\theoremstyle{plain} 
\newcommand{\thistheoremname}{}
\newtheorem*{genericthm}{\thistheoremname}
\newenvironment{namedthm}[1]
  {\renewcommand{\thistheoremname}{#1}%
   \begin{genericthm}}
  {\end{genericthm}}
\newcommand{\defn}[1]{\textit{#1}}
\DeclareMathOperator{\magn}{mag}
\DeclareMathOperator{\tdim}{tdim}
\DeclareMathOperator{\Sh}{Sh}
\DeclareMathOperator{\sh}{sh}
\newcommand{\orb}{\mathrm{orb}}
\newcommand{\Map}{\mathrm{Map}}
\newcommand{\uMap}{\ul{\smash{\mathrm{Map}}}}
\newcommand{\umu}{\ul{\smash{\mu}}}
\newcommand{\unu}{\ul{\smash{\nu}}}
\newcommand{\ulambda}{\ul{\smash{\lambda}}}
\newcommand{\urho}{\ul{\smash{\rho}}}
\newcommand{\type}{\mathrm{type}}
\renewcommand{\int}{\operatorname{int}}
\newcommand{\lpp}{(\!(}
\newcommand{\rpp}{)\!)}
\newcommand{\prin}{\mathrm{prin}}
\title{The geometry of polynomial representations}
\author{Arthur Bik}
\address{University of Bern, Switzerland, and MPI for Mathematics in the Sciences, Germany}
\email{\href{mailto:arthur.bik@mis.mpg.de}{arthur.bik@mis.mpg.de}}
\urladdr{\url{http://arthurbik.nl}}
\author{Jan Draisma}
\address{University of Bern, Switzerland, and Eindhoven University of Technology, The Netherlands}
\email{\href{mailto:jan.draisma@math.unibe.ch}{jan.draisma@math.unibe.ch}}
\urladdr{\url{https://mathsites.unibe.ch/jdraisma/}}
\author{Rob H. Eggermont}
\address{Eindhoven University of Technology, The Netherlands}
\email{\href{mailto:r.h.eggermont@tue.nl}{r.h.eggermont@tue.nl}}
\urladdr{\url{https://www.tue.nl/en/research/researchers/rob-eggermont/}}
\author{Andrew Snowden}
\address{Department of Mathematics, University of Michigan, Ann Arbor, MI}
\email{\href{mailto:asnowden@umich.edu}{asnowden@umich.edu}}
\urladdr{\url{http://www-personal.umich.edu/~asnowden/}}
\begin{document}

\begin{abstract}
We define a \defn{$\GL$-variety} to be a (typically infinite dimensional) algebraic variety equipped with an action of the infinite general linear group under which the coordinate ring forms a polynomial representation. Such varieties have been used to study asymptotic properties of invariants like strength and tensor rank, and played a key role in two recent proofs of Stillman's conjecture. We initiate a systematic study of $\GL$-varieties, and establish a number of foundational results about them. For example, we prove a version of Chevalley's theorem on constructible sets in this setting.
\end{abstract}

\maketitle
\tableofcontents

\let\thefootnote\relax
\footnotetext{\hspace*{-14pt}
\begin{minipage}{.05\textwidth}
\includegraphics[width=\textwidth]{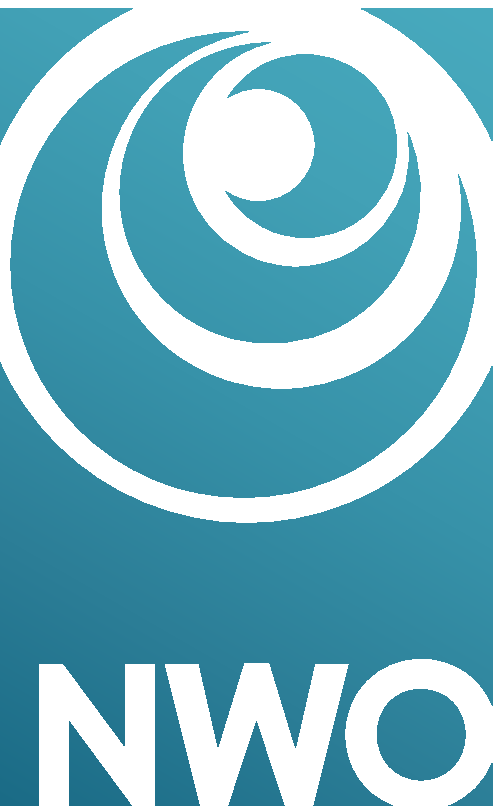}
\end{minipage}
\begin{minipage}{442pt}
AB was partially supported by JD's Vici Grant.
JD was partially supported by the NWO Vici grant
639.033.514 and SNSF project grant 200021\_191981.
RE was partially supported by the NWO Veni grant 016.Veni.192.113.
AS was supported by NSF grant DMS-1453893.
\end{minipage}}

\section{Introduction} \label{s:intro}

Across pure and applied mathematics, varieties with an action of the
general linear group feature prominently. These varieties are typically
embedded in a linear representation of the group, and the
group action on the variety gives rise to group representations on
defining ideals, higher-order syzygies, and other invariants. A thorough
understanding of both the commutative algebra and the
geometry of these varieties---their orbits, singularities,
etc.---is crucial for many purposes.

An important class of examples is secant varieties of the orbit of
highest-weight vectors in an irreducible representation.  This class
includes, for instance, varieties of tensors of bounded border rank;
a standard reference is \cite{landsberg}.  Representation-theoretic
techniques have long been used effectively in the quest for the defining
ideals of secant varieties. For instance, the GSS conjecture from
algebraic statistics \cite{gss}, which says that the ideal of the variety
of rank-two tensors is generated by $3 \times 3$-minors of flattenings,
is proved in \cite{raicu}, and generalized to Segre--Veronese varieties
and their tangential varieties in \cite{raicu2,or}. For tensors of higher
bounded rank, minors of flattenings are far from enough to cut out the
variety \cite{bb}; and in \cite{lo}, minors of flattenings are vastly
generalized to new classes of equations, via so-called Young flattenings.
At a more geometric level, \cite{lw} studies the singularities of secant
varieties, and \cite{bl} studies the orbits in the third secant variety.

These results concern varieties in representations of the general
linear group that can be defined without specifying the dimension
of the underlying vector space. In such a setting, one can study all
finite dimensional instances of the variety at once by passing to a
suitable infinite dimensional limit.  The limit lives in an ambient
space whose coordinate ring is a polynomial representation, now for
the infinite general linear group $\GL$. In this paper, we initiate
a systematic study of such $\GL$-varieties and prove a number of
fundamental results about them. Our results show that, although a
$\GL$-variety is typically infinite dimensional, the action of $\GL$
imposes so much structure on the variety that many aspects of it can be
captured in finite terms.  This finite description then implies uniform
descriptions of all finite dimensional instances of the variety that we
started out with.  To illustrate, we establish a unirationality theorem,
which says that any $\GL$-variety is the closure of a polynomial map
from a finite dimensional variety times an affine space acted upon by
$\GL$.  One might argue that this result is the {\em raison d'\^etre}
of the vast research area in applied mathematics that aims to decompose
tensors into simpler, structured tensors: any basis-independent variety
of tensors admits such a decomposition, given by said polynomial map!
Another result, a version of Chevalley's theorem, predicts, for instance,
that for any fixed $k$ and $d$, the set of sums of $k$ $d$-th powers
of linear forms has a uniform description as a constructible set,
independent of the number of variables; this is akin to the uniform
description of the orbits in the third secant variety
studied in \cite{lo}.

Infinite dimensional varieties acted upon by $\GL$ have featured
prominently in some recent work: \cite{draisma,es} prove noetherian
results in this setting, \cite{bde} establishes a structural result for
certain varieties of this form, and \cite{dll,genstillman,ess} give proofs
of Stillman's conjecture (and some related results in commutative algebra)
using such varieties.  Furthermore, $\GL$-varieties are closely connected
to research in which Gowers-norm techniques are used to establish
properties of high-rank (tuples of) polynomials \cite{kaz1,kaz2}.

\subsection{\texorpdfstring{$\GL$}{GL}-varieties} \label{ss:intro-glvar}

Fix a field $K$ of characteristic~0. Let $\GL=\bigcup_{n \ge 1}
\GL_n(K)$ be the infinite general linear group, and let
$\bV=\bigcup_{n \ge 1} K^n$ be its standard representation. For a
partition $\lambda$, we let $\bV_{\lambda}=\bS_{\lambda}(\bV)$ be the
irreducible polynomial representation of $\GL$ with highest weight
$\lambda$; here $\bS_{\lambda}$ denotes a Schur functor. For example,
$\bV_{(d)}$ is the $d$th symmetric power of $\bV$. We let
$\bA^{\lambda}$ be the spectrum of the polynomial ring
$\Sym(\bV_{\lambda})$; this is an affine scheme equipped with an
action of the group $\GL$. For a tuple of partitions
$\ulambda=[\lambda_1, \ldots, \lambda_r]$, we let
$\bA^{\ulambda}=\bA^{\lambda_1} \times \cdots \times \bA^{\lambda_r}$.
The role of the spaces $\bA^{\ulambda}$ in our theory is analogous to
the role of the usual affine spaces $\bA^n$ in classical algebraic
geometry.

\begin{namedthm}{Main definition}
An \defn{affine $\GL$-variety} is a closed $\GL$-stable subset of $\bA^{\ulambda}$, for some $\ulambda$.
\end{namedthm}

This paper is a study of affine $\GL$-varieties: our goal is
to understand their structure and establish analogs of
classical theorems of algebraic geometry for them. In the
process, it will sometimes be convenient to work with the larger
class of quasi-affine varieties, which are $\GL$-stable
open subsets of affine $\GL$-varieties.

Before proceeding, we give some examples of $\GL$-varieties that are typical of the kinds appearing in applications.

\begin{example} \label{ex:rank-var}
Let $V$ be a complex vector space. Recall that an element of $V^{\otimes d}$ has \defn{(tensor) rank} $\le r$ if it can be written as a sum of $r$ pure tensors. Tensor rank has been widely studied; for a general introduction, see \cite{landsberg}. The rank $\le r$ locus in $V^{\otimes d}$ is typically not Zariski closed (see, e.g., \cite[\S 2.4.5]{landsberg}). A point in the closure is said to have \defn{border rank} $\le r$.

Now, let $X=(\bV^{\otimes d})^*$; this space has the form $\bA^{\ulambda}$ for an appropriate choice of $\ulambda$. One can make sense of the notions of rank and border rank in $X$, and the border rank $\le r$ locus is a closed $\GL$-subvariety of $X$. By studying this $\GL$-variety, one can hope to understand aspects of border rank that are independent of the dimension of the vector space.
\end{example}

\begin{example} \label{ex:str-var}
Let $R=\bigoplus_{n \ge 0} R_n$ be a graded ring. The \defn{strength} of a homogeneous element $f \in R$ is the minimal $n$ for which there exists an expression $f=\sum_{i=1}^n g_i h_i$ where $g_i$ and $h_i$ are homogeneous elements of positive degree. This concept was introduced by Ananyan and Hochster \cite{ah} in their proof of Stillman's conjecture, though similar ideas had been previously considered. Applying this in the case $R=\Sym(V)$, with $V$ a vector space, we obtain a notion of strength for elements of $\Sym^d(V)$. This is not the most direct analog of tensor rank, but it is a closely related idea. The strength $\le k$ locus in $\Sym^d(V)$ is not closed in general \cite{bbov}, but its closure is an interesting closed subvariety of $\Sym^d(V)$.

Now let $X=\Sym^d(\bV)^* = \bA^{(d)}$. Then one can make sense of the strength $\le k$ locus in $X$, and its closure is a closed $\GL$-subvariety of $X$. As in the previous example, one can hope to understand aspects of strength that are independent of the number of variables by studying this variety. This strategy has been carried out successfully in \cite{ess,dll} to give new proofs of Stillman's conjecture.
\end{example}

\subsection{Structural results}

We now describe our main structural results about $\GL$-varieties. Let $G(n)$ be the subgroup of $\GL$ consisting of block matrices of the form
\begin{displaymath}
\begin{pmatrix} \id_n & 0 \\ 0 & \ast \end{pmatrix}.
\end{displaymath}
The group $G(n)$ is in fact isomorphic to $\GL$. Thus if $X$ is a $\GL$-variety, we can restrict the action of $\GL$ to $G(n)$ and then identify $G(n)$ with $\GL$ to obtain a new action of $\GL$. We call this the $n$th \defn{shift} of $X$, and denote it by $\Sh_n(X)$. This is a very important operation on $\GL$-varieties that appears throughout the paper.

We can now state our first main theorem:

\begin{namedthm}{Embedding theorem}[Theorem~\ref{thm:embed2}]
Let $Y$ be a $\GL$-variety, let $\lambda$ be a non-empty partition, and let $X$ be a closed $\GL$-subvariety of $Y \times \bA^{\lambda}$. Then one of the following holds:
\begin{enumerate}
\item We have $X=Y_0 \times \bA^{\lambda}$ for some closed $\GL$-subvariety $Y_0 \subset Y$.
\item There is a non-empty open $\GL$-subvariety of
$\Sh_n(X)$, for some $n$, that embeds into $\Sh_n(Y) \times
\bA^{\umu}$ for some tuple $\umu$, where each partition in $\umu$ is smaller than $\lambda$ (in the sense that the Young diagram has fewer boxes).
\end{enumerate}
\end{namedthm}

This theorem is important since it yields an inductive
approach to studying $\GL$-varieties. Indeed, suppose $X$ is
an affine $\GL$-variety. Then, by definition, $X$ is a
closed $\GL$-subvariety of $\bA^{\ul{\kappa}}$ for some
tuple $\ul{\kappa}=[\kappa_1, \ldots, \kappa_r]$. Arrange
the $\kappa$'s so that $\kappa_r$ has maximal size, and
apply the theorem with $Y=\bA^{\kappa_1} \times \cdots
\times \bA^{\kappa_{r-1}}$ and $\lambda=\kappa_r$. In case
(a), $X$ has a relatively simple form. In case (b), we find
that an open subset of a shift of $X$ embeds into a space of
the form $\Sh_n(Y) \times \bA^{\umu}$, where each $\mu_i$
has smaller size than $\kappa_r$. This space has the form
$\bA^{\urho}$ for some tuple $\urho$ with
$\magn(\urho)<\magn(\ul{\kappa})$, where $\magn$ is defined in \S \ref{ss:part}.

In fact, the embedding theorem was essentially proved by the
second author in the course of proving his noetherianity
theorem \cite{draisma}. However, the theorem was not
formally stated in that paper: it appeared only implicitly
within the main proof. We have isolated it here as a
standalone theorem since we have found it to be an important
tool; indeed, it is the key input to our next theorem.

An important theme in representation stability is that objects can be often made ``nice'' by shifting. A prototypical result of this kind is Nagpal's theorem \cite{nagpal} that a sufficient shift of a finitely generated $\mathbf{FI}$-module over a field of characteristic~0 is projective. We establish a shift theorem for $\GL$-varieties:

\begin{namedthm}{Shift theorem}[Theorem~\ref{thm:shift}]
Let $X$ be a non-empty affine $\GL$-variety. Then there is a non-empty open affine $\GL$-subvariety of $\Sh_n(X)$, for some $n$, that is isomorphic to $B \times \bA^{\ul{\kappa}}$ for some variety $B$ and tuple $\ul{\kappa}$.
\end{namedthm}

Here, and elsewhere in this paper, a variety without further
qualifications is a finite dimensional affine variety over $K$, i.e.,
the spectrum of a finitely generated reduced $K$-algebra.

The shift theorem has a number of important consequences.
For one, it shows that constructing rational points on
$\GL$-varieties is no more difficult than on finite
dimensional varieties. For example, if $K$ is algebraically
closed then the $K$-points of a $\GL$-variety are dense;
this follows easily from the definitions if $K$ is
uncountable (see \cite{lang} for details), but is not obvious when $K$ is countable. More generally, given a $\GL$-variety $X$ there is some $d$ such that the points of $X$ defined over degree $d$ extensions of $K$ are dense in $X$.

Here is a more interesting result that follows from the shift theorem:

\begin{namedthm}{Unirationality theorem}[Theorem~\ref{thm:uni}]
Let $X$ be an irreducible affine $\GL$-variety. Then there is a dominant morphism $\phi \colon B \times \bA^{\ulambda} \to X$ for some variety $B$ and tuple $\ulambda$.
\end{namedthm}

Recall that a variety $C$ is \defn{unirational} if there is a dominant rational morphism $\bA^n \dashrightarrow C$ for some $n$. Thus the above theorem can be interpreted as saying that $X$ is ``unirational in the $\GL$ direction'' or ``unirational up to a finite dimensional error.'' We emphasize that the $\phi$ in the theorem is actually a morphism, and not just a rational map. As an application, we show that the field $K(X)^{\GL}$ of invariant rational functions on $X$ is a finitely generated extension of $K$. We prove some finer results surrounding the unirationality theorem as well: for instance, we show that one can find $\phi$ that is surjective (though with $B$ reducible), and show how one can control $\ulambda$.

The shift theorem shows that if $X$ is an arbitrary $\GL$-variety then, after shifting, $X$ has a fairly simple form outside of some closed subset. We now describe how this theorem can be used to give a more cohesive picture of $\GL$-varieties. An \defn{elementary} $\GL$-variety is one of the form $B \times \bA^{\ulambda}$, where $B$ is a finite dimensional quasi-affine variety; we apply the same terminology to $G(n)$-varieties. We say that a $\GL$-variety is \defn{locally elementary} if every point admits an open elementary $G(n)$-stable neighborhood. With this language, we can now state the theorem:

\begin{namedthm}{Decomposition theorem}[Theorem~\ref{thm:decomp}]
Let $X$ be a quasi-affine $\GL$-variety. Then there is a finite decomposition $X=\bigsqcup_{i=1}^r Y_i$ where each $Y_i$ is a locally elementary $\GL$-variety that is locally closed in $X$.
\end{namedthm}

We also establish a version of the decomposition theorem for morphisms of $\GL$-varieties; in fact, this is a much more useful statement. The decomposition theorem (for morphisms) allows one to easily prove a variety of results about $\GL$-varieties, such as:

\begin{namedthm}{Chevalley's theorem}[Theorem~\ref{thm:chev}]
Let $\phi \colon X \to Y$ be a morphism of quasi-affine $\GL$-varieties and let $C$ be a $\GL$-constructible subset of $X$. Then $\phi(C)$ is a $\GL$-constructible subset of $Y$.
\end{namedthm}

Here, we say that a subset of a quasi-affine $\GL$-variety is \defn{$\GL$-constructible} if it is a finite union of locally closed $\GL$-stable subsets. One consequence of this theorem is that if $\phi$ is surjective on $\ol{K}$-points then it is scheme-theoretically surjective.

\subsection{Orbits and types} \label{ss:type}

Given a group action on a variety, it is often of interest to study
the orbits. In the case of $\GL$-varieties, it turns out that orbits
in the usual sense are pathological. Indeed, $\bA^{(2)}$ can be
identified with the space of all symmetric bilinear forms on $\bV$;
thus its points can be represented by infinite symmetric matrices.
Since elements of $\GL$ differ from the identity matrix in only
finitely many entries, they cannot change an infinite symmetric matrix
very much. To remedy this, we introduce the notion of
\defn{generalized orbit}: two points of a $\GL$-variety belong to the
same generalized orbit if and only if they have the same orbit
closure. Generalized orbits are well-behaved. For example, generalized
orbits on $\bA^{(2)}$ are parameterized by rank.
We let $X^{\orb}$ be the space of generalized orbits on a $\GL$-variety $X$. We are thus interested in the problem of describing $X^{\orb}$.

The first interesting phenomenon one observes is that many $\GL$-varieties contain a $K$-point with dense orbit; we call such points \defn{$\GL$-generic}. For example, the space $\bA^{(3)}=\Sym^3(\bV)^*$ of cubic forms in infinitely many variables contains a $\GL$-generic $K$-point. This is rather counterintuitive since the analogous statement in finitely many variables is clearly false: the variety $\Sym^3(K^n)^*$ of cubic forms in $n \gg 0$ variables clearly does not contain a $K$-point with dense $\GL_n$ orbit, as $\GL_n$ has dimension $n^2$ but the space has dimension $\approx \tfrac{1}{6} n^3$. In fact, we show (Proposition~\ref{prop:generic-exists}) that if $\ulambda$ is any \defn{pure} tuple (meaning it does not contain the empty partition) then $\bA^{\ulambda}$ contains a $\GL$-generic $K$-point. (We note that if $\lambda=\emptyset$ is the empty partition then $\bA^{\lambda}$ is the ordinary affine line with trivial $\GL$ action, and does not contain a $\GL$-generic $K$-point.)

Combining the above observation and the unirationality theorem leads to a strategy for studying generalized orbits. Let $x$ be a point of $X$ and let $\ol{O}_x$ be its orbit closure. We assume for simplicity that $x$ is a $K$-point and $K$ is algebraically closed.
Using the unirationality theorem, we show that there is a dominant map
$\phi \colon \bA^{\ulambda} \to \ol{O}_x$ for some irreducible variety $B$ and pure tuple
$\ulambda$. In fact, if we assume that
$\ulambda$ is appropriately minimal then $\phi$ is unique up to the
action $\Aut(\bA^{\ulambda})$; in this case, we say that $x$ has
\defn{type} $\ulambda$ and that $\phi$ is a \defn{typical} morphism
for $x$. It then follows that $x=\phi(y)$ for some $\GL$-generic $K$-point $y
\in \bA^{\ulambda}$. Now suppose that $x'$ is a second $K$-point that happens to have the same typical morphism $\phi$; this is not so improbable as the mapping space $\cM_{\ulambda}=\Map(\bA^{\ulambda}, X)$ is finite dimensional. We can then write $x'=\phi(y')$ for some $\GL$-generic $K$-point $y'$. Since $y$ and $y'$ are both $\GL$-generic, each belongs to the orbit closure of the other, and so the same is true for $x$ and $x'$; that is, $x$ and $x'$ belong to the same generalized orbit. This suggests that we might be able to use typical morphisms to detect generalized orbits.

To pursue this line of thought in more detail, we introduce
some notation. Let $X^{\orb}_{\ulambda}$ be the subspace of
$X^{\orb}$ on points having type $\le \ulambda$, where $\le$
is the containment partial order. Let $X^{\type}_{\ulambda}$
be the quotient of the finite dimensional variety
$\cM_{\lambda}$ by the action of the algebraic group
$\Aut(\bA^{\ulambda})$, equipped with the quotient topology. We have a well-defined map
\begin{displaymath}
\rho_{\ulambda} \colon X^{\type}_{\ulambda} \to X^{\orb}_{\ulambda}
\end{displaymath}
given by evaluating a morphism $\bA^{\ulambda} \to X$ on any $\GL$-generic point and taking the associated generalized orbit. (The well-definedness here is essentially the content of the previous paragraph.) The following is our main theorem about this construction:

\begin{theorem}[Theorem~\ref{thm:rholambda}]
The map $\rho_{\ulambda}$ is a continuous bijection.
\end{theorem}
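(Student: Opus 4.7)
My plan is to verify well-definedness, continuity, surjectivity, and injectivity in that order, with the last of these being the real work. The overall strategy is already foreshadowed in the paragraph preceding the theorem: evaluate a representative $\phi \in \cM_\ulambda$ at a $\GL$-generic point $y \in \bA^\ulambda$ and record the generalized orbit $[\phi(y)]$.

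For well-definedness, the key observation is that if $y$ is $\GL$-generic then $\GL \cdot y$ is dense in $\bA^\ulambda$, so by equivariance its image $\GL \cdot \phi(y)$ is dense in $\phi(\bA^\ulambda)$; hence $\ol{\GL \cdot \phi(y)} = \ol{\phi(\bA^\ulambda)}$, a description manifestly independent of $y$. Replacing $\phi$ by $\phi \circ \sigma$ for $\sigma \in \Aut(\bA^\ulambda)$ amounts to evaluating at $\sigma(y)$, which is still $\GL$-generic because $\sigma$ commutes with $\GL$ and is invertible, so the assignment descends to the quotient $X^\type_\ulambda$. The same identification shows that $\phi$ restricts to a dominant map $\bA^\ulambda \to \ol{O}_{\phi(y)}$, so $[\phi(y)]$ has type $\le \ulambda$ and genuinely lies in $X^\orb_\ulambda$.

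Surjectivity and continuity are then cheap. If $[x] \in X^\orb_\ulambda$ has minimal type $\umu \le \ulambda$, the definition of type furnishes a dominant $\psi \colon \bA^\umu \to \ol{O}_x$, and precomposing with a coordinate projection $\bA^\ulambda \to \bA^\umu$ (available since $\umu \le \ulambda$) produces a preimage of $[x]$ under $\rho_\ulambda$. For continuity, closed subsets of $X^\orb_\ulambda$ pull back from $\GL$-stable closed subsets $Z \subseteq X$, and the preimage $\rho_\ulambda^{-1}([Z])$ equals $\{[\phi] : \phi(\bA^\ulambda) \subseteq Z\}$ by the density argument above, which is a visibly closed subset of $\cM_\ulambda$ and so descends to a closed subset of the quotient.

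The heart of the matter is injectivity. Suppose $\phi_1, \phi_2 \in \cM_\ulambda$ satisfy $\rho_\ulambda([\phi_i]) = [x]$; both are then dominant onto $\ol{O}_x$. Let $\umu$ be the minimal type of $x$ and $\psi \colon \bA^\umu \to \ol{O}_x$ a typical morphism. I would first try to factor each $\phi_i$ as $\psi \circ \pi_i$ with $\pi_i \colon \bA^\ulambda \to \bA^\umu$ dominant — a universal property of the minimal type $\umu$ that I expect to follow from the uniqueness-up-to-$\Aut(\bA^\umu)$ clause in the definition of type, possibly after absorbing an element of $\Aut(\bA^\umu)$ into $\psi$. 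The remaining, and most delicate, step is to show that any two dominant $\pi_1, \pi_2 \colon \bA^\ulambda \to \bA^\umu$ are related by precomposition with an element of $\Aut(\bA^\ulambda)$ — equivalently, that the space of dominant $\GL$-equivariant maps $\bA^\ulambda \to \bA^\umu$ is a single $\Aut(\bA^\ulambda)$-orbit. This is a homogeneity property of the standard spaces themselves, independent of $X$, and I expect it to be the principal obstacle; I would attack it by induction on $\magn(\ulambda) - \magn(\umu)$, perhaps passing to the graph of $\pi_i$ inside $\bA^\ulambda \times \bA^\umu$ and peeling off one partition at a time using the embedding theorem.
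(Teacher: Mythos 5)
Your overall architecture (well-definedness, then continuity, surjectivity, injectivity) is reasonable, and the well-definedness and continuity paragraphs are essentially correct: the density argument for the $\GL$-orbit of a $\GL$-generic point is exactly how the map is defined, and showing $\rho_\ulambda^{-1}(\pi(Z))=\cM_\ulambda(Z)^{\Gamma_\ulambda}$ is a legitimate alternative to the paper's route (writing $\rho_{\ulambda}$ as a composite of visibly continuous maps).

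However, both your surjectivity and injectivity arguments silently drop the finite-dimensional factor $B$, and this is a genuine gap. A typical morphism for $\ol{O}_x$ is a dominant map $B\times\bA^{\umu}\to\ol{O}_x$ where $B$ is an irreducible variety whose dimension is the typical dimension $\tdim(x)=\operatorname{tr.deg}_K K(\ol{O}_x)^{\GL}$, which is positive in general (e.g.\ take $x$ to be the generic point of the rank-$\le 1$ locus in $\bA^{[(1),(1)]}$, where $\tdim=1$). There is no dominant $\bA^{\umu}\to\ol{O}_x$ in that case. Consequently the preimage $\rho_{\ulambda}^{-1}([x])$ is \emph{not} the $\Aut(\bA^{\ulambda})$-class of a single closed point of $\cM_{\ulambda}$; it is the (non-closed) generic point of a whole $\Gamma_{\ulambda}$-stable irreducible subvariety $M\subset\cM_{\ulambda}(\ol{O}_x)$. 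Your surjectivity paragraph produces only a single morphism and your injectivity paragraph tries to compare two individual morphisms by factoring each through $\psi$, so neither addresses the correct set-up.

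Two further remarks. First, the ``principal obstacle'' you isolate for injectivity --- that any two dominant $\GL$-maps $\bA^{\ulambda}\to\bA^{\umu}$ are $\Aut(\bA^{\ulambda})$-conjugate --- is (in the form that is actually true, after passing to dense opens and allowing a $B$-factor) precisely Proposition~\ref{prop:Adom}, which the paper has already established; you would not need a fresh induction. Second, the paper's actual injectivity argument takes a different path that correctly handles the $B$-issue: it observes that the closure $M=\ol{\{x\}}$, being irreducible, $\Gamma_{\ulambda}$-stable and dominating $Z=\ol{O}_{\rho_\ulambda(x)}$ under evaluation, must be the \emph{principal component} $\cM^{\prin}_{\ulambda}(Z)$ of Proposition~\ref{prop:prin}, hence is determined by $Z$. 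The principal-component machinery of \S\ref{ss:prin} (built on the typical-morphism comparison lemma, which does involve the $B$-factor via Propositions~\ref{prop:typical-factor} and~\ref{prop:Adom}) is what does the real work, rather than a pairwise factorization of morphisms $\bA^{\ulambda}\to X$. Your surjectivity argument needs the same fix: take any dominant $\psi\colon B\times\bA^{\ulambda}\to W$ (available since $\type(y)\subset\ulambda$), push $B$ into $\cM_{\ulambda}$, saturate by $\Gamma_{\ulambda}$, close up, and take the generic point.
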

Let us explain why this is a useful result. The space
$X^{\type}_{\ulambda}$, while not necessarily a
finite dimensional variety, is {\em algebraic} in nature: it is a quotient of a finite dimensional variety by the action of a finite dimensional algebraic group. The space $X^{\orb}_{\ulambda}$, on the other hand, is \emph{analytic} in nature. Indeed, to determine if two points $x$ and $y$ belong to the same generalized orbit, one must construct a sequence $g_{\bullet}$ in $\GL$ such that $g_i x$ converges to $y$ in an appropriate sense. The above theorem connects these two very different looking objects.

We introduced typical morphisms for $K$-points in the above discussion. In fact, we define the notion for arbitrary irreducible affine $\GL$-varieties: given such a variety $X$, a typical morphism is a dominant map $B \times \bA^{\ulambda} \to X$ that is appropriately minimal. This is unique up to the action of $\Aut(\bA^{\ulambda})$ and passing to finite covers and open subsets of $B$. In particular, the dimension of $B$ is an invariant of $X$, which we call the \defn{typical dimension} of $X$. We give the following elegant characterization of this quantity:

\begin{theorem}[Theorem~\ref{thm:typdim}]
Let $\phi:B\times \bA^{\ulambda} \to X$ be a typical morphism. Then
$K(B)$ is a finite extension of the field $K(X)^\GL$ of invariant
rational functions. In particular, the typical dimension of $X$ is the
transcendence degree of $K(X)^\GL$ over $K$.
\end{theorem}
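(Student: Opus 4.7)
The plan is to pull rational functions back along $\phi$, identify the $\GL$-invariants on $B\times\bA^{\ulambda}$ with $K(B)$, and then use the minimality built into the definition of a typical morphism to conclude that the induced inclusion $K(X)^{\GL}\hookrightarrow K(B)$ has finite index.

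Since $\phi$ is dominant and $\GL$-equivariant (with $\GL$ acting trivially on $B$ and naturally on $\bA^{\ulambda}$), pullback yields a $\GL$-equivariant injection $\phi^*\colon K(X)\hookrightarrow K(B\times\bA^{\ulambda})$ and hence an injection $\phi^*\colon K(X)^{\GL}\hookrightarrow K(B\times\bA^{\ulambda})^{\GL}$. The latter equals $K(B)$: since $\ulambda$ is pure (part of the definition of a typical morphism), Proposition~\ref{prop:generic-exists} gives a $\GL$-generic $K$-point of $\bA^{\ulambda}$, whose orbit remains dense after base change to $K(B)$; a $\GL$-invariant rational function on $B\times\bA^{\ulambda}$ must then be constant along this orbit and therefore depend only on the $B$-coordinate.

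For finiteness, recall that $K(X)^{\GL}$ is a finitely generated extension of $K$ (an application of the unirationality theorem already advertised in the introduction), so $\phi^*(K(X)^{\GL})=K(B_0)$ for some irreducible variety $B_0$, with the inclusion $K(B_0)\hookrightarrow K(B)$ corresponding to a dominant rational map $\pi\colon B\dashrightarrow B_0$. Finiteness is equivalent to $\dim B_0=\dim B$, and I would argue this by contradiction. Suppose $\dim B_0<\dim B$, so $\pi$ has positive-dimensional generic fibres. For generic $b, b'$ in a common fibre and a $\GL$-generic $y\in\bA^{\ulambda}$, every $\GL$-invariant rational function on $X$ agrees at $\phi(b,y)$ and $\phi(b',y)$. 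Invoking the bijection $\rho_{\ulambda}$ of Theorem~\ref{thm:rholambda}, which identifies generalized orbits with $\Aut(\bA^{\ulambda})$-orbits of morphisms $\bA^{\ulambda}\to X$, one translates this invariant-level equality into the statement that $\phi(b,\cdot)$ and $\phi(b',\cdot)$ lie in a common $\Aut(\bA^{\ulambda})$-orbit, and in particular have the same closed image in $X$. Choosing a generic section $s\colon B_0\dashrightarrow B$ of $\pi$, the composition $\phi\circ(s\times\id)\colon B_0\times\bA^{\ulambda}\dashrightarrow X$ therefore remains dominant, giving a dominant morphism from $B_0\times\bA^{\ulambda}$ with $\dim B_0<\dim B=\tdim(X)$; this contradicts the uniqueness of typical morphisms up to finite covers and open subsets of the base.

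The main obstacle is that last translation: passing from ``$\GL$-invariant rational functions on $X$ cannot distinguish $\phi(b,y)$ from $\phi(b',y)$'' to ``the two morphisms are in a common $\Aut(\bA^{\ulambda})$-orbit'' requires the invariants in $K(X)^{\GL}$ to separate the relevant generalized orbits. I would expect to establish this by combining Theorem~\ref{thm:rholambda} with the decomposition theorem, which reduces $X$ to locally elementary pieces $B''\times\bA^{\umu}$ where the $\GL$-invariant rational functions transparently coincide with $K(B'')$. Once the finite extension is established, the ``in particular'' clause is immediate: $\tdim(X)=\dim B=\mathrm{tr.deg}_K K(B)=\mathrm{tr.deg}_K K(X)^{\GL}$.
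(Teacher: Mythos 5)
The overall plan (compare $K(X)^\GL$ with $K(B)$, argue by contradiction using minimality of the typical morphism) is the right shape, but the key technical step is not established, and the route you propose to fill it does not work.

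You correctly identify that the crux is a Rosenlicht-type statement: that the $\GL$-invariant rational functions in $K(X)^\GL$ separate generalized orbits generically. Theorem~\ref{thm:rholambda} does not help here in the direction you need: it is a bijection between the abstract set of types and the abstract set of generalized orbits, with no assertion about rational invariants. To deduce that $\phi(b,\cdot)$ and $\phi(b',\cdot)$ land in the same $\Gamma_{\ulambda}$-orbit, you first need to know that $\phi(b,y)$ and $\phi(b',y)$ lie in the same generalized orbit, and the input you have is only that $\GL$-invariant rational functions agree at them. Bridging this is exactly the content to be proved, so invoking $\rho_{\ulambda}$ is circular at this point.

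Your proposed fix---that on a locally elementary piece $B''\times\bA^{\umu}$ the $\GL$-invariant rational functions ``transparently coincide with $K(B'')$''---is false. The decomposition theorem produces $G(m)$-elementary charts, and on such a chart the field $K(B'')$ is the field of $G(m)$-invariants, not of $\GL$-invariants. In general $K(X)^{\GL}\subsetneq K(X)^{G(m)}$. A concrete example: $X=\bA^{(2)}$ has $K(X)^{\GL}=K$, while $\Sh_1(X)\cong\bA^{(2)}\times\bA^{(1)}\times\bA^1$ has $K(\Sh_1(X))^{\GL}=K(t)$, i.e.\ $K(X)^{G(1)}=K(t)\supsetneq K=K(X)^{\GL}$. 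So passing to the charts produced by the decomposition theorem strictly enlarges the invariant field you control, and the gap you wanted to close reappears as the problem of comparing $K(X)^{\GL}$ with these $K(B'')$.

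The paper's proof does not go through the decomposition theorem or through $\rho_{\ulambda}$ for this step. Instead it builds the needed invariants explicitly: it forms the graph $\Gamma\subset X\times B\times\bA^{\ulambda}$ of $\phi$, applies Lemma~\ref{lem:Invariant} to the ideal of $\ol{\im(\pi)}$ in $K(X)\otimes K[B]$ to extract genuinely $\GL$-invariant relations $f_j=\sum_l g_{jl}\otimes f_{jl}$ with $g_{jl}\in K(X)^{\GL}$, and then runs a delicate shrinking argument (using the $\GL$-Chevalley theorem and a $\GL$-generic $K$-point $a\in\bA^{\ulambda}$) to show that the $g_{jl}$ separate image closures $\im(\phi(b,\cdot))$ on a dense open in $B$ up to a quasi-finite map; minimality of $\dim B$ then forces the quasi-finite cover to be all of $B$. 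You need some version of this construction, or an independent proof that $\GL$-rational invariants separate generalized orbits generically, before the contradiction argument can close.
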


\subsection{Further results} \label{ss:further}

Let $X$ be an irreducible $\GL$-variety. In a follow-up paper (in preparation), we prove the following results.
\begin{itemize}
\item Any two points of $X$ can be joined by an irreducible curve.
\item There exists a surjective morphism $B \times \bA^{\ulambda} \to X$ for some tuple $\ulambda$ and \emph{irreducible} variety $B$.
\item The continuous bijection $\rho_{\ulambda} \colon X^{\type}_{\ulambda} \to X^{\orb}_{\ulambda}$ is a homeomorphism.
\item The mapping space $\Map(\bA^{\ulambda}, X)$ is irreducible for all $\ulambda$.
\item Let $\phi \colon Y \to X$ be a map of $\GL$-varieties. Then any point in the image closure of $\phi$ can be realized as the limit of a ``nice'' 1-parameter family in $Y$.
\end{itemize}
These statements are closely related to one another; for example, it is easy to see that the first is a consequence of the second. The proofs of these results are rather long, which is why we have deferred them to a separate paper.

\subsection{Applications}

We discuss a few applications of our theory.

\begin{itemize}
\item Let $\cR$ be the inverse limit of the graded rings $K[x_1, \ldots, x_n]$. In \cite{ess} (see also \cite{am}), it is shown that $\cR$ is isomorphic to a polynomial ring (in uncountably many variables), which was an important component in the proofs of Stillman's conjecture given there. We give a new proof of this result using the theory developed in this paper. When $K=\bC \lpp t \rpp$ is the field of Laurent series, \cite{ess} introduced a subring $\cR^{\flat}$ of $\cR$ consisting of elements with bounded denominators, and showed that it too is a polynomial ring. In recent work \cite{relbig}, the third author showed that $\cR$ is a polynomial algebra over $\cR^{\flat}$; this result is significantly more difficult than the polynomiality results of \cite{ess}. We reprove this result as well. See \S \ref{ss:bigpoly}.
\item In \cite{bbov}, Ballico-Oneto-Ventura and one of the the current authors give a nonconstructive proof that the strength $\leq3$ locus in $\Sym^4(V)$ is not closed in general. In \cite{bdde}, Danelon and three of the current authors generalize
a theorem of Kazhdan-Ziegler \cite{kaz2} on universality of high-strength polynomials
to arbitrary polynomial functors. These results build on a weaker
variant of the unirationality theorem that appeared in the
first author's thesis \cite{bik}.
\item Our version of Chevalley's theorem shows that the tensor rank $r$ locus in Example~\ref{ex:rank-var} or the strength $r$ locus in Example~\ref{ex:str-var} are $\GL$-constructible sets.
\item Our Chevalley theorem also implies that for any
$\GL$-equivariant morphism $\bA^{\ulambda} \to \bA^{\umu}$ there exists
a polynomial-time membership test for the image. To
illustrate, for fixed $k_1,k_2,k_3$, there exists such a polynomial-time
algorithm that, on input $n$ and a tensor $T \in \ol{\bQ}^n \otimes
\ol{\bQ}^n
\otimes \ol{\bQ}^n$, tests whether $T$ is the sum of $k_1$ terms of the form $v_1 \otimes
v_2 \otimes v_3$, $k_2$ terms of the form $v_1 \otimes A_{23}$, and $k_3$
terms of the form $A_{12} \otimes v_3$, where the $v_i$ are
vectors in $\ol{\bQ}^n$ and the $A_{ij}$ are matrices in
$\ol{\bQ}^n \otimes \ol{\bQ}^n$. This algorithm just
verifies, on this input tensor, the uniform quantifier-free description of this set
promised by Chevalley's theorem; it
does not compute the $v_i$ and the $A_{ij}$. However, using the
decomposition theorem for morphisms, there is a polynomial-time
algorithm that computes these, as well.
\end{itemize}

\subsection{On the characteristic}

We restrict ourselves to characteristic zero for two related
reasons.  First, in positive characteristic, the Schur
functors $\bS_\lambda$
are typically not
irreducible, and one should replace direct sums of these
with objects in the larger category of polynomial functors. This category is not
semisimple, and this complicates some of the reasoning. Second, the
proof of one of our key tools, the second version of the Embedding
Theorem, does not work as stated in positive characteristic, even when direct sums of Schur functors are replaced by arbitrary
polynomial functors; see Remark~\ref{re:poschar}. However, the proof of Noetherianity in \cite{draisma}
shows that in positive characteristic, the map that is required to be
a closed embedding in Theorem~\ref{thm:embed2} is a universal
homeomorphism onto its image. This suggests that most of our results
have analogs in positive characteristic. We will pursue this
line of thought in a follow-up paper.

\subsection{Notation}

We typically denote ordinary, finite dimensional varieties by $B$ or $C$, and $\GL$-varieties by $X$ or $Y$. Other important notation:
\begin{description}[align=right,labelwidth=2cm,leftmargin=!]
\item [$K$] the base field, always characteristic~0
\item [$\Omega$] a (variable) extension of $K$, often algebraically closed
\item [$\bV$] a fixed infinite dimensional $K$-vector space with basis $\{e_i\}_{i \ge 1}$
\item [$\GL$] the group of automorphisms of $\bV$ fixing almost all basis vectors
\item [$G(n)$] the subgroup of $\GL$ fixing $e_1, \ldots, e_n$ and mapping $\operatorname{span} \{ e_i \}_{i>n}$ into itself.
\item [$\Sh_n$] the shift functor
\item [$\sh_n$] the shift operation on partitions or tuples
\item [{$X[1/h]$}] the open subset of the variety $X$ defined by $h \ne 0$.
\item [$X\{V\}$] the result of taking the affine $\GL$-scheme $X$, regarding it as functor on vector spaces, and evaluating on the $K$-vector space $V$
\item [$\bV_{\lambda}$] the irreducible $\GL$-representation $\bS_{\lambda}(\bV)$, where $\bS_{\lambda}$ is a Schur functor
\item [$\ulambda$] a tuple $[\lambda_1, \ldots, \lambda_r]$ of partitions
\item [$\bV_{\ulambda}$] the representation $\bV_{\lambda_1} \oplus \cdots \oplus \bV_{\lambda_r}$
\item [$\bR_{\ulambda}$] the symmetric algebra on $\bV_{\ulambda}$
\item [$\bA^{\ulambda}$] the affine $\GL$-variety with coordinate ring $\bR_{\ulambda}$
\item [$\cM_{\ulambda}(X)$] the space of maps $\bA^{\ulambda} \to X$.
\end{description}

\section{\texorpdfstring{$\GL$}{GL}-varieties} \label{s:GLvar}

\subsection{Partitions and tuples} \label{ss:part}

Recall that a \defn{partition} is a sequence $\lambda=(\lambda_1, \lambda_2, \ldots)$ of non-negative integers with $\lambda_i \ge \lambda_{i+1}$ for all $i$ and $\lambda_i=0$ for $i \gg 0$. We write $\vert \lambda \vert$ for the sum $\lambda_1+\lambda_2+\cdots$, and call it the \defn{size} of $\lambda$. The empty partition $\emptyset$ is the partition with all parts equal to~0.

A \defn{tuple of partitions} (often simply called a \defn{tuple}) is a finite tuple $\ulambda=[\lambda_1, \lambda_2, \ldots, \lambda_r]$ where each $\lambda_i$ is a partition. We use square brackets for the notation since each $\lambda_i$ is denoted with parentheses. We say that a tuple is \defn{pure} if it does not contain the empty partition. We say that a tuple $\ulambda=[\lambda_1, \ldots, \lambda_r]$ \defn{contains} a tuple $\umu=[\mu_1, \ldots, \mu_s]$ if $r \ge s$ and, perhaps after reordering, we have $\lambda_i=\mu_i$ for $1 \le i \le s$; we denote this by $\umu \subset \ulambda$. For tuples $\ulambda=[\lambda_1, \ldots, \lambda_r]$ and $\umu=[\mu_1, \ldots, \mu_s]$, we let $\ulambda \cup \umu$ be the tuple $[\lambda_1, \ldots, \lambda_r, \mu_1, \ldots, \mu_s]$.

We define the \defn{magnitude} of a tuple $\ulambda$,
denoted $\magn(\ulambda)$, to be the sequence $(n_0, n_1, \ldots)$ where $n_i$ is the number of partitions in $\ulambda$ of size $i$. We compare magnitudes lexicographically, that is, $\magn(\ulambda)<\magn(\umu)$ if $\magn(\ulambda)_i<\magn(\umu)_i$ where $i$ is the largest index where the magnitudes disagree. This is a well-order. We can therefore argue by induction on magnitude. We define the \defn{degree} of a tuple $\ulambda$, denoted $\deg(\ulambda)$ to be the maximal size of a partition in $\ulambda$.

\subsection{Polynomial representations} \label{ss:polyrep}

We fix a field $K$ of characteristic~0 throughout this paper. Let $\GL=\bigcup_{n \ge 1} \GL_n(K)$ be the infinite general linear group, thought of as a discrete group, and let $\bV=\bigcup_{n \ge 1} K^n$ be its standard representation. A \defn{polynomial representation} of $\GL$ is one that occurs as a subquotient of a (possibly infinite) direct sum of tensor powers of $\bV$. The category of polynomial representations is a semi-simple abelian category.

For a partition $\lambda$, we write $\bV_{\lambda}$ for the polynomial representation $\bS_{\lambda}(\bV)$ where $\bS_{\lambda}$ denotes the Schur functor associated to $\lambda$. The $\bV_{\lambda}$'s are exactly the irreducible polynomial representations. For a tuple $\ulambda=[\lambda_1, \ldots, \lambda_r]$ of partitions, we write $\bV_{\ulambda}$ for $\bigoplus_{i=1}^r \bV_{\lambda_i}$. Every finite length polynomial representation is isomorphic to some $\bV_{\ulambda}$.

Every polynomial representation $V$ carries a canonical grading. For $t \in K^{\times}$, let $A_n(t) \in \GL$ be the diagonal matrix where the first $n$ entries are $t$ and the remaining entries are~1; one should think of $A_n(t)$ as an approximation of the scalar matrix $t \cdot \id$. If $v \in V$ then $A_n(t) v$ is independent of $n$ for $n \gg 0$; we denote this common value by $A(t) v$. Then $v$ is homogeneous of degree $d$ if and only if $A(t) v=t^d v$ for all $t \in K^{\times}$. Under this grading, the irreducible representation $\bV_{\lambda}$ is homogeneous of degree $\vert \lambda \vert$. We note that an element of $v$ is $\GL$-invariant if and only if it has degree~0. We define an action of the multiplicative group $\bG_m$ on a polynomial representation $V$ by letting $t \in \bG_m$ act by $t^d$ on the degree $d$ piece. We refer to this as the ``central $\bG_m$.''

The category of polynomial representations is equivalent to the category of polynomial functors. Given a polynomial representation $V$, we let $V\{K^n\}$ be the result of evaluating the corresponding polynomial functor on $K^n$. Explicitly, this can be identified with $V^G$ where $G$ is the subgroup of $\GL$ fixing each of $e_1, \ldots, e_n$. The operation $V \mapsto V\{K^n\}$ is compatible with tensor products and arbitrary direct sums.

The action of $\GL$ on a polynomial representation $V$ extends in a few ways. First the action of the discrete group $\GL_n(K) \subset \GL$ extends uniquely to an action of the algebraic group $\GL_n$; that is, $V$ is naturally a comodule over the Hopf algebra $K[\GL_n]$. And second, the action of $\GL$ naturally extends to an action of the monoid $\End(\bV)$. These new actions are completely natural, and compatible with direct sums and tensor products.

\subsection{\texorpdfstring{$\GL$}{GL}-algebras}

A \defn{$\GL$-algebra} is a $K$-algebra $R$ equipped with an action of the group $\GL$ by algebra automorphisms such that it forms a polynomial representation. Such an algebra is naturally graded, since every polynomial representation is graded, and any $\GL$-equivariant homomorphism of $\GL$-algebras is homogeneous. We let $\bR_{\ulambda}=\Sym(\bV_{\ulambda})$; this is a typical, and important, example of a $\GL$-algebra. We say that a $\GL$-algebra $R$ is \defn{finitely $\GL$-generated} if it is generated, as a $K$-algebra, by the $\GL$-oribts of finitely many elements; equivalently, $R$ is a quotient of $\bR_{\ulambda}$ for some $\ulambda$.

Let $R$ be a $\GL$-algebra. A \defn{$\GL$-ideal} of $R$ is an ideal that is $\GL$-stable. We say that a $\GL$-ideal is \defn{finitely $\GL$-generated} if it is generated, as an ideal, by the $\GL$-orbits of finitely many elements. Equivalently, a $\GL$-ideal $I$ is finitely $\GL$-generated if there exists a $\GL$-equivariant surjection of $R$ modules $R \otimes \bV_{\ulambda} \to I$ for some $\ulambda$.

Let $R$ be a $\GL$-algebra over an ordinary ring $A$, that is, $\GL$ acts trivially on $A$ and we have a ring homomorphism $A \to R_0$. We say that $R$ is \defn{pure} over $A$ if the map $A \to R_0$ is an isomorphism. In this case, there is a natural map $R \to A$ that sends all positive degree elements to~0. We note that $\bR_{\ulambda}$ is pure over $K$ if and only if $\ulambda$ is pure.

\subsection{\texorpdfstring{$\GL$}{GL}-varieties} \label{ss:GLvar}

An \defn{affine $\GL$-scheme} is an affine scheme $X$ equipped with an action of $\GL$ such that $\Gamma(X, \cO_X)$ is a $\GL$-algebra. An \defn{affine $\GL$-variety} is a reduced affine $\GL$-scheme such that $\Gamma(X, \cO_X)$ is finitely $\GL$-generated. A \defn{quasi-affine $\GL$-variety} is a $\GL$-stable open subscheme of an affine $\GL$-variety. A \defn{morphism} of $\GL$-schemes is simply a $\GL$-equivariant morphism of schemes.

Let $\bA^{\ulambda}=\Spec(\bR_{\ulambda})$. This is an affine $\GL$-variety, and serves as the principal example: any affine $\GL$-variety can be realized as a closed $\GL$-subvariety of $\bA^{\ulambda}$ for some $\ulambda$. Thus the $\bA^{\ulambda}$ take the place of the usual affine spaces in classical algebraic geometry. For a singleton $\ulambda=[\lambda]$, we simply write $\bA^{\lambda}$ in place of $\bA^{\ulambda}$. If $\lambda$ is the empty partition then $\bA^{\lambda}=\bA^1$ is the ordinary affine line.


Let $f \colon X \to S$ be an affine $\GL$-scheme over the ordinary affine scheme $S$. We say that $X$ is \defn{pure} over $S$ if (locally) the map on coordinate rings is pure In this case, the map $f$ admits a canonical $\GL$-invariant section $S \to X$ that we call the \defn{zero section}. We note that $\bA^{\ulambda}$ is pure over $K$ if and only if the tuple $\ulambda$ is pure.

Let $X=\Spec(R)$ be an affine $\GL$-scheme. Recall that $R\{K^n\}$ is the result of evaluating $R$ on $K^n$, where we regard $R$ as a polynomial functor. This is a $K$-algebra equipped with a $\GL_n$-action. We define $X\{K^n\}=\Spec(R\{K^n\})$. We note that $R\{K^n\}$ can be realized as a subalgebra of $R$, and so if $R$ is reduced or integral then so is $R\{K^n\}$.

\subsection{Noetherianity} \label{ss:noeth}

The following noetherianity theorem was established by the second author, and suggested that the theory pursued in this paper might be within reach:

\begin{theorem}[\cite{draisma}] \label{thm:Noetherianity}
For any tuple $\ulambda$, the closed $\GL$-subsets of $\bA^{\ulambda}$ satisfy the descending chain condition. Equivalently, any closed $\GL$-subvariety of $\bA^{\ulambda}$ is the zero locus of a finitely $\GL$-generated $\GL$-ideal of $\bR_{\ulambda}$.
\end{theorem}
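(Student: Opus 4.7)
The plan is to induct on the magnitude $\magn(\ulambda)$, which is well-ordered by \S\ref{ss:part}. In the base case, all partitions in $\ulambda$ are empty, so $\bA^{\ulambda}$ is an ordinary finite-dimensional affine space and the descending chain condition follows from the classical Hilbert basis theorem. The two formulations of the theorem are equivalent by standard commutative-algebra arguments once DCC is in place, so it suffices to establish DCC.

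For the inductive step, order $\ulambda = [\lambda_1,\ldots,\lambda_r]$ so that $\lambda := \lambda_r$ has maximal size, set $\ulambda' := [\lambda_1,\ldots,\lambda_{r-1}]$, and factor $\bA^{\ulambda} = \bA^{\ulambda'} \times \bA^{\lambda}$. The key technical input I would establish is a ``leading term'' dichotomy for a proper closed $\GL$-subvariety $X \subsetneq \bA^{\ulambda}$: either $X$ is a cylinder $Y \times \bA^{\lambda}$ over a proper closed $\GL$-subvariety $Y \subsetneq \bA^{\ulambda'}$, or some defining equation $f$ of $X$ has a nonzero principal part in the direction of the highest-weight vector of $\bV_{\lambda}$. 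In the first alternative, $Y$ lives in a space of lower magnitude $\magn(\ulambda') < \magn(\ulambda)$, so the inductive hypothesis applies. In the second, I would localize away from the vanishing locus of this principal coefficient, shift by a suitable $G(n)$, and use $f$ to express the highest-weight coordinate in terms of coordinates from $\bV_{\ulambda'}$ together with those corresponding to partitions strictly smaller than $\lambda$. This produces a $G(n)$-equivariant closed embedding of a nonempty open subset of $\Sh_n(X)$ into some $\bA^{\umu}$ with $\magn(\umu) < \magn(\ulambda)$, again falling within the scope of the inductive hypothesis.

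Given a descending chain $X_1 \supseteq X_2 \supseteq \cdots$, I would apply the dichotomy to each strict descent. The cylindrical ones correspond to a chain in $\bA^{\ulambda'}$ that stabilizes by outer induction. For a non-cylindrical $X_1$, one obtains an open $G(n)$-stable subset $U \subseteq \Sh_n(X_1)$ embedded into a space of smaller magnitude, on which the chain $\Sh_n(X_i) \cap U$ stabilizes by outer induction; this translates directly to stabilization of the original chain over the corresponding open set, since $\Sh_n$ acts only on the group action and not on underlying sets. The residual chain on the closed complement $\Sh_n(X_1) \setminus U$ — a proper closed $\GL$-subvariety of $\Sh_n(X_1)$ — must then be handled by an inner induction, either on a dimension-like invariant of the complement or by iterating the dichotomy within it. I expect the main obstacle to be this inner induction together with the gluing of local stabilization on $U$ and on its complement into a single global chain-stabilization statement; this careful bookkeeping is the technical heart of Draisma's original argument in \cite{draisma}.
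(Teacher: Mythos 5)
This theorem is quoted directly from Draisma's paper \cite{draisma} and is not proved in the present paper, so there is no internal proof to compare against. That said, your sketch does follow the overall architecture of the cited proof, and in particular your ``leading term dichotomy'' is essentially what this paper extracts from \cite{draisma} as its Embedding Theorem (Theorems~\ref{thm:embed1} and~\ref{thm:embed2}): either $X$ is a cylinder over $\bA^{\ulambda'}$, or, after shifting by $G(n)$ and inverting a suitable derivative~$h$, one obtains a $\GL$-equivariant closed immersion into a space of strictly smaller magnitude. One small imprecision: the function one inverts is not the ``principal part in the direction of the highest-weight vector'' but a partial derivative $\partial f/\partial x_i$ for a suitable coordinate $x_i$ on $\bA^{\lambda}$; it is the nonvanishing of such a derivative that powers the implicit-function-type argument, and it is exactly here that characteristic zero is used (in positive characteristic all such derivatives may vanish identically; see Remark~\ref{re:poschar}).

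The genuine gap is the one you flag yourself and then defer: the ``inner induction'' on the residual chain $X_i \cap V(h)$ living in $\Sh_n(X_1)\setminus U$. The magnitude has \emph{not} dropped there—this complement still sits inside $\Sh_n(\bA^{\ulambda})$, whose magnitude dominates $\magn(\ulambda)$—so you cannot simply reapply the outer magnitude induction. You gesture at ``a dimension-like invariant of the complement or iterating the dichotomy,'' but this is precisely the hard bookkeeping in Draisma's argument and is not something one can wave through: one needs to identify a secondary well-founded quantity (and deal carefully with reducibility, since the dichotomy applies cleanly only after reducing to irreducible $X_1$, and the set of irreducible components can change along the chain). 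Also note a subtlety you implicitly rely on but do not justify: you claim that stabilization of $\Sh_n(X_i)\cap U$ ``translates directly'' to stabilization over the original open set because ``$\Sh_n$ acts only on the group action and not on underlying sets,'' which is true as a statement about underlying topological spaces, but the sets being intersected and the localization $[1/h]$ are taken with respect to $G(n)$-invariant functions, so one must check that stabilization as $G(n)$-sets does propagate back to stabilization as $\GL$-sets. In short, the high-level plan is right and matches \cite{draisma}, but the proposal as written leaves the load-bearing step unproved.
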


In fact, \cite{draisma} establishes the natural analog of the above
theorem in positive characteristic, and more recently it has been
established over arbitrary noetherian base rings \cite{bdd}. We stress,
though, that this finitely $\GL$-generated $\GL$-ideal might not be
radical; e.g., the stronger, ideal-theoretic version of Noetherianity
is not known to hold in general.

\subsection{Mapping spaces}
\label{ss:mapping}

Let $X$ and $Y$ be $\GL$-schemes over an affine base $K$-scheme $S$.
We let $\Map_S(X,Y)$ be the set of maps $X \to Y$ of
$\GL$-schemes over $S$. The group $\bG_m(S)$ acts on this
set, via the action of the central $\bG_m$ on $Y$ (or on
$X$; the two actions differ by an inverse). If $Y$ is pure
over $S$ then we let $0 \in \Map_S(X,Y)$ be the composition
of $X \to S$ with the zero section $S \to Y$. We let
$\uMap_S(X,Y)$ be the the functor from affine $S$-schemes to sets
given by $T \mapsto \Map_T(X_T, Y_T)$, where $(-)_T$ denotes
base change to $T$. This carries an action of $\bG_m$, and
if $Y$ is pure over $S$, then there is a zero section $0
\colon S \to \uMap_S(X,Y)$ (where we identify $S$ with
the functor it represents).

\begin{example} \label{ex:mapping}
Suppose $S=\Spec(K)$, $X=\Spec(A)$ for some finitely $\GL$-generated $\GL$-algebra $A$ with $A_0=K$, and $Y=\bA^{\lambda}$ for a partition $\lambda$. Suppose that $\bV_{\lambda}$ occurs with multiplicity $n$ in $A$. Then
\begin{displaymath}
\Map_S(X,Y)=\Hom_{\text{$\GL$-alg}}(\Sym(\bV_{\lambda}), A)=\Hom_{\text{$\GL$-rep}}(\bV_{\lambda}, A) \cong K^n.
\end{displaymath}
More generally, if $R$ is a $K$-algebra then $\uMap_S(X,Y)(\Spec(R)) \cong R^n$. It follows that $\uMap_S(X,Y)$ is represented by $\bA^n$.
\end{example}

\begin{proposition} \label{prop:Map}
Let $S$ be a noetherian affine scheme over $K$, let $X \to S$ be a $\GL$-scheme that is affine, pure, $\GL$-finite type, and flat over $S$, and let $Y \to S$ be a $\GL$-scheme that is affine and $\GL$-finite type over $S$.
\begin{enumerate}
\item $\uMap_S(X,Y)$ is represented by a finite type affine scheme over $S$.
\item Suppose $Y$ is pure over $S$. Then the quotient of $\uMap_S(X,Y) \setminus \{0\}$ by $\bG_m$ is a closed subvariety of a weighted projective space over $S$.
\item Suppose $Y=\bA^{\ulambda}_S$ for a (possibly non-pure) tuple $\ulambda$. Then $\uMap_S(X,Y)$ is a vector bundle over $S$.
\end{enumerate}
\end{proposition}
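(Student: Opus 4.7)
The plan is to decompose the coordinate ring of $X$ into $\GL$-isotypic pieces and reduce the moduli problem to linear algebra over $S$. Writing $X = \Spec_S(A)$ and $Y = \Spec_S(B)$, I would work with the isotypic decomposition $A = \bigoplus_\lambda M_\lambda(A) \otimes_K \bV_\lambda$, where $M_\lambda(A) := \Hom_{\GL,K}(\bV_\lambda, A)$ is an $\cO_S$-module. The whole argument rests on the following key lemma, which I would establish first: \emph{each $M_\lambda(A)$ is a locally free $\cO_S$-module of finite rank.}

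For the lemma, since $X$ is pure and $\GL$-finite type over $S$, any empty-partition generator of $A$ maps into $A_0 = \cO_S$ and can be absorbed into the base, so $A$ admits a $\GL$-surjection $\cO_S \otimes \bR_{\unu} \twoheadrightarrow A$ with $\unu$ \emph{pure}. Then $M_\lambda(\bR_{\unu})$ sits in the single graded piece $(\bR_{\unu})_{|\lambda|}$, which is finite-dimensional over $K$ (count the solutions of $\sum_i d_i |\nu_i| = |\lambda|$ with each $|\nu_i| \ge 1$); exactness of the isotypic functor in characteristic~$0$ then gives a surjection $\cO_S \otimes M_\lambda(\bR_{\unu}) \twoheadrightarrow M_\lambda(A)$, so $M_\lambda(A)$ is finitely generated. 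Flatness of $A$ over $\cO_S$ (from $X \to S$ flat) descends to each isotypic summand; since $\bV_\lambda$ is $K$-free the summand $M_\lambda(A) \otimes_K \bV_\lambda$ is a direct sum of copies of $M_\lambda(A)$, forcing $M_\lambda(A)$ to be flat. Finitely generated plus flat over noetherian $S$ gives locally free of finite rank.

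With the lemma in hand, part~(1) goes as follows. Present $B = (\cO_S \otimes \bR_{\ulambda})/I$ for some tuple $\ulambda$ and $\GL$-ideal $I$. A $T$-point of $\uMap_S(X,Y)$ unwinds to a $\GL$-equivariant $K$-linear map $\bV_{\ulambda} \to A \otimes_{\cO_S} \cO_T$ whose induced $\cO_T$-algebra extension kills $I_T$. Dropping the ``kills~$I$'' condition, the functor becomes $T \mapsto \bigoplus_{\lambda \in \ulambda} M_\lambda(A) \otimes_{\cO_S} \cO_T$, which by the key lemma is represented by the vector bundle $W := \Spec_S\bigl(\Sym_{\cO_S} \bigoplus_{\lambda \in \ulambda} M_\lambda(A)^\vee\bigr)$ over $S$. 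Killing $I$ is a closed condition cut out by polynomial equations on the universal map, and by Theorem~\ref{thm:Noetherianity} finitely many $\GL$-generators of $I$ suffice, so $\uMap_S(X,Y)$ is a closed subscheme of $W$, hence finite-type affine over $S$.

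Parts~(2) and~(3) follow quickly. The central $\bG_m$ acts linearly on $W$ with weight $|\lambda|$ on the $M_\lambda(A)^\vee$ summand; when $Y$ is pure, $\ulambda$ is pure, so every weight is strictly positive, $0_S$ is the unique fixed locus, and $(W \setminus 0_S)/\bG_m$ identifies with the weighted projective bundle $\operatorname{Proj}_S\bigl(\Sym_{\cO_S} \bigoplus_\lambda M_\lambda(A)^\vee\bigr)$ over $S$; the image of $\uMap_S(X,Y) \setminus \{0\}$ is closed therein, giving~(2). For~(3), taking $Y = \bA^{\ulambda}_S$ gives $I = 0$, hence $\uMap_S(X,Y) = W$ itself, a vector bundle. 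The main obstacle is the key lemma on local freeness of $M_\lambda(A)$: this is where purity, $\GL$-finite type, flatness, and noetherianity of $S$ all enter together, and once it is secured the remaining arguments are routine packaging of linear data into scheme-theoretic constructions.
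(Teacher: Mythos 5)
Your overall approach — isotypic decomposition of $\cO_X$, the local-freeness lemma for the multiplicity modules, and packaging the functor as a closed subscheme of a vector bundle — is the same as the paper's (the paper proves part~(3) first, which is your lemma plus the $Y=\bA^{\ulambda}_S$ case, then deduces (1) and (2)). Your handling of why the multiplicity modules are finitely generated is actually a little more careful than the paper's wording: you explicitly reduce to a \emph{pure} presenting tuple $\unu$, which is exactly where the purity hypothesis on $X$ enters, whereas the paper just asserts finite generation.

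However, there is a genuine gap in your part~(1). You write that ``by Theorem~\ref{thm:Noetherianity} finitely many $\GL$-generators of $I$ suffice,'' but Theorem~\ref{thm:Noetherianity} is a \emph{set-theoretic} statement over the field $K$: it produces a finitely $\GL$-generated ideal with the same \emph{vanishing locus} as a given closed $\GL$-subvariety, not a finite $\GL$-generating set for the ideal $I$ itself. The paper explicitly warns (in \S\ref{ss:noeth}) that ideal-theoretic noetherianity is not known in general, and here one is moreover working over an arbitrary noetherian base $S$, not over $K$, and with a scheme $Y$ cut out by $I$ rather than by a radical. Replacing $I$ with an ideal having the same vanishing set would change the scheme $Y$ and hence the functor $\uMap_S(X,Y)$. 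The paper's workaround is to prove the claim first when $I$ \emph{is} equivariantly finitely generated (realizing $Y$ as a fiber product $\bA^{\ulambda}_S \times_{\bA^{\umu}_S} S$ and using compatibility of $\uMap$ with fiber products), and then, when it is not, to write $I$ as a directed union of finitely $\GL$-generated subideals, obtain a descending chain of closed subschemes $Y_i\supseteq Y$, observe that $\uMap_S(X,Y)=\bigcap_i\uMap_S(X,Y_i)$ inside $\uMap_S(X,\bA^{\ulambda}_S)$, and conclude by noetherianity of the \emph{finite-dimensional} ambient mapping space (already known from your part~(3)) that the chain stabilizes. You need this extra step to close the argument.
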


\begin{proof}
(c) This is essentially the same computation as Example~\ref{ex:mapping}. Let $\cO_S$, $\cO_X$, and $\cO_Y$ be the rings for $S$, $X$, and $Y$, and let $\cO_X = \bigoplus_{\lambda} \cO_{X,\lambda} \otimes_K \bV_{\lambda}$ be the isotypic decomposition of $\cO_X$. Note that since $\cO_X$ is finitely $\GL$-generated over $\cO_S$, each multiplicity space $\cO_{X,\lambda}$ is a finitely generated $\cO_S$-module. Since $X$ is flat over $S$ it follows that $\cO_{X,\lambda}$ is flat as an $\cO_S$-module; since $\cO_S$ is noetherian, it thus follows that $\cO_{X,\lambda}$ is projective as an $\cO_S$-module.

Regarding $\uMap_S(X,Y)$ as a functor on $\cO_S$-algebras, we have
\begin{align*}
\uMap_S(X,Y)(R)
=& \Hom_{\text{$(\cO_S,\GL)$-alg}}(\cO_Y, R \otimes_{\cO_S} \cO_X) \\
=& \Hom_{\text{$\GL$-rep}}(\bV_{\ulambda}, R \otimes_{\cO_S} \cO_X)
= \bigoplus_{i=1}^r R \otimes_{\cO_S} \cO_{X,\lambda_i}
\end{align*}
where $\ulambda=[\lambda_1, \ldots, \lambda_r]$. Now, if $P$ is a finitely generated projective $\cO_S$-module then, putting $P^{\vee}=\Hom_{\text{$\cO_S$-mod}}(P, \cO_S)$, we have
\begin{displaymath}
R \otimes_{\cO_S} P = \Hom_{\text{$\cO_S$-mod}}(P^{\vee}, R) = \Hom_{\text{$\cO_S$-alg}}(\Sym(P^{\vee}), R).
\end{displaymath}
Thus $R \mapsto R \otimes_{\cO_S} P$ is represented by the vector bundle $\Spec(\Sym(P^{\vee}))$ over $S$. We thus see that if $\cE_i$ is the vector bundle associated to $\cO_{X,\lambda_i}$ then $\uMap_S(X,Y)$ is represented by the vector bundle $\cE=\bigoplus_{i=1}^r \cE_i$.

Before continuing, we make one additional observation. An element $x \in \bG_m$ acts on $\cE_i$ via multiplication by $x^{d_i}$ where $d_i=\vert \lambda_i \vert$. If $\ulambda$ is pure then $d_i>0$ for all $i$, and so the quotient of $\cE \setminus \{0\}$ by $\bG_m$ is a family of weighted projective spaces over $S$.

(a) Since $Y$ is of finite type over $S$, it embeds into $\bA^{\ulambda}_S$ for some (possibly non-pure) tuple $\lambda$. First suppose the ideal of $Y$ in $\bA^{\ulambda}_S$ is equivariantly finitely generated. We can then realize $Y$ as $f^{-1}(0)$ for some map of $\GL$-schemes $f \colon \bA^{\ulambda}_S \to \bA^{\umu}_S$ with $\umu$ some other tuple (the $\mu_i$'s correspond to the generators of the ideal). In other words, we have a cartesian diagram
\begin{displaymath}
\xymatrix{
Y \ar[r] \ar[d] & \bA^{\ulambda}_S \ar[d] \\
S \ar[r]^-0 & \bA^{\umu}_S. }
\end{displaymath}
(Note that even if $\umu$ is not pure, the space $\bA^{\umu}_S$ has a natural zero section.) Since $\uMap$ is compatible with fiber products in the second variable, we obtain a cartesian diagram
\begin{displaymath}
\xymatrix{
\uMap_S(X,Y) \ar[r] \ar[d] & \uMap_S(X, \bA^{\ulambda}_S) \ar[d] \\
\uMap_S(X,S) \ar[r]^-0 & \uMap_S(X,\bA^{\umu}_S). }
\end{displaymath}
These functors, other than the top left, are representable by finite type affine schemes by (c), and so it follows that the top left one is as well.

We now treat the case where the ideal for $Y$ is not equivariantly finitely generated. Express the ideal for $Y$ as a directed union of equivariantly finitely generated ideals; thus $Y=\bigcap_{i \ge 1} Y_i$ for a descending chain $\{Y_i\}_{i \ge 1}$ of closed $\GL$-subschemes of $\bA^{\ulambda}_S$. Since $\uMap$ is compatible with such intersections in its second argument, we see that $\uMap_S(X,Y)=\bigcap_{i \ge 1} \uMap_S(X,Y_i)$ as subfunctors of $\uMap_S(X,\bA^{\ulambda}_S)$. Since each $\uMap_S(X,Y_i)$ is a closed subscheme of $\uMap_S(X,\bA^{\ulambda}_S)$, it follows that $\uMap_S(X,Y)$ is too. In fact, since $\uMap_S(X,\bA^{\ulambda}_S)$ is noetherian, this descending chain stabilizes, and so $\uMap_S(X,Y)=\uMap_S(X,Y_i)$ for $i \gg 0$.

(b) Choose a closed embedding $Y \to \bA^{\ulambda}_S$ for a pure tuple $\ulambda$. Then $\uMap_S(X,Y)$ is a $\bG_m$-stable closed subscheme of $\uMap_S(X,\bA^{\ulambda}_S)$. We have already seen that the quotient of $\uMap_S(X,\bA^{\ulambda}_S) \setminus \{0\}$ by $\bG_m$ is a weighted projective space, and so the result follows.
\end{proof}

\begin{remark}
The hypotheses in the above proposition can be relaxed in various ways. For example, instead of $X$ being pure over $S$, it is enough for $X_0 \to S$ to be finite.
\end{remark}

In this paper, we only apply the proposition when $X$ is an affine space. We introduce a special notation for that case:

\begin{definition}
Let $Y$ be an affine $\GL$-variety over $K$ and let $\ulambda$ be a pure tuple. We put $\cM_{\ulambda}(Y)=\uMap_K(\bA^{\ulambda}, Y)$, which is a finite type affine scheme over $K$.
\end{definition}

\begin{remark} \label{rmk:mapping-ones}
Let $\ulambda=[(1)^n]$ be the tuple consisting of $n$ copies of the
partition $(1)$. Then $\cM_{\ulambda}(Y)$ is naturally identified
with $Y\{K^n\}$. This is easiest explained on $K$-points: given a
$K$-point $y$ of $Y\{K^n\}$, we have a morphism of $K$-schemes
$\bA^{\ulambda} \to Y$ defined on $K$-points as follows. A $K$-point
of the former space is a tuple $(v_1,\ldots,v_n)$ of vectors $v_i
\in \bV^*$. This tuple defines a linear map $\bV \to K^n, e_i \mapsto
(v_{1i},\ldots,v_{ni})$ and hence, by functoriality, a morphism $Y\{K^n\}
\to Y\{\bV\}=Y$ of $K$-schemes. Applying this morphism to $y$ yields a
$K$-point of $Y$. Conversely, if $\phi$ is a $K$-morphism $\bA^{\ulambda}
\to Y$, then we apply $\phi$ to the $K$-point $(e^1,e^2,\ldots,e^n)$
of the former space to get a $K$-point in $Y\{K^n\}$; here $e^i$ is
the linear function on $\bV$ defined by $e^i(e_j)=\delta_{ij}$. These
two constructions are inverse to each other, and extends to points with
values in arbitrary $K$-algebras.
\end{remark}

\section{Generalized orbits} \label{s:genorbit}

\subsection{Orbit closures}

Let $X$ be an affine $\GL$-scheme. We define the \defn{orbit closure} of a subset $W$ of $X$, denoted $\ol{O}_W$, to be the intersection of all closed $\GL$-subsets of $X$ that contain $W$. We are most interested in the case where $W=\{x\}$ is a point, in which case we simply write $\ol{O}_x$. The following proposition summarizes the basic properties of orbit closures:

\begin{proposition} \label{prop:FirstProperties}
We have the following:
\begin{enumerate}
\item For any subset $W$ of $X$, we have $\ol{O}_W= \ol{\GL \cdot W}$.
\item For any subset $W$ of $X$ we have $\ol{O}_W=\ol{O}_{\ol{W}}$, where $\ol{W}$ denotes the closure of $W$.
\item If $W$ is an irreducible subset of $X$ then $\ol{O}_W$ is also irreducible.
\item Every irreducible component (maximal irreducible subset) of $X$ is $\GL$-stable.
\item For every $x \in X$ the orbit closure $\ol{O}_x$ is irreducible and its generic point is $\GL$-fixed.
\item Every point $x\in X$ that is fixed by $\GL$ is the generic point of its orbit closure $\ol{O}_x=\overline{\{x\}}$.
\end{enumerate}
\end{proposition}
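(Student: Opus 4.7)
The plan is to prove the six items more-or-less in order, using (a) as the workhorse and only one genuinely nontrivial input, namely the algebraicity of the $\GL_n$-action coming from the polynomial representation structure (\S\ref{ss:polyrep}).

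For (a), first note that each $g\in \GL$ acts on $X$ as an automorphism of schemes, hence as a homeomorphism, so the closure of any $\GL$-stable subset is again $\GL$-stable. Thus $\ol{\GL \cdot W}$ is a closed $\GL$-stable set containing $W$, whence $\ol{O}_W \subseteq \ol{\GL \cdot W}$; conversely, $\ol{O}_W$ is closed and $\GL$-stable and contains $W$, so it contains $\GL\cdot W$ and therefore $\ol{\GL\cdot W}$. Item (b) is then immediate: $\ol{O}_W$ is closed and contains $W$, hence contains $\ol{W}$, giving $\ol{O}_W \supseteq \ol{O}_{\ol{W}}$, while the other inclusion follows from $\ol{W}\supseteq W$ and the definition of $\ol{O}_{(-)}$ as an intersection.

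For (c), the key is that the action of $\GL_n(K)$ on the $\GL$-algebra $\Gamma(X,\cO_X)$ extends canonically to an algebraic $\GL_n$-action (as recalled in \S\ref{ss:polyrep}), so the action map $\GL_n \times X \to X$ is a morphism of schemes. The image of the irreducible scheme $\GL_n \times W$ under this morphism is $\GL_n \cdot W$, which is therefore irreducible. Since $\GL = \bigcup_{n\ge 1} \GL_n(K)$, the set $\GL\cdot W = \bigcup_n \GL_n\cdot W$ is an ascending union of irreducible subsets, hence itself irreducible; by (a), $\ol{O}_W$ is the closure of this, and the closure of an irreducible set is irreducible. Item (d) is a direct corollary: if $Z$ is an irreducible component of $X$, then by (c) the set $\ol{O}_Z$ is irreducible and closed and contains $Z$, so by maximality $\ol{O}_Z = Z$, proving $Z$ is $\GL$-stable.

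For (e), take $W=\{x\}$ in (c): $\ol{O}_x$ is an irreducible closed subscheme and hence has a unique generic point $\eta$. Each $g\in \GL$ is a scheme automorphism of $X$ carrying $\ol{O}_x$ to itself (it is $\GL$-stable), so $g$ restricts to a self-homeomorphism of $\ol{O}_x$; such a homeomorphism must permute generic points, and since there is only one, $g\eta=\eta$. For (f), if $x$ is $\GL$-fixed then $\{x\}$ is $\GL$-stable, so its closure $\overline{\{x\}}$ is a closed $\GL$-stable set containing $x$, and conversely any closed $\GL$-stable set containing $x$ contains $\overline{\{x\}}$; thus $\ol{O}_x = \overline{\{x\}}$, of which $x$ is by definition the generic point.

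The only non-formal step is the irreducibility in (c): everything else is essentially bookkeeping with closures and homeomorphisms. The main obstacle is therefore remembering to invoke the algebraicity of the $\GL_n$-action to make $\GL_n\cdot W$ (rather than just an abstract orbit) into the image of an irreducible scheme, after which the chain-union argument finishes it off.
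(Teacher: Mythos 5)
Your proof follows essentially the same route as the paper, and all six items are handled correctly in spirit. The one place where you are imprecise is in item (c): you treat $\GL_n \times W$ as an ``irreducible scheme'', but $W$ is initially only an irreducible \emph{subset} of $X$ (not assumed closed or even locally closed), so $\GL_n \times W$ is not yet a scheme. To make this honest you should first invoke part (b), which you have already established, to replace $W$ by its closure $\overline{W}$; this is an irreducible reduced closed subscheme, and then $\GL_n \times_K \overline{W}$ is an honest irreducible affine scheme (using that $\GL_n$ is geometrically irreducible over $K$). After that, your argument---image of an irreducible scheme under a morphism is irreducible, an ascending union of irreducible subsets is irreducible, closure of irreducible is irreducible---runs exactly as in the paper, which makes this same reduction as its first step in proving (c). Everything else (items (a), (b), (d), (e), (f)) matches the paper's argument, differing only in phrasing: for instance in (e) you argue via homeomorphisms permuting generic points, while the paper equivalently observes that the prime ideal defining $\ol{O}_x$ is $\GL$-stable.
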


\begin{proof}
(a) For $\subseteq$ observe that the right-hand side is a closed
$\GL$-subset of $X$ that contains $W$. For $\supseteq$ observe that each
closed $\GL$-subset of $X$ containing $W$ also contains the right-hand
side.

(b) The inclusion $\subseteq$ follows from $W \subseteq
\ol{W}$. For the inclusion $\supseteq$ observe that every closed
$\GL$-subset of $X$ that contains $W$ also contains $\ol{W}$.

(c) By (b), and since the closure of an irreducible set is irreducible, we may assume that $W$ is a closed subset, hence (the underlying space of) an irreducible affine scheme. For every $n \in \bZ_{\geq 0}$, $\overline{\GL_n \cdot W}$ is irreducible since it is the closure of the image of the irreducible affine scheme $\GL_n \times W$ under a morphism. (Here we appeal to the comment at the end of \S \ref{ss:polyrep} that the action of the discrete group $\GL_n(K) \subset \GL$ on the coordinate ring of $X$ extends uniquely to an action of the algebraic group $\GL_n$.) Hence, if $\GL \cdot W \subseteq X_1 \cup X_2$ with $X_1$ and $X_2$ closed, then for all $n$ we have $\GL_n \cdot W \subseteq X_{i_n}$ for some $i_n \in \{1,2\}$. Taking the union of the $\GL_n$ over a subsequence where the $i_n$ are constant, we find that $\GL \cdot W \subseteq X_1$ or $\GL \cdot W \subseteq X_2$, and hence also $\ol{\GL \cdot W} \subseteq X_1$ or $\ol{\GL \cdot W} \subseteq X_2$. Now use (a).

(d) Let $W$ be an irreducible component. Then $W$ is contained in
the closed $\GL$-subset $\ol{O}_W$, which by (c) is irreducible. By
maximality of $W$, we have $W=\ol{O}_W$.

(e) The first statement follows from (c). The statement about
the generic point says that the prime ideal defining $\ol{O}_x$ is
$\GL$-stable.

(f) If $x \in X$ is $\GL$-fixed, then $\ol{O}_x=\ol{\{x\}}$ is
irreducible and has $x$ as its generic point.
\end{proof}

\subsection{Generalized orbits}

Let $X$ be an affine $\GL$-scheme. We define the \defn{generalized orbit} of $x \in X$, denoted $O_x$, to be the set of all points $y \in X$ such that $\ol{O}_x=\ol{O}_y$. The generalized orbits partition $X$ into disjoint subsets, just as ordinary orbits do. Note that $O_x$ contains the usual orbit $\GL \cdot x$; in particular, by Proposition~\ref{prop:FirstProperties}(a), the closure of $O_x$ is $\ol{O}_x$, so the notation is consistent.

\begin{example}
Suppose $X=\bA^{(1)}$, which is (the scheme associated to) the dual vector space $\bV^*$. The orbits of $\GL$ on $\bV^*$ behave pathologically: indeed, a point of $\bV^*$ can be represented by infinitely many coordinates but an element of $\GL$ only affects finitely many coordinates, and so two points of $\bV^*$ can only belong to the same orbit if all but finitely many of their coordinates are equal. The generalized orbits, on the other hand, behave as expected: there are two, namely, the origin and everything else.
\end{example}

\begin{example} \label{ex:A2-orbits}
Let $X=\bA^{(2)}$, which one can regard as the space of symmetric bilinear forms on $\bV$. One can show that two points belong to the same generalized orbit if and only if they have the same rank. Thus the generalized orbits are naturally index by the set $\bN \cup \{\infty\}$.
\end{example}

\begin{proposition} \label{prop:O-as-int-opens}
The set $O_x$ is the intersection of all non-empty open $\GL$-subsets of $\ol{O}_x$.
\end{proposition}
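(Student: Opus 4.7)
The plan is to prove the two inclusions separately, both by contradiction via taking complements of open sets to produce proper closed $\GL$-subsets of $\ol{O}_x$.

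For the inclusion $O_x \subseteq \bigcap_U U$, I would take $y \in O_x$, so by definition $\ol{O}_y = \ol{O}_x$, and fix a non-empty open $\GL$-subset $U$ of $\ol{O}_x$. The complement $Z = \ol{O}_x \setminus U$ is then a proper closed $\GL$-subset of $\ol{O}_x$. If $y$ were to lie in $Z$, then by definition of orbit closure $\ol{O}_y \subseteq Z$, forcing $\ol{O}_x \subseteq Z$, which contradicts $Z \subsetneq \ol{O}_x$. Hence $y \in U$, and since $U$ was arbitrary, $y$ lies in the intersection.

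For the reverse inclusion, let $y$ lie in every non-empty open $\GL$-subset of $\ol{O}_x$. Taking $U = \ol{O}_x$ itself shows $y \in \ol{O}_x$, and hence $\ol{O}_y \subseteq \ol{O}_x$ because $\ol{O}_x$ is a closed $\GL$-subset of $X$ containing $y$. Now suppose for contradiction that $\ol{O}_y \subsetneq \ol{O}_x$. Then $U := \ol{O}_x \setminus \ol{O}_y$ is open in $\ol{O}_x$ (since $\ol{O}_y$ is closed in $X$), $\GL$-stable (since $\ol{O}_y$ is), and non-empty (by the assumed proper containment). By hypothesis $y \in U$, but clearly $y \in \ol{O}_y$, so $y \notin U$, a contradiction. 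Hence $\ol{O}_y = \ol{O}_x$, i.e., $y \in O_x$.

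There is no serious obstacle here: the argument just unpacks the definitions together with the basic fact (Proposition~\ref{prop:FirstProperties}(a)) that $\ol{O}_y$ is the smallest closed $\GL$-subset containing $y$. The only subtlety worth a line of comment is verifying in the second direction that $\ol{O}_x \setminus \ol{O}_y$ is genuinely a $\GL$-stable open subset of $\ol{O}_x$, but this is immediate from the $\GL$-stability and closedness of $\ol{O}_y$ inside $X$.
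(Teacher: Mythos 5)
Your proof is correct and takes essentially the same approach as the paper: the forward inclusion is identical, and the reverse inclusion is the contrapositive formulation of the paper's argument (the paper shows any $y \in \ol{O}_x \setminus O_x$ is excluded from $W$ via the open set $\ol{O}_x \setminus \ol{O}_y$, while you show any $y \in W$ must satisfy $\ol{O}_y = \ol{O}_x$ by the same construction).
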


\begin{proof}
Let $W$ be the intersection of all non-empty open $\GL$-subsets of $\ol{O}_x$. We show $W=O_x$.

Let $U$ be a non-empty open $\GL$-subset of $\ol{O}_x$, and let $Z$ be its complement. Suppose $y \in O_x$. If $y$ belonged to $Z$ then $\ol{O}_y=\ol{O}_x$ would be contained in $Z$, which is not the case. Thus $y \in U$. Since $y$ is arbitrary, we have $O_x \subset U$, and since $U$ is arbitrary, we find $O_x \subset W$.

Now suppose that $y \in \ol{O}_x \setminus O_x$. Then, by definition, $Z=\ol{O}_y$ is not all of $\ol{O}_x$. Thus $U=\ol{O}_x \setminus Z$ is a non-empty open $\GL$-subset of $\ol{O}_x$ and $y \not\in U$; thus $y \not\in W$. Since $y$ is arbitrary, we have $\ol{O}_x \setminus O_x \subset \ol{O}_x \setminus W$, and so $W \subset O_x$. This completes the proof.
\end{proof}

\begin{proposition} \label{prop:gen-orbit-fixed-pt}
The generic point of $\ol{O}_x$ is the unique $\GL$-fixed point in $O_x$.
\end{proposition}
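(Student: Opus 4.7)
The plan is to deduce this directly from parts (e) and (f) of Proposition~\ref{prop:FirstProperties}, which we are allowed to invoke. Let $\eta$ denote the generic point of $\ol{O}_x$.

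First I would show $\eta \in O_x$. By part~(e), $\eta$ is $\GL$-fixed, so part~(f) applies to give $\ol{O}_\eta = \ol{\{\eta\}}$. But since $\eta$ is the generic point of the irreducible closed set $\ol{O}_x$, we have $\ol{\{\eta\}} = \ol{O}_x$. Combining these, $\ol{O}_\eta = \ol{O}_x$, which by definition of generalized orbit means $\eta \in O_x$.

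For uniqueness, suppose $y \in O_x$ is also $\GL$-fixed. By definition, $\ol{O}_y = \ol{O}_x$. Applying part~(f) to $y$, $y$ is the generic point of $\ol{\{y\}} = \ol{O}_y = \ol{O}_x$. Since the generic point of an irreducible scheme is unique, $y = \eta$, as desired.

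I don't anticipate any real obstacle here: both the existence of a $\GL$-fixed point in $O_x$ (via the generic point of $\ol{O}_x$) and the uniqueness are formal consequences of the previously established facts that orbit closures are irreducible with $\GL$-fixed generic point, and that a $\GL$-fixed point is the generic point of its own orbit closure.
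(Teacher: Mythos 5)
Your proof is correct and follows essentially the same approach as the paper: invoke Proposition~\ref{prop:FirstProperties}(e) to get a $\GL$-fixed generic point of $\ol{O}_x$, use part~(f) to identify its orbit closure with $\ol{\{\eta\}}=\ol{O}_x$, and then deduce uniqueness from part~(f) together with the fact that an irreducible affine scheme has a unique generic point.
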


\begin{proof}
For $x \in X$, the set $\ol{O}_x$ is irreducible by Proposition~\ref{prop:FirstProperties}. Let $y$ be its generic point. Then $y$ is $\GL$-fixed since $\ol{O}_x$ is $\GL$-stable, and we have $\ol{O}_y = \ol{\{y\}} = \ol{O}_x$, so $O_x =O_y$. Suppose that $z \in O_x$ were some other $\GL$-fixed point. By Proposition~\ref{prop:FirstProperties}(f), we see that $z$ is the generic point of $\ol{O}_z=\ol{O}_x$, and thus equal to $y$ since $\ol{O}_x$ has a unique generic point (affine schemes are sober).
\end{proof}

\subsection{The space of orbits}

Let $X$ be an affine $\GL$-scheme. We define the \defn{orbit space} $X^{\orb}$ to be the set of generalized orbits in $X$, and we let $\pi \colon X \to X^{\orb}$ be the quotient map, defined by $\pi(x)=O_x$. We give $X^{\orb}$ the quotient topology, so that a subset $U$ of $X^{\orb}$ is open if and only if $\pi^{-1}(U)$ is an open subset of $X$. We let $X^{\GL}$ denote the subset of $X$ consisting of points fixed by the group $\GL$, and we endow it with the subspace topology.

\begin{proposition} \label{prop:XGL-Xorb}
The map $X^{\GL} \to X^{\orb}$ induced by $\pi$ is a homeomorphism.
\end{proposition}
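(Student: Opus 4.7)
The plan is to establish that the map $f := \pi \circ i \colon X^{\GL} \to X^{\orb}$, where $i \colon X^{\GL} \hookrightarrow X$ is the inclusion, is a continuous closed bijection. Bijectivity follows directly from Proposition~\ref{prop:gen-orbit-fixed-pt}: every generalized orbit $O_x$ contains exactly one $\GL$-fixed point, namely the generic point $\eta_x$ of $\ol{O}_x$ (which is $\GL$-fixed by Proposition~\ref{prop:FirstProperties}(e)). Continuity of $f$ is immediate, since both $i$ and $\pi$ are continuous.

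The nontrivial step is to show that $f$ is closed. Let $C \subseteq X^{\GL}$ be closed; by definition of the subspace topology we may write $C = X^{\GL} \cap D$ for some closed $D \subseteq X$. By definition of the quotient topology, $f(C)$ is closed in $X^{\orb}$ if and only if $\pi^{-1}(f(C))$ is closed in $X$.

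The key computation is the identity
\[
\pi^{-1}(f(C)) \;=\; \bigcap_{g \in \GL} g^{-1}(D).
\]
Indeed, $\pi^{-1}(f(C))$ is the set of $x \in X$ whose unique fixed point $\eta_x$ lies in $C$, equivalently in $D$. Since $D$ is closed, $\eta_x \in D$ if and only if $\ol{\{\eta_x\}} = \ol{O}_x \subseteq D$, which by Proposition~\ref{prop:FirstProperties}(a) is equivalent to $\GL \cdot x \subseteq D$, i.e.\ to $x \in g^{-1}(D)$ for every $g \in \GL$. The displayed intersection is closed, being an intersection of closed sets (each $g \in \GL$ acts as a homeomorphism of $X$).

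I expect the main subtlety to be this unpacking: recognizing that saturating a subset of $X^{\GL}$ under $\pi^{-1} \circ f$ amounts to taking the largest $\GL$-stable subset of $D$, which then rewrites as an intersection of closed $\GL$-translates and is therefore closed. Everything else—bijectivity, continuity in one direction, and the reduction to checking closedness on generators of the subspace topology—is purely formal once one invokes the two cited propositions about fixed points and orbit closures.
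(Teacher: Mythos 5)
Your proof is correct and is essentially the dual of the paper's: you show $f$ is closed by proving $\pi^{-1}(f(C))=\bigcap_{g\in\GL}g^{-1}(D)$ for closed $D$, while the paper shows the inverse map is continuous by $\GL$-saturating an open set $V$ to $V'=\GL\cdot V$ and verifying $\pi^{-1}(\pi(V'))=V'$. Both arguments rest on the same key observation that a $\GL$-stable open (equivalently, closed) subset of $X$ is a union of generalized orbits and hence is $\pi$-saturated.
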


\begin{proof}
By Proposition~\ref{prop:gen-orbit-fixed-pt}, the map is a bijection, and it is the restriction of a
continuous map, hence continuous.  Next assume that $U$ is an open subset
of $X^\GL$, say $U=V \cap X^\GL$ with $V$ open in $X$. Then
$V':=\GL \cdot V$
is also open in $X$ and $\GL$-stable, and $V' \cap X^\GL=V \cap X^\GL=U$,
so after replacing $V$ by $V'$ we may assume that $V$ is a $\GL$-stable
open subset of $X$. Let $Y \subseteq X$ be its complement. Then the orbit
closure of a point $y \in Y$ is contained in $Y$ and hence $O_v=O_y$ does not hold for any $v \in V$. It follows that $V$ is a union of
generalized orbits. Consequently, $\pi^{-1}(\pi(V))=V$, hence $\pi(V)$
is open by definition of the quotient topology. Finally,
since each generalized orbit  contains a $\GL$-stable point,
we have $\pi(V)=\pi(U)$. So the inverse of $\pi|_{X^{\GL}}$
is also continuous, as desired.
\end{proof}

\begin{proposition} \label{prop:opens-of-Xorb}
The open (resp.\ closed) $\GL$-subsets of $X$ are in bijection with the open (resp.\ closed) subsets of $X^{\orb}$, via $\pi$ and $\pi^{-1}$.
\end{proposition}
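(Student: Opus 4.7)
The plan is to show that $\pi$ and $\pi^{-1}$ induce mutually inverse bijections between the open (respectively closed) $\GL$-stable subsets of $X$ and the open (respectively closed) subsets of $X^{\orb}$. The closed case will follow from the open case by complementation, so I focus on open sets.

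In one direction, given an open subset $U \subseteq X^{\orb}$, its preimage $\pi^{-1}(U)$ is open by continuity of $\pi$, and is automatically $\GL$-stable since $\pi$ is constant on each generalized orbit and each generalized orbit is $\GL$-stable. In the other direction, I would send an open $\GL$-stable subset $V \subseteq X$ to $\pi(V) \subseteq X^{\orb}$. To see this is open in the quotient topology, the key point is that $V$ is \emph{saturated} under $\pi$, i.e., $\pi^{-1}(\pi(V)) = V$. This is the only nontrivial step and is the one I expect to require the most care.

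To establish saturation, let $Y = X \setminus V$, which is a closed $\GL$-subset of $X$. For any $y \in Y$, the orbit closure $\ol{O}_y$ is contained in $Y$ (since $Y$ is closed and $\GL$-stable), and thus $O_y \subseteq \ol{O}_y \subseteq Y$. Hence no generalized orbit meets both $V$ and $Y$, so $V$ is a union of generalized orbits, which is exactly the statement $\pi^{-1}(\pi(V)) = V$. This same argument already appears inside the proof of Proposition~\ref{prop:XGL-Xorb}. With saturation in hand, $\pi(V)$ is open by definition of the quotient topology.

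Finally, I would verify that the two assignments are mutually inverse. The composition $U \mapsto \pi^{-1}(U) \mapsto \pi(\pi^{-1}(U)) = U$ uses that $\pi$ is surjective, which is clear since every generalized orbit is nonempty. The other composition $V \mapsto \pi(V) \mapsto \pi^{-1}(\pi(V)) = V$ is exactly the saturation statement just proved. This establishes the bijection for open sets, and taking complements yields the analogous bijection for closed sets, completing the proof.
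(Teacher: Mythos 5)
Your proof is correct and takes essentially the same route as the paper's: the crux is that $\GL$-stable open (or closed) subsets are saturated under $\pi$, and you correctly note that this saturation argument is the one already used inside the proof of Proposition~\ref{prop:XGL-Xorb}. The only cosmetic difference is that you invoke the definition of the quotient topology directly, whereas the paper routes the same fact through the homeomorphism $X^{\GL} \cong X^{\orb}$.
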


\begin{proof}
A closed $\GL$-suset $Z \subseteq X$ contains the generalized orbit of each of its points. By Proposition~\ref{prop:gen-orbit-fixed-pt} we have $\pi(Z)=\pi(Z^\GL)$, which by Proposition~\ref{prop:XGL-Xorb} is closed, and its preimage in $X^\GL$ is $Z^\GL$; it follows that $\pi^{-1}(\pi(Z))=Z$. Moreover, also by Proposition~\ref{prop:XGL-Xorb}, all closed subsets of $X^{\orb}$ are of the form $\pi(Z)$ with $Z \subseteq X$ $\GL$-stable and closed, and we find $\pi(\pi^{-1}(\pi(Z)))=\pi(Z)$. Similarly for open $\GL$-sets.
\end{proof}

Let $U$ be an open $\GL$-subset of $X$. We say that $U$ is \defn{$\GL$-quasi-compact} if any open cover of $U$ by open $\GL$-subsets admits a finite subcover. This is equivalent to $\pi(U)$ being quasi-compact by Proposition~\ref{prop:opens-of-Xorb}.

\begin{proposition} \label{prop:GL-quasi-cpt}
Let $Z$ be a closed $\GL$-subset of $X$. Then $U=X \setminus Z$ is $\GL$-quasi-compact if and only if $Z$ can be defined by a finitely $\GL$-generated ideal.
\end{proposition}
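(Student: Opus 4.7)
The plan is to use the operation that, to any element $f$ of the coordinate ring $R = \Gamma(X, \mathcal{O}_X)$, associates the open $\GL$-stable subset $U_f := \bigcup_{g \in \GL} D(gf)$ of $X$: this is the smallest $\GL$-stable open subset of $X$ containing the basic affine open $D(f)$, and its complement is the vanishing locus of the $\GL$-ideal generated by $f$. Both directions of the equivalence become natural once we cover $U$ by such ``$\GL$-basic'' opens.

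For the direction $(\Leftarrow)$, suppose $Z$ is defined by a finitely $\GL$-generated ideal $I = \langle \GL \cdot f_1, \ldots, \GL \cdot f_n \rangle$, so that $U = U_{f_1} \cup \cdots \cup U_{f_n}$. Given an arbitrary open cover $\{V_\alpha\}$ of $U$ by $\GL$-stable opens, I would first use ordinary quasi-compactness of the affine open $D(f_i) \subset U$ to extract a finite subcover $V_{\alpha_{i,1}}, \ldots, V_{\alpha_{i,k_i}}$ of $D(f_i)$. Since each $V_\alpha$ is $\GL$-stable, translating by an arbitrary $g \in \GL$ yields $D(gf_i) \subseteq V_{\alpha_{i,1}} \cup \cdots \cup V_{\alpha_{i,k_i}}$, and taking the union over $g$ shows that this same finite family covers $U_{f_i}$. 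Combining over all $i$ produces the desired finite $\GL$-subcover of $U$.

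For $(\Rightarrow)$, let $J \subseteq R$ be the radical ideal cutting out $Z$; it is automatically $\GL$-stable because $Z$ is. I would then observe that $\{U_f\}_{f \in J}$ is an open $\GL$-cover of $U$: every translate $gf$ of an $f \in J$ still vanishes on $Z$, so each $U_f$ is contained in $U$, while any $x \in U$ lies outside $V(J) = Z$ and hence in $D(f) \subseteq U_f$ for some $f \in J$. Applying the $\GL$-quasi-compactness hypothesis yields finitely many $f_1, \ldots, f_n \in J$ with $U = U_{f_1} \cup \cdots \cup U_{f_n}$. The finitely $\GL$-generated ideal $I := \langle \GL \cdot f_1, \ldots, \GL \cdot f_n \rangle$ then satisfies $V(I) = X \setminus \bigcup_i U_{f_i} = Z$, so it defines $Z$ as required.

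No step presents a serious obstacle once the construction $f \mapsto U_f$ is in place: the forward direction is just the application of the hypothesis to a canonical cover indexed by $J$, and the backward direction reduces $\GL$-quasi-compactness to classical quasi-compactness of the affine opens $D(f_i)$ by exploiting the $\GL$-stability of the members of an arbitrary cover. The only mild subtlety to flag is that in the forward direction the ideal $I$ we produce need not equal $J$; but we only need $V(I) = Z$ as sets, which is forced by the chain $Z = V(J) \subseteq V(I) = X \setminus U = Z$.
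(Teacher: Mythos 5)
Your proof is correct and follows essentially the same route as the paper: both directions hinge on the $\GL$-stable opens $\bigcup_{g \in \GL} gD(f)$, with the backward direction reducing to classical quasi-compactness of $D(f_i)$ and the forward direction extracting a finite family from such a cover indexed by a defining ideal. The only cosmetic difference is that you work with the full radical ideal $J$ while the paper picks an arbitrary generating family $\{f_\alpha\}$ of the ideal cutting out $Z$, but the argument is the same.
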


\begin{proof}
Suppose the orbits of $f_1, \ldots, f_r$ define an ideal cutting out $Z$. Let $D(f)$ be the usual distinguished open set defined by $f \ne 0$. Suppose that $\{V_{\alpha}\}_{\alpha \in I}$ is an open cover of $U$ by open $\GL$-subsets. Since $D(f_i) \subset U$, the $V_{\alpha}$'s cover $D(f_i)$. But $D(f_i)$ is an affine scheme, and thus quasi-compact, and so there exists a finite subset $J_i$ of $I$ such that $D(f_i) \subset \bigcup_{\alpha \in J_i} V_{\alpha}$. Since the $V_{\alpha}$'s are $\GL$-stable, we thus have $\bigcup_{g \in \GL} gD(f_i) \subset \bigcup_{\alpha \in J_i} V_{\alpha}$. Since $U=\bigcup_{i=1}^r \bigcup_{g \in \GL} gD(f_i)$, we see that $U=\bigcup_{\alpha \in J} V_{\alpha}$, where $J=\bigcup_{i=1}^r J_i$ is finite.

Now suppose that $U$ is $\GL$-quasi-compact. Let $\{f_{\alpha}\}_{\alpha \in I}$ define an ideal cutting out $Z$. Then $U=\bigcup_{\alpha \in I} \bigcup_{g \in \GL} gD(f_{\alpha})$ is a cover by open $\GL$-sets, and so there is a finite subset $J$ of $I$ such that $U=\bigcup_{\alpha \in J} \bigcup_{g \in \GL} gD(f_{\alpha})$. We thus see that $Z=\bigcap_{\alpha \in J} \bigcap_{g \in \GL} gV(f_{\alpha})$. Thus $\{f_{\alpha}\}_{\alpha \in J}$ is a finite set of elements that generated a $\GL$-ideal cutting out $Z$.
\end{proof}

\begin{proposition} \label{prop:Xorb-spectral}
The space $X^{\orb}$ is a spectral topological space.
\end{proposition}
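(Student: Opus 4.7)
The plan is to verify Hochster's four conditions for a spectral space: $X^{\orb}$ must be $T_0$, quasi-compact, sober, and carry a basis of quasi-compact open subsets closed under finite intersections. I exploit the bijection $Z \mapsto \pi(Z)$ between closed $\GL$-subsets of $X$ and closed subsets of $X^{\orb}$ from Proposition~\ref{prop:opens-of-Xorb}, the homeomorphism $X^{\GL} \cong X^{\orb}$ of Proposition~\ref{prop:XGL-Xorb}, and the characterization of $\GL$-quasi-compactness in Proposition~\ref{prop:GL-quasi-cpt}.

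Two conditions are immediate. The $T_0$ property descends to $X^{\orb} \cong X^{\GL}$ as a subspace of the $T_0$ affine scheme $X$. For quasi-compactness, any cover of $X^{\orb}$ by open subsets pulls back through $\pi$ to a cover of $X$ by open $\GL$-subsets, which has a finite subcover since $X$ is itself quasi-compact. For the basis: every open $\GL$-subset has the form $X \setminus V(I)$ for some $\GL$-ideal $I$, and writing $I$ as the directed union of its finitely $\GL$-generated sub-$\GL$-ideals exhibits this open as a union of $\GL$-quasi-compact opens via Proposition~\ref{prop:GL-quasi-cpt}.

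The delicate step is closure under finite intersections. Given finitely $\GL$-generated $I_1 = \langle \GL \cdot \{f_{1,i}\}_i \rangle$ and $I_2 = \langle \GL \cdot \{f_{2,j}\}_j \rangle$, I set $J = \langle \GL \cdot \{f_{1,i} f_{2,j}\}_{i,j}\rangle$, again finitely $\GL$-generated. The inclusion $J \subseteq I_1 \cap I_2$ gives $V(J) \supseteq V(I_1) \cup V(I_2)$. For the reverse inclusion, given $p \in V(J)$, Proposition~\ref{prop:FirstProperties} guarantees that $\ol{O}_p$ is irreducible with a $\GL$-fixed generic point, so $\ol{O}_p$ is cut out by a $\GL$-prime $I_p$ containing $J$. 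From $f_{1,i} f_{2,j} \in I_p$, primality gives $f_{1,i} \in I_p$ or $f_{2,j} \in I_p$ for each pair $(i,j)$; a short case analysis then shows either all $f_{1,i} \in I_p$ (whence $I_1 \subseteq I_p$ by $\GL$-stability) or some $f_{1,i_0} \notin I_p$ (forcing all $f_{2,j} \in I_p$, hence $I_2 \subseteq I_p$). Either way $p \in V(I_1) \cup V(I_2)$, so $V(J) = V(I_1) \cup V(I_2)$ and the intersection of two $\GL$-quasi-compact opens is again $\GL$-quasi-compact.

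For sobriety, let $C \subseteq X^{\orb}$ be irreducible closed and write $C' \subseteq X^{\GL}$ for its image under Proposition~\ref{prop:XGL-Xorb}. The closure $\widehat C := \ol{C'}^X$ in $X$ is irreducible in the sober scheme $X$ and has a unique generic point $\eta$. Since $C'$ lies in the $\GL$-fixed locus, $\widehat C$ is $\GL$-stable; the point $g\eta$ is also a generic point of $\widehat C$ and by uniqueness must equal $\eta$, so $\eta \in X^{\GL}$. The identity $\ol{C'}^{X^{\GL}} = \widehat C \cap X^{\GL}$ combined with $C'$ being closed in $X^{\GL}$ yields $\eta \in C'$, and $\eta$ transports to the unique generic point of $C$. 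The main obstacle I anticipate is the intersection step above: the ordinary product $I_1 I_2$ is typically not finitely $\GL$-generated, and the trick is to replace it by the $\GL$-ideal generated by pairwise products of generators, whose zero locus matches $V(I_1) \cup V(I_2)$ via the $\GL$-prime argument.
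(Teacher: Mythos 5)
Your proof is correct and follows essentially the same route as the paper: the homeomorphism $X^{\GL} \cong X^{\orb}$ handles sobriety, Proposition~\ref{prop:GL-quasi-cpt} together with the directed-union trick yields the basis of quasi-compact opens, and one then checks that the quasi-compact opens are stable under intersection. Your sobriety argument is a minor repackaging of the paper's (closing up $C'$ in $X$, using $\GL$-stability to put the generic point in $X^{\GL}$); the $T_0$ and global quasi-compactness checks, which the paper leaves implicit, are fine and in fact $T_0$ is already a consequence of sobriety.

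The one place you deviate is the intersection-closure step, and there your anticipated obstacle does not actually arise. You claim the ordinary product $I_1 I_2$ is ``typically not finitely $\GL$-generated,'' but it always is: if $M_i \subset R$ is the $\GL$-subrepresentation spanned by the orbits of the chosen generators $f_{i,\bullet}$, then each $M_i$ has finite length (every element of a polynomial representation lies in a finite-length subrepresentation, since the category is semisimple with irreducibles $\bV_{\lambda}$), the multiplication image of $M_1 \otimes M_2$ in $R$ is a quotient of a finite-length representation (Littlewood--Richardson) and hence finite length, and $I_1 I_2$ is generated as an ideal by that image. So $V(I_1) \cup V(I_2) = V(I_1 I_2)$ is cut out by a finitely $\GL$-generated ideal directly, with no need to pass to your smaller ideal $J$ or invoke the $\GL$-prime $I_p$ of $\ol{O}_p$. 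That said, your argument via $J$ and primality of $I_p$ is valid as written --- it just proves the set-theoretic equality $V(J) = V(I_1)\cup V(I_2)$ by a more elaborate route than necessary.
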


\begin{proof}
Let $W$ be an irreducible closed subset of $X^{\orb}$.  By Proposition~ \ref{prop:XGL-Xorb}, its preimage $U$ in $X^\GL$ is closed and irreducible. Then $\ol{U}=\ol{O}_U$
is a closed irreducible $\GL$-set in the sober space $X$, hence has a unique
generic point $x$. Then $x \in X^\GL$, and $x$ lies in the closure of
$U$ in $X^\GL$, which is $U$ itself. So $W$ has a generic point, namely,
$\pi(x)$. The pre-image in $U$ of any other generic point of $W$ would
also be a generic point of $\ol{U}$, hence equal to $x$. This shows that $X$ is sober.

Let $U$ be an open $\GL$-stable subset of $X$, and let $Z=X \setminus U$. Let $Z=V(\fa)$ for some $\GL$-ideal $\fa$ of $\Gamma(X, \cO_X)$. Write $\fa=\sum_{\alpha \in I} \fa_{\alpha}$ where each $\fa_{\alpha}$ is finitely $\GL$-generated. Let $Z_{\alpha}=V(\fa_{\alpha})$, so that $Z=\bigcap_{\alpha \in I} Z_{\alpha}$. Let $U_{\alpha}=X \setminus Z_{\alpha}$, so that $U=\bigcup_{\alpha \in I} U_{\alpha}$. By Propositions~\ref{prop:opens-of-Xorb} and~\ref{prop:GL-quasi-cpt}, we see that $\pi(U)=\bigcup_{\alpha \in I} \pi(U_{\alpha})$ is a cover of $\pi(U)$ by quasi-compact open subsets. Thus the quasi-compact open subsets of $X^{\orb}$ form a basis for the topology. It follows immediately from the same propositions that the intersection of two quasi-compact open subsets is again quasi-compact. Thus $X$ is spectral.
\end{proof}

\begin{proposition}
If $X$ is an affine $\GL$-variety then $X^{\orb}$ is a noetherian topological space.
\end{proposition}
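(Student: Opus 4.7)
The plan is to reduce the claim to the Noetherianity Theorem~\ref{thm:Noetherianity} via the bijection between closed subsets of $X^{\orb}$ and closed $\GL$-subsets of $X$ established in Proposition~\ref{prop:opens-of-Xorb}.

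First I would recall that, by definition, $X$ is an affine $\GL$-variety, so it can be realized as a closed $\GL$-subvariety of some $\bA^{\ulambda}$. Theorem~\ref{thm:Noetherianity} then gives the descending chain condition on closed $\GL$-subsets of $\bA^{\ulambda}$; since the closed $\GL$-subsets of $X$ are precisely the closed $\GL$-subsets of $\bA^{\ulambda}$ that are contained in $X$, they inherit the DCC.

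Next I would invoke Proposition~\ref{prop:opens-of-Xorb}: the maps $Z \mapsto \pi(Z)$ and $W \mapsto \pi^{-1}(W)$ are mutually inverse bijections between closed $\GL$-subsets of $X$ and closed subsets of $X^{\orb}$. Both of these maps are manifestly inclusion-preserving, so the bijection is in fact an isomorphism of posets. Consequently, any descending chain $W_1 \supseteq W_2 \supseteq \cdots$ of closed subsets of $X^{\orb}$ pulls back to a descending chain $\pi^{-1}(W_1) \supseteq \pi^{-1}(W_2) \supseteq \cdots$ of closed $\GL$-subsets of $X$, which must stabilize by the previous step; applying $\pi$ recovers stabilization of the original chain. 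This gives DCC on closed subsets of $X^{\orb}$, i.e., noetherianity.

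There is no real obstacle here: the entire statement is a formal transport of DCC across the bijection of Proposition~\ref{prop:opens-of-Xorb}, once Theorem~\ref{thm:Noetherianity} is in hand. The only thing to verify carefully is that the bijection is order-preserving in both directions, which is immediate from the fact that both $\pi$ and $\pi^{-1}$ respect inclusions on the respective collections of (closed $\GL$-stable / closed) subsets.
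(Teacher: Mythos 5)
Your proof is correct and takes exactly the approach the paper uses: the paper's own proof is the one-line citation ``This follows from Theorem~\ref{thm:Noetherianity} and Proposition~\ref{prop:opens-of-Xorb},'' and you have simply spelled out the (correct) observation that the bijection of Proposition~\ref{prop:opens-of-Xorb} is inclusion-preserving in both directions, so descending chain condition transports across it.
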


\begin{proof}
This follows from Theorem~\ref{thm:Noetherianity} and Proposition~\ref{prop:opens-of-Xorb}.
\end{proof}

\subsection{\texorpdfstring{$\GL$}{GL}-generic points} \label{ss:glgen}

We say that a point $x \in X$ is \defn{$\GL$-generic} if $\ol{O}_x=X$. By Proposition~\ref{prop:opens-of-Xorb}, this is equivalent to $\pi(x)$ being a generic point of $X^{\orb}$. An interesting feature of $\GL$-varieties is that there can be closed points that are $\GL$-generic.

\begin{example}
Let $\ulambda=[(1)^d]$, so that $\bV_{\ulambda}=\bV^{\oplus d}$. One can identify a point of $\bA^{\ulambda}$ with a $d$-tuple $(v_1, \ldots, v_d)$ where $v_i \in \bV^*$. Such a point is $\GL$-generic if and only if the $v_i$'s are linearly independent.
\end{example}

\begin{example}
A point of $\bA^{(2)}$ is $\GL$-generic if and only if it has infinite rank, when thought of as a symmetric bilinear form on $\bV$. This follows immediately from the description of the generalized orbits given in Example~\ref{ex:A2-orbits}.
\end{example}

In fact, it is not difficult to generalize the above examples:

\begin{proposition} \label{prop:generic-exists}
The space $\bA^{\ulambda}$ admits a $\GL$-generic $K$-point if and only if $\ulambda$ is pure.
\end{proposition}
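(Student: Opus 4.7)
The proposition has two directions, so I would treat them separately.

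For the ``only if'' direction, the argument is immediate. Suppose $\ulambda$ contains $\emptyset$; writing $\ulambda=\umu\cup[\emptyset]$ we get a $\GL$-equivariant decomposition $\bA^\ulambda\cong\bA^\umu\times\bA^1$ in which $\GL$ acts trivially on the $\bA^1$ factor. Any $K$-point projects to a closed point $t\in\bA^1$, so its orbit closure lies in the closed proper subset $\bA^\umu\times\{t\}$ and thus cannot be all of $\bA^\ulambda$.

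The real work is the ``if'' direction. Here my plan is to build a $\GL$-generic $K$-point explicitly using a block construction. Fix a partition $\bN=\bigsqcup_{n\ge1}S_n$ into countably many infinite blocks, let $\bV^{(n)}=\mathrm{span}\{e_j:j\in S_n\}$, and, for each $i\in\{1,\dots,r\}$ and each $n\ge1$, pick a nonzero element $\xi_{i,n}\in\bV_{\lambda_i}\{\bV^{(n)}\}^{*}\subset\bV_{\lambda_i}^{*}$. Because each element of $\bV_{\lambda_i}$ involves only finitely many of the $S_n$, the formal sum $\xi_i=\sum_n\xi_{i,n}$ is a well-defined linear functional on $\bV_{\lambda_i}$, hence a $K$-point of $\bA^{\lambda_i}$. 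Set $x=(\xi_1,\dots,\xi_r)\in\bA^\ulambda(K)$. The claim is that $x$ is $\GL$-generic.

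To verify this claim I would argue by contradiction: if $\ol{O}_x\subsetneq\bA^\ulambda$, then by the Noetherianity theorem (Theorem~\ref{thm:Noetherianity}) the ideal of $\ol{O}_x$ is finitely $\GL$-generated, so some nonzero $\GL$-isotypic component $\bV_\mu\subset\bR_\ulambda$ contains an element $f$ that vanishes on the entire $\GL$-orbit of $x$. To produce a contradiction I would pass to the larger monoid $\End(\bV)$ acting on $\bR_\ulambda$ (as in \S\ref{ss:polyrep}); since $\GL$ is dense in $\End(\bV)$ in the pointwise topology and $\bR_\ulambda$ is a polynomial functor, any $\GL$-invariant polynomial identity on $\GL\cdot x$ extends to $\End(\bV)\cdot x$. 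Applying the projection $\pi_N\in\End(\bV)$ onto $W_N=\bV^{(1)}\oplus\cdots\oplus\bV^{(N)}$ gives a point $\pi_N\cdot x\in\bA^\ulambda\{W_N\}(K)$ whose coordinates are the $(\xi_{i,n})_{n\le N}$, and by varying an element $\alpha\in\End(W_N)$ afterwards one realizes a Zariski-dense subset of $\bA^\ulambda\{W_N\}$ provided the pieces $\xi_{i,n}$ generate $\bV_{\lambda_i}\{W_N\}^*$ as an $\End(W_N)$-module (which is easily arranged, e.g.\ by taking $\xi_{i,n}$ to be a highest-weight dual vector in its block). Since $f\in\bV_\mu\subset\bR_\ulambda$ is nonzero and $\bV_\mu\{W_N\}\ne 0$ for $N\gg0$, the restriction $f|_{\bA^\ulambda\{W_N\}}$ is a nonzero polynomial, so it fails to vanish at some point in this dense subset, contradicting $f(g\cdot x)=0$ for all $g$.

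The main obstacle will be the last step: rigorously justifying that $\End(\bV)\cdot x$ (and in particular the ``finite-dimensional concentrations'' $\alpha\circ\pi_N\cdot x$) lies in $\ol{\GL\cdot x}$, and that a suitable choice of the block vectors $\xi_{i,n}$ makes the induced map $\End(W_N)\to\bV_\ulambda\{W_N\}^*$ surjective (or at least has Zariski-dense image) for all large $N$. Once these two points are in hand, the Noetherianity-plus-contradiction argument closes cleanly.
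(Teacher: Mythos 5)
Your ``only if'' direction is fine and matches the paper's in substance: project away the $\bA^1$ factor on which $\GL$ acts trivially, and observe that a $K$-point lands on a closed (non-generic) point of $\bA^1$, so its orbit closure is a proper closed subset.

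Your ``if'' direction also starts from the same key ideas as the paper---an infinite sum of pieces with disjoint supports, together with the observation that the monoid $\End(\bV)$ preserves $\GL$-stable closed sets, so one may slide the pieces around. However, the verification step as you have written it has two genuine problems, and the second is structural, not merely technical.

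First, ``pick a nonzero element $\xi_{i,n}$'' is too weak: if you took $\xi_{1,n}=\xi_{2,n}$ for all $n$ you would get $\xi_1=\xi_2$, which is degenerate whenever $\lambda_1=\lambda_2$. You need to choose the $\xi_{i,n}$ with disjoint supports across the index $(i,n)$, not just across $n$, and you need some genericity beyond nonvanishing.

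Second, and more seriously, the step ``by varying $\alpha\in\End(W_N)$ one realizes a Zariski-dense subset of $\bA^{\ulambda}\{W_N\}$'' cannot hold. Whatever you choose for $\xi_{i,n}$---a highest-weight dual vector, or any vector you can write down without already having solved the problem---will have bounded ``complexity'' (e.g.\ rank $\le 1$ for $\lambda=(2)$). Then $\pi_N x$ is a sum of $N$ such pieces, and its entire $\End(W_N)$-orbit remains in the locus of complexity $\le N$, a proper closed subset of $\bA^{\ulambda}\{W_N\}$, which here is still infinite-dimensional. So the orbit is never dense in $\bA^{\ulambda}\{W_N\}$, for any finite $N$. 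What goes wrong is precisely that truncating $x$ to $\pi_N x$ \emph{before} applying the monoid throws away the infinitely many unused blocks, which are exactly the ``room'' needed to hit an arbitrary target.

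The paper's proof avoids this in two ways. It works first in $\bT^d=\Spec(\Sym(\bV^{\otimes d}))$, where the building blocks of $v=\sum_n e^*_{f(1,n),\dots,f(d,n)}$ are pure tensors: any element $w$ of the \emph{finite-dimensional} slice $T^*_{d,\le n}$ is a finite sum of pure tensors, and a single $m\in\End(\bV)$ (acting on the full, untruncated $v$) can be chosen to hit the summands of $w$ with the first few blocks of $v$ and annihilate the remaining blocks. Thus the full orbit closure $\ol{O}_v$ contains $T^*_{d,\le n}$ for every $n$, whence density. Only at the end does the paper project $\bV^{\otimes d}$ down to its Schur constituents. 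So the two essential differences from your sketch are: use $\End(\bV)$ acting on the untruncated $x$, aiming only at finite-dimensional slices rather than the infinite-dimensional $\bA^{\ulambda}\{W_N\}$; and run the argument in tensor space, where ``building blocks'' means ``pure tensors'' and the requisite density is elementary, rather than directly in $\bS_\lambda(\bV)^*$ where you would have to argue that sums of $\GL$-translates of a highest-weight dual vector are Zariski-dense, which is itself a reformulation of what you are trying to prove. As a minor remark, your appeal to Noetherianity is unnecessary: any proper closed subset of $\bA^{\ulambda}$ has \emph{some} nonzero function vanishing on it, finite $\GL$-generation is not needed.
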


\begin{proof}
First suppose that $\ulambda$ is not pure. We then have a $\GL$-invariant projection map $\bA^{\ulambda} \to \bA^1$, under which any $\GL$-generic point of $\bA^{\ulambda}$ maps to a generic point of $\bA^1$. Since $\bA^1$ does not have a generic $K$-point, it follows that $\bA^{\ulambda}$ does not have a $\GL$-generic $K$-point.

We now prove the converse direction. We start by considering
a special case. Let $T_d=\bV^{\otimes d}$ be the $d$th
tensor power representation and let $\bT^d=\Spec(\Sym(T_d))$
be the corresponding $\GL$-variety. Assume that $d>0$, so
that $\bT^d$ is pure. Let $\{e_i\}_{i \ge 1}$ be a basis of
$\bV$, so that we have a natural basis of $T_d$ given by
elements of the form $e_{i_1,\ldots,i_d}=e_{i_1} \otimes
\cdots \otimes e_{i_d}$. A $K$-point $v$ of $\bT^d$ is an element of the dual space $T_d^*$, and can be expressed as a (possibly infinite) sum $\sum_{i_1,\ldots,i_d} c_{i_1,\ldots,i_d} e_{i_1,\ldots,i_d}^*$ where the $c$'s belong to $K$.

Let $f \colon [d] \times \bN \to \bN$ be an injective function, and put $v=\sum_{n \ge 0} e^*_{f(1,n),\ldots,f(d,n)}$. For example, if $d=2$ one could take $v=e^*_{1,2}+e^*_{3,4}+\cdots$. This will be our $\GL$-generic point of $\bT^d$. Before proving this, we establish a property of its orbit closure. Let $T_{d,\le n}$ be the subspace of $T_d$ spanned by the vectors $e_{i_1,\ldots,i_d}$ with $i_1,\ldots,i_d \le n$. We claim that $T_{d,\le n}^*$ is contained in $\ol{O}_v$. Indeed, let $w \in T_{d,\le n}^*$ be given. Write $w=w_1+\cdots+w_r$ where each $w_k$ is a pure tensor. We can then find an endomorphism $m$ of $\bV$ that satisfies $m(e^*_{f(1,k),\ldots,f(d,k)})=w_k$ for $1 \le k \le r$ and $m(e^*_{f(1,k),\ldots,f(d,k)})=0$ for $k>r$. We thus see that $m(v)=w$. As remarked in \S \ref{ss:polyrep}, the monoid $\End(\bV)$ naturally acts on any polynomial representation. Thus $m$ acts on $\Sym(T_d)$, and leaves any subrepresentation stable; in particular, the ideal of $\ol{O}_v$ is stable. It follows that the action $m$ induces on $\bT^d$ carries $\ol{O}_v$ into itself. Since $m(v)=w$, we see that $w \in \ol{O}_v$.

It now follows that $\ol{O}_v=\bT^d$. Indeed, suppose that $h$ is some non-zero element of $\Sym(T_d)$. Let $n$ be such that $h$ belongs to $\Sym(T_{d,\le n})$. Since $h$ is non-zero, it does not vanish on all of $T^*_{d,\le n}$, and so we can find some $w \in T^*_{d,\le n}$ such that $h(w) \ne 0$. Since $w \in \ol{O}_v$, it follows that $h$ does not vanish on $\ol{O}_v$. Thus no element of $\Sym(T_d)$ vanishes on $\ol{O}_v$, and so $\ol{O}_v$ is Zariski dense in $\bT^d$. Since it is also closed, it must be the entire space.

Now consider the space $\bT^{d_1} \times \cdots \times \bT^{d_r}$, where each $d_i$ is positive. Let $v_i \in \bT^{d_i}(K)$ be as in the previous paragraph, but choose the $v_i$'s so that they use distinct basis vectors. In other words, if $v_i$ is associated to the function $f_i$, then the images of the $f_i$'s should be disjoint from one another. This ensures that we can move the $v_i$'s independently under the action $\End(\bV)$. Put $v=(v_1,\ldots,v_r)$. Then from our previous argument, we see that $\ol{O}_v$ contains $T_{d_1,\le n}^* \times \cdots \times T_{d_r,\le n}^*$ for all $n$. Thus $v$ is $\GL$-generic just as before.

Finally, suppose that $\ulambda$ is an arbitrary pure tuple.
We have $\bT^{d_1} \times \cdots \times \bT^{d_r} =
\bA^{\ulambda} \times \bA^{\umu}$ for appropriate $d_1,
\ldots, d_r>0$ and $\umu$. Taking a $\GL$-generic $K$-point
of $\bT^{d_1} \times \cdots \times \bT^{d_r}$, its projection to $\bA^{\ulambda}$ is a $\GL$-generic $K$-point.
\end{proof}

We have the following mild generalization of the above proposition:

\begin{proposition} \label{prop:generic-exists2}
Let $B$ be an irreducible variety and let $\ulambda$ be a pure tuple. Suppose that $B$ admits a generic point over the extension $\Omega/K$. Then $B \times \bA^{\ulambda}$ admits a $\GL$-generic $\Omega$-point.
\end{proposition}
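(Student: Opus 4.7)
The plan is to construct the desired point by combining a $\GL$-generic $K$-point of $\bA^{\ulambda}$ with the given generic $\Omega$-point of $B$. Concretely, Proposition~\ref{prop:generic-exists} supplies a $\GL$-generic $K$-point $v\in\bA^{\ulambda}(K)$ (this is where purity of $\ulambda$ enters); and letting $b\in B(\Omega)$ denote the given $\Omega$-point, whose image is the generic point $\eta_B$ of $B$, I would take $(b,v)\in(B\times\bA^{\ulambda})(\Omega)$ as the candidate $\GL$-generic $\Omega$-point, and show that its associated scheme-theoretic point $(\eta_B,v)$ has orbit closure equal to all of $B\times\bA^{\ulambda}$.

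The main calculation is to analyze the ideal of this orbit closure inside $\cO(B)\otimes_K\bR_{\ulambda}$. An element $F$ lies in this ideal exactly when $F(b,gv)=0$ in $\Omega$ for every $g\in\GL$. Writing $F=\sum_{i=1}^n f_i\otimes g_i$ with the $f_i\in\cO(B)$ chosen $K$-linearly independent, the vanishing condition reads $\sum_i f_i(b)\,g_i(gv)=0$ in $\Omega$. Since $v$ is a $K$-point and $\GL$ acts $K$-linearly, each $g_i(gv)$ lies in $K$; on the other hand, because $b$ is generic the induced map $\cO(B)\hookrightarrow K(B)\hookrightarrow\Omega$ is injective, so the elements $f_i(b)\in\Omega$ are $K$-linearly independent. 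Hence $g_i(gv)=0$ for every $i$ and every $g\in\GL$, and the $\GL$-genericity of $v$ then forces $g_i=0$ for all $i$, so $F=0$.

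It follows that the defining ideal of the orbit closure is trivial, so $(b,v)$ is $\GL$-generic. The only real subtlety is the separation-of-variables step in the second paragraph: everything rests on being able to disentangle the $\cO(B)$-factor from the $\bR_{\ulambda}$-factor in a vanishing relation in $\Omega$, and this is exactly where the generic-point hypothesis on $\Omega$ enters, in the guise of the injection $\cO(B)\hookrightarrow \Omega$. The remainder of the argument is essentially a direct transposition of the proof of Proposition~\ref{prop:generic-exists}.
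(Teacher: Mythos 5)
Your proposal is correct and takes essentially the same route as the paper: same candidate point $(b,v)$, same two key inputs (injectivity of $\mathcal{O}(B)\hookrightarrow\Omega$ via the generic point, and $\GL$-genericity of $v$). The paper compresses the argument into a parenthetical hint about base-changing along $x\colon\Spec(\Omega)\to B$ and re-invoking Proposition~\ref{prop:generic-exists} over $\Omega$; your ideal computation with $F=\sum_i f_i\otimes g_i$ simply unpacks that base change explicitly.
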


\begin{proof}
Let $x$ be a generic $\Omega$-point of $B$ and let $v$ be a $\GL$-generic $K$-point of $\bA^{\ulambda}$. Then one easily sees that $(x,v)$ is a $\GL$-generic $\Omega$-point of $B \times \bA^{\ulambda}$. (For instance, base change along $x \colon \Spec(\Omega) \to B$ and then apply the previous proposition over the base field $\Omega$.)
\end{proof}

We observe that, in general, one does need to pass to an extension of $K$ to obtain $\GL$-generic points:

\begin{example} \label{ex:no-K-generic}
Let $X$ be the closed $\GL$-subvariety of $\bA^{[(1),(1)]}$
consisting of pairs of linearly dependent vectors. We have a
surjective map of $\GL$-varieties $\phi \colon \bA^2 \times
\bA^{(1)} \to X$ defined by $(\alpha,\beta,v) \mapsto
(\alpha v, \beta v)$. Moreover, $\phi$ is surjective on
$K$-points, and so the orbit closure of any non-zero
$K$-point is a copy of $\bA^{(1)}$ inside of $X$. Thus $X$
does not admit a $\GL$-generic $K$-point. The restriction of
$\phi$ to $\bA^1 \times \{1\} \times \bA^{(1)} \subset \bA^2
\times \bA^{(1)}$ (the line $y=1$ in the first factor) is dominant, and so, by Proposition~\ref{prop:generic-exists2}, we see that $X$ admits a $\GL$-generic $K(t)$-point.
\end{example}

\section{The embedding theorem} \label{s:embed}

The proof of the noetherianity theorem in \cite{draisma} relies on a
fundamental embedding theorem. Roughly speaking, this theorem states
that if $X$ is a proper closed $\GL$-subvariety of $\bA^{\ulambda}$
then an open subset of some shift of $X$ embeds into $\bA^{\umu}$ where
$\magn(\umu)<\magn(\ulambda)$. This provides an effective tool for
performing inductive arguments on $\GL$-varieties. We now formulate
two precise versions of this theorem.

\subsection{The shift operation} \label{ssec:Shift}

For any partition $\lambda$ and any $n \in \bZ_{\geq 0}$, we define $\Sh_n
(\bV_\lambda)$ to be the polynomial $\GL$-representation $\bS_\lambda(K^n
\oplus \bV)$, where $\GL$ acts trivially on $K^n$. The functoriality of
$\bS_\lambda$ implies that for $m,n \in \bZ_{\geq 0}$ we have $\Sh_{m+n}
(\bV_\lambda) \cong \Sh_m (\Sh_n(\bV_\lambda))$. We extend $\Sh_n$
(additively) to arbitrary polynomial $\GL$-representations, and as such
it is a functor from the category of polynomial representations to itself
that commutes with tensor products, symmetric powers, and indeed general
Schur functors.

As a consequence, if $R$ is a $\GL$-algebra, then the polynomial
$\GL$-representation $\Sh_n(R)$ is naturally a $\GL$-algebra, with the
product coming from the image of the multiplication homomorphism $R
\otimes R \to R$ under the shift functor. Note that $\Sh_n$ commutes
with tensor products of $\GL$-algebras.

If $X$ is an affine $\GL$-scheme with coordinate ring $R$, then we define $\Sh_n(X)$
to be the spectrum of the $\GL$-algebra $\Sh_n(R)$.  In particular,
$\Sh_n(\bA^{\ulambda})$ is the spectrum of $\Sh_n(\bR_{\ulambda})
\cong \Sym(\Sh_n(\bS_{\ulambda}))$.  Note that $\Sh_n$ commutes with
products of affine $\GL$-schemes.

Since we take $\bV$ and coordinate rings as the primary objects, the shift
operation is dual to that in \cite{draisma}. In particular, the natural
inclusion $\bV \to K^n \oplus \bV$ yields injective homomorphisms
$R \to \Sh_n(R)$ of $\GL$-algebras and surjective morphisms $\Sh_n(X)
\to X$ of affine $\GL$-schemes. Indeed, the natural projection $K^n
\oplus \bV \to \bV$ yields a left inverse $\Sh_n(R) \to R$ and a right
inverse $X \to \Sh_n(X)$, respectively. Similarly, note that the natural inclusion
$K^n\to K^n\oplus V$ yields the inclusion $\Sh_n(R)^{\GL}\to\Sh_n(R)$
of the subalgebra of $\GL$-invariant functions on $\Sh_n(X)$ into $\Sh_n(R)$.

Alternatively, the shift operation $\Sh_n$ may and will be understood as
follows. The linear shift isomorphism $\sigma_n: K^n \oplus \bV \to \bV$
that sends the standard basis of $K^n$ to $e_1,\ldots,e_n$ and the basis
vector $e_i$ of $\bV$ to $e_{i+n}$ is $\GL$-equivariant if we let $\GL$ act
on the right-hand side via the isomorphism $\GL \to G(n)$ that shifts an
infinite matrix to the south-east and inserts an $n \times n$-identity
matrix in the top left corner. Consequently, $\Sh_n(\bV_\lambda)$ is
isomorphic to the restriction of the $\GL$-representation $\bS_\lambda(\bV)$
to the subgroup $G(n)$, regarded as a polynomial $\GL$-representation
via the isomorphism $\GL \to G(n)$. We write $\sigma_n:\Sh_n(\bV_\lambda)
\to \bV_\lambda$ for this isomorphism, but stress that the
$\GL$-action on $\bV_\lambda$ is non-standard. Note that the linear inclusion
$K^n\to \bV$ sending the standard basis vectors to $e_1,\ldots,e_n$ yields
the inclusion $R^{G(n)}\to R$ of the subalgebra of $G(n)$-invariant functions on $X$ into $R$.

Similarly, for a $\GL$-algebra $R$, $\sigma_n$ induces an
isomorphism $\sigma_n: \Sh_n(R) \to R$ if we let $\GL$
act on $R$ via its isomorphism into $G(n)$, and for an affine $\GL$-scheme
$X$, $\sigma_n$ induces an isomorphism $\sigma_n: X \to \Sh_n(X)$
if we let $\GL$ act on $X$ via that isomorphism.

For a tuple $\ulambda$, we write $\sh_n(\ulambda)$
for the tuple such that $\Sh_n(\bA^{\ulambda}) =
\bA^{\sh_n(\ulambda)}$. The tuple $\sh_n(\ulambda)$
always contains $\ulambda$; we let
$\sh_{n,0}(\ulambda)$ be the complement. Thus
$\Sh_n(\bA^{\ulambda}) \cong \bA^{\ulambda} \times
\bA^{\sh_{n,0}(\ulambda)}$. For a partition $\lambda$,
we write $\sh_n(\lambda)$ and $\sh_{n,0}(\lambda)$ in place
of $\sh_n([\lambda])$ and $\sh_{n,0}([\lambda])$. We note
that every partition appearing in $\sh_{n,0}(\lambda)$ has
size strictly smaller than that of $\lambda$, and that
$\sh_{n,0}(\lambda)$ is the empty tuple if $\lambda$ is the
empty partition.

\subsection{First version}

Let $Y$ be an affine $\GL$-variety, and let $\lambda$ be
a non-empty partition. Let $f$ be a regular function on $Y \times
\bA^{\lambda}$ that does not factor through $Y$ and let $X \subset Y
\times \bA^{\lambda}$ be the closed $\GL$-subvariety defined by the
orbit of $f$. Let $h$ be a non-zero partial derivative of $f$ with
respect to some coordinate on $\bA^{\lambda}$. Let $n$ be such that
$f$ and the coordinate are $G(n)$-invariant; then so is $h$. Recall
that $\Sh_n(\bA^{\lambda})=\bA^{\lambda} \times \bA^{\umu}$, where
$\umu=\sh_{n,0}(\lambda)$ is a tuple whose constituents have
sizes smaller
than $\lambda$, so that $\magn(\umu)<\magn([\lambda])$.
Furthermore, we recall that $\Sh_n(Y \times \bA^{\lambda})$ can
be regarded as $Y \times \bA^{\lambda}$ but with $\GL$ acting via the
isomorphism $\GL \to G(n)$.  Via this latter interpretation, $h$ defines
a $\GL$-invariant function on $\Sh_n(Y \times \bA^{\lambda}) = \Sh_n(Y)
\times \bA^{\lambda} \times \bA^{\umu}$. In particular,
this invariant function does not
involve the coordinates on $\bA^\lambda$ (which have positive degree
since $\lambda$ is not the empty partition), and therefore
factors through the projection
\begin{displaymath}
\pi \colon \Sh_n(Y \times \bA^{\lambda}) \to \Sh_n(Y)
\times \bA^{\umu}.
\end{displaymath}
We can thus regard $h$ as a function on either the source or target. The embedding theorem is the following statement:

\begin{theorem} \label{thm:embed1}
The projection map $\pi$ restricts to a closed immersion
\begin{displaymath}
\pi \colon \Sh_n(X)[1/h] \to (\Sh_n(Y) \times \bA^{\umu})[1/h].
\end{displaymath}
\end{theorem}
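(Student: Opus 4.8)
The plan is to turn the statement into a ring-theoretic surjectivity and then extract, from the shifted ideal, enough relations to eliminate the ``old'' coordinates. Write $R=\Gamma(\Sh_n(Y\times\bA^\lambda))=\Gamma(\Sh_n(Y))\otimes\Sym(\bV_\lambda)\otimes\Sym(\bV_\umu)$, where I think of the generators of $\Sym(\bV_\lambda)$ as the \emph{old} coordinates and those of $\Sym(\bV_\umu)$ as the \emph{new} coordinates, and set $R'=\Gamma(\Sh_n(Y))\otimes\Sym(\bV_\umu)$, the coordinate ring of $\Sh_n(Y)\times\bA^\umu$; by construction $h\in R'$. Let $W\subseteq\Gamma(Y\times\bA^\lambda)$ be the finite-dimensional $\GL$-subrepresentation generated by $f$, so that $X$ is cut out by the ideal $W\cdot\Gamma(Y\times\bA^\lambda)$ and, since $\Sh_n$ is exact and monoidal (here characteristic $0$ enters), $\Gamma(\Sh_n(X))=R/\big(\iota(\Sh_n(W))R\big)$, where $\iota\colon\Sh_n(W)\hookrightarrow R$ is the inclusion. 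Both $\Sh_n(X)[1/h]$ and $(\Sh_n(Y)\times\bA^\umu)[1/h]$ are affine, $\pi$ between them is visibly a morphism of $\GL$-schemes, and it is a closed immersion exactly when $\pi^\#\colon R'[1/h]\to\big(R/\iota(\Sh_n(W))R\big)[1/h]$ is surjective. Since $R=R'[\{\text{old coordinates}\}]$ as a polynomial ring over $R'$, this amounts to: \emph{every old coordinate, modulo $\iota(\Sh_n(W))R$ and after inverting $h$, is a polynomial in the new coordinates and the coordinates of $\Sh_n(Y)$.}

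The heart of the proof is therefore the following claim, which is essentially the computation in \cite{draisma}: for each old coordinate $y$ (a basis vector of $\bV_\lambda$) there is an element of $\iota(\Sh_n(W))R$ of the form $h^{k_y}\,y-g_y$ with $k_y\ge 1$, where $g_y$ is a polynomial in the new coordinates, the coordinates of $\Sh_n(Y)$, and (possibly) old coordinates that are strictly lower than $y$ for a suitable fixed linear order on a basis of $\bV_\lambda$. Granting this, one orders the old coordinates and solves for them one at a time after inverting $h$, which gives the desired surjectivity. To produce these elements I would use the infinitesimal action: $W$ being $\GL$-stable, it is stable under the derivations $E_{ij}$ on $\Sym(\bV_\lambda)\otimes\Gamma(Y)$ coming from the $\End(\bV)$-action, and applying to $f$ a carefully chosen second-order operator $E_{i,a}E_{i',a'}$ — with $a,a'$ among the first $n$ indices, chosen so that one differentiation reproduces the fixed partial derivative $h=\partial f/\partial x$ while the remaining operator reassembles the old coordinate $y$ from the corresponding basis vector of $\bV_\lambda$ — yields, for $n$ large enough (so that all the needed indices are available and $f,x$ are $G(n)$-fixed), an element of $\Sh_n(W)$ whose component that is linear in the old coordinates is $h\cdot y$ up to a nonzero scalar and whose remaining components involve only new coordinates, $\Sh_n(Y)$-coordinates, and lower old coordinates. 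Keeping track of which summand $\bS_\nu(K^n)\otimes\bS_{\lambda/\nu}(\bV)$ of $\Sh_n(\bV_\lambda)$ each monomial lands in, and using the Pieri/Littlewood--Richardson description of $\Sh_n(W)=\bigoplus_\nu\bS_\nu(K^n)\otimes\bS_{\kappa/\nu}(\bV)$ (summed over the constituents $\bV_\kappa$ of $W$), is what organizes this bookkeeping.

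I expect the combinatorial extraction just described to be the main obstacle: one must check that the ``$h\cdot y$'' term genuinely appears with nonzero coefficient in the appropriate graded component of $\Sh_n$ applied to the inclusion $W\hookrightarrow\Gamma(Y\times\bA^\lambda)$, and that it survives rather than being cancelled by other terms of the same old-degree. This is precisely where the hypotheses are used. Characteristic zero makes the derivation action available and the category of polynomial representations semisimple, so that $\Sh_n$ is exact and the isotypic decompositions behave; $\lambda$ being non-empty is what forces $h$, after shifting, to have degree $0$ and hence to factor through $\pi$ (so that $R'[1/h]$ makes sense as above) and also guarantees that old coordinates genuinely occur among the generators of $R$; and the assumption that $f$ does not factor through $Y$ is exactly what guarantees that a nonzero partial derivative $h$ with respect to an $\bA^\lambda$-coordinate exists in the first place, hence that there is something to work with. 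Once the claim is established, the conclusion that $\pi$ restricts to a closed immersion on $\Sh_n(X)[1/h]$ is immediate from the reduction in the first paragraph.
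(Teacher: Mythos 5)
The paper does not actually prove Theorem~\ref{thm:embed1} from scratch: its proof is a pointer to \cite[\S 2.9]{draisma} with a short discussion of how the hypotheses there line up with the ones here. So you are reconstructing Draisma's argument rather than comparing against a self-contained proof. Your reduction is sound: the map is a closed immersion exactly when $\pi^{\sharp}\colon R'[1/h]\to\bigl(R/\iota(\Sh_n(W))R\bigr)[1/h]$ is surjective, and since $R=R'[\text{old coords}]$ this amounts to eliminating each old coordinate modulo the ideal after inverting $h$. The ``key claim'' that for each old coordinate $y$ there is an element $h^{k_y}y-g_y$ of the ideal with $g_y$ free of $y$ and of higher old coordinates is indeed the right intermediate statement, and the iterative solving step that follows from it is fine.

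The concrete gap is in the mechanism you propose for producing those ideal elements. You assert that a second-order operator $E_{i,a}E_{i',a'}f$ suffices, but this fails as soon as $|\lambda|\ge 3$. Under the shift, $\Sh_n(\bV_\lambda)=\bS_\lambda(K^n\oplus\bV)=\bigoplus_{\mu\subseteq\lambda}\bS_{\lambda/\mu}(K^n)\otimes\bS_\mu(\bV)$: the old coordinates are the summand with $\mu=\lambda$ (all $|\lambda|$ tensor slots in $\bV$), while $f$, being $G(n)$-invariant, is a polynomial only in coordinates from $\bS_\lambda(K^n)$ (all slots in $K^n$). Each raising derivation $E_{i,a}$ with $a\le n<i$ moves at most one slot from $K^n$ into $\bV$, so $E_{i,a}E_{i',a'}f$ lies in the span of coordinates with at most two slots in $\bV$; for $|\lambda|\ge 3$ these are all \emph{new} coordinates, and no $h\cdot y$ term can appear at all. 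What is needed is a composition of (at least) $|\lambda|$ raising operators, one for each box of the tableau $x$ used to form $h=\partial f/\partial x$. Beyond this, you explicitly flag --- and do not carry out --- the check that the $h\cdot y$ term appears with nonzero coefficient and survives possible cancellation; that bookkeeping is precisely the content of \cite[\S 2.9]{draisma} and is the genuinely hard part of the theorem, so as written the proposal leaves the central verification open. (Also, a small inaccuracy: $h$ has degree $0$ after shifting because it is $G(n)$-invariant, independent of $\lambda$; the role of $\lambda\neq\emptyset$ is only to make the old coordinates have positive degree, so that $h$ cannot involve them.)
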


\begin{proof}
This is proved in \cite[\S 2.9]{draisma}; we only clarify some
minor differences in the set-up.  The proof there, for the purpose
of an induction, assumes that $X$ is a closed $\GL$-subvariety of
$\bA^{\unu} \times \bA^{\lambda}$ with $|\lambda| \geq |\nu_i|$
for all $i$, but this inequality is not needed in the proof.  There,
$Y$ is just the closure of the image of $X$ in $\bA^{\unu}$, and
the requirement is that $X$ is not equal to $Y \times \bA^{\lambda}$;
here we have made sure of this fact by requiring that $f$ does not
factor through $Y$. Also, there, $X$ might be cut out from $Y \times
\bA^{\lambda}$ by further equations in addition to the orbit of $f$,
but of course the proof in particular applies to the case where $X$ is
defined by this single orbit. Finally, in \cite{draisma} it is required
that $h$ does not vanish identically on $X$. But if it does
(e.g., if $f$ is a square), then the conclusion of the theorem is trivial since $\Sh_n(X)[1/h]$ is empty.
\end{proof}

\subsection{Second version}

The above formulation of the embedding theorem is very precise, but rather cumbersome to apply (ensuring $h$ is non-zero can be subtle). We now formulate a simpler version which is usually sufficient:

\begin{theorem} \label{thm:embed2}
Let $Y$ be an affine $\GL$-variety, let $\lambda$ be a non-empty partition, and let $X$ be a closed $\GL$-subvariety of $Y \times \bA^{\lambda}$. Then one of the following two cases hold:
\begin{enumerate}
\item There exists a closed $\GL$-subvariety $Y_0 \subset Y$ such that $X=Y_0 \times \bA^{\lambda}$.
\item There exists an integer $n \ge 0$ and a $\GL$-invariant function $h$ on $\Sh_n(Y) \times \bA^{\umu}$ that does not vanish identically on $\Sh_n(X)$ such that the projection map
\begin{displaymath}
\pi \colon \Sh_n(X)[1/h] \to (\Sh_n(Y) \times \bA^{\umu})[1/h]
\end{displaymath}
is a closed immersion. (Here $\pi$ and $\umu=\sh_{n,0}(\lambda)$ are defined as before.)
\end{enumerate}
\end{theorem}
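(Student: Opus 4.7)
The plan is to reduce Theorem~\ref{thm:embed2} to Theorem~\ref{thm:embed1} by producing, from the failure of case~(a), a function $f \in I(X)$ with a partial derivative $h = \partial f / \partial x_i$ in some coordinate $x_i$ on $\bA^{\lambda}$ that does not vanish identically on $X$. Once such $f$ and $h$ are in hand, both lie in a finite-dimensional polynomial subrepresentation of $\Gamma(Y \times \bA^{\lambda})$, so one can choose $n$ large enough that $f$ and $x_i$, and hence $h$, are $G(n)$-invariant. Theorem~\ref{thm:embed1} then yields directly the closed immersion $\Sh_n(X)[1/h] \to (\Sh_n(Y) \times \bA^{\umu})[1/h]$ of case~(b).

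The crux is therefore the following \emph{cylinder lemma}: if the ideal $I(X) \subseteq \Gamma(Y) \otimes \Sym(\bV_{\lambda})$ is preserved by $\partial/\partial x_i$ for every coordinate $x_i$ on $\bA^{\lambda}$, then in fact $I(X) = (I(X) \cap \Gamma(Y)) \cdot \Gamma(Y \times \bA^{\lambda})$, and hence $X = Y_0 \times \bA^{\lambda}$ for the closed $\GL$-subvariety $Y_0 = V(I(X) \cap \Gamma(Y))$ of $Y$ (which is $\GL$-stable since $I(X) \cap \Gamma(Y)$ is). The contrapositive supplies the $f$ and $h$ sought above.

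To establish the cylinder lemma, I would consider the Euler operator $E = \sum_i x_i \, \partial/\partial x_i$, which intrinsically multiplies $\Gamma(Y) \otimes \Sym^d(\bV_{\lambda})$ by $d$. Since $I(X)$ is an ideal closed under each $\partial/\partial x_i$, it is also closed under $E$, and hence under every iterate $E^k$. In characteristic zero, the values $\{d^k\}_{k \ge 0}$ form a Vandermonde system distinguishing the finitely many $\Sym$-degrees appearing in any given $f \in I(X)$, so each homogeneous piece of $f$ lies in $I(X)$; thus $I(X)$ is homogeneous for the $\Sym$-grading. For $f \in I(X) \cap (\Gamma(Y) \otimes \Sym^d(\bV_{\lambda}))$ written as $f = \sum_{|\alpha| = d} g_\alpha x^\alpha$, the iterated partial $\partial^\alpha f = \alpha!\, g_\alpha$ lies in $I(X) \cap \Gamma(Y)$, so $g_\alpha \in I(X) \cap \Gamma(Y)$, using that $\alpha!$ is invertible in characteristic zero. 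Hence $f$ lies in the right-hand side.

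Finally, one must verify that $h$, viewed as a function on $\Sh_n(X)$ via the surjection $\Sh_n(X) \twoheadrightarrow X$ coming from the inclusion $\Gamma(X) \hookrightarrow \Sh_n(\Gamma(X))$, does not vanish identically on $\Sh_n(X)$; but this pullback is injective on functions and sends the nonzero element $h|_X$ to $h|_{\Sh_n(X)}$, so this is automatic. The main obstacle in the plan is the cylinder lemma, whose proof depends essentially on characteristic zero: in positive characteristic, both the Vandermonde extraction of graded pieces and the invertibility of $\alpha!$ fail, in line with the comment in the introduction that Theorem~\ref{thm:embed2} does not hold as stated in that setting.
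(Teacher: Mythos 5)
Your proof is correct and, at its core, parallels the paper's: both reduce to Theorem~\ref{thm:embed1} by producing $f \in I(X)$ and a coordinate $x_i$ on $\bA^{\lambda}$ with $h = \partial f/\partial x_i \notin I(X)$, and both depend essentially on characteristic zero. The route to the pair $(f,h)$ is genuinely different, though. The paper argues directly: assuming case~(a) fails, it picks $f$ of \emph{minimal $x$-degree} vanishing on $X$ but not on $Y_0 \times \bA^{\lambda}$ (with $Y_0 = \ol{\phi(X)}$), discards coefficients that vanish on $Y_0$, and observes that any nonzero partial $h$ has strictly smaller degree and nonvanishing coefficients on $Y_0$, so by minimality cannot lie in $I(X)$. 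You instead isolate a standalone \emph{cylinder lemma} --- an ideal of $\Gamma(Y)\otimes\Sym(\bV_{\lambda})$ closed under every $\partial/\partial x_i$ is extended from $\Gamma(Y)$ --- proved via the Euler operator and a Vandermonde argument to homogenize, followed by $\partial^{\alpha} f = \alpha!\, g_{\alpha}$ to extract coefficients, and then take its contrapositive. Your version is more conceptual and gives an intrinsic ideal-theoretic characterization of case~(a); the paper's minimal-degree trick is shorter and yields a slightly stronger conclusion for free ($h$ nonvanishing on all of $Y_0 \times \bA^{\lambda}$, not merely on $X$). Both collapse in positive characteristic for exactly the reasons you name. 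One small gloss in your write-up: Theorem~\ref{thm:embed1} as formulated applies to the $\GL$-variety $\wt{X}$ cut out by the orbit of $f$, not to $X$ directly; since $f \in I(X)$ and $X$ is $\GL$-stable, $X$ is a closed $\GL$-subvariety of $\wt{X}$, hence $\Sh_n(X)[1/h] \hookrightarrow \Sh_n(\wt{X})[1/h]$ is a closed immersion and one composes with the immersion from Theorem~\ref{thm:embed1} --- the paper makes this extra step explicit.
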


\begin{proof}
Suppose case (a) does not hold. Let $\phi \colon Y \times \bA^{\lambda} \to Y$ be the projection map and let $Y_0=\ol{\phi(X)}$. Since (a) does not hold, it follows that $X$ is a proper closed subset of $\phi^{-1}(Y_0)=Y_0 \times \bA^{\lambda}$. There is therefore some function on $Y \times \bA^{\lambda}$ that vanishes identically on $X$ but not on $Y_0 \times \bA^{\lambda}$. Of all such functions, choose one $f$ of minimal degree. Let $\{x_i\}$ be coordinates on $\bA^{\lambda}$, and write $f=\sum_{\alpha \in S} g_{\alpha} x^{\alpha}$ where $S$ is a finite set of exponent vectors, $x^{\alpha}$ denotes the monomial $x_1^{\alpha_1} x_2^{\alpha_2} \cdots$, and $g_{\alpha}$ is a function on $Y$. We can assume that no $g_{\alpha}$ vanishes identically on $Y_0$; indeed, we can simply subtract off such terms from $f$. Let $h$ be the partial derivative of $f$ with respect to some $x_i$ that appears in $f$. Then $h$ does not vanish identically on $Y_0 \times \bA^{\lambda}$, since its coefficients are among the $g_{\alpha}$'s. Since $h$ is non-vanishing on $Y_0 \times \bA^{\lambda}$ and has smaller degree than $f$, it does not vanish on $X$ by the minimality of the degree of $f$. Now, let $\wt{X}$ be the $\GL$-subvariety of $Y \times \bA^{\lambda}$ defined by the orbit of $f$; note that $X$ is a closed subvariety of $\wt{X}$. Again take $n$ such that $f$ and $x_i$ are $G(n)$-invariant. Then, by the previous version of the theorem, the map
\begin{displaymath}
\pi \colon \Sh_n(\wt{X})[1/h] \to (\Sh_n(Y) \times \bA^{\umu})[1/h]
\end{displaymath}
is a closed immersion. Since $\Sh_n(X)[1/h]$ is a closed subvariety of $\Sh_n(\wt{X})[1/h]$, we obtain the desired result.
\end{proof}

\begin{remark} \label{re:poschar}
In the proof of Theorem~\ref{thm:embed2}, characteristic zero is used in an essential manner: indeed, in positive characteristic, it is possible for all derivatives of all possible $f$'s to vanish identically. We do not know if Theorem~\ref{thm:embed2} remains valid in positive characteristic; we hope to return to this topic in the future.
\end{remark}

\subsection{A companion result}

The following result is often used in conjunction with the embedding theorem:

\begin{proposition} \label{prop:shiftmap}
Let $X$ be an affine $\GL$-variety over a field $K$ and let $h$ be a non-zero invariant function on $\Sh_n(X)$.
\begin{enumerate}
\item The natural inclusion $\iota: \bV \to K^n \oplus \bV$ induces a $\GL$-morphism $\phi \colon \Sh_n(X)[1/h] \to X$.
\item The image of $\phi$ contains a non-empty open $\GL$-subset $U$ of $X$.
\item Every point $x \in U$ admits an open neighborhood $V$ such that the map $\phi^{-1}(V) \to V$ has a section. (The open set and section are not necessarily $\GL$-stable.)
\item Let $\Omega/K$ be an extension field. Given any $\Omega$-point $x$ of $U$, there is an $\Omega$-point $y$ of $\Sh_n(X)[1/h]$ such that $\phi(y)=x$.
\end{enumerate}
\end{proposition}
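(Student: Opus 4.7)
The plan is to dispatch (a) immediately, reduce (b), (c), (d) to a single key claim, and then concentrate the work on that claim. For (a), the inclusion $\iota \colon \bV \to K^n \oplus \bV$ induces a $\GL$-morphism $\Sh_n(X) \to X$ (as already described in \S~\ref{ssec:Shift}); since $h$ is $\GL$-invariant, $\Sh_n(X)[1/h]$ is a $\GL$-stable open, and $\phi$ is simply the restriction.

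For (b), (c), (d), I consider the family of sections of $\phi$ arising from splittings of $\iota$. Every $K$-linear left inverse of $\iota$ has the form $p_\alpha(a,v) = v + \alpha(a)$ for a unique $\alpha \in \Hom(K^n, \bV)$, and $p_\alpha$ induces by functoriality a morphism $s_\alpha \colon X \to \Sh_n(X)$ with $\phi \circ s_\alpha = \id_X$. Writing $s_\alpha = X\{T_\alpha\} \circ s_0$, where $T_\alpha \in \GL(K^n \oplus \bV)$ is the unipotent translation $(a,v) \mapsto (a, v + \alpha(a))$, one checks that the $\GL$-action on $\Sh_n(X)$ conjugates $X\{T_\alpha\}$ to $X\{T_{g\alpha}\}$ (for an appropriate $\GL$-action on $\Hom(K^n, \bV)$); hence, using $\GL$-invariance of $h$, the opens $V_\alpha := X[1/s_\alpha^* h] \subseteq X$ satisfy $g V_\alpha = V_{g\alpha}$. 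The key claim is then that some $\alpha_0 \in \Hom(K^n, \bV)$ has $s_{\alpha_0}^* h \ne 0$ in $\cO(X)$. Granted this, $s_{\alpha_0}$ restricts to a morphism $V_{\alpha_0} \to \Sh_n(X)[1/h]$ which is a section of $\phi$; the union $U := \bigcup_{g \in \GL} V_{g\alpha_0}$ then gives the non-empty open $\GL$-subset of $X$ lying in the image of $\phi$, proving (b). For each $x \in U$, picking $g$ with $x \in V_{g\alpha_0}$ and using the section $s_{g\alpha_0}$ on that open yields (c); and (d) follows by composing any $\Omega$-point $\Spec \Omega \to V_{g\alpha_0}$ with $s_{g\alpha_0}$.

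The hard part is the key claim. My plan is first to reduce to $X$ irreducible, so that $R := \cO(X)$ is a domain. Since $\phi$ admits the global section $s_0$, the ring map $\phi^* \colon R \hookrightarrow \Sh_n(R)$ is split injective; moreover, via the ring isomorphism $\sigma_n \colon \Sh_n(R) \xrightarrow{\sim} R$ we see that $\Sh_n(R)$ is itself a domain, hence $R$-torsion-free. Consequently a non-zero $h \in \Sh_n(R)$ has non-zero image in $\Sh_n(R) \otimes_R \operatorname{Frac}(R)$, i.e., $h$ does not vanish identically on the generic fiber $\phi^{-1}(\eta)$ where $\eta$ is the generic point of $X$. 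The remaining — and principal — obstacle is to upgrade this generic-fiber non-vanishing to the existence of an $\alpha_0$ with $s_{\alpha_0}^* h \ne 0$. My intended approach is to organize the sections into a morphism $\Psi \colon \Hom(K^n, \bV) \times X \to \Sh_n(X)$, $(\alpha,x) \mapsto s_\alpha(x)$, understood via its finite-dimensional truncations $\Hom(K^n, K^m) \times X$ as $m \to \infty$, and to show that $\Psi^*$ is injective when restricted to the subring $\Sh_n(R)^{\GL}$ of $\GL$-invariants. Since $R$ is a polynomial functor, $R\{T_\alpha\}$ depends polynomially on $\alpha$, so $s_\alpha^* h$ is a polynomial in $\alpha$ with coefficients in $R$; if it vanishes identically on every truncation, then every Taylor coefficient vanishes in $R$, and the $\GL$-invariance of $h$ — together with a careful analysis of how $\Hom(K^n, \bV) \subset \End(\bV)$ acts on polynomial representations — should force $h = 0$, contradicting the hypothesis. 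Carrying out this polynomial-functor calculation cleanly (especially noting that the non-invariant pullback $\Psi^*$ is \emph{not} injective in general, so the $\GL$-invariance must be used essentially) is where the main technical effort will lie.
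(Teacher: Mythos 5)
The framework you set up is sound: parameterizing left inverses of $\iota$ by $\alpha \in \Hom(K^n,\bV)$, noting $s_\alpha = X\{T_\alpha\}\circ s_0$, and observing that the opens $V_\alpha$ are permuted by $\GL$ so that $U = \bigcup_g V_{g\alpha_0}$ does the job for (b)--(d) once $s_{\alpha_0}^*h \ne 0$ for some $\alpha_0$. The paper uses exactly this device with the single $\alpha_0 \colon e_i \mapsto e_i$ (its map $\tau$ is your $p_{\alpha_0}$). But you leave the key claim unproven, and the route you sketch for it---torsion-freeness of $\Sh_n(R)$ over $R$, non-vanishing on the generic fiber, organizing the $s_\alpha$ into $\Psi$ and computing Taylor coefficients---stops at ``should force $h=0$.'' That is a genuine gap, and it is also a substantial detour.

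Here is the observation you are missing, which makes the key claim immediate. Because $h$ is $\GL$-invariant, it lies in $\Sh_n(R)^{\GL}$, which (as recalled in \S\ref{ssec:Shift}) is the image of $R\{K^n\}$ under the map induced by the inclusion $K^n \hookrightarrow K^n\oplus\bV$. Writing $\cF$ for the polynomial functor with $R = \cF(\bV)$, the composite
\begin{displaymath}
\cF(K^n) \longrightarrow \cF(K^n\oplus\bV) = \Sh_n(R) \xrightarrow{\ s_\alpha^*\ } \cF(\bV) = R
\end{displaymath}
is $\cF(p_\alpha\circ\mathrm{incl}) = \cF(\alpha)$. In characteristic $0$, $\cF$ is a direct sum of Schur functors, and $\bS_\lambda(K^n)$ is a direct summand of $\bS_\lambda(K^n\oplus W)$ for any $W$, so $\cF$ sends (split) injections to injections. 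Hence for \emph{any} injective $\alpha_0 \colon K^n\to\bV$ one has $s_{\alpha_0}^*h \ne 0$, with no torsion or generic-fiber argument required. This is precisely why the paper can get away with the single concrete $\tau$. Incidentally, the aside ``$\Hom(K^n,\bV)\subset\End(\bV)$'' in your sketch does not typecheck, and the reduction to $X$ irreducible introduces a mismatch you do not address ($U_i$ would be open in the component $X_i$, not in $X$; one has to intersect with $X_i\setminus\bigcup_{j\ne i}X_j$), whereas the invariance argument above works uniformly over all of $X$ without any such reduction.
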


\begin{proof}
(a) By the setup in \S \ref{ssec:Shift}, that inclusion yields an
inclusion at the level of coordinate rings and a surjection $\Sh_n X
\to X$; $\phi$ is the restriction to $\Sh_n(X)[1/h]$.

(b) Consider the linear map $\tau: K^n \oplus \bV \to \bV$ that is the identity
on $\bV$ and sends the standard basis of $K^n$ to $e_1,\ldots,e_n$. We
have $\tau \circ \iota=1_V$, and therefore $\tau$ induces a
(non-$\GL$-equivariant) section $\tau^*$
of the $\GL$-equivariant surjective morphism $\Sh_n(X) \to
X$ coming from $\iota$. Let $K^n \to K^n\oplus\bV$ be the natural inclusion and let $K^n\to\bV$ be the linear inclusion sending the standard basis to $e_1,\ldots,e_n$. Then, since $h$
is $\GL$-invariant and nonzero and the diagram
\begin{displaymath}
\xymatrix{
K^n\ar[r]\ar[rd] & K^n\oplus \bV \ar[d]^{\tau} \\
& \bV}
\end{displaymath}
commutes, the pull-back $\tau h$ of $h$ along $\tau^{*}$
is a nonzero $G(n)$-invariant function on $X$. It
follows that $X[1/\tau h]$ is in the image of
$\phi:\Sh_n(X)[1/h] \to X$. Since
$\im \phi$ is $\GL$-stable, the open $\GL$-set $U:=\GL \cdot
X[1/\tau h]$ is also contained in $\im \phi$.

(c) On $X[1/\tau h]$ we have constructed the section
$\tau^*$. On $g \cdot
X[1/h]$ for $g \in \GL$ we have the section $g \tau^* g^{-1}$.

(d) Use the section from (c).
\end{proof}

\section{The shift theorem and consequences} \label{s:shift}

\subsection{The shift theorem}

We now establish the important shift theorem. We first
introduce a piece of terminology. Suppose that $X$ is a
non-empty closed $\GL$-subvariety of $\bA^{\ulambda}$ for
some tuple $\ulambda$. Let $d$ be the maximal size of a
partition appearing in $\ulambda$, and decompose $\ulambda$
as $\umu \cup \unu$, where $\unu$ consists of the partitions
in $\ulambda$ of size $d$ and $\umu$ consists of the
remaining partitions. We say that $X$ is \defn{imprimitive}
if $d>0$ and $X$ has the form $Y \times \bA^{\unu}$ for some
closed $\GL$-subvariety $Y$ of $\bA^{\umu}$, and
\defn{primitive} otherwise. It is not difficult to see that
this notion is independent of coordinates, i.e., invariant
under the action of $\Aut(\bA^{\ulambda})$. Given any $X$,
one can find a decomposition $\ulambda=\umu \cup \unu$ (not
necessarily the same as the decomposition above) with $\unu$ pure such that $X=Y \times \bA^{\unu}$ for some primitive closed $\GL$-subvariety $Y$ of $\bA^{\umu}$.

\begin{theorem} \label{thm:shift}
Let $X$ be an affine $\GL$-variety.
\begin{enumerate}
\item There exists an integer $n \ge 0$ and a non-zero invariant function $h$ on $\Sh_n(X)$ such that $\Sh_n(X)[1/h]$ is isomorphic, as a $\GL$-variety, to $B \times \bA^{\ul{\kappa}}$ for some variety $B/K$ and some pure tuple $\ul{\kappa}$.
\item Suppose that $X$ is a primitive closed $\GL$-subvariety of $\bA^{\ulambda}$, and $\ulambda$ contains a non-empty partition. Then one can realize the situation in (a) with $\magn(\ul{\kappa})<\magn(\ulambda)$. In fact, if $d$ is the maximal size of a partition in $\ulambda$, then one can take $\ul{\kappa}$ so that $\magn(\ul{\kappa})_i=0$ for $i>d$ and $\magn(\ul{\kappa})_d<\magn(\ulambda)_d$.
\end{enumerate}
\end{theorem}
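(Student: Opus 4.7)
The plan is to prove (b) by induction on $\magn(\ulambda)$, ordered lexicographically as in \S\ref{ss:part}; part (a) will follow immediately. For (a), given any affine $\GL$-variety $X$, I would decompose $X=Y\times\bA^{\unu}$ with $Y\subseteq\bA^{\umu}$ primitive and $\unu$ pure: if $\umu$ has a non-empty partition, apply (b) to $Y$ and multiply by $\bA^{\sh_n(\unu)}$; otherwise $Y$ is already a finite-dimensional variety with trivial $\GL$-action and no shift is needed.

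For the induction step in (b), set $d=\deg(\ulambda)>0$, pick a partition $\lambda\in\ulambda$ of size $d$, and write $\ulambda=\ulambda'\cup[\lambda]$. I would apply the Embedding Theorem (Theorem~\ref{thm:embed2}) with $Y=\bA^{\ulambda'}$: case~(a) there cannot hold, since $X=Y_0\times\bA^{\lambda}$ would force $X$ to be imprimitive, so case~(b) yields $n_1\geq 0$, a $\GL$-invariant function $h_1$ on $\Sh_{n_1}(Y)\times\bA^{\umu}$ with $\umu=\sh_{n_1,0}(\lambda)$, and a closed immersion
\[
\pi\colon\Sh_{n_1}(X)[1/h_1]\hookrightarrow\bA^{\urho}[1/h_1], \qquad \urho:=\sh_{n_1}(\ulambda')\cup\umu.
\]
A count of sizes shows $\magn(\urho)_i=0$ for $i>d$ and $\magn(\urho)_d=\magn(\ulambda)_d-1$, so $\magn(\urho)<\magn(\ulambda)$. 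Let $Z\subseteq\bA^{\urho}$ be the closure of the image of $\pi$; then $Z$ is an affine $\GL$-variety with $Z[1/h_1]\cong\Sh_{n_1}(X)[1/h_1]$. Decomposing $Z=Z_{\prin}\times\bA^{\unu_0}$ with $Z_{\prin}\subseteq\bA^{\umu_0}$ primitive, $\unu_0$ pure, and $\umu_0\cup\unu_0=\urho$: either $\umu_0$ has no non-empty partition (so $Z_{\prin}$ is finite-dimensional and I take $m=0$, $h_2=1$) or the inductive hypothesis (b) applies to $Z_{\prin}$, and in either case I obtain $m\geq 0$, a nonzero invariant $h_2\in\Sh_m(Z_{\prin})^{\GL}$, and an isomorphism $\Sh_m(Z_{\prin})[1/h_2]\cong B\times\bA^{\ul{\kappa}''}$ with $B$ a variety and $\ul{\kappa}''$ pure.

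Multiplying by $\Sh_m(\bA^{\unu_0})=\bA^{\sh_m(\unu_0)}$ gives $\Sh_m(Z)[1/h_2]\cong B\times\bA^{\ul{\kappa}'}$ with $\ul{\kappa}':=\ul{\kappa}''\cup\sh_m(\unu_0)$ pure; a short case analysis depending on whether $\deg(\umu_0)<d$ or $\deg(\umu_0)=d$ then confirms $\magn(\ul{\kappa}')_i=0$ for $i>d$ and $\magn(\ul{\kappa}')_d<\magn(\ulambda)_d$. Applying $\Sh_m$ to the $\GL$-equivariant surjection underlying $\pi$ and localizing further at $h_2$ produces
\[
\Sh_{n_1+m}(X)[1/(h_1h_2)]\cong\Sh_m(Z)[1/(h_1h_2)]\cong(B\times\bA^{\ul{\kappa}'})[1/h_1].
\]
Since $\ul{\kappa}'$ is pure, $\bR_{\ul{\kappa}'}^{\GL}=K$, so the invariant $h_1$ must factor through the projection to $B$; the right-hand side is therefore $B[1/h_1]\times\bA^{\ul{\kappa}'}$, completing the induction with shift $n_1+m$, invariant $h_1h_2$, and $B':=B[1/h_1]$.

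The main technical obstacle will be justifying that applying $\Sh_m$ to the isomorphism $\Sh_{n_1}(X)[1/h_1]\cong Z[1/h_1]$ produces the displayed iso above, since those localizations are not polynomial representations in the strict sense. This reduces to showing that $\Sh_m$, an exact symmetric monoidal functor on polynomial representations, sends ideals of $\bR_{\urho}$ to ideals of $\bR_{\sh_m(\urho)}$ and commutes with localization at the $\GL$-invariant $h_1$: the key point is that $h_1\in\bR_{\urho}^{\GL}$ lifts to $h_1\in\bR_{\sh_m(\urho)}^{\GL}$ under the canonical $\GL$-equivariant inclusion $\bV\hookrightarrow K^m\oplus\bV$, so both sides of the resulting iso can be described as quotients of $\bR_{\sh_m(\urho)}[1/h_1]$ by the same ideal. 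Beyond this, the magnitude bookkeeping in the case split is mechanical but must be tracked carefully to maintain strict lexicographic decrease at index $d$.
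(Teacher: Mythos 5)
Your overall strategy matches the paper's: induct on $\magn(\ulambda)$ using the embedding theorem (Theorem~\ref{thm:embed2}) and track how magnitude decreases. The organization differs slightly — you induct directly on statement (b) for primitive subvarieties and recover (a) by peeling off a pure affine factor, whereas the paper inducts on an unconditional statement $S(\ulambda)$ about arbitrary closed $\GL$-subvarieties and then derives (b) as a corollary — but that is a matter of bookkeeping, not substance.

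There is, however, one genuine gap. You write that ``case~(a) there cannot hold, since $X=Y_0\times\bA^{\lambda}$ would force $X$ to be imprimitive,'' but this is false when $\ulambda$ contains more than one partition of maximal size $d$. Imprimitivity requires $X=W\times\bA^{\unu_0}$ where $\unu_0$ is the \emph{full} sub-tuple of all size-$d$ partitions, and a product factorization along a \emph{single} size-$d$ coordinate $\bA^{\lambda}$ does not imply that. Concretely, take $X=\{0\}\times\bA^{(1)}\subset\bA^{(1)}\times\bA^{(1)}=\bA^{[(1),(1)]}$; this $X$ is primitive, yet picking $\lambda$ to be the second copy of $(1)$ and $Y=\bA^{(1)}$ the first copy, case~(a) of Theorem~\ref{thm:embed2} does hold (with $Y_0=\{0\}$), and your argument stalls. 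Picking $\lambda$ to be the other copy avoids the problem, and in fact one can show that for primitive $X$ there always exists \emph{some} size-$d$ coordinate for which case~(a) fails, but your proof needs to make that choice explicitly (or, as the paper's proof does, ``after possibly making a linear change of coordinates, we can choose the decomposition $\ulambda=\umu\cup\nu$ such that $X$ does not have the form $Y_0\times\bA^{\nu}$; this follows from the assumption that $X$ is primitive''). Aside from this point, your handling of $\Sh_m$ versus localization at $\GL$-invariants, the decomposition of the image closure $Z=Z_{\prin}\times\bA^{\unu_0}$, and the magnitude bookkeeping at index $d$ are all sound and consistent with the paper's argument.
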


\begin{proof}
(a) Let $\ulambda$ be a pure tuple and let $d$ be the maximum size of a partition appearing in $\ulambda$. For a pure tuple $\ul{\kappa}$, we write $\ul{\kappa} \lessapprox \ulambda$ if $\magn(\ul{\kappa})_i=0$ for $i>d$ and $\magn(\ul{\kappa})_d \le \magn(\ulambda)_d$. Consider the following statement for a tuple $\ulambda$:
\begin{itemize}
\item[$S(\ulambda)$:] Given a closed $\GL$-subvariety $X$ of $\bA^{\ulambda}$ there exists $n \ge 0$ and a non-zero invariant function $h$ on $\Sh_n(X)$ such that $\Sh_n(X)[1/h]$ is isomorphic to $B \times \bA^{\ul{\kappa}}$ for some variety $B$ and pure tuple $\ul{\kappa} \lessapprox \ulambda$.
\end{itemize}
We prove the statement $S(\ulambda)$ for all $\ulambda$ by induction on the magnitude of $\ulambda$. This will establish the theorem.

Let $\ulambda$ be given, and suppose that $S(\umu)$ holds
for all $\umu$ with magnitude strictly less than that of
$\ulambda$. We prove $S(\ulambda)$. If $\ulambda$ consists
of empty partitions the statement is obvious, so assume this
is not the case. Let $X \subseteq \bA^{\ulambda}$ be given.
Let $d>0$ be the maximal size of a partition in $\ulambda$ and let $\nu$ be a partition in $\ulambda$ of size $d$. Let $\umu$ be the tuple obtained by removing $\nu$ from $\ulambda$, so that $\ulambda=\umu \cup \nu$. Let $Y=\bA^{\umu}$, so that $X$ is a subvariety of $Y \times \bA^{\nu}$. We now apply Theorem~\ref{thm:embed2}; we consider the two possible cases in turn.

In the first case, there is a closed $\GL$-subvariety $Y_0$ of $Y$ such
that $X=Y_0 \times \bA^{\nu}$. Since $\magn(\umu)<\magn(\ulambda)$,
the statement $S(\umu)$ holds, and so $\Sh_n(Y_0)[1/h] \cong B \times
\bA^{\ul{\tau}}$ for some variety $B$, tuple $\ul{\tau} \lessapprox \umu$, integer $n$,
and non-zero invariant function $h$ on $\Sh_n(Y_0)$. We thus find
\begin{displaymath}
\Sh_n(X)[1/h] = \Sh_n(Y_0)[1/h] \times \Sh_n(\bA^{\nu}) \cong B \times \bA^{\ul{\tau} \cup \sh_n(\nu)}.
\end{displaymath}
We note that $\ul{\tau} \cup \sh_n(\nu) \lessapprox \ulambda$, since all partitions in $\sh_n(\nu)$ other than $\nu$ have size $<d$. Hence the conclusion of the theorem holds for $X$.

In the second case, there exists an integer $n \ge 0$ and a non-zero
invariant function $h$ on $\Sh_n(Y) \times \bA^{\sh_{n,0}(\nu)}$ that
does not vanish identically on $\Sh_n(X)$ such that the map
\begin{displaymath}
\pi \colon \Sh_n(X)[1/h] \to (\Sh_n(Y) \times \bA^{\sh_{n,0}(\nu)})[1/h]
\end{displaymath}
is a closed immersion. Now, $\Sh_n(Y) \times \bA^{\sh_{n,0}(\nu)}=\bA^{\ul{\sigma}}$ where $\ul{\sigma}=\sh_n(\umu) \cup \sh_{n,0}(\nu)$. Since $h$ is a $\GL$-invariant function on $\bA^{\ul{\sigma}}$, we can realize $\bA^{\ul{\sigma}}[1/h]$ as a closed $\GL$-subvariety of $\bA^1 \times \bA^{\ul{\sigma}}=\bA^{\ul{\tau}}$, where $\ul{\tau}=[\emptyset] \cup \ul{\sigma}$. We have $\magn(\ul{\tau})<\magn(\ul{\lambda})$; in fact, $\magn(\ul{\tau})_d<\magn(\ulambda)_d$. Thus the statement $S(\ul{\tau})$ holds by the induction hypothesis. Hence, there exists an integer $m$ and a non-zero invariant function $h'$ on $\Sh_m(\Sh_n(X)[1/h])=\Sh_{n+m}(X)[1/h]$ such that $\Sh_{n+m}(X)[1/hh']$ is isomorphic to $B \times \bA^{\ul{\kappa}}$ with $\ul{\kappa} \lessapprox \ul{\tau}$. Note that $\ul{\kappa} \lessapprox \ulambda$; in fact, we have $\magn(\ul{\kappa})_d<\magn(\ulambda)_d$. Thus $S(\ulambda)$ holds.

(b) We maintain the notation from part (a). After possibly
making a linear change of coordinates, we can choose the
decomposition $\ulambda=\umu \cup \nu$ such that $X$ does
not have the form $Y_0 \times \bA^{\nu}$; this follows from
the assumption that $X$ is primitive. Following the proof in part~(a), we find ourselves in the second case, and so, as noted, we can realize the conclusion with $\magn(\ul{\kappa})_d<\magn(\ulambda)_d$.
\end{proof}

\begin{remark}
The variety $B$ in Theorem~\ref{thm:shift} is in fact $(X\{K^n\})[1/h]$, as one sees by evaluating on $K^0$. We can therefore deduce certain properties about $B$ from those of $X$. For example, if $X$ is irreducible then so is $B$.
\end{remark}

The shift theorem has the following consequence for $\GL$-algebras:

\begin{corollary}
Let $R$ be a reduced $\GL$-algebra that is finitely $\GL$-generated over a field~$K$. Then there is a non-zero element $h \in R$ such that, ignoring the $\GL$-actions, $R[1/h]$ is isomorphic to a polynomial algebra over a finitely generated $K$-algebra.
\end{corollary}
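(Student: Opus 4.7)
The strategy is to apply the shift theorem (Theorem~\ref{thm:shift}(a)) to $X=\Spec(R)$ and then descend from $\Sh_n(X)$ to $X$ using the fact that the shift operation is an isomorphism at the level of $K$-algebras once the $\GL$-action is forgotten.

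First, I would observe that since $R$ is reduced and finitely $\GL$-generated, $X=\Spec(R)$ is an affine $\GL$-variety in the sense of~\S\ref{ss:GLvar}. Applying Theorem~\ref{thm:shift}(a), I obtain an integer $n\ge 0$, a non-zero $\GL$-invariant function $h'$ on $\Sh_n(X)$, and an isomorphism of $\GL$-varieties
\begin{displaymath}
\Sh_n(X)[1/h']\;\cong\; B\times\bA^{\ul{\kappa}}
\end{displaymath}
for some (finite-dimensional) variety $B/K$ and some pure tuple $\ul{\kappa}$. Passing to coordinate rings and setting $A=\Gamma(B,\cO_B)$, this yields an isomorphism of $\GL$-algebras
\begin{displaymath}
\Sh_n(R)[1/h']\;\cong\; A\otimes_K\bR_{\ul{\kappa}}.
\end{displaymath}
Here $A$ is a finitely generated $K$-algebra, and $\bR_{\ul{\kappa}}=\Sym(\bV_{\ul{\kappa}})$ is the symmetric algebra on a $K$-vector space and hence, after choosing a basis of $\bV_{\ul{\kappa}}$, a polynomial ring over $K$. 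Thus $A\otimes_K\bR_{\ul{\kappa}}$ is a polynomial ring over the finitely generated $K$-algebra $A$.

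Next, I would invoke the alternative description of the shift operation given in \S\ref{ssec:Shift}: the linear shift $\sigma_n$ induces an isomorphism of $K$-algebras $\sigma_n\colon \Sh_n(R)\to R$, where the $\GL$-action on the target is twisted through $\GL\to G(n)$. In particular, forgetting the $\GL$-action entirely, $\sigma_n$ is an isomorphism of bare $K$-algebras. Setting $h:=\sigma_n(h')\in R$, which is non-zero because $h'$ is non-zero and $\sigma_n$ is an isomorphism, the map $\sigma_n$ descends to an isomorphism $\Sh_n(R)[1/h']\cong R[1/h]$ of $K$-algebras. Combining this with the displayed isomorphism above produces an isomorphism of $K$-algebras
\begin{displaymath}
R[1/h]\;\cong\; A\otimes_K\bR_{\ul{\kappa}},
\end{displaymath}
which is the desired conclusion.

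There is no real obstacle here: the shift theorem does all of the geometric work, and the only remaining subtlety is to remember that while $\Sh_n$ changes the $\GL$-action, it does not change the underlying $K$-algebra up to isomorphism. That observation is precisely what allows the invariant function $h'$ on the shift to be transported to a function $h$ on $R$ itself with the required property.
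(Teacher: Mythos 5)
Your proof is correct and is exactly the intended argument: the paper states this as an immediate consequence of Theorem~\ref{thm:shift} without writing out a proof, and your two-step reasoning (apply the shift theorem, then transport the invariant $h'$ back to $R$ via the $K$-algebra isomorphism $\sigma_n\colon \Sh_n(R)\to R$ from \S\ref{ssec:Shift}, which forgets $\GL$-equivariance) is precisely what is meant.
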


\subsection{Unirationality}

We now obtain the important unirationality theorem:

\begin{theorem} \label{thm:uni}
Let $X$ be an irreducible affine $\GL$-variety. Then there exists a dominant $\GL$-morphism $\phi \colon B \times \bA^{\umu} \to X$ for some irreducible variety $B/K$ and pure tuple $\ulambda$. Moreover:
\begin{enumerate}
\item The image of $\phi$ contains a non-empty open $\GL$-subset $U$ of $X$.
\item Every point $x \in U$ admits an open neighborhood $V$ such that the map $\phi^{-1}(V) \to V$ admits a section.
\item Let $\Omega/K$ be an extension. Then any $\Omega$-point of $U$ lifts to an $\Omega$-point of $B \times \bA^{\ulambda}$.
\item Suppose $X$ is a closed $\GL$-subvariety of $\bA^r \times \bA^{\ulambda}$, for some $r \ge 0$ and pure tuple $\ulambda$. Then either $X=C \times \bA^{\ulambda}$ for some closed subvariety $C$ of $\bA^r$, or else one can find $\phi$ as above with $\magn(\umu)<\magn(\ulambda)$.
\end{enumerate}
\end{theorem}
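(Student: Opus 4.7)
The plan is to deduce $\phi$ and clauses (a), (b), (c) from the Shift theorem (Theorem~\ref{thm:shift}) together with Proposition~\ref{prop:shiftmap}, and then handle clause (d) by induction on $\magn(\ulambda)$ using the refined part (b) of the Shift theorem.

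For the main existence statement, apply Theorem~\ref{thm:shift}(a) to $X$ to produce an integer $n\ge 0$, a nonzero $\GL$-invariant function $h$ on $\Sh_n(X)$, and a $\GL$-equivariant isomorphism $\Sh_n(X)[1/h] \cong B \times \bA^{\umu}$ with $\umu$ pure. The shift isomorphism $\sigma_n$ identifies the coordinate ring of $\Sh_n(X)$ with that of $X$ as bare $K$-algebras, so $\Sh_n(X)[1/h]$ is integral; since $\bR_{\umu}$ is a polynomial ring over $K$, it follows that $B$ is integral, hence irreducible. Proposition~\ref{prop:shiftmap}(a) supplies a $\GL$-morphism $\Sh_n(X)[1/h]\to X$, which under the above isomorphism becomes the desired morphism $\phi\colon B\times \bA^{\umu}\to X$; it is $\GL$-equivariant by construction, and dominant since Proposition~\ref{prop:shiftmap}(b) guarantees its image contains a non-empty $\GL$-stable open $U\subseteq X$. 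Clauses (a), (b), (c) of the theorem are then direct translations of clauses (b), (c), (d) of Proposition~\ref{prop:shiftmap}.

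For clause (d) I argue by induction on $\magn(\ulambda)$. If $\ulambda$ is the empty tuple, we are automatically in the first case with $C=X$. Otherwise let $\ul{\sigma}=[\emptyset,\ldots,\emptyset]\cup\ulambda$ (with $r$ empty partitions), so that $X\subseteq\bA^{\ul{\sigma}}$, and set $d=\max\ulambda>0$. If $X$ is primitive in $\bA^{\ul{\sigma}}$, apply Theorem~\ref{thm:shift}(b) directly: because the $r$ empty partitions contribute nothing to $\magn$ at indices $\ge 1$, the resulting pure tuple $\umu$ satisfies $\magn(\umu)_i=0$ for $i>d$ and $\magn(\umu)_d<\magn(\ul{\sigma})_d=\magn(\ulambda)_d$, whence $\magn(\umu)<\magn(\ulambda)$. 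If instead $X$ is imprimitive, write $X=Y\times\bA^{\unu}$ where $\unu$ is the sub-tuple of size-$d$ partitions of $\ulambda$ and $Y\subseteq \bA^r\times\bA^{\ulambda'}$ with $\ulambda=\ulambda'\cup\unu$; here $Y$ is irreducible because $X$ is, and $\magn(\ulambda')<\magn(\ulambda)$. By the inductive hypothesis applied to $Y$, either $Y=C\times\bA^{\ulambda'}$ (whence $X=C\times\bA^{\ulambda}$ as required), or there is a dominant $\GL$-morphism $\phi'\colon B\times\bA^{\umu'}\to Y$ with $\magn(\umu')<\magn(\ulambda')$; in the latter case, $\phi'\times\id\colon B\times\bA^{\umu'\cup\unu}\to X$ is the required morphism, and clauses (a)--(c) transfer from $\phi'$ to $\phi'\times\id$ by taking the open subset, local section, and $\Omega$-point lift for $Y$ and multiplying by the identity on $\bA^{\unu}$.

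The only subtle point is the magnitude bookkeeping in the imprimitive case of (d): one needs that appending a common pure tuple $\unu$ to both sides preserves the lexicographic inequality $\magn(\umu')<\magn(\ulambda')$. This is immediate from componentwise additivity of $\magn$ under $\cup$, since $\magn(\umu'\cup\unu)$ and $\magn(\ulambda)=\magn(\ulambda'\cup\unu)$ differ at exactly the positions where $\magn(\umu')$ and $\magn(\ulambda')$ differ, and by the same amounts. Everything else---irreducibility of $B$ and the preservation of (a)--(c) under the inductive construction---is essentially routine once Theorem~\ref{thm:shift} and Proposition~\ref{prop:shiftmap} are in hand.
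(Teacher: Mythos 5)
Your proof is correct and follows essentially the same route as the paper: the existence statement and clauses (a)--(c) come directly from Theorem~\ref{thm:shift}(a) combined with Proposition~\ref{prop:shiftmap}, and clause (d) is handled via the primitive/imprimitive dichotomy and Theorem~\ref{thm:shift}(b). The only real difference is organizational: the paper invokes, in one step, the assertion from \S\ref{s:shift} that any $X \subset \bA^{\ulambda}$ admits a decomposition $X = Y \times \bA^{\ul{\tau}}$ with $Y$ \emph{primitive} in $\bA^r \times \bA^{\ul{\sigma}}$ and $\ulambda = \ul{\sigma} \cup \ul{\tau}$, whereas you peel off the top-degree factor one layer at a time and recurse, with the lexicographic-magnitude check that appending a common pure tuple $\unu$ to both sides of $\magn(\umu') < \magn(\ulambda')$ preserves the inequality (componentwise additivity of $\magn$ under $\cup$). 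This makes your treatment of (d) slightly more self-contained at the cost of an explicit induction, but it is not a different idea.
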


\begin{proof}
By the shift theorem (Theorem~\ref{thm:shift}), we have an isomorphism $\Sh_n(X)[1/h] \cong B \times \bA^{\ulambda}$ for some $n$, $h$, $B$, and $\ulambda$. Parts (a), (b) and (c) now follow from Proposition~\ref{prop:shiftmap} applied to $\Sh_n(X)[1/h] \to X$. This morphism is dominant by (a) since $X$ is irreducible.

We now handle part~(d). Write $X=Y \times \bA^{\ul{\tau}}$ where $\ulambda=\ul{\sigma} \cup \ul{\tau}$ and $Y$ is a primitive subvariety of $\bA^r \times \bA^{\ul{\sigma}}$. If $\ul{\sigma}$ is the empty tuple then $C=Y$ is a subvariety of $\bA^r$, and $X=C \times \bA^{\ulambda}$. Thus assume this is not the case. By the shift theorem, we have an isomorphism $\Sh_n(Y)[1/h] \cong B \times \bA^{\ul{\kappa}}$ for some $n$, $h$, $B$, and $\ul{\kappa}$ with $\magn(\ul{\kappa})<\magn(\ul{\sigma})$. As noted in the first paragraph, $\psi \colon \Sh_n(Y)[1/h] \to Y$ satisfies the analogues of (a)--(c) and is dominant. Now consider the map $\bA^{\ul{\tau}} \times \Sh_n(Y)[1/h] \to \bA^{\ul{\tau}} \times Y$ given by $\id \times \psi$; call this $\phi$. The domain of $\phi$ is identified with $B \times \bA^{\ul{\kappa} \cup \ul{\tau}}$ and the target with $X$. Clearly, $\phi$ satisfies (a)--(c) and is dominant. Moreover, $\umu=\ul{\kappa} \cup \ul{\tau}$ satisfies $\magn(\umu)<\magn(\ulambda)$. Thus (d) holds.
\end{proof}

\begin{corollary} \label{cor:fg-GL-gen}
Let $X$ be an irreducible affine $\GL$-variety. Then $X$ admits a $\GL$-generic point defined over a finitely generated extension of $K$.
\end{corollary}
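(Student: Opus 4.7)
The plan is to combine the unirationality theorem (Theorem~\ref{thm:uni}) with the existence of $\GL$-generic points on products $B \times \bA^{\ulambda}$ over sufficiently large fields (Proposition~\ref{prop:generic-exists2}), and then push the resulting point forward to $X$.

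More concretely, first I would apply Theorem~\ref{thm:uni} to obtain an irreducible variety $B/K$, a pure tuple $\ulambda$, and a dominant $\GL$-morphism $\phi \colon B \times \bA^{\ulambda} \to X$. Let $L = K(B)$; this is a finitely generated extension of $K$, and $B$ admits a generic $L$-point (given by the identity map $\Spec L \to \Spec L \to B$). Then Proposition~\ref{prop:generic-exists2} directly produces a $\GL$-generic $L$-point $y$ of $B \times \bA^{\ulambda}$, i.e.\ a point whose $\GL$-orbit is Zariski dense.

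Next I would push this point forward: set $x = \phi(y) \in X(L)$, and check that $x$ is $\GL$-generic in $X$. Since $\phi$ is $\GL$-equivariant and continuous, and since $\ol{O}_y = B \times \bA^{\ulambda}$, we have
\begin{displaymath}
\ol{O}_x \;=\; \ol{\GL \cdot \phi(y)} \;=\; \ol{\phi(\GL \cdot y)} \;\supseteq\; \phi\bigl(\ol{\GL \cdot y}\bigr) \;=\; \phi(B \times \bA^{\ulambda}).
\end{displaymath}
Taking Zariski closures on the right and using that $\phi$ is dominant, we obtain $\ol{O}_x \supseteq \ol{\phi(B \times \bA^{\ulambda})} = X$, so $\ol{O}_x = X$ and $x$ is $\GL$-generic. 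Since $x$ is defined over $L$, which is finitely generated over $K$, this completes the argument.

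There is essentially no hard step here: the unirationality theorem does all the heavy lifting, Proposition~\ref{prop:generic-exists2} supplies the generic point on the source, and the pushforward is a straightforward application of continuity and $\GL$-equivariance of $\phi$ together with dominance. The only subtlety worth flagging is the little diagram-chase identifying $\ol{O}_x$ with (something containing) $\phi(\ol{O}_y)$, which is handled cleanly by the continuity identity $\phi(\ol{A}) \subseteq \ol{\phi(A)}$.
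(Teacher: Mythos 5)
Your proof is correct and follows essentially the same path as the paper's: apply the unirationality theorem to get a dominant $\phi \colon B \times \bA^{\ulambda} \to X$, invoke Proposition~\ref{prop:generic-exists2} with $\Omega = K(B)$ to get a $\GL$-generic point on the source over a finitely generated extension, and push it forward. You spell out the final continuity/equivariance chase that the paper leaves implicit, but the argument is the same.
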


\begin{proof}
Let $\phi \colon B \times \bA^{\ulambda} \to X$ be a dominant map as in Theorem~\ref{thm:uni}. Then $B \times \bA^{\ulambda}$ admits a $\GL$-generic point $x$ defined over a finitely generated extension of $K$ by Proposition~ \ref{prop:generic-exists2}, and $\phi(x)$ is a $\GL$-generic point of $x$.
\end{proof}

Combining the above result with a noetherian induction argument, we can obtain a finite collection of maps from varieties of the form $B \times \bA^{\umu}$ that are jointly surjective. Precisely:

\begin{proposition} \label{prop:Asurj}
Let $X$ be a closed $\GL$-subvariety of $\bA^{\ulambda}$ for some tuple $\ulambda$. Then we can find a finite family $\{ \phi_i \colon B_i \times \bA^{\ul{\kappa}_i} \to X \}_{1 \le i \le n}$ of morphisms $\GL$-varieties such that:
\begin{enumerate}
\item $B_i$ is an irreducible variety and $\magn(\ul{\kappa}_i) \le \magn(\ulambda)$.
\item For any extension $\Omega/K$, the $\phi_i$'s are jointly surjective on $\Omega$-points.
\end{enumerate}
\end{proposition}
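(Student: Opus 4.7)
The plan is to argue by noetherian induction on closed $\GL$-subvarieties of $\bA^{\ulambda}$, which is valid by Theorem~\ref{thm:Noetherianity}. Write $\ulambda=[\emptyset^r]\cup\ulambda'$ with $\ulambda'$ pure, so $\bA^{\ulambda}=\bA^r\times\bA^{\ulambda'}$. Assume the statement holds for every proper closed $\GL$-subvariety of $X$, and decompose $X=X_1\cup\cdots\cup X_m$ into irreducible components. Each $X_j$ is automatically $\GL$-stable by Proposition~\ref{prop:FirstProperties}(d).

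For each component $X_j$, apply Theorem~\ref{thm:uni}(d) in the ambient space $\bA^r\times\bA^{\ulambda'}$. In Case~1 of that theorem, $X_j=C_j\times\bA^{\ulambda'}$ for some closed $C_j\subset\bA^r$; since $X_j$ is irreducible and $\bA^{\ulambda'}$ has a $K$-point (the zero section), $C_j$ is irreducible as the image of $X_j$ under a projection. Take $\phi_j=\id$ with $B_j=C_j$ and $\ul{\kappa}_j=\ulambda'$; this satisfies $\magn(\ul{\kappa}_j)\le\magn(\ulambda)$ and is surjective on $\Omega$-points for all $\Omega/K$. In Case~2, we obtain a dominant $\GL$-morphism $\phi_j\colon B_j\times\bA^{\ul{\kappa}_j}\to X_j$ with $B_j$ irreducible and $\magn(\ul{\kappa}_j)<\magn(\ulambda')\le\magn(\ulambda)$, together with a non-empty open $\GL$-subset $U_j\subseteq X_j$ contained in the image on $\Omega$-points by parts (a)--(c) of Theorem~\ref{thm:uni}.

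Set $U_j=X_j$ in Case~1 and let $Z_j=X_j\setminus U_j$, which is a proper closed $\GL$-subvariety of $X_j$ (empty in Case~1). Then $Z:=\bigcup_{j=1}^m Z_j$ is a closed $\GL$-subvariety of $X$ that misses the generic point of every $X_j$ and is therefore a proper closed $\GL$-subvariety of $X$, hence also of $\bA^{\ulambda}$. By the inductive hypothesis applied to $Z\subset\bA^{\ulambda}$, there is a finite family $\{\psi_k\colon C_k\times\bA^{\ul{\tau}_k}\to Z\}$ satisfying (a) and (b) for $Z$. Composing these with the inclusion $Z\hookrightarrow X$ and combining with the $\phi_j$ yields a finite family whose joint $\Omega$-image equals $\bigl(\bigcup_j U_j(\Omega)\bigr)\cup Z(\Omega)=X(\Omega)$, and in which every source satisfies the magnitude bound.

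\textbf{Main obstacle.} The only subtle point is to ensure that the noetherian induction terminates, for which we need $Z\subsetneq X$ as closed $\GL$-subvarieties of the fixed ambient $\bA^{\ulambda}$; this is guaranteed because in Case~2 the open set $U_j$ is non-empty (by Theorem~\ref{thm:uni}(a)) and hence $Z_j\subsetneq X_j$, while in Case~1 $Z_j$ is empty. A secondary point is verifying that the magnitude bound is preserved across both cases and through the inductive step, but this is built into the statement of Theorem~\ref{thm:uni}(d) and the fact that the inductive hypothesis is applied within the same ambient $\bA^{\ulambda}$.
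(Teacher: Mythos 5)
Your proof is correct and follows essentially the same strategy as the paper's: noetherian induction combined with Theorem~\ref{thm:uni}(d). The only cosmetic difference is that you handle all irreducible components of $X$ simultaneously (peeling off a $\GL$-open $U_j$ from each $X_j$ and recursing on the union of the complements), whereas the paper applies the unirationality theorem to a single component and recurses on the complement of the resulting single open set; both variants are valid and lead to the same magnitude bound in both cases.
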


\begin{proof}
By noetherian induction, we can assume that the result holds for every proper closed $\GL$-subvariety of $X$. Applying Theorem~\ref{thm:uni} to an irreducible component of $X$, we have a map $\phi_1 \colon B_1 \times \bA^{\ul{\kappa}_1} \to X$ for an irreducible variety $B_1$ such that:
\begin{itemize}
\item the image of $\phi_1$ contains a non-empty open $\GL$-subset $U$ of $X$,
\item every $\Omega$-point of $U$ lifts to an $\Omega$-point under $\phi_1$, and
\item $\magn(\ul{\kappa}_1) \le \magn(\ulambda)$.
\end{itemize}
To see the third point, we apply Theorem~\ref{thm:uni}(d); in the first case we take $\ul{\kappa}_1=\ulambda$, and in the second we actually get $\magn(\ul{\kappa}_1)<\magn(\ulambda)$. Let $Z$ be the complement of $U$. By the inductive hypothesis, we can find maps $\{\phi_i \colon B_i \times \bA^{\ul{\kappa}_i} \to Z\}_{2 \le i \le n}$ such that $\magn(\ul{\kappa}_i) \le \magn(\ulambda)$ and the $\phi_i$'s are jointly surjective on $\Omega$-points for all $\Omega$. We thus see that $\{\phi_i\}_{1 \le i \le n}$ satisfies the requisite conditions.
\end{proof}

Note that by taking $B=B_1 \amalg \cdots \amalg B_n$ and
$\ul{\kappa}=\ul{\kappa}_1 \cup \cdots \cup \ul{\kappa}_n$, one can
create a single morphism $\phi \colon B \times \bA^{\ul{\kappa}} \to X$
that is surjective. However, even if $X$ is irreducible, the $B$ produced
by this method may well be reducible. In our follow-up (see \S \ref{ss:further}), with a considerable amount of extra work,
we show that one can actually find an irreducible $B$ in this case.

\subsection{Invariant function fields}

Let $X$ be an irreducible affine $\GL$-variety. We define the \defn{invariant function field} of $X$ to be $K(X)^{\GL}$, i.e., the fixed field of $\GL$ acting on the usual function field $K(X)$.

\begin{proposition} \label{prop:Ainv}
Let $B/K$ be an irreducible variety with function field $E$ and let $\ulambda$ be a pure tuple. Then the invariant function field of $B \times \bA^{\ulambda}$ is $E$.
\end{proposition}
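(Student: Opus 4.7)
The plan is to prove both inclusions, with the easy one $E \subseteq K(B \times \bA^{\ulambda})^{\GL}$ being immediate since functions pulled back from $B$ are $\GL$-fixed (as $\GL$ acts trivially on $B$). For the reverse, I would work with the coordinate ring. Set $A := \cO(B) \otimes_K \bR_{\ulambda}$; the function field equals $\Frac(A) = \Frac(A')$, where $A' := E \otimes_K \bR_{\ulambda}$ is a polynomial ring over the field $E$, hence a UFD whose units are exactly $E^\times$.

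Given $f \in \Frac(A')^{\GL}$, I would write $f = p/q$ with $p, q$ coprime in $A'$. For each $g \in \GL$, the equality $gp \cdot q = p \cdot gq$, combined with coprimality and the symmetric argument applied to $g^{-1}$, gives $gp = c(g) p$ for a unit $c(g) \in E^\times$, and similarly $gq = c(g) q$ for the \emph{same} character $c \colon \GL \to E^\times$ (this common character coming from $gf = f$). So both $p$ and $q$ become $\GL$-eigenvectors in $A'$.

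The heart of the argument is then to show that every $\GL$-eigenvector in $A'$ lies in $E$, which I would carry out in two stages. First, using the central $\bG_m$-action from \S\ref{ss:polyrep}, which is the large-$n$ limit of the $\GL$-elements $A_n(t)$ and which acts by $t^d$ on the degree-$d$ part of $\bR_{\ulambda}$, I conclude that $p$ is homogeneous of some degree $d \geq 0$: decomposing $p = \sum p_d$, the identity $A_n(t) p = c(A_n(t)) p$ stabilises to $\sum t^d p_d = c p$, which forces all but one $p_d$ to vanish. Second, I would rule out $d > 0$: since $\ulambda$ is pure, every partition $\mu$ occurring in the isotypic decomposition of $\bR_{\ulambda, d}$ has $|\mu| = d > 0$, so each $\bV_\mu$ is infinite-dimensional and irreducible. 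Base-changing $A'_d$ to an algebraic closure $\bar E$, each $\bar E \otimes_K \bV_\mu$ remains irreducible (a standard characteristic-zero fact about Schur functors) and therefore admits no nonzero $1$-dimensional $\GL$-subrepresentation. But the line $\bar E \cdot (p \otimes 1)$ would be such a subrepresentation, contradicting $p \neq 0$. Hence $d = 0$, giving $p \in A'_0 = E$; the same argument handles $q$, so $f = p/q \in E$.

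The step I expect to be the main obstacle is the final one: confirming that the irreducibility of $\bV_\mu$ over $K$ is preserved under base change to $\bar E$, so that no $1$-dimensional eigenline can hide in the extension. Once this absolute-irreducibility input is cleanly invoked, the rest of the proof is bookkeeping with the UFD $A'$ and the grading.
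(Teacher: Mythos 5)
Your proof is correct and takes a genuinely different, more explicit route than the paper's. The paper notes that $K(B\times\bA^{\ulambda})^{\GL}$ is contained in the invariants of the infinite symmetric group sitting inside $\GL$, asserts that those are already just $E$, and stops there; the whole argument is a single sentence. Your proof is self-contained: coprimality in the UFD $E\otimes_K\bR_{\ulambda}$ forces the numerator and denominator of an invariant $p/q$ to be $\GL$-eigenvectors with a common $E^\times$-valued character; the central $\bG_m$ acting through the $A_n(t)$ then forces each to be homogeneous; and the decomposition of the positive-degree graded pieces into infinite-dimensional irreducible $\bV_\mu$'s rules out any eigenline there, so $p,q\in A'_0$, with purity of $\ulambda$ guaranteeing $A'_0=E$.

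Two remarks. The step you flag as the main obstacle is in fact routine in characteristic zero: $\End_{\GL}(\bV_\mu)=K$, because an equivariant endomorphism is determined by its scalar action on the one-dimensional highest-weight space, and this is exactly absolute irreducibility of $\bV_\mu$; equivalently, irreducible $\bQ[S_n]$-modules are absolutely irreducible and this transfers to the Schur functors. In particular you need not base-change to $\bar E$: your eigenline already lives over $E$, and absolute irreducibility over $K$ makes $E\otimes_K\bV_\mu$ irreducible as an $E[\GL]$-module. Secondly, once $p$ is known to be an eigenvector you could collapse the homogeneity and irreducibility steps: the line $Ep$ is a one-dimensional $E[\GL]$-subrepresentation of the semisimple module $E\otimes_K\bR_{\ulambda}$; since every nonempty $\bV_\mu$ is infinite-dimensional, $Ep$ must land in the trivial isotypic component, which equals $E\otimes_K(\bR_{\ulambda})^{\GL}=E$ when $\ulambda$ is pure.
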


\begin{proof}
The function field $E(\bA^{\ulambda})$ is the field of rational functions in the coordinate variables on $\bA^{\ulambda}$. No non-constant rational function is invariant under $\GL$; in fact, none is invariant under the infinite symmetric group, which is a subgroup of $\GL$. We thus see that $E(\bA^{\ulambda})^{\GL}=E$, which establishes the result.
\end{proof}

\begin{proposition} \label{prop:inv}
Let $X$ be an irreducible $\GL$-variety over a field $K$. Then the invariant function field $K(X)^{\GL}$ is a finitely generated extension of $K$.
\end{proposition}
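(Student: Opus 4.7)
The plan is to reduce the statement to the corresponding fact for ordinary finite dimensional varieties via the unirationality theorem, and then invoke the classical fact that intermediate fields of finitely generated field extensions are themselves finitely generated.

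First, I would apply the unirationality theorem (Theorem~\ref{thm:uni}) to obtain a dominant $\GL$-morphism $\phi \colon B \times \bA^{\ulambda} \to X$ with $B$ an irreducible variety over $K$ and $\ulambda$ a pure tuple. Since $\phi$ is dominant, the pullback of rational functions gives a $\GL$-equivariant injection of fields
\[
\phi^* \colon K(X) \hookrightarrow K(B \times \bA^{\ulambda}),
\]
where $\GL$ acts trivially on $K(B)$ and naturally on the coordinates of $\bA^{\ulambda}$.

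Taking $\GL$-invariants, I obtain an injection $K(X)^{\GL} \hookrightarrow K(B \times \bA^{\ulambda})^{\GL}$. By Proposition~\ref{prop:Ainv}, the latter invariant field is just $K(B)$. Thus $K(X)^{\GL}$ is isomorphic to a subfield of $K(B)$ containing $K$. Since $B$ is an ordinary finite dimensional variety over $K$, the field $K(B)$ is a finitely generated extension of $K$.

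It remains to invoke the classical fact that any intermediate field of a finitely generated field extension is finitely generated. More precisely, if $K \subseteq L \subseteq K(B)$ and $K(B)/K$ is finitely generated, then $L/K$ is finitely generated; this is a standard result (a consequence of the noetherianity of finitely generated $K$-algebras combined with the theorem that any subfield of the fraction field of such an algebra arises as the fraction field of a finitely generated subalgebra). Applying this with $L = K(X)^{\GL}$ completes the proof. The main conceptual content is the reduction via unirationality; the only non-trivial background ingredient is the intermediate-field result, which is a well-known classical theorem and causes no real obstacle.
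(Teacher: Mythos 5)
Your proposal is correct and follows essentially the same route as the paper's proof: apply the unirationality theorem to get a dominant map $B \times \bA^{\ulambda} \to X$, pass to invariant function fields via Proposition~\ref{prop:Ainv} to embed $K(X)^{\GL}$ into $K(B)$, and conclude by the classical fact that subextensions of finitely generated field extensions are finitely generated. No meaningful differences.
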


\begin{proof}
By the unirationality theorem (Theorem~\ref{thm:uni}), we have a dominant $\GL$-morphism $B \times \bA^{\ulambda} \to X$ with $B$ an irreducible variety over $K$ and $\ulambda$ a pure tuple. We thus have $K(X)^{\GL} \subset K(B \times \bA^{\ulambda})^{\GL}=E$, where $E$ is the function field of $B$. Thus $K(X)^{\GL}$ is contained in a finitely generated extension of $K$, and is therefore itself finitely generated.
\end{proof}

\begin{remark}
We can also consider the invariants of $K(X)$ under the subgroups $G(n)$. This leads to a tower of fields
\begin{displaymath}
K(X)^{\GL}=K(X)^{G(0)} \subseteq K(X)^{G(1)} \subseteq K(X)^{G(2)} \subseteq \cdots
\end{displaymath}
each of which is a finitely generated extension of $K$. The shift theorem implies that for $m \gg 0$, $K(X)^{G(n)}$ is a rational function field over $K(X)^{G(m)}$ for each $n \geq m$.
\end{remark}

\subsection{Dimension functions}

Let $X$ be an affine $\GL$-variety over $K$. We define the
\defn{dimension function} of $X$ to be the function
$\delta_X \colon \bN \to \bN$ given by $\delta_X(d)=\dim X\{K^d\}$.

\begin{proposition} \label{prop:dim}
The dimension function $\delta_X$ is eventually polynomial. That is, there is a polynomial $p$ with rational coefficients such that $\delta_X(d)=p(d)$ for all $d \gg 0$.
\end{proposition}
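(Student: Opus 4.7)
The plan is to reduce, via the shift theorem (Theorem~\ref{thm:shift}), to a variety of the form $B\times\bA^{\ul{\kappa}}$, whose fibers under $\{K^d\}$ have dimensions governed by classical Schur-functor dimension formulas. First I will reduce to the case where $X$ is irreducible. The components $X_1,\ldots,X_r$ of $X$ are $\GL$-stable by Proposition~\ref{prop:FirstProperties}(d), corresponding to $\GL$-stable minimal primes $\mathfrak{p}_i$ in $R=\Gamma(X,\cO_X)$. In characteristic zero the operation $\{K^d\}$ is exact on polynomial representations, so it commutes both with intersections of subrepresentations and with quotients; hence $\bigcap_i \mathfrak{p}_i\{K^d\}=0$ in $R\{K^d\}$, which gives $X\{K^d\}=\bigcup_i X_i\{K^d\}$ as sets and $\delta_X(d)=\max_i \delta_{X_i}(d)$. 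Since the pointwise maximum of finitely many eventually polynomial functions is itself eventually polynomial (the polynomial of largest leading term eventually dominates), it suffices to treat the irreducible case.

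Now assume $X$ is irreducible and apply the shift theorem to obtain $n\ge 0$, a nonzero $\GL$-invariant $h$ on $\Sh_n(X)$, a variety $B/K$, and a pure tuple $\ul{\kappa}=[\kappa_1,\ldots,\kappa_s]$ with $\Sh_n(X)[1/h]\cong B\times\bA^{\ul{\kappa}}$. By the definition of $\Sh_n$ in terms of the polynomial functor $V\mapsto R\{K^n\oplus V\}$, we have $\Sh_n(X)\{K^d\}=X\{K^{n+d}\}$. Since $R$ is an integral domain (as $X$ is irreducible), so is the subring $R\{K^{n+d}\}$, and the invariant $h\in R\{K^n\}\subset R\{K^{n+d}\}$ remains nonzero there, so $X\{K^{n+d}\}[1/h]$ is a dense open subset of $X\{K^{n+d}\}$ with the same dimension. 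Because $\{K^d\}$ commutes with products and with localization at $\GL$-invariant elements, this open subset is identified with $B\times\Spec(\Sym(\bV_{\ul{\kappa}}\{K^d\}))$, of dimension
\[
\dim B + \sum_{i=1}^s \dim \bS_{\kappa_i}(K^d).
\]
By the hook content formula, each $d\mapsto \dim \bS_{\kappa_i}(K^d)$ agrees with a polynomial in $d$ for all $d\ge 0$, so $\delta_X(n+d)$ is polynomial in $d$, and $\delta_X$ is eventually polynomial.

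The main obstacle, in my view, is not any single conceptual step but the bookkeeping needed to transport the shift-theorem isomorphism through the evaluation functor $\{K^d\}$: one must verify that $\{K^d\}$ commutes with $\Sh_n$ (via the functorial identity $\Sh_n(-)\{K^d\}=(-)\{K^{n+d}\}$), with products of $\GL$-schemes, and with localization at $\GL$-invariant elements. Each of these is essentially formal from the polynomial-functor viewpoint set up in \S\ref{ss:polyrep}, but they must be handled cleanly for the final dimension computation to be rigorous.
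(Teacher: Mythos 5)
Your proof is correct and follows essentially the same route as the paper: reduce to the irreducible case by taking the max over components, then apply the shift theorem, use the identification $\Sh_n(X)\{K^d\}=X\{K^{n+d}\}$, note that localizing at $h$ does not change dimension in the irreducible case, and conclude via polynomiality of $d\mapsto\dim\bS_\kappa(K^d)$. You supply a bit more explicit bookkeeping (exactness of $\{K^d\}$ for the reduction to components, the hook content formula) than the paper, but the argument is the same.
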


\begin{proof}
First suppose $X$ is irreducible. By the shift theorem, we
have $\Sh_n(X)[1/h] \cong B \times \bA^{\ulambda}$ for some
integer $n$. We have $\Sh_n(X)\{K^d\}=X\{K^{n+d}\}$, and
so $\delta_{\Sh_n(X)}(d)=\delta_X(d+n)$. Of course,
inverting $h$ does not affect the dimension since $X$ is
irreducible. We thus see that
$\delta_X(d+n)=\dim{B}+\delta_{\bA^{\ulambda}}(n)$. Now,
$\delta_{\bA^{\ulambda}}(d)$ is just the dimension of the
representation $\bigoplus_{i=1}^r \bS_{\lambda_i}(K^d)$,
where $\lambda=[\lambda_1, \ldots, \lambda_r]$, which is
known to be polynomial in $d$ of degree the maximum among
the sizes of the $\lambda_i$. Thus $\delta_{\bA^{\ulambda}}$ is polynomial. The result follows; in fact, we see that $d \mapsto \delta_X(d)$ is polynomial for $d \ge n$.

We now treat the general case. Let $Y_1, \ldots, Y_r$ be the irreducible components of $X$. Then $\delta_X(d)=\max(\delta_{Y_1}(d), \ldots, \delta_{Y_r}(d))$. Since each $\delta_{Y_i}$ is eventually polynomial by the previous paragraph, it follows that $\delta_X$ is too.
\end{proof}

\section{Some properties of affine spaces} \label{s:affine}

\subsection{Dominant maps to \texorpdfstring{$\bA^{\lambda}$}{Alambda}}

It is very easy to write down many examples of dominant maps $\bA^n \to \bA^m$ in standard algebraic geometry. By contrast, we now show that essentially the only dominant maps $\bA^{\umu} \to \bA^{\ulambda}$ are projection maps when $\ulambda$ and $\umu$ are pure.

\begin{proposition} \label{prop:Adom}
Let $B$ and $C$ be irreducible varieties, let $\umu$ and $\ulambda$ be pure tuples, and let $\phi \colon B \times \bA^{\umu} \to C \times \bA^{\ulambda}$ be a dominant morphism of $\GL$-varieties. Then $\ulambda \subset \umu$. Moreover, letting $\phi_0 \colon B \to C$ be the map induced by $\phi$, there exist non-empty open affine subvarieties $U \subset B$ and $V \subset C$ such that $\phi_0(U) \subset V$ and the restriction of $\phi$ to $U \times \bA^{\umu}$ factors as
\begin{displaymath}
\xymatrix@=1.5cm{
U \times \bA^{\umu} \ar[r]^{\sigma} & U \times \bA^{\umu} \ar[r]^-{\id \times \pi} & U \times \bA^{\ulambda} \ar[r]^-{\phi_0 \times \id} & V \times \bA^{\ulambda} }
\end{displaymath}
where $\sigma$ is an $U$-automorphism of $U \times \bA^{\umu}$ and $\pi$ is the projection map. In fact, it is possible to choose $U$ and $V$ such that $\phi$ maps $U \times \bA^{\umu}$ surjectively onto $V \times \bA^{\ulambda}$.
\end{proposition}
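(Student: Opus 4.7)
The plan is to extract $\phi_0$ from the $\GL$-invariants, then prove the containment $\ulambda \subset \umu$ by reducing to a field and combining Schur's lemma with a shear-automorphism argument, and finally construct $\sigma$ and spread out to opens.

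Since $\umu$ and $\ulambda$ are pure, the $\GL$-invariants of $\cO(B)\otimes\bR_{\umu}$ (resp.\ $\cO(C)\otimes\bR_{\ulambda}$) are $\cO(B)$ (resp.\ $\cO(C)$), namely the $\GL$-degree-$0$ parts. Restricting $\phi^*$ to $\GL$-invariants therefore gives $\phi_0^*\colon \cO(C)\to\cO(B)$ and hence $\phi_0$; if $\phi_0$ failed to be dominant, then $\phi_0(B)\subset C_0\subsetneq C$ would force $\phi(B\times\bA^{\umu})\subset C_0\times\bA^{\ulambda}$, contradicting dominance of $\phi$.

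To prove $\ulambda\subset\umu$, pass to the generic point of $B$ by setting $F = K(B)$, reducing to dominant $\GL$-equivariant maps $\phi\colon \bA^{\umu}_F\to\bA^{\ulambda}_F$ over a field. Analyze the induced injection $\phi^*\colon\bR^F_{\ulambda}\hookrightarrow\bR^F_{\umu}$ by $\GL$-degree: for the smallest size $d$ appearing in $\ulambda$, one has
\[
(\bR^F_{\umu})_d \;=\; \Bigl(\bigoplus_{|\mu_j|=d}\bV_{\mu_j}\Bigr) \;\oplus\; (\bR^F_{\umu_{<d}})_d,
\]
where $\umu_{<d}$ collects the partitions in $\umu$ of size $<d$. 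Decompose $\phi^*|_{\bV_{\lambda_i}} = L_i + N_i$ into its linear and nonlinear components. A shear automorphism of $\bR^F_{\umu}$ (fixing the generators of size $<d$ and translating each generator of size $d$ by an appropriate element of $(\bR^F_{\umu_{<d}})_d$) can eliminate $N_i$ provided $L_i\neq 0$; Schur's lemma then forces the multiset of partitions of size $d$ in $\ulambda$ to be contained in that of $\umu$. Iterating on the remaining, strictly smaller, problem yields $\ulambda\subset\umu$ and simultaneously assembles the shears into a $\GL$-automorphism $\sigma_F$ of $\bA^{\umu}_F$ such that $\phi = \pi\circ\sigma_F$, where $\pi\colon\bA^{\umu}\to\bA^{\ulambda}$ is the projection coming from the decomposition $\umu = \ulambda\cup\ul{\tau}$.

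Finally, the shear coefficients making up $\sigma_F$ are regular on some nonempty open $U\subset B$, giving a $U$-automorphism $\sigma$ of $U\times\bA^{\umu}$ satisfying $\phi|_{U\times\bA^{\umu}} = (\phi_0\times\id_{\bA^{\ulambda}})\circ(\id_U\times\pi)\circ\sigma$. After further shrinking so that $\phi_0$ surjects onto a dense affine open $V\subset C$, surjectivity of $\phi$ onto $V\times\bA^{\ulambda}$ follows from surjectivity of $\sigma$, $\pi$, and $\phi_0\colon U\twoheadrightarrow V$. The main obstacle is verifying that $L_i\neq 0$ whenever $\phi$ is dominant; this is handled by showing that if some $L_i = 0$, then composing $\phi^*$ with the $\GL$-algebra quotient $\bR_{\umu}\twoheadrightarrow\bR_{\umu_{<d}}$ that kills the generators of size $\geq d$ yields a strictly smaller instance to which the induction hypothesis applies, deriving a contradiction.
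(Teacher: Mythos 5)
Your overall plan---reduce to the generic point of $B$, analyze $\phi^*$ degree-by-degree using Schur's lemma, and assemble $\sigma$ from shears before spreading out to an open $U$---is close in spirit to the paper's proof, which first shows that the induced map $\ol{f}\colon F\otimes\bR_{\ulambda,+}/\bR_{\ulambda,+}^2\to F\otimes\bR_{\umu,+}/\bR_{\umu,+}^2$ is injective (Lemma~\ref{lem:Adom-2}, proved via the dimension count of Lemma~\ref{lem:Adom-1}), and then builds $\sigma$ in a single step from a complementary map $g$ and Nakayama's lemma. However, there is a genuine gap at exactly the step those two lemmas are designed to handle. Insisting only that each individual $L_i$ be nonzero is insufficient: what you actually need is that the combined linear map $L=\bigoplus_i L_i$ on the full degree-$d$ generator space is injective. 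Schur's lemma tells you that $L_i$ lands only in the copies of $\bV_{\lambda_i}$ inside $\bV_{\umu}$, but two nonzero $L_i$'s belonging to the same partition $\nu$ can be proportional, in which case the rank of $L$ on the $\nu$-isotypic piece is strictly less than the multiplicity of $\nu$ in $\ulambda$. Then the multiset containment you claim simply does not follow, and moreover no shear eliminates the nonlinear parts consistently: the shear data $h$ must satisfy $h\circ L=-N$, which requires $\ker L\subseteq\ker N$, and this typically fails.

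Your proposed workaround for $L_i=0$---composing $\phi^*$ with the quotient $\bR_{\umu}\twoheadrightarrow\bR_{\umu_{<d}}$---does not quite work either: the composite $\bR_{\ulambda}\to\bR_{\umu_{<d}}$ need not be injective, so it is not a legitimate ``smaller instance'' on which to invoke your induction hypothesis. The correct move (and essentially what the paper's Lemma~\ref{lem:Adom-2} does) is to \emph{restrict} $\phi^*$ to the sub-$\GL$-algebra generated by the part of $\bV_{\ulambda}$ killed or collapsed by $L$: if on the $\nu$-isotypic piece $L$ has rank $m$ less than the multiplicity $n$ in $\ulambda$, then after a linear change of basis the subalgebra $F\otimes\bR_{\nu^{n-m}}$ maps into $F\otimes\bR_{\umu_{<d}\cup\nu^m}$, preserving injectivity, and this contradicts either the dimension count of Lemma~\ref{lem:Adom-1} or your intended magnitude induction. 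If you patch this, the rest of your shear argument is a workable ``Nakayama by hand,'' though the paper's version---isolate injectivity of $\ol{f}$, then one Nakayama argument---is cleaner and avoids tracking a degree-by-degree iteration.
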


We require a few lemmas before giving the proof. Recall that $\bR_{\ulambda}=\Sym(\bV_{\ulambda})$ is the coordinate ring of $\bA^{\ulambda}$. We write $\bR_{\ulambda,+}$ for the ideal of positive degree elements of $\bR_{\ulambda}$.

\begin{lemma} \label{lem:Adom-1}
Let $E \subset F$ be field extensions of $K$, let $\ulambda$ and $\umu$ be pure tuples, and let $f \colon E \otimes \bR_{\ulambda} \to F \otimes \bR_{\umu}$ be an injection of $\GL$-algebras over $E$. Then $\dim \bS_{\ulambda}(K^n) \le c+\dim \bS_{\umu}(K^n)$ for some $c \in \bN$ and all $n \ge 0$.
\end{lemma}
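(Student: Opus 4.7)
The plan is to reduce the claim to a transcendence-degree comparison between two polynomial rings, where the constant $c$ will arise as the transcendence degree of a finitely generated subextension of $F/E$ sufficient to contain the image of $f$.

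First I would reduce to the case where $F/E$ is finitely generated. The polynomial representation $\bV_{\ulambda}$ is $\GL$-generated by finitely many highest-weight vectors $v_1, \ldots, v_r$ (one per constituent partition), so since $f$ is a $\GL$-equivariant $E$-algebra homomorphism it is determined by the finitely many elements $f(1 \otimes v_i) \in F \otimes \bR_{\umu}$. Each such element is a finite $F$-linear combination of monomials in the generators of $\bR_{\umu}$, so in total only finitely many scalars $\alpha_1, \ldots, \alpha_s \in F$ appear. Setting $F_0 = E(\alpha_1, \ldots, \alpha_s)$, the map $f$ factors as $E \otimes \bR_{\ulambda} \to F_0 \otimes \bR_{\umu} \hookrightarrow F \otimes \bR_{\umu}$, and the first arrow is injective because the second arrow is (flat base change along a field extension). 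Set $c := \operatorname{trdeg}_E F_0$, which is finite since $F_0/E$ is finitely generated.

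Next, I would apply the exact functor $V \mapsto V\{K^n\} = V^{G(n)}$ to the resulting injection. On polynomial representations this functor is exact (by characteristic zero semisimplicity) and commutes with $\Sym$, and it acts trivially on the constant field coefficients $E$ and $F_0$. Using the identification $\bR_{\ulambda}\{K^n\} = \Sym(\bS_{\ulambda}(K^n))$, we obtain an injective ring homomorphism of polynomial rings
\begin{displaymath}
E[x_1, \ldots, x_a] \hookrightarrow F_0[y_1, \ldots, y_b],
\end{displaymath}
where $a = \dim \bS_{\ulambda}(K^n)$ and $b = \dim \bS_{\umu}(K^n)$.

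Finally, I would pass to fraction fields and compare transcendence degrees over $E$. The left-hand side has transcendence degree $a$ over $E$, while the right-hand side has transcendence degree $\operatorname{trdeg}_E F_0 + b = c + b$ over $E$ (by additivity of transcendence degree in a tower). Monotonicity of transcendence degree under field inclusion yields $a \le b + c$, and the same $c$ works for every $n$. The only real subtlety is the initial reduction to a finitely generated subextension $F_0 \subseteq F$; without it, the transcendence degree difference of $E$ and $F$ over $K$ could a priori be infinite, and the point is that only finitely many scalars of $F$ are actually needed to specify $f$.
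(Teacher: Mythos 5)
Your proof is correct and follows essentially the same approach as the paper's: isolate the finitely many scalars of $F$ actually needed to specify $f$, evaluate on $K^n$, and compare dimension invariants. The paper packages this slightly differently (first reducing $E$ to $K$, and using the Krull dimension of a finitely generated $K$-subalgebra $B \subset F$ rather than the transcendence degree of your subfield $F_0$), but the underlying argument is identical.
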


\begin{proof}
The restriction of $f$ to $\bR_{\ulambda}$ is still an injection of $\GL$-algebras, so we may as well assume $E=K$. Then $f$ induces a map of representations $\bV_{\ulambda} \to F \otimes \bR_{\umu}$. The image of this map is necessarily contained in $U \otimes \bR_{\umu}$ for some finite dimensional $K$-subspace $U$ of $F$. It follows that the image of $f$ is contained in $B \otimes \bR_{\umu}$ where $B$ is the $K$-subalgebra of $F$ generated by $U$.

Evaluating the polynomial functors on $K^n$, we see that the map $\bR_{\ulambda}\{K^n\} \to B \otimes \bR_{\umu}\{K^n\}$ is an injection of finitely generated $K$-algebras. It follows that the Krull dimension of the source, which is $\dim{\bS_{\ulambda}(K^n)}$, is at most that of the target, which is $c+\dim{\bS_{\umu}(K^n)}$, where $c=\dim(B)$. The result follows.
\end{proof}

\begin{lemma} \label{lem:Adom-2}
Let $E \subset F$ be field extensions of $K$, let $\ulambda$ and $\umu$ be pure tuples, and let $f \colon E \otimes \bR_{\ulambda} \to F \otimes \bR_{\umu}$ be an injection of $\GL$-algebras over $E$. Then the induced map
\begin{displaymath}
\ol{f} \colon F \otimes \bR_{\ulambda,+}/\bR_{\ulambda,+}^2 \to F \otimes \bR_{\umu,+}/\bR_{\umu,+}^2
\end{displaymath}
is an injection. (Note: it is $F$ on both the left and right sides above.)
\end{lemma}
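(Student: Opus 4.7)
The plan is to reduce to the case $E=F$ via flatness, interpret $\ol{f}$ as a $\GL$-equivariant map of semisimple polynomial representations, and derive a contradiction from a hypothetical nonzero kernel using Lemma~\ref{lem:Adom-1}.

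First, since $F/E$ is a field extension it is flat, so tensoring $f$ with $F$ over $E$ preserves injectivity, and I may assume $E=F$. Next, the $\GL$-equivariance of $f$ forces it to commute with the central $\bG_m$-action (via the operators $A(t)$ of \S\ref{ss:polyrep}), so $f$ is a graded homomorphism. It follows that $f$ sends $\bR_{\ulambda,+}$ into $\bR_{\umu,+}$ and $\bR_{\ulambda,+}^2$ into $\bR_{\umu,+}^2$, and the induced map $\ol{f}$ is well defined. Under the canonical identifications $\bR_{\ulambda,+}/\bR_{\ulambda,+}^2 \cong \bV_\ulambda$ (and similarly for $\umu$), the map $\ol{f}$ is an $F$-linear $\GL$-equivariant map between semisimple polynomial $\GL$-representations.

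Suppose for contradiction that $\ker(\ol{f}) \neq 0$. By semisimplicity and Schur's lemma over $F$, the kernel contains a $\GL$-equivariant $F$-subrepresentation isomorphic to $F \otimes \bV_\nu$ for some partition $\nu$; hence there is a $\GL$-equivariant $F$-linear embedding $\iota \colon F \otimes \bV_\nu \hookrightarrow F \otimes \bV_\ulambda$ whose image lies in $\ker(\ol{f})$. Restricting $f$ to the $\GL$-subalgebra generated by $\iota(\bV_\nu)$ yields an injective $\GL$-equivariant algebra map $g \colon F \otimes \bR_\nu \hookrightarrow F \otimes \bR_\umu$ satisfying $g(\bV_\nu) \subset F \otimes \bR_{\umu,+}^2$.

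The hard part is to extract a contradiction from $g$. My plan is to exploit the fact that $g$ wastes its generator directions: since $g(\bV_\nu)$ lies in the ideal of products of positive-degree elements rather than hitting a new linear direction in $F \otimes \bV_\umu$, one should be able to construct, for each $k \geq 1$, an enlarged injection $g_k \colon F \otimes \bR_{[\nu^k]} \hookrightarrow F \otimes \bR_\umu$ by adjoining successive copies of $\bV_\nu$ whose images occupy fresh algebraically independent directions in $F \otimes \bR_\umu$. Applying Lemma~\ref{lem:Adom-1} to each $g_k$ yields $k \cdot \dim \bS_\nu(K^n) \leq c_k + \dim \bS_\umu(K^n)$ for all $n$, and the contradiction follows provided the constants $c_k$ can be controlled well enough that $k \cdot \dim \bS_\nu(K^n)$ eventually exceeds the right-hand side for some fixed $n$. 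I expect the crux to be making this iterative extension precise, using that $F$-linear independence of linear parts in $F \otimes \bV_\umu$ propagates (by a lowest-degree argument in the graded polynomial algebra) to algebraic independence in $F \otimes \bR_\umu$.
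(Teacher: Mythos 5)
Your first step --- reducing to $E = F$ by flat base change --- is where the argument breaks. Tensoring $f \colon E \otimes_K \bR_{\ulambda} \to F \otimes_K \bR_{\umu}$ with $F$ over $E$ produces an injection $F \otimes_K \bR_{\ulambda} \to (F \otimes_E F) \otimes_K \bR_{\umu}$, whose target is \emph{not} $F \otimes_K \bR_{\umu}$; to land there you would have to further compose with the multiplication $F \otimes_E F \to F$, which has a large kernel whenever $E \subsetneq F$. Equivalently, the only way to extend $f$ to an $F$-algebra map $F \otimes \bR_{\ulambda} \to F \otimes \bR_{\umu}$ is the $F$-linear extension $a \otimes r \mapsto a\, f(1 \otimes r)$, and injectivity of that extension (or even of its linear part $\ol f$) is exactly what can fail a priori and is what the lemma is really asserting; you cannot assume it at the outset. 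The rest of your argument then relies on $E=F$ in an essential way: to ``restrict $f$ to the $\GL$-subalgebra generated by $\iota(\bV_\nu)$'' you need $\iota(\bV_\nu)$ to be an $E$-subspace of $E \otimes \bV_{\ulambda}$, but a copy of $\bV_\nu$ inside $\ker \ol f$ is in general only defined over $F$.

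Two further remarks. First, even granting $E = F$, your iterative $g_k$ construction is overkill and is not actually available: once $g(\bV_\nu) \subset F \otimes \bR_{\umu,+}^2$, note that a degree-$|\nu|$ element of $\bR_{\umu,+}^2$ involves only generators of size $<|\nu|$, so $g$ maps $F \otimes \bR_\nu$ injectively into $F \otimes \bR_{\umu'}$ (where $\umu'$ consists of the partitions of $\umu$ of size $<|\nu|$); a single application of Lemma~\ref{lem:Adom-1} then contradicts the fact that $\dim\bS_\nu(K^N)$ has polynomial degree $|\nu|$ in $N$, strictly larger than the degree of $\dim\bS_{\umu'}(K^N)$. (Also, $k$ in your $g_k$ can never exceed the multiplicity of $\nu$ in $\ulambda$, so it cannot be grown indefinitely.) Second, the paper sidesteps the field issue by working with multiplicity spaces: if $\ol f$ has a kernel on the $\bV_\nu$-isotypic piece, then the linear parts of $f$ applied to all $n$ coordinate copies of $\bV_\nu$-generators (which are defined over $K$, hence over $E$) span an $F$-subspace of dimension $m<n$ in the $\nu$-multiplicity space of $\bV_\umu$; after an $F$-linear change of coordinates on $\bA^{\umu}$, the restriction of $f$ to the honest $E$-subalgebra $E \otimes \bR_{\nu^n}$ maps into $F \otimes \bR_{\umu' \cup \nu^m}$, and Lemma~\ref{lem:Adom-1} then gives $n\,\dim\bS_\nu(K^N) \le c + m\,\dim\bS_\nu(K^N) + \dim\bS_{\umu'}(K^N)$, a contradiction for $N \gg 0$ since $n>m$.
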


\begin{proof}
Suppose that $\ol{f}$ fails to be injective on the $\bV_{\nu}$ isotypic piece. Let $n$ be the multiplicity of $\nu$ in $\ulambda$, and let $m<n$ be the multiplicity of $\bV_{\nu}$ in the image of $\ol{f}$. We thus see that the $n$ ``$\nu$ variables'' in $E \otimes \bR_{\ulambda}$ are mapped to elements that make use of only $m$ ``$\nu$ variables'' in $F \otimes \bR_{\umu}$ and lower degree elements. In other words, $f$ induces an injection $E \otimes \bR_{\nu^n} \to F \otimes \bR_{\umu' \cup \nu^m}$ where $\umu'$ consists of the partitions in $\umu$ of size $<\vert \nu \vert$, and $\nu^n$ denotes the $n$-tuple $[\nu, \ldots, \nu]$. This contradicts the Lemma~\ref{lem:Adom-1}
\end{proof}

\begin{proof}[Proof of Proposition~\ref{prop:Adom}]
Let $B=\Spec(R)$ and $C=\Spec(S)$ where $R$ and $S$ are
finitely generated integral $K$-algebras. Since $\phi$ is
dominant, it follows that $\phi_0 \colon B \to C$ is
dominant, and that the corresponding $K$-algebra homomorphism $S \to R$ is injective. Let $E=\Frac(S)$ and $F=\Frac(R)$; we regard $E$ as a subfield of $F$. Then $\phi$ induces an injection $f \colon E \otimes \bR_{\ulambda} \to F \otimes \bR_{\umu}$ of $\GL$-algebras over $E$. Lemma~\ref{lem:Adom-2} implies that the multiplicity of $\bV_{\nu}$ in $\bV_{\ulambda}$ is at most the multiplicity of $\bV_{\nu}$ in $\bV_{\umu}$, for any partition $\nu$; note that $\bR_{\ulambda,+}/\bR_{\ulambda,+}^2 \cong \bV_{\ulambda}$. Thus $\ulambda \subset \umu$. Write $\umu=\ulambda \cup \unu$.

Let $\ol{f}$ be as in Lemma~\ref{lem:Adom-2}. The image of $\ol{f}$ is isomorphic to $F \otimes \bV_{\ulambda}$, and thus has a complementary representation isomorphic to $F \otimes \bV_{\unu}$. Choose an $F$-linear injection of representations $\ol{g} \colon F \otimes \bV_{\unu} \to F \otimes \bR_{\umu}$ such that the images of $\ol{f}$ and $\ol{g}$ in $F \otimes \bR_{\umu,+}/\bR_{\umu,+}^2$ form complementary subrepresentations. Let $g \colon F \otimes \bR_{\unu} \to F \otimes \bR_{\umu}$ be the algebra homomorphism induced by $g$. Consider the following sequence of maps
\begin{displaymath}
\xymatrix@C=3em{
E \otimes \bR_{\ulambda} \ar[r] &
F \otimes \bR_{\ulambda} \ar[r] &
F \otimes \bR_{\ulambda} \otimes \bR_{\unu} \ar[r]^-{f \otimes g} \ar[r] &
F \otimes \bR_{\umu} }
\end{displaymath}
where the first two maps are the inclusions. Since the right map is an isomorphism modulo the maximal ideal, Nakayama's lemma implies that it is an isomorphism. We identify the third algebra with $F \otimes \bR_{\umu}$. The second map is then the standard inclusion coming from $\ulambda \subset \umu$, and the right map is an automorphism.

Now, consider the automorphism $\ol{f} \otimes \ol{g}$ of $F \otimes \bR_{\umu,+}/\bR_{\umu,+}^2$. For each partition $\nu$ in $\umu$, let $A_{\nu}$ be the matrix of $\ol{f} \otimes \ol{g}$ on the $\nu$-multiplicity space in some basis. Each $A_{\nu}$ is a finite matrix, and there are only finitely many of them. It follows that we can find a non-zero $h \in R$ such that each of our bases is defined over $R[1/h]$, the determinants of all these matrices are units in $R[1/h]$, and $f$ and $g$ are defined over $R[1/h]$. We thus see that $f \otimes g$ induces an automorphism of $R[1/h] \otimes \bR_{\umu}$. We thus get maps
\begin{displaymath}
\xymatrix@C=3em{
S \otimes \bR_{\ulambda} \ar[r] &
R[1/h] \otimes \bR_{\ulambda} \ar[r] &
R[1/h] \otimes \bR_{\umu} \ar[r]^-{f \otimes g} \ar[r] &
R[1/h] \otimes \bR_{\umu} }
\end{displaymath}
where, again, the first map comes from the inclusion $S \subset R[1/h]$ and the second from the inclusion $\ulambda \subset \umu$. Let $U=\Spec(R[1/h])$, an open affine of $B$. The above maps translate to maps
\begin{displaymath}
\xymatrix@C=4em{
U \times \bA^{\umu} \ar[r]^-{\sigma} &
U \times \bA^{\umu} \ar[r]^-{\id \times \pi} &
U \times \bA^{\ulambda} \ar[r]^-{\phi_0 \times \id} &
C \times \bA^{\ulambda} }
\end{displaymath}
where $\sigma$ is a $U$-automorphism and $\pi$ is the projection map. This gives the main statement of the proposition, with $V=C$.

To prove the final part, about surjectivity, simply take $V$ to be a non-empty open subset of $C$ contained in $\phi_0(U)$ (which exists by Chevalley's theorem) and replace $U$ with $U \cap \phi_0^{-1}(V)$.
\end{proof}

\subsection{A lifting result} \label{ss:Alifting}

Using Proposition~\ref{prop:Adom}, we obtain a very useful lifting result:

\begin{proposition} \label{prop:lift}
Let $\phi \colon B \times \bA^{\ulambda} \to X$ and $\psi \colon Y \to X$ be morphisms of $\GL$-varieties, with $B$ an irreducible variety and $\ulambda$ a pure tuple. Let $x$ be the generic point of $B \times \bA^{\ulambda}$, and suppose that $\phi(x) \in \im(\psi)$. We can then find a commutative diagram
\begin{displaymath}
\xymatrix@C=4em{
C \times \bA^{\ulambda} \ar[r]^-{\alpha \times \id} \ar[d] &
B \times \bA^{\ulambda} \ar[d]^{\phi} \\
Y \ar[r]^{\psi} & X }
\end{displaymath}
where $C$ is an irreducible variety and $\alpha \colon C \to B$ is a quasi-finite dominant morphism.
\end{proposition}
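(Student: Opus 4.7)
The plan is to build the lift via the $\GL$-scheme fiber product $Y \times_X (B \times \bA^{\ulambda})$, combined with the unirationality theorem and Proposition~\ref{prop:Adom}.

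First, I would form the fiber product $W = Y \times_X (B \times \bA^{\ulambda})$ in the category of $\GL$-schemes; its coordinate ring is $\Gamma(Y) \otimes_{\Gamma(X)} \Gamma(B \times \bA^{\ulambda})$, which is naturally a $\GL$-algebra. The hypothesis $\phi(x) \in \im(\psi)$ ensures that the generic point $x$ lies in the image of the projection $W \to B \times \bA^{\ulambda}$. By Theorem~\ref{thm:Noetherianity}, the reduction of $W$ has finitely many irreducible components, and since their images cover that of $W$, I can choose one, call it $W_0$, that dominates $B \times \bA^{\ulambda}$. By Proposition~\ref{prop:FirstProperties}(d), $W_0$ is a $\GL$-subvariety, equipped with $\GL$-morphisms $p_1 \colon W_0 \to Y$ and $p_2 \colon W_0 \to B \times \bA^{\ulambda}$ satisfying $\psi \circ p_1 = \phi \circ p_2$ by construction.

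Next, I would apply the unirationality theorem (Theorem~\ref{thm:uni}) to $W_0$, producing a dominant $\GL$-morphism $\mu \colon D \times \bA^{\ulambda_0} \to W_0$ with $D$ an irreducible variety and $\ulambda_0$ a pure tuple. The composition $p_2 \circ \mu$ is a dominant morphism of exactly the form to which Proposition~\ref{prop:Adom} applies. That proposition yields the containment $\ulambda \subset \ulambda_0$, say $\ulambda_0 = \ulambda \cup \umu_0$, a dominant induced morphism $\beta_0 \colon D \to B$, and (after restricting to a non-empty open $U \subset D$) a $\GL$-equivariant $U$-automorphism $\sigma$ of $U \times \bA^{\ulambda_0}$ such that
\begin{displaymath}
(p_2 \circ \mu)|_{U \times \bA^{\ulambda_0}} = (\beta_0|_U \times \id_{\bA^{\ulambda}}) \circ (\id_U \times \pi) \circ \sigma,
\end{displaymath}
where $\pi \colon \bA^{\ulambda_0} \to \bA^{\ulambda}$ projects away the $\bA^{\umu_0}$-coordinates.

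To extract the lift, I would pre-compose $\sigma^{-1}$ with the $\GL$-equivariant zero section $z \colon U \times \bA^{\ulambda} \to U \times \bA^{\ulambda_0}$ filling in $0$ in the $\bA^{\umu_0}$-slot (this is $\GL$-equivariant since $\umu_0$ is pure). Setting $f := \mu \circ \sigma^{-1} \circ z$, a direct unwinding of the factorization above gives $p_2 \circ f = \beta_0|_U \times \id_{\bA^{\ulambda}}$, and therefore
\begin{displaymath}
\psi \circ (p_1 \circ f) = \phi \circ p_2 \circ f = \phi \circ (\beta_0|_U \times \id_{\bA^{\ulambda}}).
\end{displaymath}
This realizes the desired commutative square, modulo the fact that $\beta_0|_U$ is only known to be dominant, not quasi-finite. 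To upgrade it, I would cut $U$ down to a locally closed irreducible subvariety $C \subset U$ of dimension $\dim B$ by slicing with $\dim D - \dim B$ sufficiently general hyperplane sections in $U$, then restricting to the open locus where the fibers of $\beta_0$ become finite. Taking $\alpha := \beta_0|_C \colon C \to B$ and $\widetilde{\phi} := (p_1 \circ f)|_{C \times \bA^{\ulambda}}$ completes the diagram. The most delicate point is the $\GL$-equivariance of $\sigma$ in Proposition~\ref{prop:Adom}, which is not stated there in so many words; however, every ring homomorphism appearing in the construction of $\sigma$ in that proof is a morphism of $\GL$-algebras, so $\sigma$ inherits $\GL$-equivariance automatically.
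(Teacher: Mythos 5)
Your proposal follows essentially the same path as the paper's proof: form the fiber product over $X$, pick an irreducible component dominating $B \times \bA^{\ulambda}$ (which exists because the generic point $x$ lies in the image of the projection), apply the unirationality theorem to obtain a dominant map from some $D \times \bA^{\ulambda_0}$, invoke Proposition~\ref{prop:Adom} to recognize this as a projection after an automorphism, restrict along the pure zero section $\bA^{\ulambda} \hookrightarrow \bA^{\ulambda_0}$, and finally cut $D$ down to achieve quasi-finiteness. The only difference is presentational: the paper absorbs the automorphism $\sigma$ into the choice of unirationality map and phrases the last step as ``restrict to $C \times \bA^{\ulambda} \subset C \times \bA^{\umu}$'' where you spell out the composite $\mu \circ \sigma^{-1} \circ z$ explicitly; and your closing remark about the $\GL$-equivariance of $\sigma$ is correct but unnecessary, since Proposition~\ref{prop:Adom} is a statement about morphisms of $\GL$-varieties, so equivariance of every map in the factorization, including $\sigma$, is already part of its conclusion.
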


\begin{proof}
Let $Z$ be the reduced subscheme of the fiber product of $(B \times \bA^{\ulambda}) \times_X Y$. This is a $\GL$-variety, and the condition $\phi(x) \in \im(\psi)$ ensures that the projection map $Z \to B \times \bA^{\ulambda}$ is dominant. Let $Z'$ be an irreducible component of $Z$ mapping dominantly to $B \times \bA^{\ulambda}$, and, applying Theorem~\ref{thm:uni}, choose a dominant morphism $C \times \bA^{\umu} \to Z'$ with $C$ an irreducible variety. We thus have a commutative diagram
\begin{displaymath}
\xymatrix@C=4em{
C \times \bA^{\umu} \ar[r] \ar[d] &
B \times \bA^{\ulambda} \ar[d]^{\phi} \\
Y \ar[r]^{\psi} & X }
\end{displaymath}
where the top morphism is dominant. We now apply Proposition~\ref{prop:Adom}. We find that $\ulambda \subset \umu$. Furthemore, replacing $C$ with a dense open and applying an automorphism of $C \times \bA^{\umu}$, we can assume that the top map has the form $\alpha \times \pi$ where $\alpha \colon C \to B$ is a morphism, necessarily dominant, and $\pi \colon \bA^{\umu} \to \bA^{\ulambda}$ is the projection map. We now simply restrict to $C \times \bA^{\ulambda} \subset C \times \bA^{\umu}$. This yields the stated result, except for quasi-finiteness. For this, we replace $C$ with an appropriate closed subvariety.
\end{proof}

We note an elegant corollary:

\begin{corollary}
Assume $K$ is algebraically closed. Suppose that $\phi \colon Y \to \bA^{\ulambda}$ is a dominant morphism of $\GL$-varieties, with $Y$ irreducible and $\ulambda$ pure. Then $\phi$ admits a $\GL$-equivariant section. In particular, $\phi$ is surjective.
\end{corollary}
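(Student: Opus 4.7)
The plan is to apply the lifting result (Proposition~\ref{prop:lift}) with the identity map of $\bA^{\ulambda}$ playing the role of its morphism $\phi$, and our given $\phi\colon Y\to\bA^{\ulambda}$ playing the role of its morphism $\psi$. Concretely, I take $B=\Spec(K)$ (so $B\times\bA^{\ulambda}=\bA^{\ulambda}$) and $X=\bA^{\ulambda}$. The hypothesis to check is that the generic point $\eta$ of $\bA^{\ulambda}$ lies in the image of $\phi$. This is automatic: since $Y$ is irreducible, its coordinate ring is an integral domain, so $Y$ has a unique generic point $\eta_Y$; dominance of $\phi$ is equivalent to injectivity of the comorphism $\bR_{\ulambda}\hookrightarrow \cO(Y)$, and the contraction of $(0)\subset\cO(Y)$ is then $(0)\subset\bR_{\ulambda}$, i.e.\ $\phi(\eta_Y)=\eta$.

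Invoking Proposition~\ref{prop:lift} produces an irreducible variety $C$, a quasi-finite dominant morphism $\alpha\colon C\to\Spec(K)$, and a commutative square of $\GL$-morphisms
\[
\xymatrix@C=4em{
C\times\bA^{\ulambda} \ar[r]^-{\alpha\times\id} \ar[d]_-{s} & \bA^{\ulambda} \ar[d]^{\id} \\
Y \ar[r]^{\phi} & \bA^{\ulambda}.
}
\]
Algebraic closure of $K$ now collapses the diagram. The morphism $\alpha$ being quasi-finite over $\Spec(K)$ forces $C$ to be zero-dimensional; as $C$ is moreover a reduced irreducible $K$-variety, $C=\Spec(E)$ for some finite field extension $E/K$, and since $K=\overline{K}$, we must have $E=K$. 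Thus $C=\Spec(K)$ and $\alpha\times\id$ is the identity of $\bA^{\ulambda}$, so commutativity of the square reads $\phi\circ s=\id_{\bA^{\ulambda}}$. Because $s$ is a morphism of $\GL$-varieties, it is the desired $\GL$-equivariant section. Surjectivity of $\phi$ is then immediate, since every point of $\bA^{\ulambda}$ (scheme-theoretic or valued in any extension of $K$) is hit by $s$ followed by $\phi=\id$.

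There is no serious obstacle here: the proof is essentially a formal consequence of Proposition~\ref{prop:lift} once one notices that its hypothesis is trivially satisfied when the ``lower'' morphism is the identity and the ``upper'' source is an irreducible $\GL$-variety. The only mild subtlety is the step that uses $K=\overline{K}$ to trim $C$ down to $\Spec(K)$; without algebraic closure, one would instead obtain a section defined over a finite field extension and could not descend it, which is why the hypothesis is genuinely needed.
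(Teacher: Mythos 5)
Your proof is correct and follows exactly the same route as the paper: invoke Proposition~\ref{prop:lift} with $B=\Spec(K)$ and $X=\bA^{\ulambda}$, then use algebraic closure of $K$ to force the quasi-finite irreducible $C$ over $\Spec(K)$ down to $\Spec(K)$. The only addition you make is spelling out why the generic-point hypothesis of Proposition~\ref{prop:lift} is automatic when $\phi$ is dominant, which the paper leaves implicit.
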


\begin{proof}
Apply the proposition with $B=\Spec(K)$ and $X=\bA^{\ulambda}$. Then $C$ is necessarily $\Spec(K)$ (since $K$ is algebraically closed), and the map $C \times \bA^{\ulambda} \to Y$ is the sought after section.
\end{proof}

\subsection{Extending maps}

Let $Z$ be a closed subscheme of an affine scheme $X$. Then any morphism $Z \to \bA^n$ can be extended to a morphism $X \to \bA^n$. We now show that this property also holds for the $\GL$-analogs of affine spaces.

\begin{proposition}
Let $X$ be an affine $\GL$-scheme, let $Z$ be a closed
$\GL$-subscheme of $X$, and let $\phi \colon Z \to \bA^{\ulambda}$ be a morphism of $\GL$-schemes, with $\ulambda$ an arbitrary tuple. Then there exists a morphism of $\GL$-schemes $\psi \colon X \to \bA^{\ulambda}$ extending $\phi$.
\end{proposition}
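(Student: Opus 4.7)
The plan is to translate the problem into the language of $\GL$-algebras and then use two basic facts: the universal property of symmetric algebras, and the semisimplicity of the category of polynomial representations of $\GL$ in characteristic zero.

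First, I would set up the algebraic translation. Let $R=\Gamma(X,\cO_X)$ and let $I\subseteq R$ be the $\GL$-ideal cutting out $Z$, so that $\Gamma(Z,\cO_Z)=R/I$. The morphism $\phi$ corresponds to a $\GL$-equivariant $K$-algebra homomorphism $\phi^{\sharp}\colon \bR_{\ulambda}=\Sym(\bV_{\ulambda})\to R/I$, and finding the desired extension $\psi$ is equivalent to producing a $\GL$-equivariant algebra homomorphism $\psi^{\sharp}\colon \bR_{\ulambda}\to R$ whose composition with $R\to R/I$ is $\phi^{\sharp}$. By the universal property of the symmetric algebra, $\psi^{\sharp}$ is uniquely determined by, and can be constructed from, its restriction to the generating subrepresentation $\bV_{\ulambda}$; thus it suffices to lift the $\GL$-equivariant linear map $\phi^{\sharp}|_{\bV_{\ulambda}}\colon \bV_{\ulambda}\to R/I$ through the quotient $R\to R/I$ as a map of $\GL$-representations.

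The next step is the key input. Both $R$ and $R/I$ are polynomial representations of $\GL$ by the definition of a $\GL$-algebra recalled in \S\ref{ss:polyrep}, and the category of polynomial representations is semisimple in characteristic zero. Therefore the surjection $R\twoheadrightarrow R/I$ of polynomial representations splits: there is a $\GL$-equivariant $K$-linear section $s\colon R/I\to R$. Composing, I obtain a $\GL$-equivariant linear map
\begin{displaymath}
f:=s\circ\phi^{\sharp}|_{\bV_{\ulambda}}\colon \bV_{\ulambda}\longrightarrow R,
\end{displaymath}
and then the universal property of $\Sym$ produces a unique $\GL$-equivariant $K$-algebra homomorphism $\psi^{\sharp}\colon \bR_{\ulambda}\to R$ extending $f$. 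Finally, the composition $R\to R/I$ followed by $\psi^{\sharp}$ and the composition $\phi^{\sharp}$ agree on the generators $\bV_{\ulambda}$ by the defining property of $s$, and hence agree on all of $\bR_{\ulambda}$, so $\psi:=\Spec(\psi^{\sharp})$ is the required extension.

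There is no real obstacle here: the argument is essentially the observation that $\bA^{\ulambda}$, viewed as a $\GL$-scheme, is injective with respect to closed $\GL$-embeddings, exactly as ordinary affine space is in classical algebraic geometry. The only place where something nontrivial is used is the semisimplicity of polynomial representations of $\GL$, which fails in positive characteristic; this is the same reason the present paper restricts to characteristic zero, as noted in the introduction.
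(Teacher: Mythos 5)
Your proof is correct and follows the same route as the paper: reduce to lifting the $\GL$-equivariant linear map $\bV_{\ulambda}\to R/I$ through the quotient $R\to R/I$ using semisimplicity of polynomial representations, then apply the universal property of $\Sym$. The only cosmetic difference is that you split the entire surjection $R\twoheadrightarrow R/I$, whereas the paper only lifts the finite-length subrepresentation $\bV_{\ulambda}$ directly; both are immediate from semisimplicity.
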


\begin{proof}
Write $X=\Spec(A)$ and $Z=\Spec(A/I)$, where $A$ is a $\GL$-algebra and $I$ is a $\GL$-ideal of $A$. Write $\bA^{\ulambda}=\Spec(\bR_{\lambda})$ as usual, with $\bR_{\ulambda}=\Sym(\bV_{\ulambda})$. Consider the following diagram
\begin{displaymath}
\xymatrix{
\bR_{\ulambda} \ar@{..>}[r]^{\psi^*} \ar[rd]_{\phi^*} & A \ar[d] \\
& A/I }
\end{displaymath}
We must find a morphism $\psi^*$ of $\GL$-algebras that makes the diagram commute. The restriction of $\phi^*$ to $\bV_{\ulambda} \subset \bR_{\ulambda}$ is a map $\bV_{\ulambda} \to A/I$ of representations. It lifts to a map of representations $\bV_{\ulambda} \to A$, since polynomial representations of $\GL$ are semi-simple. This map induces a map of $\GL$-algebras $\psi^* \colon \bR_{\ulambda} \to A$, by the universal property of $\Sym$, which satisfies the necessary properties.
\end{proof}

\begin{remark}
This proposition does not remain valid in positive characteristic: for
instance, over a field $K$ of characteristic $3$, the space of infinite
cubics contains as a proper closed subset the linear space spanned by
third powers of linear forms, which is stable under the infinite general
linear group. The identity map from that space to itself does not extend
to an equivariant morphism defined on the entire space of infinite
cubics.
\end{remark}

\section{The decomposition theorem} \label{s:decomp}

\emph{Throughout this section, ``$\GL$-variety'' means ``quasi-affine $\GL$-variety'' by default.}

\subsection{Elementary varieties and morphisms}

We say that a $\GL$-variety $X$ is \defn{elementary} if it is isomorphic to a $\GL$-variety of the form $B \times \bA^{\ulambda}$ for some irreducible affine variety $B$ and tuple $\ulambda$; this implies that $X$ is irreducible and affine. Let $\phi \colon X \to Y$ be a morphism of $\GL$-varieties. We say that $\phi$ is \defn{elementary} if there exists a commutative diagram
\begin{equation} \label{eq:Diagram}
\begin{gathered}
\xymatrix@C=5em{
X \ar[r]^{\phi} \ar[d]_-i & Y \ar[d]^-j \\
B \times \bA^{\ulambda} \ar[r]^{\psi \times \pi} & C \times \bA^{\umu} }
\end{gathered}
\end{equation}
where $i$ and $j$ are isomorphisms of $\GL$-varieties, $B$ and $C$ are irreducible affine varieties, $\psi \colon B \to C$ is a surjective morphism of varieties, $\ulambda$ and $\umu$ are pure tuples with $\umu \subset \ulambda$, and $\pi \colon \bA^{\ulambda} \to \bA^{\umu}$ is the projection map. This implies that $X$ and $Y$ are elementary and that $\phi$ is surjective. Note that a $\GL$-variety is elementary if and only if its identity map is. We say that a $G(n)$-variety, or morphism of $G(n)$-varieties, is \defn{$G(n)$-elementary} if it is so after identifying $G(n)$ with $\GL$.

We say that a quasi-affine $\GL$-variety $X$ is \defn{locally elementary at $x \in X$} if there exists $m$ and a $G(m)$-stable open neighborhood of $x$ that is $G(m)$-elementary; we say that $X$ is \defn{locally elementary} if it is so at all points. Let $\phi \colon X \to Y$ be a morphism of $\GL$-varieties. We say that $\phi$ is \defn{locally elementary at $x \in X$} if there exists $m$ and $G(m)$-stable open neighborhoods $U$ of $x$ and $V$ of $\phi(x)$ such that $\phi$ induces a $G(m)$-elementary morphism $U \to V$. We say that $\phi$ is \defn{locally elementary} if it is surjective and locally elementary at all $x \in X$. This implies that both $X$ and $Y$ are locally elementary. We note that a $\GL$-variety is locally elementary if and only if its identity morphism is. We again define a $G(n)$-variety, or a morphism of $G(n)$-varieties, to be \defn{locally $G(n)$-elementary} if it is so after identifying $G(n)$ with $\GL$.

\begin{example}
Let $\ulambda$ be a non-empty pure tuple. Then $X=\bA^{\ulambda} \setminus \{0\}$ is an example of a locally elementary variety that is not elementary. The fact that $X$ is locally elementary follows from Corollary~\ref{cor:loc-elem-base-change} below. It is clear that $X$ is not elementary, as it is not affine. The identity morphism of $X$ is a (somewhat boring) example of a locally elementary morphism that is not elementary.
\end{example}

\begin{proposition} \label{prop:elem1}
Let $\phi \colon X \to Y$ be a morphism of $G(n)$-varieties,
and let $m \geq n$.
\begin{enumerate}
\item If $\phi$ is $G(n)$-elementary then it is $G(m)$-elementary.
\item $\phi$ is locally $G(n)$-elementary (at $x \in X$) if and only if it is locally $G(m)$-elementary (at $x$).
\end{enumerate}
\end{proposition}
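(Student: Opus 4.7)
The plan is to handle part~(a) by applying the shift functor $\Sh_k$ to the elementary diagram, and then to deduce part~(b) from~(a) via elementary observations about stability under nested subgroups.

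For part~(a), I start with the commutative diagram \eqref{eq:Diagram} that witnesses the $G(n)$-elementary structure, interpreted as a $\GL$-diagram via $G(n) \cong \GL$. The subgroup $G(m) \subseteq G(n)$ corresponds, under this identification, to $G(k) \subseteq \GL$ where $k := m-n$. Passing from the $G(n) \cong \GL$ viewpoint to the $G(m) \cong \GL$ viewpoint is thus effected by applying the shift functor $\Sh_k$. Since $\Sh_k$ commutes with products of affine $\GL$-schemes, fixes varieties with trivial $\GL$-action (whose coordinate rings sit in degree zero), and sends $\bA^{\ulambda}$ to $\bA^{\sh_k(\ulambda)}$, the shifted diagram reads
\begin{displaymath}
\xymatrix@C=3em{
X \ar[r]^{\phi} \ar[d] & Y \ar[d] \\
B \times \bA^{\sh_k(\ulambda)} \ar[r]^-{\psi \times \pi'} & C \times \bA^{\sh_k(\umu)} }
\end{displaymath}
where $\pi'$ is the projection associated with the inclusion $\sh_k(\umu) \subseteq \sh_k(\ulambda)$; this inclusion follows from $\umu \subseteq \ulambda$ by writing $\ulambda = \umu \cup \unu$ and noting $\sh_k(\ulambda) = \sh_k(\umu) \cup \sh_k(\unu)$.

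The one subtlety is that $\sh_k(\ulambda)$ and $\sh_k(\umu)$ may contain empty partitions and thus fail to be pure, as required for $G(m)$-elementarity. To address this, split each into its pure part ($\tilde{\ulambda}$ and $\tilde{\umu}$, respectively) and its empty partitions (numbering $a$ and $b$, with $b \le a$ since the empty partitions of $\sh_k(\umu)$ are a subset of those of $\sh_k(\ulambda)$). Setting $\tilde{B} := B \times \bA^a$ and $\tilde{C} := C \times \bA^b$ produces irreducible affine varieties, and one obtains canonical identifications $B \times \bA^{\sh_k(\ulambda)} \cong \tilde{B} \times \bA^{\tilde{\ulambda}}$ and similarly for $Y$. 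Under these, the shifted map decomposes as $\tilde{\psi} \times \tilde{\pi}$, where $\tilde{\psi} \colon \tilde{B} \to \tilde{C}$ acts by $\psi$ on $B$ together with the projection $\bA^a \to \bA^b$ onto the coordinates for empty partitions coming from $\sh_k(\umu)$, and $\tilde{\pi}$ is the projection for $\tilde{\umu} \subseteq \tilde{\ulambda}$. This yields the required $G(m)$-elementary structure on $\phi$.

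For part~(b), unwinding the definitions, ``$\phi$ is locally $G(n)$-elementary at $x$'' is equivalent to the existence of some $k \geq n$ and $G(k)$-stable open neighborhoods $U,V$ of $x$ and $\phi(x)$ (in the ambient $\GL$) such that $\phi|_U \colon U \to V$ is $G(k)$-elementary. The ``if'' direction is then immediate: any witness $k' \geq m$ for the $G(m)$-statement also satisfies $k' \geq n$. For ``only if'', given a witness $k \geq n$, set $k' := \max(k,m) \geq m$; since $G(k') \subseteq G(k)$, the sets $U,V$ remain $G(k')$-stable, and part~(a) implies that $\phi|_U$ is $G(k')$-elementary as well. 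The main obstacle I anticipate is the combinatorial bookkeeping in part~(a): correctly partitioning the empty partitions arising from the shift between $\tilde{B}$ and $\tilde{C}$ so that the shifted map decomposes as a product of a finite-dimensional morphism and a coordinate projection matching the definition of $G(m)$-elementary.
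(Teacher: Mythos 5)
Your proof is correct and follows essentially the same line as the paper's: apply $\Sh_{m-n}$ to the elementary diagram, observe that the shifted tuples may now contain empty partitions, and absorb the corresponding affine factors into $B$ and $C$ to restore purity, with the linear surjection between those affine factors keeping the base morphism surjective. Part (b) is handled identically.
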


\begin{proof}
(a) There is a diagram \eqref{eq:Diagram} after identifying
$G(n)$ with $\GL$. Now apply $\Sh_{m-n}$ to that diagram.
Recall that $\Sh_{m-n} X \cong X$ where the action of $\GL$ on the
right-hand side is via its isomorphism to $G(m-n)$, which under the
identification of $\GL$ with $G(n)$ is mapped to $G(m)$. So
the top row remains unchanged (except that we remember only
that $\phi$ is $G(m)$-equivariant). In the bottom row, $B$
and $C$ are replaced by products with ordinary affine spaces $\bA^b$
and $\bA^c$ where $b \geq c$ are the multiplicities of the empty
partition in $\sh_{m-n}\ulambda$ and $\sh_{m-n}\umu$,
respectively, $\psi$ is replaced by its product with a
linear surjection $\bA^b \to \bA^c$, and $\pi$ is replaced
by a $\GL$-equivariant linear surjection $\bA^{\ulambda'} \to \bA^{\umu'}$
where $\ulambda'$ and $\umu'$ are the pure parts of
$\sh_{m-n}\ulambda$ and $\sh_{m-n}\umu'$, respectively. We thus see that $\phi$ is $G(m)$-elementary.

(b) Assume that $\phi$ is locally $G(n)$-elementary at $x \in X$. Then there exist a $k \geq n$ and $G(k)$-stable open neighborhoods $U$ of $x$ and $V$ of $\phi(x)$ such that $\phi$ induces a $G(k)$-elementary morphism $U \to V$. By part (a), $\phi \colon U \to V$ is also $G(\max\{k,m\})$-elementary, so $\phi$ is locally $G(m)$-elementary at $x$. The converse is similar (but does not require the use of part (a), since an integer $\geq m$ is automatically $\geq n$).
\end{proof}

Due to part (b) above, we simply say ``locally elementary'' in place of ``locally $G(n)$-elementary,'' since the condition is independent of $n$.

\begin{proposition} \label{prop:loc-elem-unif}
Let $\phi \colon X \to Y$ be a morphism of $\GL$-varieties. The following are equivalent:
\begin{enumerate}
\item $\phi$ is locally elementary.
\item There exist $m$ and non-empty $G(m)$-stable open subvarieties $U_1, \ldots, U_n$ of $X$ and $V_1, \ldots, V_n$ of $Y$ such that $X$ is the union of the $\GL$-translates of the $U_i$'s, $Y$ is the union of the $\GL$-translates of the $V_i$'s, and $\phi$ induces a $G(m)$-elementary map $U_i \to V_i$ for all $i$.
\end{enumerate}
\end{proposition}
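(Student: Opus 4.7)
The plan is to prove each direction by combining the $\GL$-quasi-compactness of quasi-affine $\GL$-varieties with a basic commutation property between $\GL_n$ and $G(k)$ for $k \ge n$.

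For $(\mathrm{a})\Rightarrow(\mathrm{b})$: since $X$ is quasi-affine, Theorem~\ref{thm:Noetherianity} together with Proposition~\ref{prop:GL-quasi-cpt} tells me that $X$ is $\GL$-quasi-compact. For each $x \in X$, local elementarity of $\phi$ furnishes an integer $m_x$ and $G(m_x)$-stable open neighborhoods $W_x \ni x$ and $W_x' \ni \phi(x)$ such that $\phi|_{W_x} \colon W_x \to W_x'$ is $G(m_x)$-elementary. The $\GL$-saturations $\GL \cdot W_x$ form an open $\GL$-stable cover of $X$, so $\GL$-quasi-compactness lets me pass to a finite subcover coming from points $x_1, \ldots, x_n$. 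Setting $m := \max_i m_{x_i}$, Proposition~\ref{prop:elem1}(a) upgrades each $\phi|_{W_{x_i}}$ to a $G(m)$-elementary morphism, and the inclusion $G(m) \subseteq G(m_{x_i})$ preserves the stability of the neighborhoods. Since elementary morphisms are surjective and $\phi$ itself is, one has $Y = \phi(X) = \bigcup_i \GL \cdot W_{x_i}'$, giving the required cover on the target. Thus $U_i := W_{x_i}$ and $V_i := W_{x_i}'$ witness (b).

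For $(\mathrm{b})\Rightarrow(\mathrm{a})$: the decisive fact I will use is that whenever $g \in \GL_n$ and $k \ge n$, every element of $G(k)$ commutes with $g$. In block form with blocks of sizes $n$, $k - n$, $\infty$, any such $g$ takes the shape $\mathrm{diag}(A, I, I)$ and any $h \in G(k)$ the shape $\mathrm{diag}(I, I, B)$, so they commute on the nose. Given $x \in X$, write $x \in gU_i$ for some $g \in \GL$, say with $g \in \GL_n$, and some $i$, and set $k := \max(m, n)$. The commutation gives $g^{-1} G(k) g = G(k) \subseteq G(m)$, so $gU_i$ is $G(k)$-stable (and similarly $gV_i$), and left-translation by $g$ is a $G(k)$-equivariant isomorphism of varieties $U_i \xrightarrow{\sim} gU_i$, and likewise $V_i \xrightarrow{\sim} gV_i$. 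Hence the morphism $gU_i \to gV_i$ is isomorphic, as a morphism of $G(k)$-varieties, to $U_i \to V_i$, which is $G(m)$-elementary and therefore $G(k)$-elementary by Proposition~\ref{prop:elem1}(a). Since elementarity of a morphism is preserved under $G(k)$-equivariant isomorphism of source and target, this exhibits $\phi$ as locally elementary at $x$. Surjectivity of $\phi$ follows from $Y = \bigcup_{g, i} gV_i = \phi\bigl(\bigcup_{g, i} gU_i\bigr) = \phi(X)$, using that the $V_i$-covers the image under $\phi$ of the $U_i$-cover.

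The main obstacle I anticipate is not the length of either argument but the need to notice and exploit the commutation $[\GL_n, G(k)] = 1$ for $k \ge n$: it is precisely this that lets the elementary structure of $U_i \to V_i$ be transported along the (a priori non-$\GL$-equivariant) translation by $g$, at the price of enlarging the stabilizing subgroup from $G(m)$ to $G(\max(m,n))$. Without this observation the translated neighborhood $gU_i$ would carry only a twisted $G(k)$-action through the conjugation $h \mapsto g^{-1} h g$, and it would not be clear a priori that the elementary structure survives.
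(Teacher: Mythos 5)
Your proof is correct and follows essentially the same route as the paper's: for (a)$\Rightarrow$(b) both arguments exploit $\GL$-quasi-compactness of $X$ (via Theorem~\ref{thm:Noetherianity} and Proposition~\ref{prop:GL-quasi-cpt}) to pass to a finite cover and then enlarge $m$ using Proposition~\ref{prop:elem1}(a). For (b)$\Rightarrow$(a), the paper simply says it is ``clear''; your commutation argument---that $\GL_n$ and $G(k)$ commute for $k \ge n$, so that translation by $g$ is a $G(k)$-equivariant isomorphism carrying the $G(k)$-elementary structure of $U_i \to V_i$ over to $gU_i \to gV_i$---is exactly the mechanism that makes it clear, and you are right that without it the translate would carry only a conjugated action.
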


\begin{proof}
It is clear that (b) implies (a). Conversely, suppose (a)
holds. For each $x \in X$, choose open $G(m_x)$-stable
neighborhoods $U_x$ of $x$ and $V_x$ of $\phi(x)$ such that
$\phi$ induces a $G(m_x)$-elementary map $U_x \to V_x$. Let $U'_x=\bigcup_{g \in \GL} gU_x$.
The sets $U'_x$ form an open cover of $X$ by $\GL$-stable
open subsets. Since $X^{\orb}$ is quasi-compact, it follows
that there is a finite subcover; thus we can find points
$x_1, \ldots, x_n$ such that $X=\bigcup_{i=1}^n U'_{x_i}$.
Put $U_i=U_{x_i}$ and $V_i=V_{x_i}$; these are open and
$G(m)$-stable, where $m=\max(m_{x_1}, \ldots, m_{x_n})$.
Note that $\ol{\phi} \colon U_i \to V_i$ is $G(m)$-elementary by Proposition~\ref{prop:elem1}(a). It is thus clear that (b) holds.
\end{proof}

\begin{proposition} \label{prop:loc-elem-base-change}
Locally elementary maps are stable under base change along open immersions. More precisely, consider a pullback diagram
\begin{displaymath}
\xymatrix{
X' \ar[r] \ar[d]_{\phi'} & X \ar[d]^{\phi} \\
Y' \ar[r] & Y }
\end{displaymath}
where $\phi$ is a locally elementary map of $\GL$-varieties and $Y' \to Y$ is an open immersion of $\GL$-varieties. Then $\phi'$ is locally elementary.
\end{proposition}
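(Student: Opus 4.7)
The plan is as follows. Surjectivity of $\phi'$ follows from surjectivity of $\phi$ and $X' = \phi^{-1}(Y')$, so the real task is to verify local elementariness at an arbitrary $x' \in X'$ with image $y' = \phi(x') \in Y'$. By local elementariness of $\phi$ at $x'$, I first fix $m_0$ and $G(m_0)$-stable open neighborhoods $U \subseteq X$ of $x'$ and $V \subseteq Y$ of $y'$, together with $G(m_0)$-isomorphisms $i \colon U \to B \times \bA^{\ulambda}$ and $j \colon V \to C \times \bA^{\umu}$ exhibiting $\phi|_U$ as $\psi \times \pi$ (with $\psi \colon B \to C$ surjective, $\umu \subseteq \ulambda$ pure). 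The strategy is to shrink $V$ by inverting a suitable $G(m_1)$-invariant function (for some $m_1 \geq m_0$) so that the shrunken target lies in $Y'$ and the restricted morphism is $G(m_1)$-elementary.

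Concretely, let $Z := j(V \setminus Y')$, a closed $\GL$-stable subset of $C \times \bA^{\umu}$, and pick $f$ in its defining ideal with $f(j(y')) \neq 0$; choose $m_1 \geq m_0$ so that $f$ is fixed by $G(m_1)$. Then $V[1/f] := j^{-1}((C \times \bA^{\umu})[1/f])$ is a $G(m_1)$-stable principal open of $V$ contained in $V \cap Y'$ and containing $y'$, and $U[1/\phi^* f]$ is a $G(m_1)$-stable principal open of $U$ contained in $X'$ and containing $x'$.

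It remains to recognize these opens and the induced morphism as $G(m_1)$-elementary. For this, I apply the shift isomorphism $\sigma_{m_1} \colon V \xrightarrow{\sim} \Sh_{m_1}(V) = C \times \bA^{\sh_{m_1}(\umu)}$ of $G(m_1)$-varieties and write $\sh_{m_1}(\umu) = \umu^{\sharp} \cup \umu_0$, where $\umu^{\sharp}$ is the pure part (containing $\umu$) and $\umu_0$ is a tuple of $k$ empty partitions arising from the $\GL$-invariants $\bS_\lambda(K^{m_1})$ for $\lambda \in \umu$. This gives $V \cong (C \times \bA^k) \times \bA^{\umu^{\sharp}}$ as $G(m_1)$-varieties. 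Since $\umu^{\sharp}$ is pure, $\bR_{\umu^{\sharp}}^{\GL} = K$, so $f$ corresponds under $\sigma_{m_1}$ to a function $\tilde f$ on the variety factor $C \times \bA^k$; hence $V[1/f] \cong (C \times \bA^k)[1/\tilde f] \times \bA^{\umu^{\sharp}}$ is $G(m_1)$-elementary. The same shift argument applied to $U$ yields $U \cong (B \times \bA^{\ell}) \times \bA^{\ulambda^{\sharp}}$ with $\umu^{\sharp} \subseteq \ulambda^{\sharp}$, together with the analogous decomposition of $U[1/\phi^* f]$. Functoriality of $\Sh_{m_1}$ transforms $\psi \times \pi$ into $(\psi \times \pi_0) \times \pi^{\sharp}$, where $\psi \times \pi_0 \colon B \times \bA^{\ell} \to C \times \bA^k$ is surjective (the linear surjection $\pi_0$ being dual to the inclusion $\bS_{\umu}(K^{m_1}) \hookrightarrow \bS_{\ulambda}(K^{m_1})$) and $\pi^{\sharp} \colon \bA^{\ulambda^{\sharp}} \to \bA^{\umu^{\sharp}}$ is the projection coming from $\umu^{\sharp} \subseteq \ulambda^{\sharp}$. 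Passing to principal opens preserves surjectivity of the variety part, so $\phi|_{U[1/\phi^* f]} \colon U[1/\phi^* f] \to V[1/f]$ is $G(m_1)$-elementary, completing the verification.

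The main obstacle is showing that any $G(m_1)$-invariant regular function on $V = C \times \bA^{\umu}$ factors through the ordinary affine factor of the shifted decomposition. This rests on the identification $(\bR_{\umu})^{G(m_1)} = \Sym(\bV_{\umu}\{K^{m_1}\})$ combined with the purity of $\umu^{\sharp}$ (so that $\bR_{\umu^{\sharp}}^{\GL} = K$); together, these ensure that inverting such a function preserves the elementary form of $V$ and, via the parallel shift on $U$, of the morphism $\phi|_U$ restricted to the chosen principal opens.
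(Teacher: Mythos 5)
Your proof is correct and takes essentially the same route as the paper's three-step argument: after invoking local elementariness at the chosen point, you shrink to a principal open cut out by a single function invariant under some $G(m_1)$ and then observe, after shifting, that inverting an invariant function only affects the finite-dimensional base of the elementary structure, so the product form $\psi\times\pi$ and the surjectivity of its base part both survive. The one cosmetic slip is in the shift indexing—where you write $\Sh_{m_1}$ and $\bS_{\bullet}(K^{m_1})$ you mean the shift by $m_1-m_0$ relative to the ambient $G(m_0)$-identification—but the intent is clear and the argument goes through.
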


\begin{proof}
We proceed in three steps. In what follows, we simply identify $Y'$ and $X'$ with open $\GL$-subvarieties of $Y$ and $X$.

\textit{\textbf{Step 1:} Suppose $\phi$ is elementary and $Y'=Y[1/h]$ for non-zero $\GL$-invariant function $h$ on $Y$.} Identify $X$ with $B \times \bA^{\ulambda}$ and $Y$ with $C \times \bA^{\umu}$ and $\phi$ with $\psi \times \pi$, where $\psi \colon B \to C$ is a surjective morphism of varieties and $\pi \colon \bA^{\ulambda} \to \bA^{\umu}$ is the projection map. We find that $Y'=C[1/h] \times \bA^{\umu}$ and $X'=B[1/h] \times \bA^{\ulambda}$. The induced map $\phi' \colon X' \to Y'$ is simply $\psi\vert_{B[1/h]} \times \pi$, and is thus elementary.

\textit{\textbf{Step 2:} Suppose $\phi$ is elementary.} Let $x \in X'$ and $y=f(x)$. We can then find a function $h$ on $Y$ such that $y \in Y[1/h] \subset Y'$. Let $m$ be such that $h$ is $G(m)$-invariant. Regard $\phi$ as a morphism of $G(m)$-varieties---as such, it is still elementary by Proposition~\ref{prop:elem1}(a)---and apply Step~1 with the open set $Y[1/h]$. We thus see that $\phi^{-1}(Y[1/h]) \to Y[1/h]$ is $G(m)$-elementary. Since $x \in \phi^{-1}(Y[1/h]) \subset X'$, it follows that $\phi'$ is locally elementary at $x$. Since $x$ is arbitrary and $\phi'$ is surjective, it follows that $\phi'$ is locally elementary.

\textit{\textbf{Step 3:} the general case.} Let $x \in X'$ and let $y=\phi(x)$. Since $\phi$ is locally elementary, we can find $G(m)$-stable open neighborhoods $U$ of $x$ and $V$ of $y$ such that $\phi$ induces a $G(m)$-elementary morphism $\ol{\phi} \colon U \to V$. Let $V'=V \cap Y'$ and let $U'=U \cap X'=\ol{\phi}^{-1}(V')$. By Step~2, we see that the induced morphism $U' \to V'$ is locally elementary. Since $x \in U' \subset X'$, it follows that $\phi'$ is locally elementary at $x$. Since $x$ is arbitrary and $\phi'$ surjective, it follows that $\phi'$ is locally elementary.
\end{proof}

Applying the proposition to the identity map, we find:

\begin{corollary} \label{cor:loc-elem-base-change}
Let $X$ be a locally elementary $\GL$-variety and let $U$ be an open $\GL$-subvariety. Then $U$ is locally elementary.
\end{corollary}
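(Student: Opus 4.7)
The plan is to derive this corollary as an essentially immediate consequence of Proposition~\ref{prop:loc-elem-base-change} by taking the morphism $\phi$ to be the identity. First I would recall that, by the definition given in the paragraph preceding Proposition~\ref{prop:elem1}, a quasi-affine $\GL$-variety $X$ is locally elementary if and only if its identity morphism $\id_X \colon X \to X$ is locally elementary. Hence the hypothesis that $X$ is locally elementary is exactly the statement that $\id_X$ is a locally elementary morphism of $\GL$-varieties.

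Next I would set up the pullback square
\begin{displaymath}
\xymatrix{
U \ar[r] \ar[d]_{\id_U} & X \ar[d]^{\id_X} \\
U \ar[r] & X
}
\end{displaymath}
in which both horizontal arrows are the given open immersion $U \hookrightarrow X$ of $\GL$-varieties. This is manifestly a pullback diagram, and the right-hand vertical arrow $\id_X$ is locally elementary by the previous paragraph.

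Applying Proposition~\ref{prop:loc-elem-base-change} to this square, we conclude that the left-hand vertical arrow $\id_U$ is locally elementary. Invoking once more the equivalence ``$Z$ is locally elementary iff $\id_Z$ is locally elementary,'' this time for $Z = U$, we obtain that $U$ itself is a locally elementary $\GL$-variety, which is the desired conclusion. There is no real obstacle here: the entire content of the corollary lies in Proposition~\ref{prop:loc-elem-base-change}, and the corollary is just the special case of that proposition in which the morphism being base-changed is an identity.
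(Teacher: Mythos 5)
Your proof is correct and matches the paper's approach exactly: the paper itself states this corollary with the single line ``Applying the proposition to the identity map, we find:'' immediately before it, and your pullback square $U \times_X X \cong U$ with $\phi = \id_X$ spells out precisely that application together with the equivalence that a $\GL$-variety is locally elementary iff its identity morphism is.
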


\subsection{Locally elementary decompositions}

Let $X$ be a $\GL$-variety. A \defn{locally elementary decomposition} of $X$ is a finite decomposition $X=\bigsqcup_{i \in I} X_i$ of $X$ such that each $X_i$ is $\GL$-stable, locally closed (i.e., open in its closure), and locally elementary. Let $\phi \colon X \to Y$ be a morphism of $\GL$-varieties. A \defn{locally elementary decomposition} of $\phi$ consists of locally elementary decompositions $X=\bigsqcup_{i \in I} X_i$ and $Y=\bigsqcup_{j \in J} Y_j$ such that for each $i \in I$ there is some $j \in J$ such that $\phi(X_i) \subset Y_j$ and the map $\phi \colon X_i \to Y_j$ is locally elementary. In what follows, we sometimes abbreviate ``locally elementary decomposition'' to LED.

\begin{proposition} \label{prop:decomp-pullback}
Consider a pullback diagram
\begin{displaymath}
\xymatrix{
X' \ar[r] \ar[d]_{\phi'} & X \ar[d]^{\phi} \\
Y' \ar[r] & Y }
\end{displaymath}
where $\phi$ is a map of $\GL$-varieties and $Y' \to Y$ is an open immersion of $\GL$-varieties. Suppose $\phi$ admits an LED given by $X=\bigsqcup_{i \in I} X_i$ and $Y=\bigsqcup_{j \in J} Y_j$. Put $X'_i = X_i \cap X'$ and $Y'_j = Y_j \cap Y'$. Then $\phi'$ admits the LED $X'=\bigsqcup_{i \in I} X'_i$ and $Y'=\bigsqcup_{j \in J} Y'_j$.
\end{proposition}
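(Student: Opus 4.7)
The proof is essentially a matter of pulling back each piece of the two decompositions along the open immersions $X' \hookrightarrow X$ and $Y' \hookrightarrow Y$, and checking that the properties required of a locally elementary decomposition are preserved. The substance of the argument is carried by Proposition~\ref{prop:loc-elem-base-change} and its Corollary~\ref{cor:loc-elem-base-change}, so the plan is to verify the various conditions one by one, citing these results where needed.

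First I would verify the elementary set-theoretic facts. The collection $\{X'_i\}_{i \in I}$ is a disjoint decomposition of $X'$ since $\{X_i\}_{i \in I}$ is a disjoint decomposition of $X$, and similarly for $\{Y'_j\}$. Each $X'_i$ is $\GL$-stable as the intersection of two $\GL$-stable subsets, and each $X'_i$ is locally closed in $X'$: writing $X_i = U_i \cap Z_i$ with $U_i$ open and $Z_i$ closed in $X$, we get $X'_i = (U_i \cap X') \cap (Z_i \cap X')$, where $U_i \cap X'$ is open in $X'$ and $Z_i \cap X'$ is closed in $X'$. The same argument handles the $Y'_j$.

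Next, each $X'_i$ is locally elementary: indeed, $X'_i = X_i \cap X'$ is open in $X_i$ (as $X'$ is open in $X$), so Corollary~\ref{cor:loc-elem-base-change} applied to the locally elementary $\GL$-variety $X_i$ gives that $X'_i$ is locally elementary. Similarly each $Y'_j$ is locally elementary.

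Finally, for the compatibility of $\phi'$ with the decompositions, fix $i \in I$ and let $j \in J$ be the index with $\phi(X_i) \subseteq Y_j$ and $\phi\colon X_i \to Y_j$ locally elementary. Since $X' = \phi^{-1}(Y')$ and $\phi(X_i) \subseteq Y_j$, we have
\begin{displaymath}
X'_i = X_i \cap X' = X_i \cap \phi^{-1}(Y') = X_i \cap \phi^{-1}(Y' \cap Y_j) = X_i \cap \phi^{-1}(Y'_j),
\end{displaymath}
so the square with vertices $X'_i, X_i, Y'_j, Y_j$ (with horizontal arrows the open immersions and vertical arrows induced by $\phi$) is a pullback. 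Consequently $\phi'(X'_i) \subseteq Y'_j$, and Proposition~\ref{prop:loc-elem-base-change} applied to the locally elementary map $\phi\colon X_i \to Y_j$ and the open immersion $Y'_j \hookrightarrow Y_j$ shows that the induced map $X'_i \to Y'_j$ is locally elementary. This completes the verification that the $X'_i$ and $Y'_j$ constitute a locally elementary decomposition of $\phi'$. There is no real obstacle here; the only thing to be slightly careful about is the identification of $X'_i$ as the pullback of $X_i$ along $Y'_j \hookrightarrow Y_j$, which uses in an essential way that $\phi$ already sends $X_i$ into $Y_j$.
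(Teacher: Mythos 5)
Your proof is correct and is essentially the same argument the paper indicates (the paper's proof is the single line \emph{``This follows immediately from Proposition~\ref{prop:loc-elem-base-change}''}); you have simply spelled out the routine verifications — disjointness, $\GL$-stability, local closedness, the identification of $X'_i$ as the pullback of $X_i$ along $Y'_j \hookrightarrow Y_j$ — that the paper leaves implicit.
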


\begin{proof}
This follows immediately from Proposition~\ref{prop:loc-elem-base-change}.
\end{proof}

\subsection{The decomposition theorem}

The following is our main structure theorem for morphisms of $\GL$-varieties.

\begin{theorem} \label{thm:decomp}
Every morphism of $\GL$-varieties admits a locally elementary decomposition.
\end{theorem}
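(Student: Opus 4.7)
The plan is to argue by noetherian induction on $X$, using the noetherianity of $X^{\orb}$ (which follows from Proposition~\ref{prop:opens-of-Xorb} and Theorem~\ref{thm:Noetherianity}). After the standard reductions---decomposing $X$ into its $\GL$-stable irreducible components (Proposition~\ref{prop:FirstProperties}(d)), replacing $Y$ by $\overline{\phi(X)}$, and handling any residual $Y$-pieces by a parallel argument for identity morphisms---we may assume $\phi \colon X \to Y$ is dominant between irreducible quasi-affine $\GL$-varieties.

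The crux will be to construct a non-empty open $\GL$-stable $V \subset Y$ together with a decomposition $\phi^{-1}(V) = \bigsqcup_k W_k$ into locally closed $\GL$-stable pieces such that each restriction $\phi|_{W_k} \colon W_k \to V$ is locally elementary. Granting this, the induction concludes by applying the inductive hypothesis to $\phi|_{X \setminus \phi^{-1}(V)} \colon X \setminus \phi^{-1}(V) \to Y \setminus V$---whose source is a proper closed $\GL$-subvariety of $X$---and combining the two decompositions (the $Y$-side being the single piece $V$ on the open side, and the inductively-supplied decomposition on the complement).

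To construct $V$ I will apply the Shift Theorem (Theorem~\ref{thm:shift}) to both $X$ and $Y$ at a common shift level $n$, giving $\Sh_n(X)[1/h_X] \cong B \times \bA^{\ulambda}$ and $\Sh_n(Y)[1/h_Y] \cong C \times \bA^{\umu}$. Setting $\tilde h := h_X \cdot \Sh_n(\phi)^*(h_Y)$---nonzero by dominance of $\phi$---the restriction of $\Sh_n(\phi)$ to $\Sh_n(X)[1/\tilde h]$ is a dominant morphism of the form $B' \times \bA^{\ulambda} \to C \times \bA^{\umu}$, where $B'$ is a principal open of $B$. Proposition~\ref{prop:Adom} then produces dense opens $U \subset B'$ and $V_C \subset C$ on which the restriction $U \times \bA^{\ulambda} \to V_C \times \bA^{\umu}$ is elementary and surjective. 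Translating back through the shift isomorphism and saturating under $\GL$ yields open $\GL$-stable subvarieties $\tilde W \subset X$ and $\tilde V \subset Y$ for which $\phi|_{\tilde W} \colon \tilde W \to \tilde V$ is locally elementary; local elementariness survives $\GL$-translation because any fixed element of $\GL_N$ commutes with $G(m)$ for $m \geq N$, so that $\GL$-translates of a $G(n)$-elementary neighborhood remain $G(m)$-elementary for sufficiently large $m$.

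The hard part will be that $\phi^{-1}(\tilde V)$ may strictly contain $\tilde W$, so the single piece $\tilde W$ does not exhaust the preimage. The natural remedy---shrinking $\tilde V$ by removing $\overline{\phi(X \setminus \tilde W)}$---succeeds when that closed $\GL$-subvariety is proper in $\tilde V$, but can fail when $\phi|_{X \setminus \tilde W}$ happens to be dominant onto $\tilde V$. I plan to address this by a secondary iteration of the same shift-plus-Proposition~\ref{prop:Adom} argument applied to the ``extra'' locus $\phi^{-1}(\tilde V) \setminus \tilde W$, a locally closed $\GL$-subvariety whose closure in $X$ is strictly smaller than $X$; this carves off additional locally closed $\GL$-stable pieces each mapping locally elementarily to $\tilde V$, and noetherianity of $X^{\orb}$ guarantees the iteration terminates in finitely many steps.
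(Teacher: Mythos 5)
Your core construction---applying the Shift Theorem (Theorem~\ref{thm:shift}) to $X$ and $Y$ at a common shift level, localizing so that Proposition~\ref{prop:Adom} applies, and then saturating under $\GL$---is exactly what the paper packages as Lemmas~\ref{lem:decomp-1}--\ref{lem:decomp-3}, and that part of your plan is fine. The genuine gap is in the inductive bookkeeping. Your reduction step, ``replacing $Y$ by $\overline{\phi(X)}$ and handling any residual $Y$-pieces by a parallel argument for identity morphisms,'' is circular: the identity map on $Y\setminus\overline{\phi(X)}$ has source that is not a closed $\GL$-subvariety of $X$, so a noetherian induction on $X$ alone tells you nothing about it, and ``a parallel argument'' is precisely the theorem you are trying to prove. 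Likewise, ``decomposing $X$ into its irreducible components'' is not a disjoint decomposition (the components overlap), and does not reduce you to the irreducible case without further argument. The paper instead runs an \emph{outer} noetherian induction on $Y$ together with the inner one on $X$: the $Y$-induction at once dispatches reducible $Y$, non-dominant $\phi$, and the complement $Y\setminus V$, and it renders any irreducibility assumption on $X$ unnecessary (Lemma~\ref{lem:decomp-3} does not require it).

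A secondary issue is in your final ``iteration.'' When shift-plus-Adom is applied to the dominant map $\phi^{-1}(\tilde V)\setminus\tilde W\to\tilde V$, the open piece it produces maps locally elementarily onto a strictly smaller open $V''\subsetneq\tilde V$, not onto $\tilde V$ as you claim; thus $\tilde V$ must shrink at every pass, every previously carved piece must be re-restricted (by open base change, Proposition~\ref{prop:loc-elem-base-change}), and termination must be shown to track a strictly descending chain of closures in $X$---none of which is addressed. The paper avoids this entirely: it applies the inner IH to $Z=X\setminus W\to Y$ to get an LED, reads off the unique open $Y$-stratum $V_2$, sets $V=V_1\cap V_2$, and pulls back the $Z$-strata over $V$ (Proposition~\ref{prop:decomp-pullback}); the complement $Y\setminus V$ is then handled by the outer IH on $Y$. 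You are rebuilding by hand, and imprecisely, what the double noetherian induction provides in one step.
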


Applying the theorem to the identity map, we find:

\begin{corollary}
Every $\GL$-variety admits a locally elementary decomposition.
\end{corollary}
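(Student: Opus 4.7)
The plan is to apply the decomposition theorem for morphisms (Theorem~\ref{thm:decomp}) to the identity morphism $\mathrm{id}_X \colon X \to X$. By that theorem, $\mathrm{id}_X$ admits a locally elementary decomposition, which by definition consists of LEDs $X = \bigsqcup_{i \in I} X_i$ (source) and $X = \bigsqcup_{j \in J} X_j'$ (target) such that for each $i$ there exists $j$ with $\mathrm{id}_X(X_i) \subset X_j'$ and such that the induced map $X_i \to X_j'$ is locally elementary. The source decomposition $X = \bigsqcup_{i \in I} X_i$ is then precisely a locally elementary decomposition of $X$ in the sense required: each $X_i$ is $\GL$-stable, locally closed in $X$, and locally elementary.

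There is essentially no obstacle here: the corollary is a direct specialization of Theorem~\ref{thm:decomp}, since the definition of a locally elementary decomposition of a variety is engineered to be exactly the ``source half'' of the definition of a locally elementary decomposition of a morphism. One need not even use the target decomposition or the compatibility of the map, although these come along for free. In particular, no further arguments about refinement or compatibility are needed, and the proof is a single sentence invoking Theorem~\ref{thm:decomp} with $\phi = \mathrm{id}_X$.
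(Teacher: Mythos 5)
Your proof is correct and matches the paper exactly: the paper also obtains this corollary by applying Theorem~\ref{thm:decomp} to the identity morphism of $X$. Nothing further is needed.
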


We require a number of lemmas before proving the theorem.

\begin{lemma} \label{lem:decomp-1}
Let $X$ be a non-empty $\GL$-variety. Then $X$ contains an irreducible (and, in particular, non-empty) locally elementary open $\GL$-subvariety.
\end{lemma}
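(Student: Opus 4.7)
The plan is to apply the shift theorem (Theorem~\ref{thm:shift}) after reducing to the irreducible affine case, and then to spread the resulting local structure around by the $\GL$-action.

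First, I reduce to the case where $X$ is irreducible. By Theorem~\ref{thm:Noetherianity} the underlying topological space of $X$ is noetherian, so $X$ has only finitely many irreducible components, each of which is $\GL$-stable by Proposition~\ref{prop:FirstProperties}(d). Removing all but one of these components yields a nonempty open irreducible $\GL$-subvariety of $X$, so I may assume $X$ is irreducible. Since $X$ is quasi-affine by the convention of this section, it is open in some affine $\GL$-variety $W$; let $\bar{X}$ be its closure in $W$. Then $\bar{X}$ is an irreducible affine $\GL$-variety and $X$ is a dense open $\GL$-subvariety of $\bar{X}$.

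Next, I apply Theorem~\ref{thm:shift}(a) to $\bar{X}$: there exist an integer $n\ge 0$, a nonzero $\GL$-invariant function $h$ on $\Sh_n(\bar{X})$, and an isomorphism $\Sh_n(\bar{X})[1/h]\cong B\times\bA^{\ul{\kappa}}$ of $\GL$-varieties, with $B$ irreducible by the remark following Theorem~\ref{thm:shift}. Transporting along the shift isomorphism $\sigma_n\colon\bar{X}\to\Sh_n(\bar{X})$, which is $\GL$-equivariant after identifying $\GL$ on the source with $G(n)$, the function $h$ corresponds to a nonzero $G(n)$-invariant function $h'$ on $\bar{X}$, and $\bar{X}[1/h']\cong B\times\bA^{\ul{\kappa}}$ as a $G(n)$-variety; in particular, $\bar{X}[1/h']$ is $G(n)$-elementary.

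Now set $U_0=X\cap\bar{X}[1/h']$. Since $X$ is dense in the irreducible $\bar{X}$ and $h'$ is nonzero on $\bar{X}$, $U_0$ is a nonempty open $G(n)$-subvariety of $\bar{X}[1/h']$, so by Corollary~\ref{cor:loc-elem-base-change} applied in the $G(n)$-setting, $U_0$ is locally $G(n)$-elementary. Define $U=\GL\cdot U_0\subseteq X$, which is open and $\GL$-stable in $X$, and irreducible because $X$ is. To verify that $U$ is locally elementary as a $\GL$-variety, I apply Proposition~\ref{prop:loc-elem-unif} to the identity on $U_0$ in the $G(n)$-setting: this yields an integer $m\ge n$ and $G(m)$-stable, $G(m)$-elementary open subvarieties $W_1,\ldots,W_r$ of $U_0$ whose $G(n)$-translates cover $U_0$. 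Then the $\GL$-translates of the $W_i$'s cover $U$, and the $W_i$'s therefore witness condition~(b) of Proposition~\ref{prop:loc-elem-unif} for the identity map $U\to U$; hence $U$ is locally elementary, completing the proof.

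The argument uses no hard new idea; the only point that requires a bit of care is consistently translating between $\GL$-invariant data on $\Sh_n(\bar{X})$ and $G(n)$-invariant data on $\bar{X}$, and between $\GL$-stability on $X$ and $G(n)$-stability on $U_0$. Once those bookkeeping issues are under control, the lemma is a direct combination of the shift theorem with the formal properties of locally elementary varieties established above.
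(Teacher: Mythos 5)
Your proof is correct and applies the same core argument as the paper: the shift theorem produces a $G(n)$-elementary open subset inside an affine $\GL$-variety related to $X$, which one then intersects with $X$ and spreads around by $\GL$-translates, invoking Proposition~\ref{prop:loc-elem-unif} to conclude local elementarity of the union. The organization differs only slightly---the paper proves the affine case directly and reduces the general case to it by choosing an affine $G(n)$-stable open of $X$, whereas you first reduce to the irreducible case and pass to the affine closure $\bar{X}$, which has the minor advantage of making the irreducibility of the factor $B$ in the shift theorem (needed for the localization to be elementary) and of the final $\GL$-stable open set immediate.
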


\begin{proof}
First suppose that $X$ is affine. By the shift theorem (Theorem~\ref{thm:shift}), there is an integer $n \ge 0$ and a non-zero $\GL$-invariant function $h$ on $\Sh_n(X)$ such that $\Sh_n(X)[1/h]$ is elementary. Under the identification $(\GL, \Sh_n(X)) = (G(n), X)$, the open $\GL$-subvariety $\Sh_n(X)[1/h]$ of $\Sh_n(X)$ corresponds to an open $G(n)$-subvariety $U_0$ of $X$. Now, let $U=\bigcup_{g \in \GL} g U_0$, an irreducible open $\GL$-subvariety of $X$. Since $U_0$ is $G(n)$-elementary, it follows that $U$ is locally elementary.

We now treat the general case. Let $U$ be a non-empty open affine $G(n)$-subvariety of $X$. By the previous paragraph, $U$ contains a non-empty open locally elementary $G(n)$-subvariety $V_0$. Let $V=\bigcup_{g \in \GL} g V_0$. Then $V$ is an irreducible open locally elementary $\GL$-subvariety of $X$.
\end{proof}

\begin{lemma} \label{lem:decomp-2}
Let $\phi \colon X \to Y$ be a dominant morphism of irreducible locally elementary $\GL$-varieties. Then there exist non-empty open $\GL$-subvarieties $U \subset X$ and $V \subset Y$ such that $\phi(U) \subset V$ and the induced map $\phi \colon U \to V$ is locally elementary.
\end{lemma}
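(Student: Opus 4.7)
The plan is to leverage local elementarity of $X$ and $Y$ to reduce to a dominant morphism between elementary $G(n')$-varieties, apply Proposition~\ref{prop:Adom} to extract an elementary morphism on further shrunk opens, and finally spread by $\GL$-orbits and invoke Proposition~\ref{prop:loc-elem-unif}.

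First, using local elementarity of $X$ and $Y$, I would fix an integer $n$ and non-empty $G(n)$-stable open subvarieties $X_0\subseteq X$ and $Y_0\subseteq Y$ that are $G(n)$-elementary, equalizing the index via Proposition~\ref{prop:elem1}(a). Writing $X_0\cong B\times\bA^{\ulambda}$ and $Y_0\cong C\times\bA^{\umu}$ as $G(n)$-varieties (with $B,C$ irreducible since $X,Y$ are), set $Z:=Y\setminus Y_0$, a closed $G(n)$-subvariety of $Y$. Since $\phi$ is dominant and $Y_0$ is a non-empty open, $\phi(X)$ cannot be contained in $Z$, so $W:=\phi^{-1}(Z)\cap X_0$ is a proper closed $G(n)$-subvariety of $X_0$. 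Defining $X_1:=X_0\setminus W$ yields a non-empty $G(n)$-stable open subvariety of $X_0$ with $\phi(X_1)\subseteq Y_0$.

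Next, I would apply the shift theorem (mimicking the proof of Lemma~\ref{lem:decomp-1}) to an affine open $G(n)$-subvariety of the quasi-affine $X_1$ to obtain an integer $n'\geq n$ and a non-empty $G(n')$-stable open $G(n')$-elementary subvariety $X_2\subseteq X_1$. Write $X_2\cong B'\times\bA^{\ulambda'}$ as a $G(n')$-variety; by Proposition~\ref{prop:elem1}(a), $Y_0$ inherits a $G(n')$-elementary structure $Y_0\cong C'\times\bA^{\umu'}$. The restricted morphism $\phi\colon X_2\to Y_0$ is dominant, because $X_2$ is non-empty open in the irreducible $X$ and $\phi$ is dominant. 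Applying Proposition~\ref{prop:Adom} to this dominant $G(n')$-morphism $B'\times\bA^{\ulambda'}\to C'\times\bA^{\umu'}$ produces open subvarieties $U_B\subseteq B'$, $V_C\subseteq C'$ such that $\phi$ restricts to a surjective $G(n')$-morphism $U':=U_B\times\bA^{\ulambda'}\to V_C\times\bA^{\umu'}=:V'$ of the form $(\phi_0\times\pi)\circ\sigma$ with $\phi_0$ surjective, $\pi$ a projection, and $\sigma$ an automorphism, which precisely exhibits $\phi\colon U'\to V'$ as a $G(n')$-elementary morphism.

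Finally, I would set $U:=\GL\cdot U'$ and $V:=\GL\cdot V'$; these are non-empty open $\GL$-subvarieties of $X$ and $Y$, and $\GL$-equivariance of $\phi$ gives $\phi(U)\subseteq V$ (in fact, equality). Since the $\GL$-translates of $U'$ cover $U$, those of $V'$ cover $V$, and $\phi\colon U'\to V'$ is $G(n')$-elementary, Proposition~\ref{prop:loc-elem-unif} yields that $\phi\colon U\to V$ is locally elementary. The main technical obstacle, as I see it, is the bookkeeping across the two shifts: after producing the $G(n')$-elementary $X_2\subseteq X_1$, one must reinterpret $Y_0$ as a $G(n')$-elementary variety in order to invoke Proposition~\ref{prop:Adom}, and this depends essentially on the fact that elementary structures on a variety persist under enlargement of the subgroup index (Proposition~\ref{prop:elem1}(a)).
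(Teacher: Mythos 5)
Your proof is correct and follows essentially the same route as the paper: both reduce to a dominant morphism between elementary $G(m)$‑varieties by shifting, apply Proposition~\ref{prop:Adom} to produce an elementary morphism on shrunk opens, then spread by $\GL$‑orbits and invoke Proposition~\ref{prop:loc-elem-unif}. The only cosmetic difference is organizational: the paper first fixes an elementary open $Y'\subseteq Y$, passes to $\phi^{-1}(Y')$ (which is locally elementary by Proposition~\ref{prop:loc-elem-base-change}), and extracts an elementary open there, whereas you first choose $X_0$ and $Y_0$ independently and shrink $X_0$ to $X_1=X_0\setminus\phi^{-1}(Y\setminus Y_0)$ before extracting an elementary open.
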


\begin{proof}
First suppose that $X$ and $Y$ are elementary, say $X=B \times \bA^{\ulambda}$ and $Y=C \times \bA^{\umu}$. By Proposition~\ref{prop:Adom}, we have $\umu \subset \ulambda$ and there are non-empty open subschemes $U_0$ of $B$ and $V_0$ of $C$ such that $\phi_0(U_0)=V_0$ and the restriction of $\phi$ to $U_0 \times \bA^{\mu}$ factors as
\begin{displaymath}
\xymatrix@=1.5cm{
U_0 \times \bA^{\ulambda} \ar[r]^{\sigma} & U_0 \times
\bA^{\ulambda} \ar[r]^-{\id \times \pi} & U_0 \times
\bA^{\umu} \ar[r]^-{\phi_0 \times \id} & V_0 \times \bA^{\umu} }
\end{displaymath}
where $\sigma$ is some $U_0$-automorphism of $U_0 \times \bA^{\ulambda}$ and $\pi$ is the projection map. Thus, putting $U=U_0 \times \bA^{\ulambda}$ and $V=V_0 \times \bA^{\umu}$, we see that the diagram
\begin{displaymath}
\xymatrix@C=5em{
U \ar[r]^{\phi} \ar[d]_-i & V \ar@{=}[d] \\
U_0 \times \bA^{\ulambda} \ar[r]^{\phi_0 \times \pi} & V_0 \times \bA^{\umu} }
\end{displaymath}
commutes, where $i=\sigma$. Thus $\phi \colon U \to V$ is elementary.

We now treat the general case. Since $Y$ is non-empty and
locally elementary, we can find a non-empty open elementary
$G(n)$-subvariety $Y'$ of $Y$. Since $\phi^{-1}(Y')$ is a locally elementary
$G(n)$-variety (Proposition~\ref{prop:loc-elem-base-change}), by the same reasoning we can find a non-empty open elementary affine $G(m)$-subvariety $X'$ of $\phi^{-1}(Y')$, for some $m \ge n$. Note that $Y'$ is an elementary $G(m)$-variety (Proposition~\ref{prop:elem1}(a)). Thus $\phi \colon X' \to Y'$ is a dominant morphism of irreducible elementary affine $G(m)$-varieties. By the previous paragraph, we can find non-empty open $G(m)$-subvarieties $U' \subset X'$ and $V' \subset Y'$ such that $\phi$ induces an elementary map $U' \to V'$. Put $U=\bigcup_{g \in \GL} gU'$ and $V=\bigcup_{g \in \GL} gV'$. Then $\phi$ induces a locally elementary map $U \to V$, which completes the proof.
\end{proof}

\begin{lemma} \label{lem:decomp-3}
Let $\phi \colon X \to Y$ be a dominant morphism of non-empty $\GL$-varieties. Then there exist non-empty open $\GL$-subvarieties $U \subset X$ and $V \subset Y$ such that $\phi(U) \subset V$ and the map $\phi \colon U \to V$ is locally elementary.
\end{lemma}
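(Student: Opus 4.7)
The plan is to reduce to the setting of Lemma~\ref{lem:decomp-2} by first passing to a pair of irreducible components between which $\phi$ is dominant, and then applying Lemma~\ref{lem:decomp-1} on both sides.

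First I would reduce to the case where $X$ and $Y$ are irreducible. Since $X$ and $Y$ are noetherian, they have finitely many irreducible components, each of which is $\GL$-stable by Proposition~\ref{prop:FirstProperties}(d). Let $\{X_i\}_{i=1}^r$ and $\{Y_j\}_{j=1}^s$ be these components. For each $i$, the closure $\overline{\phi(X_i)}$ is an irreducible closed $\GL$-subset of $Y$, hence is contained in some $Y_{j(i)}$. Dominance of $\phi$ gives $\bigcup_i \overline{\phi(X_i)}=Y$, and the maximality of the $Y_j$ then forces each $Y_j$ to equal some $\overline{\phi(X_{i})}$. Fix a pair $(X_0,Y_0)$ with $\overline{\phi(X_0)}=Y_0$, and then replace $Y$ by the irreducible dense open $\GL$-subvariety $Y^\circ:=Y \setminus \bigcup_{j \ne 0} Y_j$ of $Y_0$, and replace $X$ by $(X \setminus \bigcup_{i \ne 0} X_i) \cap \phi^{-1}(Y^\circ)$, which is a non-empty open $\GL$-subvariety of the irreducible component $X_0$ (non-empty because $\phi(X_0)$ meets the non-empty open subset $Y^\circ$ of its closure $Y_0$). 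The restricted $\phi$ remains dominant onto the now-irreducible $Y$.

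Next, using Lemma~\ref{lem:decomp-1}, I would choose an irreducible locally elementary open $\GL$-subvariety $V_0$ of $Y$. The preimage $\phi^{-1}(V_0)$ is a non-empty open $\GL$-subvariety of $X$, and Lemma~\ref{lem:decomp-1} provides an irreducible locally elementary open $\GL$-subvariety $U_0 \subseteq \phi^{-1}(V_0)$. Since $X$ is irreducible, $U_0$ is dense in $X$; combined with dominance of $\phi$ and the inclusion $\phi(U_0) \subset V_0$, this forces $\phi(U_0)$ to be dense in $V_0$. Thus $\phi\colon U_0 \to V_0$ is a dominant morphism of irreducible locally elementary $\GL$-varieties, and Lemma~\ref{lem:decomp-2} yields non-empty open $\GL$-subvarieties $U \subset U_0$ and $V \subset V_0$ with $\phi(U)\subset V$ and $\phi\colon U \to V$ locally elementary, as desired.

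The main obstacle is the reducible-to-irreducible reduction at the start: one has to be careful that, after discarding the extraneous components of $X$ and $Y$, the restricted map remains defined and dominant onto an irreducible open. After that, the proof is just a chaining of Lemmas~\ref{lem:decomp-1} and~\ref{lem:decomp-2}.
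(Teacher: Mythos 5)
Your proof is correct, and it takes the same overall route as the paper: apply Lemma~\ref{lem:decomp-1} to find an irreducible locally elementary open $\GL$-subvariety $V_0$ of $Y$, apply it again to $\phi^{-1}(V_0)$ to find $U_0$, and then invoke Lemma~\ref{lem:decomp-2} for $\phi\colon U_0\to V_0$. The difference is your preliminary reduction to the case where $X$ and $Y$ are both irreducible, which the paper's proof omits. This extra step is not cosmetic: to apply Lemma~\ref{lem:decomp-2} one needs $\phi\colon U_0\to V_0$ to be \emph{dominant}, and without some preparatory argument this can fail. If $X$ is reducible, the irreducible open $U_0$ produced by Lemma~\ref{lem:decomp-1} sits inside a single irreducible component of $\phi^{-1}(V_0)$, and nothing forces that component to map dominantly to $V_0$. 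The paper's proof tacitly skips this point; your reduction to irreducible $X$ and $Y$ disposes of it cleanly, since then any nonempty open $U_0$ contains the generic point of $X$, and dominance of the restriction follows immediately. (A lighter fix, if you wanted to stay closer to the paper's wording, is to observe that since $V_0$ is irreducible and $\phi$ is dominant, some irreducible component $Z$ of $\phi^{-1}(V_0)$ has $\overline{\phi(Z)}=V_0$, and to apply Lemma~\ref{lem:decomp-1} inside the open dense part of that $Z$.) Everything else in your argument, including the reduction step itself, is correct.
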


\begin{proof}
By Lemma~\ref{lem:decomp-1}, we can find an irreducible open locally elementary $\GL$-subvariety $Y'$ of $Y$. By Lemma~\ref{lem:decomp-1} again, we can find an irreducible open locally elementary $\GL$-subvariety $X'$ of $\phi^{-1}(Y')$. By Lemma~\ref{lem:decomp-2}, we can find non-empty open $\GL$-subvarieties $U \subset X'$ and $V \subset Y'$ such that $\phi(U) \subset V$ and the map $\phi \colon U \to V$ is locally elementary.
\end{proof}

\begin{proof}[Proof of Theorem~\ref{thm:decomp}]
Let $\phi \colon X \to Y$ be a given morphism of $\GL$-varieties. By noetherian induction on $Y$, we can assume that for any proper closed $\GL$-subvariety $Z$ of $Y$, the map $\phi^{-1}(Z) \to Z$ admits an LED.

First suppose that $Y$ is reducible. Write $Y=\ol{Y}_1 \cup \ol{Y}_2$ where $\ol{Y}_1$ and $\ol{Y}_2$ are proper closed $\GL$-subvarieties. Let $Y_3=\ol{Y}_1 \cap \ol{Y}_2$ and $Y_1=\ol{Y}_1 \setminus \ol{Y}_2$ and $Y_2=\ol{Y}_2 \setminus \ol{Y}_1$; thus $Y=Y_1 \sqcup Y_2 \sqcup Y_3$ is a decomposition of $Y$ into locally closed $\GL$-subvarieties. For $i \in \{1,2\}$, the morphism $\phi^{-1}(\ol{Y}_i) \to \ol{Y}_i$ admits an LED by the inductive hypothesis, and so $\phi^{-1}(Y_i) \to Y_i$ admits one by Proposition~\ref{prop:decomp-pullback}. The morphism $\phi^{-1}(Y_3) \to Y_3$ also admits an LED by the inductive hypothesis. Putting together the LED's for $\phi^{-1}(Y_i) \to Y_i$ for $i \in \{1,2,3\}$, we obtain one for $X \to Y$.

Now suppose that $Y$ is irreducible. By noetherian induction on $X$, we can assume that for any proper closed $\GL$-subvariety $Z$ of $X$, the map $\phi \colon Z \to Y$ admits an LED. If $\phi$ is not dominant then its image is contained in a proper closed $\GL$-subvariety $Z$ of $Y$. We obtain an LED for $\phi$ by taking one for $\phi^{-1}(Z) \to Z$ (which exists by the first inductive hypothesis) and simply throwing in the open set $Y \setminus Z$ into the decomposition of $Y$. We can thus assume in what follows that $\phi$ is dominant.

By Lemma~\ref{lem:decomp-3}, we can find non-empty open $\GL$-subvarieties $W \subset X$ and $V_1 \subset Y$ such that $\phi(W) \subset V_1$ and the map $\phi \colon W \to V_1$ is locally elementary. Let $Z=X \setminus W$. By the second inductive hypothesis, the map $\phi \colon Z \to Y$ admits an LED, say $Z=\bigsqcup_{i \in I} Z_i$ and $Y=\bigsqcup_{j \in J} Y_j$. Let $j_0$ be the unique element of $J$ such that $V_2=Y_{j_0}$ is open, and let $I_0 \subset I$ be the set of indices $i$ such that $Z_i$ maps into $V_2$. Let $V=V_1 \cap V_2$ and let $U=\phi^{-1}(V)$. We have $U=(W \cap U) \sqcup \bigsqcup_{i \in I_0} (Z_i \cap U)$. The maps $W \cap U \to V$ and $Z_i \cap U \to V$ are locally elementary by Proposition~\ref{prop:loc-elem-base-change}. We thus see that this decomposition of $U$, together with the trivial decomposition $V=V$ of $V$, is a LED for $\phi \colon U \to V$. Combining this LED with the one for $\phi^{-1}(Y \setminus V) \to Y \setminus V$ (which exists by the first inductive hypothesis), we obtain one for~$\phi$.
\end{proof}

\subsection{Chevalley's theorem}


Let $X$ be a $\GL$-variety. We say that a subset of $X$ is \defn{$\GL$-constructible} if it is a finite union of sets of locally closed $\GL$-stable subsets. The following is an analog of Chevalley's theorem for $\GL$-varieties:

\begin{theorem} \label{thm:chev}
Let $\phi \colon X \to Y$ be a morphism of quasi-affine $\GL$-varieties, and let $C$ be a $\GL$-constructible subset of $X$. Then $\phi(C)$ is a $\GL$-constructible subset of $Y$.
\end{theorem}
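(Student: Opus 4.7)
The plan is to reduce Chevalley's theorem to essentially a bookkeeping consequence of the decomposition theorem (Theorem~\ref{thm:decomp}). Since, by definition, a $\GL$-constructible set is a finite union of locally closed $\GL$-stable subsets, and taking images commutes with finite unions, it suffices to prove that $\phi(C')$ is $\GL$-constructible when $C'$ is a single locally closed $\GL$-stable subset of $X$. Endow $C'$ with its reduced induced scheme structure. Then $C'$ is open in its closure $\overline{C'}$, which is a closed $\GL$-subvariety of the quasi-affine $\GL$-variety $X$, so $C'$ is itself a quasi-affine $\GL$-variety and $\phi|_{C'} \colon C' \to Y$ is a morphism of quasi-affine $\GL$-varieties. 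Replacing $\phi$ by $\phi|_{C'}$, we may therefore assume $C = X$.

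Next, I would apply Theorem~\ref{thm:decomp} to $\phi \colon X \to Y$. It yields finite locally elementary decompositions $X = \bigsqcup_{i \in I} X_i$ and $Y = \bigsqcup_{j \in J} Y_j$ into locally closed $\GL$-stable pieces, together with, for each $i \in I$, an index $j(i) \in J$ such that $\phi(X_i) \subset Y_{j(i)}$ and the induced morphism $\phi_i \colon X_i \to Y_{j(i)}$ is locally elementary. The crucial feature for us is that a locally elementary morphism is by definition surjective. Hence $\phi(X_i) = Y_{j(i)}$, and consequently
\[
\phi(X) \;=\; \bigcup_{i \in I} \phi(X_i) \;=\; \bigcup_{i \in I} Y_{j(i)},
\]
which is a finite union of locally closed $\GL$-stable subsets of $Y$; by definition, it is $\GL$-constructible.

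The main obstacle is not in this argument, which is short, but in Theorem~\ref{thm:decomp} itself, which has already been established: the real geometric content is the existence of a locally elementary decomposition of an arbitrary morphism of quasi-affine $\GL$-varieties. The only subtlety to spell out explicitly is the initial reduction step, namely that a locally closed $\GL$-stable subset of a quasi-affine $\GL$-variety, with its reduced induced structure, is itself a quasi-affine $\GL$-variety in the sense of \S\ref{ss:GLvar}; this is immediate from the fact that $\overline{C'}$ is a closed $\GL$-subscheme of an affine $\GL$-variety and hence an affine $\GL$-variety, of which $C'$ is a $\GL$-stable open. With that in hand, the remainder is purely formal, and one sees in particular that the ``constructible strata'' of $\phi(C)$ can be chosen among the pieces $Y_j$ furnished by any locally elementary decomposition of $\phi|_{C}$.
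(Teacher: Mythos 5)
Your proposal is correct and follows essentially the same route as the paper: reduce to the case $C = X$ by noting a locally closed $\GL$-stable subset is itself a quasi-affine $\GL$-variety, apply the decomposition theorem, and use that locally elementary morphisms are surjective so the image is a finite union of strata $Y_j$. The paper simply treats the $C=X$ case first and then the reduction, whereas you do the reduction first; the content is identical, and your explicit remark on the quasi-affine structure of the strata is a useful spelling-out of a point the paper leaves implicit.
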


\begin{proof}
First suppose that $C=X$. Let $X=\bigsqcup_{i \in I} X_i$ and $Y=\bigsqcup_{j \in J} Y_j$ be a LED for $\phi$. Thus for each $i \in I$ there is $j \in J$ such that $\phi$ induces a locally elementary map $X_i \to Y_j$. Since a locally elementary map is surjective, it follows that the image of $\phi$ is a union of some subset of $\{Y_j\}_{j \in J}$, and thus $\GL$-constructible.

Now if $C$ is a general $\GL$-constructible set in $X$, then it is a finite union of locally closed $\GL$-stable subsets of $X$, and we apply the previous paragraph to the restrictions of $\phi$ to these subsets.
\end{proof}

\begin{remark}
In EGA and the Stacks project (see \stacks{04ZC}), constructible sets are defined for arbitrary schemes (in fact, arbitrary topological spaces). However, only the so-called retrocompact open subsets are considered constructible. The basic example of a non-retrocompact open subset is $\bA^{\infty} \setminus \{0\}$. In particular, the open $\GL$-subset $\bA^{\ulambda} \setminus \{0\}$ of $\bA^{\ulambda}$ is not retrocompact, and therefore is not constructible in the usual sense. This is why we use the term $\GL$-constructible. We note, however, that $\GL$-constructible subsets of $X$ are in bijective correspondence with constructible subsets (in the usual sense) of $X^{\orb}$.
\end{remark}

\subsection{Lifting points}

Suppose that $\phi \colon X \to Y$ is a morphism of $\GL$-varieties and $y$ is a $K$-point in the image of $\phi$. Finding a point of $X$ lifting $y$ typically involves solving infinitely many equations, so it is not immediately clear how to control the extension of $K$ required to find a solution. Using the decomposition theorem, we find that the nicest reasonable statement is true:

\begin{proposition} \label{prop:points}
Let $\phi \colon X \to Y$ be a morphism of quasi-affine $\GL$-varieties. Let $\Omega$ be an extension of $K$ and let $y$ be an $\Omega$-point of the image of $\phi$. Then there exists a finite extension $\Omega'/\Omega$ and an $\Omega'$-point $x$ of $X$ mapping to $y$. In fact, there exists an integer $d$ (depending only on $\phi$) such that one can always take $[\Omega':\Omega] \le d$.
\end{proposition}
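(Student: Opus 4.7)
The plan is to apply the decomposition theorem for morphisms (Theorem~\ref{thm:decomp}) to reduce to the case of a single elementary morphism $B \times \bA^{\ulambda} \to C \times \bA^{\umu}$, and then to lift the two factors separately: the $\bA$-factor by the $K$-rational zero section and the finite-dimensional factor by a classical lifting lemma for surjective morphisms of varieties. First I would apply Theorem~\ref{thm:decomp} to $\phi$ to obtain locally elementary decompositions $X = \bigsqcup_{i \in I} X_i$ and $Y = \bigsqcup_{j \in J} Y_j$ together with, for each $i$, an index $j(i)$ such that $\phi$ restricts to a locally elementary morphism $\phi_i \colon X_i \to Y_{j(i)}$. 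The morphism $y \colon \Spec(\Omega) \to Y$ has a unique topological image, belonging to a unique stratum $Y_{j_0}$; since $y$ is an $\Omega$-point of $\phi(X)$, we must have $j_0 = j(i)$ for some $i$. By Proposition~\ref{prop:loc-elem-unif} applied to $\phi_i$, there is an integer $m$ and finitely many $G(m)$-stable opens $V_1,\ldots,V_n \subseteq Y_{j_0}$ whose $\GL$-translates cover $Y_{j_0}$, with corresponding $G(m)$-stable opens $U_k \subseteq X_i$ such that $\phi_i \colon U_k \to V_k$ is $G(m)$-elementary. After replacing $y$ by $g \cdot y$ for an appropriate $g \in \GL$, we may assume $y \in V_k(\Omega)$ for some $k$, so the problem reduces to lifting $y$ through a single $G(m)$-elementary morphism.

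Such a morphism is isomorphic, as a morphism of $G(m)$-schemes, to $\tilde\psi \times \pi \colon B \times \bA^{\ulambda} \to C \times \bA^{\umu}$, where $\tilde\psi \colon B \to C$ is a surjective morphism of irreducible finite-dimensional varieties, $\umu \subseteq \ulambda$ are pure tuples, and $\pi$ is the projection. Writing $\ulambda = \umu \cup \ulambda'$ with $\ulambda'$ pure, so that $\bA^{\ulambda} \cong \bA^{\umu} \times \bA^{\ulambda'}$, and writing $y = (c,w)$ with $c \in C(\Omega)$ and $w \in \bA^{\umu}(\Omega)$, I would lift $w$ to $(w,0) \in \bA^{\umu}(\Omega) \times \bA^{\ulambda'}(\Omega) = \bA^{\ulambda}(\Omega)$ via the $K$-rational zero of the pure complement $\bA^{\ulambda'}$, requiring no field extension. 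The entire problem thus reduces to lifting $c$ to a point of $B$ over a finite extension of $\Omega$ of bounded degree.

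For this I would prove, or cite, the following classical lemma: for any surjective morphism $\tilde\psi \colon B \to C$ of finite-type $K$-varieties, there exists an integer $d(\tilde\psi)$ such that every $\Omega$-point of $C$ lifts to an $\Omega'$-point of $B$ with $[\Omega':\Omega] \le d(\tilde\psi)$. The argument is by noetherian induction on $C$: first one chooses an irreducible closed subvariety $B' \subseteq B$ with $\dim B' = \dim C$ that still maps surjectively onto $C$, so that $\tilde\psi|_{B'}$ is generically finite of some degree $d = [K(B'):K(C)]$; by relative Noether normalization, one factors $\tilde\psi|_{B'}$ over a dense open $U \subseteq C$ through a finite surjection $B'_U \to U \times \bA^r$ of degree $d$ followed by the projection $U \times \bA^r \to U$, so that an $\Omega$-point of $U$ lifts to $B'_U(\Omega')$ with $[\Omega':\Omega] \le d$ by first choosing the $K$-rational zero on the $\bA^r$-factor and then taking a closed point of the finite fiber; on the closed complement $C \setminus U$, one decomposes into irreducible components and applies the inductive hypothesis. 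The uniform bound $d$ in the proposition is then the maximum of $d(\tilde\psi_{i,k})$ over the finitely many elementary charts appearing in the decomposition of $\phi$. The main obstacle is the classical lemma: although its content is intuitive, producing a uniform bound requires the noetherian induction above to handle non-equidimensional, non-flat surjective morphisms uniformly, and the bookkeeping across the finitely many strata of the decomposition must be done carefully so that the final $d$ depends only on $\phi$.
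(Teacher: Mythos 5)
Your proof is correct and follows the same chain of reductions as the paper: apply Theorem~\ref{thm:decomp}, pass to a locally elementary morphism, invoke Proposition~\ref{prop:loc-elem-unif} for the uniform bound, reduce to an elementary chart, and finish with the lifting statement for finite-dimensional varieties. The paper dispatches the finite-dimensional step as ``clear'' whereas you spell out a noetherian induction; the one small imprecision there is requiring your irreducible closed $B' \subseteq B$ to surject onto all of $C$ --- dominance with $\dim B' = \dim C$ suffices, since you then restrict to a dense open $U \subseteq C$ and handle $C \setminus U$ by induction on the full preimage $\tilde\psi^{-1}(C \setminus U)$ rather than on $B'$.
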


\begin{proof}
The statement is clear for maps of ordinary varieties, thus for elementary maps of $\GL$-varieties, thus for locally elementary maps (use Proposition~\ref{prop:loc-elem-unif} to get uniformity of $[\Omega':\Omega]$), and thus for all maps by Theorem~\ref{thm:decomp}.
\end{proof}

\subsection{A variant} \label{ss:decomp-var}

Suppose that $\cP$ is some property of varieties and morphisms. We define a $\GL$-variety to be \defn{$\cP$-elementary} if it has the form $B \times \bA^{\ulambda}$ where $B$ is a variety satisfying $\cP$. We define a morphism of $\GL$-varieties to be \defn{$\cP$-elementary} if there is a diagram like \eqref{eq:Diagram} where $\psi$ satisfies $\cP$. Our definitions of elementary correspond to taking $\cP$ to be the property ``irreducible affine'' on varieties and the property ``surjective'' on morphisms. One can prove an analog of the decomposition theorem for many different choices of $\cP$; essentially, one just needs to be able to decompose a variety or morphism of varieties into pieces that satisfy $\cP$. For instance, there is a version of the decomposition theorem where the strata are smooth and the morphisms on strata are flat. The particular $\cP$ we worked with is somewhat arbitrary, but was chosen to make the proof of Chevalley's theorem as simple as possible.

\section{Theory of types} \label{s:type}

\subsection{Types}

We fix an irreducible $\GL$-variety $X$ for this section.

\begin{definition} \label{defn:typical}
A \defn{typical morphism} for $X$ is a dominant morphism $\phi \colon B \times \bA^{\ulambda} \to X$, with $B$ an affine variety and $\ulambda$ a pure tuple, with the following property: if $Z$ is a proper closed $\GL$-subvariety of $B \times \bA^{\ulambda}$ then $\phi \vert_Z$ is not dominant.
\end{definition}

We note that in a typical morphism, the variety $B$ is necessarily irreducible.

\begin{proposition} \label{prop:typical-exists}
A typical morphism exists.
\end{proposition}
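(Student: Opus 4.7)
The plan is to obtain a typical morphism by starting with the unirationality theorem and then minimizing two invariants attached to a dominant morphism. Concretely, I would first invoke Theorem~\ref{thm:uni} to get \emph{some} dominant morphism $\phi_0 \colon B_0 \times \bA^{\ulambda_0} \to X$ with $B_0$ an irreducible variety and $\ulambda_0$ a pure tuple. Among all such dominant morphisms, I would then choose one for which the pair $(\magn(\ulambda), \dim B)$ is minimal in the lexicographic order (magnitude first, dimension second). Such a minimum exists: magnitudes form a well-order by \S\ref{ss:part}, and dimensions take values in $\bN$. Call a minimizing choice $\phi \colon B \times \bA^{\ulambda} \to X$; I claim it is typical.

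To verify this, suppose for contradiction that some proper closed $\GL$-subvariety $Z \subsetneq B \times \bA^{\ulambda}$ still maps dominantly to $X$. Replacing $Z$ by an irreducible component that dominates $X$ (such a component exists because $X$ is irreducible, and it is automatically $\GL$-stable by Proposition~\ref{prop:FirstProperties}(d)), I may assume $Z$ is irreducible. Now embed $B$ as a closed subvariety of some $\bA^s$; then $Z$ is a closed $\GL$-subvariety of $\bA^s \times \bA^{\ulambda}$, and I can apply Theorem~\ref{thm:uni}(d) to $Z$ itself. This yields two cases, each of which will contradict minimality.

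In the first case, $Z = C \times \bA^{\ulambda}$ for some closed subvariety $C \subset \bA^s$. Since $Z \subseteq B \times \bA^{\ulambda}$ and $Z$ is irreducible but properly contained in $B \times \bA^{\ulambda}$, the variety $C$ is an irreducible closed subvariety of $B$ with $\dim C < \dim B$. The composition $C \times \bA^{\ulambda} \hookrightarrow B \times \bA^{\ulambda} \xrightarrow{\phi} X$ is dominant with the same $\magn(\ulambda)$ but smaller $\dim$, contradicting minimality. In the second case, Theorem~\ref{thm:uni}(d) supplies a dominant morphism $\psi \colon C \times \bA^{\umu} \to Z$ with $C$ irreducible and $\magn(\umu) < \magn(\ulambda)$; composing with $Z \to X$ gives a dominant morphism $C \times \bA^{\umu} \to X$ of strictly smaller magnitude, again contradicting minimality.

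I do not expect any serious obstacle here. The only subtle points are setting up the right invariant to induct on—the lexicographic pair $(\magn(\ulambda), \dim B)$ is forced because Theorem~\ref{thm:uni}(d) gives two mutually exclusive ways the dominant map can ``shrink'' (either $\ulambda$ drops or the finite-dimensional base drops)—and verifying that passing to an irreducible component of $Z$ preserves $\GL$-stability, which is exactly Proposition~\ref{prop:FirstProperties}(d).
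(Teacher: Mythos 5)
Your proof is correct and takes essentially the same approach as the paper: pick a dominant morphism $B \times \bA^{\ulambda} \to X$ minimizing first $\magn(\ulambda)$ and then $\dim B$, and rule out a proper dominating $\GL$-subvariety $Z$ by applying Theorem~\ref{thm:uni}(d) to $Z$ in its two cases. The only (cosmetic) difference is that the paper fixes the minimal $\ulambda$ first and then fixes the minimal dimension for that $\ulambda$, whereas you package both into a lexicographic minimum; the contradiction in each case is identical.
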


\begin{proof}
Let $\Lambda$ be the set of all pure tuples $\ulambda$ for which a dominant morphism $B \times \bA^{\ulambda} \to X$ exists. This set is non-empty by Theorem~\ref{thm:uni}. Let $\ulambda \in \Lambda$ be any element of minimal magnitude; such a minimal element exists since magnitudes are well-ordered. Now let $d \in \bN$ be minimal such that a dominant morphism $B \times \bA^{\ulambda} \to X$ exists with $\dim(B)=d$.

Let $\phi \colon B \times \bA^{\ulambda} \to X$ be any dominant morphism with $B$ irreducible of dimension $d$. We claim that $\phi$ is typical. Indeed, suppose that $Z$ is a proper closed subset of $B \times \bA^{\ulambda}$ such that $\phi \vert_Z$ is dominant. We will obtain a contradiction. We may as well suppose that $Z$ is irreducible. We now apply Theorem~\ref{thm:uni}(d). There are two cases. In the first case, $Z=C \times \bA^{\ulambda}$ for some (necessarily proper) subvariety $C$ of $B$ (to apply Theorem~\ref{thm:uni} as written, embed $B$ into some $\bA^r$). This gives a contradiction since $\dim(C)<d$. In the second case, there is a dominant morphism $B \times \bA^{\umu} \to Z$ for some $\umu$ with $\magn(\umu)<\magn(\ulambda)$; thus $\umu \in \Lambda$. This contradicts the minimality of $\ulambda$.
\end{proof}

\begin{proposition} \label{prop:typical-dom}
Let $\phi \colon B \times \bA^{\ulambda} \to X$ be a typical morphism and let $\psi \colon Y \to B \times \bA^{\ulambda}$ be another morphism of $\GL$-varieties. If $\phi \circ \psi$ is dominant then $\psi$ is dominant.
\end{proposition}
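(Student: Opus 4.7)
The plan is to argue by contrapositive: assume $\psi$ is not dominant and deduce that $\phi \circ \psi$ is not dominant either.

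First I would set $Z := \overline{\psi(Y)}$, the closure of the image of $\psi$ in $B \times \bA^{\ulambda}$. Since $\psi$ is a morphism of $\GL$-varieties, its image $\psi(Y)$ is $\GL$-stable, and the closure of any $\GL$-stable subset is again $\GL$-stable; thus $Z$ is a closed $\GL$-subvariety of $B \times \bA^{\ulambda}$. The assumption that $\psi$ is not dominant means precisely that $Z$ is a \emph{proper} closed $\GL$-subvariety.

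Next I would invoke the typicality hypothesis on $\phi$: by Definition~\ref{defn:typical}, the restriction $\phi|_Z$ is not dominant, so $\overline{\phi(Z)}$ is a proper closed $\GL$-subvariety of $X$. But then
\[
(\phi \circ \psi)(Y) = \phi(\psi(Y)) \subseteq \phi(Z) \subseteq \overline{\phi(Z)} \subsetneq X,
\]
so $\phi \circ \psi$ cannot be dominant, contradicting the hypothesis. There is no real obstacle here: the argument is a direct unwinding of the definition of ``typical,'' and the only small point worth checking is that closures of images of equivariant morphisms remain $\GL$-stable, which is immediate from continuity of the $\GL$-action on the target.
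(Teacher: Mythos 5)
Your proof is correct and is essentially the same argument as the paper's, just stated in contrapositive form: both set $Z = \overline{\psi(Y)}$ and apply the definition of typicality to $\phi|_Z$. The paper runs it in the forward direction (``$\phi\circ\psi$ dominant $\Rightarrow$ $\phi|_Z$ dominant $\Rightarrow$ $Z = B\times\bA^{\ulambda}$''), which is slightly terser, but the content is identical.
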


\begin{proof}
Let $Z=\ol{\im(\psi)}$. Since $\phi \circ \psi$ is dominant, it follows that $\phi \vert_Z$ is dominant. Since $\phi$ is typical, we have $Z=B \times \bA^{\ulambda}$, and thus $\psi$ is dominant.
\end{proof}

\begin{proposition} \label{prop:typical-factor}
Let $\phi \colon B \times \bA^{\ulambda} \to X$ be a typical morphism, and let $\psi \colon C \times \bA^{\umu} \to X$ be a dominant morphism with $C$ irreducible and $\umu$ pure. Then there exists a commutative diagram
\begin{displaymath}
\xymatrix@C=4em{
D \times \bA^{\umu} \ar[r]^{\alpha \times \id} \ar[d]_{\theta} & C \times \bA^{\umu} \ar[d]^{\psi} \\
B \times \bA^{\ulambda} \ar[r]^{\phi} & X }
\end{displaymath}
where $D$ is an irreducible variety, $\alpha \colon D \to C$ is dominant and quasi-finite, and $\theta$ is dominant. In particular, $\ulambda \subset \umu$ and $\dim(B) \le \dim(C)$.
\end{proposition}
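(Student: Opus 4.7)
The plan is to apply the lifting result Proposition~\ref{prop:lift}, with the roles of the two morphisms swapped compared to the way they appear in the statement there, and then use the typicality of $\phi$ together with the structural Proposition~\ref{prop:Adom} for dominant maps between elementary $\GL$-varieties to extract the stated conclusions.

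First, I would check that the hypothesis of Proposition~\ref{prop:lift} holds when that proposition is applied with $\psi \colon C \times \bA^{\umu} \to X$ in the role of the elementary-source morphism and $\phi \colon B \times \bA^{\ulambda} \to X$ in the role of the second morphism. Let $\eta$ be the (scheme-theoretic) generic point of $C \times \bA^{\umu}$, which exists since $C$ is irreducible and $\bR_{\umu}$ is an integral domain. Because $\psi$ is dominant and $X$ is irreducible, $\psi(\eta)$ is the generic point of $X$. On the other hand, $\phi$ is dominant, so by Theorem~\ref{thm:uni}(a) its image contains a non-empty open $\GL$-subset of $X$, and in particular contains the generic point of $X$. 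Thus $\psi(\eta) \in \im(\phi)$, and Proposition~\ref{prop:lift} furnishes an irreducible variety $D$, a quasi-finite dominant $\alpha \colon D \to C$, and a morphism $\theta \colon D \times \bA^{\umu} \to B \times \bA^{\ulambda}$ such that $\phi \circ \theta = \psi \circ (\alpha \times \id)$. This is the diagram in the statement.

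Next, I would upgrade $\theta$ from a morphism to a dominant morphism. Since $\alpha$ is dominant, so is $\alpha \times \id$; composing with the dominant $\psi$ shows $\psi \circ (\alpha \times \id)$ is dominant, and hence $\phi \circ \theta$ is dominant. Since $\phi$ is typical, Proposition~\ref{prop:typical-dom} then yields that $\theta$ itself is dominant.

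Finally, applying Proposition~\ref{prop:Adom} to the dominant morphism $\theta \colon D \times \bA^{\umu} \to B \times \bA^{\ulambda}$ between elementary $\GL$-varieties with pure tuples, we immediately obtain $\ulambda \subset \umu$ and a dominant induced morphism $\theta_0 \colon D \to B$ of the underlying (irreducible, finite-dimensional) varieties. Dominance of $\theta_0$ gives $\dim(B) \le \dim(D)$, and the quasi-finite dominance of $\alpha \colon D \to C$ gives $\dim(D) = \dim(C)$; combining these yields $\dim(B) \le \dim(C)$, completing the proof.

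I do not expect any serious obstacle: the argument is essentially a bookkeeping exercise combining Propositions~\ref{prop:lift}, \ref{prop:typical-dom}, and \ref{prop:Adom}. The only subtle point is the correct labelling when invoking Proposition~\ref{prop:lift}, where one must take $\psi$ (not $\phi$) as the map whose source has the form $(\text{variety}) \times \bA^{\text{tuple}}$, so that the produced quasi-finite cover $\alpha$ lives over $C$ rather than over $B$.
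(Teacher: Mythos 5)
Your proof is correct and follows essentially the same route as the paper's: apply Proposition~\ref{prop:lift} with $\psi$ as the elementary-source map and $\phi$ as the second map, upgrade $\theta$ to dominant via Proposition~\ref{prop:typical-dom}, and extract $\ulambda \subset \umu$ and the dimension bound from Proposition~\ref{prop:Adom}. One small quibble: you cite Theorem~\ref{thm:uni}(a) to conclude that $\im(\phi)$ contains the generic point of $X$, but that theorem asserts existence of \emph{some} dominant morphism with that property, not that the given $\phi$ has it; the needed fact follows more simply because a dominant morphism of irreducible schemes sends the generic point of the source to the generic point of the target, which is exactly the argument the paper uses.
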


\begin{proof}
Let $x$ be be the generic point of $C \times \bA^{\umu}$ and let $y$ be the generic point of $B \times \bA^{\ulambda}$. Then $\psi(x)=\phi(y)$ is the generic point of $X$ since both $\phi$ and $\psi$ are dominant; in particular, $\psi(x) \in \im(\phi)$. Applying Proposition~\ref{prop:lift} (with $Y=B \times \bA^{\ulambda}$), we obtain a commutative square having the stated properties except perhaps for the dominance of $\theta$. However, since $\phi \circ \theta$ is dominant, Proposition~\ref{prop:typical-dom} shows that $\theta$ is dominant. Proposition~\ref{prop:Adom} now shows that $\ulambda \subset \umu$. Since $\theta$ induces a dominant morphism $D \to B$, we have $\dim(B) \le \dim(D)=\dim(C)$.
\end{proof}

\begin{corollary}
Let $\phi \colon B \times \bA^{\ulambda} \to X$ and $\psi \colon C \times \bA^{\umu} \to X$ be typical morphisms. Then $\ulambda=\umu$ and $\dim(B)=\dim(C)$.
\end{corollary}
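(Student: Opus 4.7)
The plan is to derive the corollary by applying Proposition~\ref{prop:typical-factor} symmetrically in both directions, exploiting the fact that both $\phi$ and $\psi$ are simultaneously typical and dominant, and that typical morphisms have irreducible source with pure tuple (the former being noted explicitly after Definition~\ref{defn:typical}).

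First, I would apply Proposition~\ref{prop:typical-factor} with $\phi$ in the role of the typical morphism and $\psi$ in the role of the other dominant morphism. Since $C$ is irreducible and $\umu$ is pure, the hypotheses are satisfied, and the conclusion gives $\ulambda \subset \umu$ and $\dim(B) \le \dim(C)$. Next, I would apply the same proposition with the roles swapped: $\psi$ as the typical morphism and $\phi$ as the dominant morphism (using that $B$ is irreducible and $\ulambda$ is pure). This yields $\umu \subset \ulambda$ and $\dim(C) \le \dim(B)$.

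Combining the two containments of tuples gives $\ulambda = \umu$ (recall that $\subset$ on tuples was defined in \S\ref{ss:part} as containment up to reordering, which is antisymmetric), and combining the two dimension inequalities gives $\dim(B) = \dim(C)$. That is the entire argument.

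There is no real obstacle here: the corollary is essentially a formal consequence of Proposition~\ref{prop:typical-factor} once one observes that the hypotheses of that proposition are symmetric in $\phi$ and $\psi$ when both are typical. The only thing to verify is that a typical morphism indeed meets the hypotheses imposed on the ``other'' dominant morphism in Proposition~\ref{prop:typical-factor} (irreducibility of the finite-dimensional factor and purity of the tuple), but both of these are built into Definition~\ref{defn:typical} and the remark immediately following Proposition~\ref{prop:typical-exists}'s statement.
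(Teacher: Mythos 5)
Your proof is correct and is essentially the canonical (and the only sensible) argument: the corollary is just Proposition~\ref{prop:typical-factor} applied twice with the roles of $\phi$ and $\psi$ swapped, and the paper omits a written proof precisely because of this. The one small slip is a citation detail: the observation that the finite-dimensional factor of a typical morphism is irreducible appears in the remark immediately after Definition~\ref{defn:typical}, not after Proposition~\ref{prop:typical-exists}; this does not affect the mathematics.
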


\begin{definition}
Let $\phi \colon B \times \bA^{\ulambda} \to X$ be a typical morphism. We define the \defn{type} of $X$, denoted $\type(X)$, to be the pure tuple $\ulambda$. We define the \defn{typical dimension} of $X$, denoted $\tdim(X)$, to be $\dim(B)$. These are well-defined by the above corollary.
\end{definition}

\begin{proposition} \label{prop:typical-bounds}
Let $\phi \colon B \times \bA^{\ulambda} \to X$ be a dominant morphism with $B$ irreducible and $\ulambda$ pure. Then $\type(X) \subset \ulambda$ and $\tdim(X) \le \dim(B)$, with equalities if and only if $\phi$ is typical.
\end{proposition}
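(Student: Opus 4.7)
The plan is to first deduce both inequalities by comparing $\phi$ with a typical morphism, and then bootstrap the ``only if'' direction from the already-proved inequality by a contradiction argument that uses Theorem~\ref{thm:uni}(d).

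For the inequalities, invoke Proposition~\ref{prop:typical-exists} to get a typical morphism $\psi \colon C \times \bA^{\umu} \to X$, so that by definition $\umu = \type(X)$ and $\dim C = \tdim(X)$. Apply Proposition~\ref{prop:typical-factor} with $\psi$ as the typical morphism and the given dominant $\phi$ playing the role of ``another dominant morphism.'' The conclusion of that proposition yields $\umu \subset \ulambda$ and $\dim C \le \dim B$, which is exactly $\type(X) \subset \ulambda$ and $\tdim(X) \le \dim B$. For the ``if'' direction of the equalities, if $\phi$ itself is typical then $\ulambda = \type(X)$ and $\dim B = \tdim(X)$ by the corollary just before Definition~\ref{defn:typical}.

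The ``only if'' direction is the main step. Assume $\ulambda = \type(X)$ and $\dim B = \tdim(X)$, and suppose for contradiction that there is a proper closed $\GL$-subvariety $Z \subsetneq B \times \bA^{\ulambda}$ with $\phi|_Z$ dominant. Pass to an irreducible component $Z_0$ of $Z$ on which $\phi$ is still dominant. Choose a closed embedding $B \hookrightarrow \bA^r$ so that $Z_0$ becomes a closed $\GL$-subvariety of $\bA^r \times \bA^{\ulambda}$, and apply Theorem~\ref{thm:uni}(d). In the first alternative, $Z_0 = C' \times \bA^{\ulambda}$ for some irreducible closed $C' \subset B$, which is necessarily proper; composing with $\phi$ gives a dominant morphism $C' \times \bA^{\ulambda} \to X$, whence the already-proved inequality yields $\tdim(X) \le \dim C' < \dim B = \tdim(X)$, a contradiction. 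In the second alternative, there is a dominant morphism $B'' \times \bA^{\umu''} \to Z_0$ with $B''$ irreducible and $\magn(\umu'') < \magn(\ulambda)$; composing with $\phi$ and applying the first inequality gives $\ulambda = \type(X) \subset \umu''$. But the containment $\ulambda \subset \umu''$ means $\magn(\ulambda)_i \le \magn(\umu'')_i$ for every $i$, which forces $\magn(\ulambda) \le \magn(\umu'')$ in the lexicographic order from the top, contradicting $\magn(\umu'') < \magn(\ulambda)$.

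The main subtlety to watch is logical order: the ``only if'' direction must be phrased so that it uses the inequality proved in the first half rather than the characterization being established. Beyond that, the work amounts to assembling existing tools (Propositions~\ref{prop:typical-exists}, \ref{prop:typical-factor}, and Theorem~\ref{thm:uni}(d)) and verifying the combinatorial fact that $\subset$ of tuples respects the magnitude lex-order, which is straightforward from the definition of $\magn$.
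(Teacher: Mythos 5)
Your proof is correct and takes essentially the same approach as the paper. The paper's ``only if'' direction simply refers back to the second paragraph of the proof of Proposition~\ref{prop:typical-exists} (after observing that, given the inequalities, the set $\Lambda$ and the integer $d$ there coincide with $\{\umu : \type(X)\subset\umu\}$ and $\tdim(X)$); you unfold that same paragraph into an explicit contradiction argument via Theorem~\ref{thm:uni}(d), but the logic is identical.
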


\begin{proof}
The inequalities $\type(X) \subset \umu$ and $\tdim(X) \le \dim(C)$ follow from Proposition~\ref{prop:typical-factor}. If $\phi$ is typical, then both are equalities by definition. Conversely, suppose that both are equalities. We follow the proof of Proposition~\ref{prop:typical-exists}. By the inequalities proven here, the set $\Lambda$ defined there consists of all tuples $\umu$ such that $\type(X) \subset \umu$ and the numer $d$ defined there is just $\tdim(X)$. Thus the second paragraph of the proof shows that $\phi$ is typical.
\end{proof}

\subsection{Characterization of typical dimension} \label{ss:tdim}

Let $X$ be an irreducible $\GL$-variety. The following is the main theorem of this section:

\begin{theorem} \label{thm:typdim}
Let $\phi \colon B \times \bA^{\ulambda} \to X$ be a typical morphism. Then $K(B)$ is a finite extension of $K(X)^{\GL}$.
\end{theorem}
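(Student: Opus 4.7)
First, $L := K(X)^{\GL}$ injects into $K(B)$: the dominance of $\phi$ gives an injective, $\GL$-equivariant $\phi^* \colon K(X) \hookrightarrow K(B \times \bA^{\ulambda})$, which upon restriction to invariants and Proposition~\ref{prop:Ainv} (stating $K(B \times \bA^{\ulambda})^{\GL} = K(B)$) produces the injection $L \hookrightarrow K(B)$. Since $L$ is finitely generated over $K$ (Proposition~\ref{prop:inv}) and $K(B)$ is as well, the extension $K(B)/L$ is finite if and only if $\dim B = \trdeg_K L$. The inequality $\dim B \geq \trdeg_K L$ is automatic, so the content is the reverse.

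The plan is to exhibit a dominant $\GL$-morphism $\tilde\phi \colon \tilde B \times \bA^{\ulambda'} \to X$ with $\dim \tilde B = \trdeg_K L$; then Proposition~\ref{prop:typical-bounds} applied to the typical $\phi$ and the dominant $\tilde\phi$ yields $\dim B \leq \dim \tilde B = \trdeg_K L$, finishing the proof. To construct $\tilde\phi$, pick an affine variety $C$ with $K(C) = L$, which gives a dominant $\GL$-invariant rational map $\chi \colon X \dashrightarrow C$. Its generic fiber $X_\eta$ is an irreducible affine $\GL$-variety over $L$ with $K(X_\eta) = K(X)$ and $L$-invariants equal to $L$. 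Applying the unirationality theorem (Theorem~\ref{thm:uni}) to $X_\eta$ over $L$ produces a dominant $\GL$-morphism $B_\eta' \times \bA^{\ulambda'}_L \to X_\eta$, and spreading out over an open of $C$ yields $\tilde\phi$ of the desired shape, with $\dim \tilde B = \dim C + \dim_L B_\eta' = \trdeg_K L + \dim_L B_\eta'$. Thus the theorem reduces to its \emph{base case}: if $X$ is an irreducible affine $\GL$-variety over a field $k$ with $k(X)^{\GL} = k$, then $\tdim(X) = 0$; applying this base case to $X_\eta$ over $L$ gives $\dim_L B_\eta' = 0$.

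The base case is the main obstacle. The intended argument is by contradiction: take $\phi \colon B \times \bA^{\ulambda} \to X$ typical with $L = k$ and $\dim B \geq 1$. Any non-constant $b \in k(B)$ is a $\GL$-invariant rational function on $B \times \bA^{\ulambda}$ which, because $\phi^*(k(X)) \cap k(B) = \phi^*(L) = k$, does not lie in $\phi^*(k(X))$. For each value $\alpha$ in the image of $b$, the proper closed $\GL$-subvariety $\{b = \alpha\} \times \bA^{\ulambda} \subsetneq B \times \bA^{\ulambda}$ cannot map dominantly to $X$ by typicality, so its image lies in a proper closed $\GL$-subvariety $X_\alpha \subsetneq X$. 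Since $\phi$ is dominant, $\bigcup_\alpha X_\alpha$ is dense in $X$, forcing the family $\{X_\alpha\}$ to be genuinely non-constant. The goal is to show that a generic $x \in X$ lies in a unique $X_\alpha$, so that $x \mapsto \alpha$ defines a non-constant rational $\GL$-invariant function on $X$, contradicting $L = k$.

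The hardest step will be establishing this uniqueness. It should follow by combining the $\GL$-Chevalley theorem (Theorem~\ref{thm:chev}) to control the images $X_\alpha$, the correspondence between $\GL$-subsets of $X$ and subsets of the noetherian space $X^{\orb}$ (Proposition~\ref{prop:opens-of-Xorb}), and the decomposition theorem (Theorem~\ref{thm:decomp}), which allows one to reduce to strata on which $\phi$ is elementary and the level sets of $b$ are transparent.
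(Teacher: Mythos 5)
The reduction step you outline---replacing $K$ by $L = K(X)^{\GL}$, taking the generic fiber of a model $X \dashrightarrow C$ of $L$ over $K$, and applying the unirationality theorem over $L$ before spreading out---is a reasonable and essentially correct maneuver, though it requires some care with quasi-affine opens and closures; it reduces the theorem, as you say, to the ``base case'' where $K(X)^{\GL} = K$ and the goal is $\tdim(X) = 0$.

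The genuine gap is in the base case, and you acknowledge it yourself: the argument requires showing that a generic $x \in X$ hits the image of $\phi|_{\{b=\alpha\}\times \bA^{\ulambda}}$ for a \emph{unique} $\alpha$, \emph{and} that this $\alpha$ depends rationally on $x$, so that $x \mapsto \alpha$ is a non-constant $\GL$-invariant rational function. Neither point is established; the appeal to $\GL$-Chevalley, the orbit-space correspondence, and the decomposition theorem is a gesture toward a toolbox, not an argument. This is precisely where the paper's proof does all its work. The paper forms the graph $\Gamma$ of $\phi$, looks at the vanishing ideal of $\overline{\im(\pi)}$ in $K(X) \otimes K[B]$, and uses a Galois-descent-type spanning lemma (Lemma~\ref{lem:Invariant}) to extract $\GL$-invariant elements $f_j = \sum_l g_{jl}\otimes f_{jl}$ with $g_{jl} \in K(X)^{\GL}$. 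These $g_{jl}$ are then used to define $\gamma \colon B \to C$, and a careful open-set bookkeeping plus the typicality hypothesis (via a quasi-finite multisection $D$) is used to show $\gamma$ is generically finite. Said differently: the ``uniqueness'' you want is a Rosenlicht-type statement for the infinite-dimensional group $\GL$ acting on $X$, and one cannot invoke the classical Rosenlicht theorem off the shelf; the paper's proof is essentially a bespoke version of it in the $\GL$ setting, and that is the content that is missing from your proposal. You should either fill in an argument that extracts enough $\GL$-invariant rational functions on $X$ from the geometry of $\phi$ (as the paper does via the ideal of $\overline{\im(\pi)}$ and Lemma~\ref{lem:Invariant}), or find another route to the uniqueness and rationality; right now neither is present.
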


We note that $\phi^*$ induces a field extension $K(X) \subset K(B \times \bA^{\ulambda})$, and thus an extension $K(X)^{\GL} \subset K(B \times \bA^{\ulambda})^{\GL}=K(B)$, where we have used Proposition~\ref{prop:Ainv}. The content of the theorem is that this extension has finite degree.

\begin{corollary}
The typical dimension of $X$ is equal to the transcendence degree over $K$ of the invariant function field $K(X)^{\GL}$.
\end{corollary}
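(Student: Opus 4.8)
The plan is to deduce this immediately from Theorem~\ref{thm:typdim} together with two standard facts from commutative algebra: that the Krull dimension of an irreducible finite type $K$-scheme equals the transcendence degree of its function field over $K$, and that a finite (hence algebraic) field extension does not change the transcendence degree over $K$.

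Concretely, I would first invoke Proposition~\ref{prop:typical-exists} to fix a typical morphism $\phi \colon B \times \bA^{\ulambda} \to X$; recall that $B$ is then automatically irreducible, so that $\dim(B)$ equals the transcendence degree of $K(B)$ over $K$. By definition $\tdim(X) = \dim(B)$. Theorem~\ref{thm:typdim} asserts that $K(B)$ is a finite extension of $K(X)^{\GL}$, via the embedding $K(X)^{\GL} \hookrightarrow K(B)$ described after that theorem. A finite field extension is algebraic, hence $K(B)$ and $K(X)^{\GL}$ have the same transcendence degree over $K$. Chaining these equalities gives $\tdim(X) = \dim(B) = \operatorname{trdeg}_K K(B) = \operatorname{trdeg}_K K(X)^{\GL}$, which is the assertion.

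There is essentially no obstacle here: the entire substance of the corollary is carried by Theorem~\ref{thm:typdim}, and the passage to transcendence degrees is purely formal. In particular, one does not even need to know that $K(X)^{\GL}$ is finitely generated over $K$ (Proposition~\ref{prop:inv}) for this argument, since transcendence degree over a fixed base is insensitive to algebraic extensions regardless of finite generation.
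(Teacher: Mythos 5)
Your argument is correct and is essentially identical to the paper's proof: both chain $\tdim(X)=\dim(B)=\operatorname{trdeg}_K K(B)=\operatorname{trdeg}_K K(X)^{\GL}$, using Theorem~\ref{thm:typdim} for the last equality, the standard dimension/transcendence-degree identity for the middle one, and the definition of typical dimension for the first. The extra remark that finite generation of $K(X)^{\GL}$ is not needed is accurate but incidental.
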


\begin{proof}
Let $\phi$ be as in Theorem~\ref{thm:typdim}. Then we have
\begin{displaymath}
\operatorname{tr. deg}(K(X)^{\GL}/K)
=\operatorname{tr. deg}(K(B)/K)
=\dim(B)
=\tdim(X).
\end{displaymath}
where the first equality follows from Theorem~\ref{thm:typdim}, the second is standard, and the third is the definition of typical dimension.
\end{proof}

The proof of the theorem occupies the remainder of \S \ref{ss:tdim}. It is inspired by Rosenlicht's theorem on separating general orbits by rational invariants \cite[Theorem 2]{rosenlicht}. It needs the following lemma, as well as its proof.

\begin{lemma} \label{lem:Invariant}
Let $V$ be a vector space over a field $K$, $F \supset K$ a field extension,
$G$ a group of $K$-automorphisms of $F$, and $W$ an $F$-subspace of $F
\otimes_K V$ that is stable under the action of $G$ on the first tensor
factor. Then $W$ is spanned over $F$ by elements in $W^G \subset F^G \otimes_K V$.
\end{lemma}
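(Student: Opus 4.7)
The plan is to prove the lemma by a classical minimality argument, in the spirit of the proofs of Speiser's lemma and of Artin's theorem on linear independence of characters. Fix once and for all a $K$-basis $\{v_j\}_{j \in J}$ of $V$, so every element of $F \otimes_K V$ has a unique expansion $\sum_{j \in J} f_j \otimes v_j$ with $f_j \in F$ and almost all $f_j = 0$; call the finite set $\{j : f_j \neq 0\}$ its \emph{support}. The containment $W^G \subset F^G \otimes_K V$ is then automatic, since if $w = \sum f_j \otimes v_j$ is $G$-fixed then uniqueness of the expansion forces each $f_j \in F^G$. So the content of the lemma is that every $w \in W$ lies in the $F$-span of $W^G$, and I will prove this by induction on the size of a finite set $S \subseteq J$ containing $\operatorname{supp}(w)$, the base case $|S|=0$ being trivial.

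For the inductive step, fix $w \in W$ with $\operatorname{supp}(w) \subseteq S$ and $|S| = n > 0$. Among all nonzero elements of $W$ whose support is contained in $S$, choose $\tilde{w}$ of minimal support size, pick $j_0 \in \operatorname{supp}(\tilde{w})$, and rescale $\tilde{w}$ by $\tilde{f}_{j_0}^{-1} \in F^{\times}$ so that its coefficient at $j_0$ becomes $1$; the rescaled $\tilde{w}$ still lies in the $F$-subspace $W$. For any $g \in G$, the element $g\tilde{w} - \tilde{w}$ lies in $W$, has support contained in $\operatorname{supp}(\tilde{w}) \setminus \{j_0\}$, and is therefore either zero or has strictly smaller support than $\tilde{w}$; by the minimal choice of $\tilde{w}$, it must vanish. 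Hence $g\tilde{w} = \tilde{w}$ for all $g \in G$, so $\tilde{w} \in W^G$.

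To finish, let $c \in F$ be the coefficient of $w$ at $j_0$ (possibly zero). Then $w - c\tilde{w}$ lies in $W$ and has support contained in $S \setminus \{j_0\}$, a set of size $n - 1$. By the inductive hypothesis, $w - c\tilde{w}$ lies in the $F$-span of $W^G$, and adding back the $F$-multiple $c\tilde{w}$ of the invariant element $\tilde{w}$ shows that $w$ does as well.

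I do not anticipate any real obstacle: the only thing to get right is the bookkeeping with supports, and the decisive step --- that $g\tilde{w} - \tilde{w}$ has strictly smaller support once the coefficient at $j_0$ has been normalized to $1$ --- is immediate from the uniqueness of expansions relative to the fixed $K$-basis of $V$. Note that the argument uses neither that $F/F^G$ be algebraic nor that $G$ act faithfully.
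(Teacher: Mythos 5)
Your proof is correct, and its decisive step coincides exactly with the paper's: a nonzero $\tilde{w} \in W$ of minimal support, normalized so that one coefficient equals $1$, satisfies $g\tilde{w}-\tilde{w}=0$ because the difference has strictly smaller support, hence $\tilde{w}\in W^G$. The only difference lies in how the descent is organized. The paper fixes a \emph{well-ordered} basis, defines ``minimal'' via inclusion-minimal support together with leading coefficient $1$, and reduces the leading term of $u$ by subtracting an appropriate multiple of a minimal element with matching leading term, appealing to well-orderedness for termination. You instead induct on the cardinality of a finite index set $S$ containing the support and, at each step, kill one coordinate $j_0 \in \operatorname{supp}(\tilde{w})$, where $\tilde{w}$ is an element of $W$ of minimal support size among those supported in $S$. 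These are variants of the same Speiser-type Galois-descent argument; your bookkeeping is if anything slightly tighter, since it never presupposes the existence of a minimal element with a \emph{prescribed} leading term, only the existence of some minimal-support element inside the finite set $S$, which is immediate.
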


\begin{proof}
Choose a well-ordered $K$-basis $(e_i)_{i \in I}$ of $V$, so that each
element $w$ of $W$ has a unique expression $w=\sum_{i \in I} c_i \otimes
e_i$ where only finitely many of the $c_i \in F$ are nonzero; the largest
$e_i$ with $c_i \neq 0$ is called the leading term of $w$, and $c_i$
its leading coefficient.

Call $w$ minimal if the support $\{i: c_i \neq 0\}$ is inclusion-wise
minimal among all supports of nonzero vectors of $W$ and if moreover the leading
coefficient equals $1$. Then for each $g \in G$, $gw-w \in W$ has support
strictly contained in that of $w$, and must therefore be zero. Hence
any minimal $w$ has coefficients in $F^G$.

Furthermore, every $u \in W$ is an $F$-linear combination
of minimal elements: find a minimal $w$ whose leading term equals that
of $u$, subtract $w$ times the leading coefficient of $u$
from $u$ so that the leading term becomes smaller, and use well-orderedness to conclude.
\end{proof}

\begin{proof}[Proof of Theorem~\ref{thm:typdim}]
Let $\phi \colon B \times \bA^{\ulambda} \to X$ be a typical morphism.
We may assume that $K$ is algebraically
closed; we then may and will work exclusively with
$K$-points in this proof. After passing to an
affine open dense subset of $B$, we may further assume that for
all $K$-points $b \in B$, the map $\phi(b,.): \bA^{\ulambda} \to
\overline{\im(\phi(b,.))}$ is typical, i.e., does not factor through
$\bA^{\umu}$ for any $\umu \subset \ulambda$.

Let $\Gamma \subseteq
X \times B \times \bA^{\ulambda}$ be the graph of $\phi$, and let
$\pi:\Gamma \to X \times B$ be the restriction of the projection morphism;
this is a morphism of $\GL$-varieties. Note that the points of
$\im(\pi)$ are the pairs $(x,b)$ with $x \in \im(\phi(b,.))$.

Let $I \subseteq K[X \times B]$ be an ideal whose vanishing set is
$\overline{\im(\pi)}$ and which is generated by the $\GL$-orbits of
finitely many elements $h_1,\ldots,h_m$; $I$ exists by Noetherianity (Theorem~\ref{thm:Noetherianity}). Let
$J$ be the ideal generated by $I$ in $K(X) \otimes K[B]$.  Then
$J$ is stable under the action of $\GL$ on $K(X)$ and hence by
Lemma~\ref{lem:Invariant}, $J$ is spanned over $K(X)$ by elements in
$K(X)^\GL \otimes K[B]$. In particular, we can find $\GL$-invariant
elements $f_1,\ldots,f_k \in J$
that are minimal relative to an arbitrary well-ordered $K$-basis of $K[B]$
and such that each $h_i$ is a $K(X)$-linear combination of the $f_j$.
Write $f_j=\sum_l g_{jl} \otimes f_{jl}$ where the $f_{jl}$ are elements
of the chosen basis of $K[B]$ and $g_{jl} \in K(X)^{\GL}$. The proof
of Lemma~\ref{lem:Invariant} shows that we can arrange
things such that each $h_i$
is in fact a linear combination of the $f_j$ with coefficients
in the subalgebra $K[X][\{g_{jl}\}]$ of $K(X)$ generated by $K[X]$
and the $g_{jl}$. Note that then, by $\GL$-invariance of the $g_{jl}$, each
element in $\GL\cdot h_i$ is also a $K[X][\{g_{jl}\}]$-linear
combination of the $f_j$.

By Chevalley's theorem (Theorem~\ref{thm:chev}), $\im(\pi)$ contains a dense, $\GL$-stable,
open subset $U_1$ of $\overline{\im(\pi)}$. We then have
$U_1=\overline{\im(\pi)} \cap U_2$ for some $\GL$-stable, dense open
subset $U_2$ of $X \times B$. Now intersect $U_2$ with the preimage in
$X \times B$ of the domains of definition in $X$ of the finitely
many rational functions in $K(X)$ needed to express each $f_j$ as a
linear combination of elements in $I$. This yields a (not necessarily
$\GL$-stable) open, dense subset $U_3$ of $X \times B$,
which, since $\overline{\im(\pi)} \to X$ is dominant, still
intersects $\overline{\im(\pi)}$ in an open dense subset contained
in $\im(\pi)$ and satisfies $f_j(x,b)=0$ for all $(x,b) \in \im(\pi)
\cap U_3$.  Next intersect $U_3$ with the preimage in $X \times B$ of the
domains of definition in $X$ of the finitely many $g_{jl} \in K(X)^{\GL}$. This
yields an open, dense subset $U_4$ of $X \times B$, which still intersects
$\overline{\im(\pi)}$ in an open dense set and such that, for $(x,b)
\in U_4$, we have $(x,b) \in \im(\pi)$ if and only if $f_j(x,b)=0$ for
all $j$---this follows since, over the open subset $U_4$, every element of each
$\GL \cdot h_i$ is a linear combination of the $f_j$.
Finally, let $U_5 \subseteq B \times \bA^{\ulambda} \times B$ be the
preimage of $U_4$ under the morphism $\psi:B \times \bA^{\ulambda}
\times B \to X \times B,\ (b_1,a,b_2) \mapsto (\phi(b_1,a),b_2)$. Note
that $\psi(b,a,b)=(\phi(b,a),b)  \in \im(\pi)$ and that this precisely
parameterizes $\im(\pi)$, so $U_6:=\{(b,a) \in B \times \bA^{\ulambda}
\mid (b,a,b) \in U_5\}$ is open dense in $B \times \bA^{\ulambda}$
and for $(a,b) \in U_6$ we have $f_j(\psi(b,a,b))=0$ for all $j$.

By the first paragraph of this proof, the $g_{jl} \in K(X)^\GL$ may be
regarded as elements of $K(B)$. After shrinking $B$ to a dense open
affine subset (and adapting the open subsets above accordingly), we
may assume that they are in $K[B]$. For $(b_1,a,b_2) \in U_5 \subseteq
B \times \bA^{\ulambda} \times B$ we have $(\phi(b_1,a),b_2) \in \im(\pi)$ if and
only if, for all $j$,
\[ 0=f_j(\phi(b_1,a),b_2)=\sum_l g_{jl}(b_1) f_{jl}(b_2). \]
Note that this condition is, in fact, independent of $a$.

Now pick a $\GL$-generic $K$-point $a \in \bA^{\ulambda}$ such that
$(b,a) \in U_6$ for some, and hence most, $b \in B$; such a point exists
by Proposition~\ref{prop:generic-exists}.  Then $V:=U_5 \cap (B \times
\{a\} \times B)$ is nonempty and therefore open dense in $B \times \{a\}
\times B$, and contains $(b,a,b)$ for $b$ in an open dense subset of $B$.
After replacing $B$ by an open dense affine subset (and updating all
open subsets accordingly), we may and will assume that
$(b,a,b) \in V$ for all $b \in B$.

Then, for $(b_1,a,b_2) \in V$, we have
\[ \phi(b_1,a) \in \im(\phi(b_2,.)) \Leftrightarrow
\sum_l g_{jl}(b_1) f_{jl}(b_2)=0 \text{ for all }
j=1,\ldots,k. \]
Since, by assumption, $\phi(b_1,.)$ and $\phi(b_2,.)$ are typical
morphisms and since $a \in \bA^{\ulambda}$ is $\GL$-generic, we have
$\phi(b_1,a) \in \im(\phi(b_2,.))$ if and only if the more symmetric
property $\im(\phi(b_1,.))=\im(\phi(b_2,.))$ holds.

Now the functions $g_{jl} \in K(X)^\GL \cap K[B]$ define a dominant
morphism $\gamma$ from $B$ to a variety $C$ whose coordinate ring is
generated by the $g_{jl}$. Let $D$ be an irreducible closed subvariety of
$B$ such that $\gamma|_D: D \to C$ is dominant and quasi-finite. After
further shrinking $B,C,D$ we may assume that $\gamma|_D$ and $\gamma$
are surjective.  Since $(b,a,b) \in V$ for any $b \in D$, $V \cap (D
\times \{a\} \times B)$ is nonempty and hence dense in $D \times \{a\}
\times B$.

For every $b_1 \in D$, the set of $b_2 \in B$ with $(b_1,a,b_2) \in V$
and $\gamma(b_2)=b_1$ is an open neighborhood of $b_1$ in the fiber
$\gamma^{-1}(\gamma(b_1))$. The union of these open neighborhoods in
fibers is a constructible set in $B$ which is easily seen to be dense.
Hence it contains an open dense subset of $B$, which, after shrinking
$B,C,D$ appropriately, we may assume to be all of $B$. Then, for any $b_2
\in B$, there exists a $b_1 \in D$ such that $\gamma(b_1)=\gamma(b_2)$
and $(b_1,a,b_2) \in V$.

We claim that $D \times \bA^{\ulambda} \to X$ is already dominant. Indeed,
consider a point $x \in \im(\phi(b_2,.))$ for some $b_2 \in B$ and
let $b_1 \in D$ be such that $(b_1,a,b_2) \in V$ and
$\gamma(b_1)=\gamma(b_2)$.  Then $g_{jl}(b_1)=g_{jl}(b_2)$
holds for all $j,l$ and we find that, for each $j$,
\[ \sum_l g_{jl}(b_1) f_{jl}(b_2)
= \sum_l g_{jl}(b_2) f_{jl}(b_2)
= 0 \]
so that $\im(\phi(b_1,.))=\im(\phi(b_2,.)) \ni x$. Since $B$
was assumed to have minimal dimension, we have $D=B$. We thus see that $K(B)/K(C)$ is a finite extension. Since $K(C) \subset K(X)^{\GL}$, the result follows.
\end{proof}

\begin{remark}
Rosenlicht's theorem that inspired the proof above says that when
an algebraic group $G$ acts on an ordinary algebraic variety $Y$,
there is a $G$-stable open subset of $Y$ in which the orbits are
separated by rational invariants in $K(Y)^G$. The proof above shows
that a similar statement holds for orbit closures of certain points
in $X$. Indeed, after shrinking $B$ as we did, suppose that $x,y$
are $K$-points in $\im(\phi)$ of type $\ulambda$ (see the next
section for types of points), and that $g_{jl}(x)=g_{jl}(x')$
for all $j,l$. Then we claim that $\ol{O}_x=\ol{O}_{x'}$.  Indeed,
write $x=\phi(b_1,a_1)$ and $y=\phi(b_2,a_2)$, where $a_1,a_2 \in
\bA^{\ulambda}$ are $\GL$-generic. Then the orbit closures of $x$ and $y$
equal $\overline{\im(\phi(b_1,.))}$ and $\overline{\im(\phi(b_2,.))}$
and these are equal since, in the notation of the proof above,
$\gamma(b_1)=\gamma(b_2)$.
\end{remark}

\subsection{Types of points}

We now extend the definitions we made for irreducible $\GL$-varieties to points by taking orbit closures:

\begin{definition}
Let $X$ be an arbitrary $\GL$-variety and let $x \in X$.
\begin{itemize}
\item The \defn{type} of $x$, denoted $\type(x)$, is the type of $\ol{O}_x$.
\item The \defn{typical dimension} of $x$, denoted $\tdim(x)$, is the typical dimension of $\ol{O}_x$.
\item A \defn{typical morphism} for $x$ is one for $\ol{O}_x$. \qedhere
\end{itemize}
\end{definition}

We note that if $X$ is an irreducible $\GL$-variety, then type, typical dimension, and typical morphisms for $X$ coincide with those for the generic point of $X$. We also note that the type of a point depends only on its generalized orbit, and thus makes sense for points of $X^{\orb}$.

\begin{proposition} \label{prop:type-img}
Let $\phi \colon X \to Y$ be a morphism of $\GL$-varieties and let $x \in X$. Then $\type(\phi(x)) \subset \type(x)$ and $\tdim(\phi(x)) \le \tdim(x)$.
\end{proposition}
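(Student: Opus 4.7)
The plan is to reduce to the definitions by passing to orbit closures and using the equivariance of $\phi$. Since the type and typical dimension of a point are defined as those of its orbit closure, it suffices to relate $\ol{O}_x$ and $\ol{O}_{\phi(x)}$.

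First I would observe that $\phi$, being $\GL$-equivariant and continuous, satisfies $\phi(\GL \cdot x) = \GL \cdot \phi(x)$ and $\phi(\ol{O}_x) \subseteq \ol{O}_{\phi(x)}$. Conversely, $\ol{\phi(\ol{O}_x)}$ is a closed $\GL$-stable subset containing $\phi(x)$, so it contains $\ol{O}_{\phi(x)}$; combining the two inclusions gives $\ol{\phi(\ol{O}_x)} = \ol{O}_{\phi(x)}$. In particular, restricting $\phi$ to $\ol{O}_x$ and corestricting to $\ol{O}_{\phi(x)}$, we obtain a dominant morphism of irreducible $\GL$-varieties $\bar\phi \colon \ol{O}_x \to \ol{O}_{\phi(x)}$.

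Next I would invoke the existence of a typical morphism for $x$, i.e.\ a typical morphism $\psi \colon B \times \bA^{\ulambda} \to \ol{O}_x$ (with $B$ irreducible and $\ulambda$ pure) provided by Proposition~\ref{prop:typical-exists}, where by definition $\ulambda = \type(x)$ and $\dim(B) = \tdim(x)$. The composition $\bar\phi \circ \psi \colon B \times \bA^{\ulambda} \to \ol{O}_{\phi(x)}$ is dominant since each factor is. Since $B$ is irreducible and $\ulambda$ is pure, Proposition~\ref{prop:typical-bounds} applies to this composition and yields $\type(\phi(x)) = \type(\ol{O}_{\phi(x)}) \subset \ulambda = \type(x)$ together with $\tdim(\phi(x)) \le \dim(B) = \tdim(x)$, as desired.

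There is no real obstacle: the entire content is that equivariance pushes the typical morphism of $\ol{O}_x$ forward to a dominant (though not necessarily typical) morphism onto $\ol{O}_{\phi(x)}$, and Proposition~\ref{prop:typical-bounds} is exactly the statement that type and typical dimension are the minimal invariants among all such dominant morphisms. The only point worth double-checking is that the typical morphism for $x$ has irreducible source and pure tuple, both of which are built into Definition~\ref{defn:typical} (the first being noted immediately after it), so Proposition~\ref{prop:typical-bounds} is directly applicable.
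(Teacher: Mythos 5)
Your proof is correct and follows essentially the same route as the paper: induce a dominant morphism $\ol{O}_x \to \ol{O}_{\phi(x)}$, precompose with a typical morphism for $\ol{O}_x$, and apply Proposition~\ref{prop:typical-bounds} to the resulting dominant map. The extra detail you give about why the induced morphism is dominant is correct and just fills in what the paper leaves implicit.
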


\begin{proof}
Let $\psi \colon B \times \bA^{\ulambda} \to \ol{O}_x$ be a typical morphism. The morphism $\phi$ induces a dominant morphism $\phi' \colon \ol{O}_x \to \ol{O}_{\phi(x)}$. The composition $\phi' \circ \psi \colon B \times \bA^{\ulambda} \to \ol{O}_{\phi(x)}$ is dominant, and so the result follows from Proposition~\ref{prop:typical-bounds}.
\end{proof}

\begin{proposition} \label{prop:type-mag}
Let $\ulambda$ be a pure tuple and let $x \in \bA^r \times \bA^{\ulambda}$. Then either $\type(x)=\ulambda$ or $\magn(\type(x))<\magn(\ulambda)$, with the former occurring if and only if $\ol{O}_x=C \times \bA^{\ulambda}$ for some closed subvariety $C \subset \bA^r$.
\end{proposition}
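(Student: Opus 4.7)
The plan is to apply Theorem~\ref{thm:uni}(d) to the orbit closure $\ol{O}_x$, which by Proposition~\ref{prop:FirstProperties}(e) is an irreducible closed $\GL$-subvariety of $\bA^r \times \bA^{\ulambda}$. That theorem immediately delivers the needed dichotomy: either (a) $\ol{O}_x=C \times \bA^{\ulambda}$ for some closed subvariety $C \subset \bA^r$, or (b) there exists a dominant morphism $\psi \colon B \times \bA^{\umu} \to \ol{O}_x$ with $B$ an irreducible variety, $\umu$ pure, and $\magn(\umu)<\magn(\ulambda)$.

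In case (a), the factor $C$ must itself be irreducible, since $\ol{O}_x$ is irreducible and $\bA^{\ulambda}$ is integral (its coordinate ring is a polynomial ring). I claim the identity $\id \colon C \times \bA^{\ulambda} \to C \times \bA^{\ulambda}$ is then a typical morphism for $\ol{O}_x$: it is tautologically dominant, and its restriction to any proper closed $\GL$-subvariety $Z$ is simply the inclusion $Z \hookrightarrow C \times \bA^{\ulambda}$, which is not dominant. Hence $\type(x)=\ulambda$ directly from the definition.

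In case (b), Proposition~\ref{prop:typical-bounds} applied to $\psi$ gives $\type(x) \subset \umu$. Containment of tuples yields the componentwise inequality $\magn(\type(x))_i \le \magn(\umu)_i$ for all $i$, and so, by the lexicographic-from-the-top convention of \S\ref{ss:part}, $\magn(\type(x)) \le \magn(\umu) < \magn(\ulambda)$.

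The two cases of Theorem~\ref{thm:uni}(d) jointly establish the stated dichotomy, whose alternatives $\type(x)=\ulambda$ and $\magn(\type(x))<\magn(\ulambda)$ are mutually exclusive. For the ``if and only if'' clause, case (a) gives the forward direction, and conversely if $\type(x)=\ulambda$ then we cannot be in case (b) (which would force a strict inequality), so case (a) must hold. There is no substantive obstacle; the only mildly delicate points are verifying that the identity really is a typical morphism in case (a) and translating tuple containment into the specific top-down lexicographic order on magnitudes in case (b).
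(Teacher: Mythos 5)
Your proof is correct and follows exactly the route the paper intends: the paper's entire proof is the single citation ``This follows from Theorem~\ref{thm:uni}(d),'' applied to the irreducible affine $\GL$-variety $\ol{O}_x$, and you have simply unpacked the two branches of that theorem. The details you supply --- that the identity map on $C \times \bA^{\ulambda}$ satisfies Definition~\ref{defn:typical} in case (a), and that in case (b) tuple containment gives componentwise inequality of magnitudes, hence a lexicographic inequality --- are the right ones and are carried out correctly.
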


\begin{proof}
This follows from Theorem~\ref{thm:uni}(d).
\end{proof}

\subsection{A stratification of the orbit space} \label{ss:orb-lambda}

Let $X$ be a $\GL$-variety. For a pure tuple $\ulambda$, let $X^{\orb}_{\ulambda}$ be the subset of $X^{\orb}$ consisting of all points $x$ with $\type(x) \subset \ulambda$. Endow this subset with the subspace topology. We note that if $\ulambda \subset \umu$ then $X^{\orb}_{\ulambda} \subset X^{\orb}_{\umu}$; also $X^{\orb}=\bigcup_{\ulambda} X^{\orb}_{\ulambda}$. We can thus regard $\{ X^{\orb}_{\ulambda} \}_{\ulambda}$ as a stratification of the space $X^{\orb}$.

\begin{proposition} \label{prop:dense-type}
Suppose that the points of type $\ulambda$ are dense in $X$. Then each generic point of $X$ has type $\ulambda$.
\end{proposition}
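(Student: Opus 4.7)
The plan is to reduce to the case when $X$ is irreducible and then extract, via noetherianity of the orbit space, a $\GL$-generic point of type $\ulambda$; the conclusion follows from the tautology $\type(\eta) = \type(X)$ for a generic point $\eta$ of an irreducible $\GL$-variety.

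For the reduction, let $X_1, \ldots, X_r$ be the (finitely many) irreducible components of $X$; each is $\GL$-stable by Proposition~\ref{prop:FirstProperties}(d). Set $S = \{x \in X : \type(x) = \ulambda\}$. The hypothesis $\ol{S} = X$ together with the distributivity of closure over finite unions gives $X = \bigcup_j \ol{S \cap X_j}$. Each irreducible $X_i$ is therefore contained in some $\ol{S \cap X_j} \subseteq X_j$, forcing $i = j$ and $\ol{S \cap X_i} = X_i$. So type-$\ulambda$ points are already dense in each irreducible component, and since every generic point of $X$ is a generic point of some $X_i$, I may replace $X$ by a single component.

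Assume now that $X$ is irreducible with generic point $\eta$, and consider the family $\mathcal{Y} = \{\ol{O}_x : x \in X,\ \type(x) = \ulambda\}$. Each member of $\mathcal{Y}$ is a closed irreducible $\GL$-stable subset of $X$, and $\bigcup_{Y \in \mathcal{Y}} Y \supseteq S$ is dense in $X$. Via Proposition~\ref{prop:opens-of-Xorb}, ascending chains of closed $\GL$-subsets of $X$ correspond to ascending chains of closed subsets of $X^{\orb}$, and the latter is a noetherian topological space (stated explicitly for affine $\GL$-varieties in \S\ref{s:genorbit}, and inherited in the quasi-affine case by passing to an open subspace). Hence among finite unions $Y_1 \cup \cdots \cup Y_n$ with $Y_i \in \mathcal{Y}$ there exists a maximal one $M$; by maximality every $Y \in \mathcal{Y}$ lies in $M$, so $M$ is a closed subset of $X$ containing the dense set $\bigcup_{Y \in \mathcal{Y}} Y$, whence $M = X$. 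Irreducibility of $X$ then forces some $Y_i$ to equal $X$, producing a $\GL$-generic point $x_i \in X$ with $\type(x_i) = \ulambda$. We conclude $\type(\eta) = \type(X) = \type(\ol{O}_{x_i}) = \ulambda$.

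The only nontrivial input is the noetherianity of $X^{\orb}$, which tames the a priori infinite family $\mathcal{Y}$; the remaining steps are formal manipulations of the definitions together with Proposition~\ref{prop:FirstProperties}. In particular, I would not expect the embedding, shift, or decomposition theorems to be needed here.
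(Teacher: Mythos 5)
The reduction to the irreducible case is fine and matches the paper. The problem is in the second step: you invoke noetherianity of $X^{\orb}$, which is the \emph{descending} chain condition on closed subsets (equivalently, ACC on open sets), but your argument actually needs the \emph{ascending} chain condition on closed subsets. To produce a maximal finite union $M = Y_1 \cup \cdots \cup Y_n$ of elements of $\mathcal{Y}$ — so that by maximality $M$ absorbs every $Y \in \mathcal{Y}$ — you need every ascending chain of such finite unions to stabilize. Noetherianity of the space does not give this. A concrete illustration, entirely within the paper's own examples: for $X = \bA^{(2)}$, the orbit space $X^{\orb}$ is $\bN \cup \{\infty\}$ with the order topology (cf.\ \S\ref{ss:type-example}), which is noetherian, yet $\{0\} \subsetneq \{0,1\} \subsetneq \{0,1,2\} \subsetneq \cdots$ is an infinite strictly ascending chain of proper closed subsets whose union is dense; there is no maximal finite union among them. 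So the sentence ``hence among finite unions $Y_1 \cup \cdots \cup Y_n$ with $Y_i \in \mathcal{Y}$ there exists a maximal one $M$'' is not justified, and this is not a minor slip — it is exactly where an elementary topological argument must fail, since for a general irreducible $\GL$-variety the closures $\ol{O}_x$ can form an infinite strictly ascending family of proper $\GL$-subvarieties with dense union.

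The paper's proof therefore proceeds quite differently: after reducing to the irreducible case, it fixes a typical morphism $\phi\colon B \times \bA^{\umu} \to X$, uses density of type-$\ulambda$ points and the mapping space $\cM_{\ulambda}(X)$ to find a dominant $C \times \bA^{\ulambda} \to X$ (giving $\umu \subset \ulambda$), invokes Chevalley's theorem to find a point $x$ of type $\ulambda$ in the image of $\phi$ with a lift $y$, and then plays off the containment $\ulambda = \type(x) \subset \type(y)$ from Proposition~\ref{prop:type-img} against the magnitude bound $\magn(\type(y)) \le \magn(\umu)$ from Proposition~\ref{prop:type-mag} to force $\umu = \ulambda = \type(y)$. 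That mix of type machinery and magnitude monotonicity is what replaces the chain-condition argument you attempted, and I do not see a way to avoid it.
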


\begin{proof}
Without loss of generality, we assume $X$ is irreducible. Let $\phi \colon B \times \bA^{\umu} \to X$ be a typical morphism. The natural map $\cM_{\ulambda}(X) \times \bA^{\ulambda} \to X$ contains all points of type $\ulambda$ in its image, and it is therefore dominant. It follows that there is some irreducible component $C$ of $\cM_{\ulambda}(X)$ such that the map $\psi \colon C \times \bA^{\ulambda} \to X$ is dominant. We thus see that $\umu \subset \ulambda$. By Chevalley's theorem, there is an open dense $\GL$-subset $U$ of $X$ contained in both $\im(\phi)$. Since the type $\ulambda$ points are dense in $X$, there is a point $x \in U$ of type $\ulambda$. Let $y \in \phi^{-1}(x)$. Then $\ulambda=\type(x) \subset \type(y)$ (Proposition~\ref{prop:type-img}) and $\magn(\type(y)) \le \magn(\umu)$ (Proposition~\ref{prop:type-mag}). Thus, putting $\unu=\type(y)$, we have
\begin{displaymath}
\umu \subset \ulambda \subset \unu, \qquad \magn(\unu) \le \magn(\umu).
\end{displaymath}
It follows that $\ulambda=\unu=\umu$. Thus $X$ has type $\ulambda$, as required.
\end{proof}

\begin{proposition}
The space $X^{\orb}_{\ulambda}$ is a noetherian spectral space.
\end{proposition}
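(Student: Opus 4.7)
The plan is to verify noetherianity and the spectral property separately. Noetherianity is immediate: the preceding proposition (extended to the quasi-affine case via Proposition~\ref{prop:opens-of-Xorb}, since a quasi-affine $\GL$-variety is an open $\GL$-subvariety of an affine one, so its orbit space is an open subspace of the noetherian orbit space of the affine variety) shows that $X^{\orb}$ is noetherian, and every subspace of a noetherian space is noetherian. For the spectral property, noetherianity makes the quasi-compactness axioms automatic (every open is quasi-compact and finite intersections of opens remain so), and $T_0$ is inherited from $X^{\orb}$; thus the only nontrivial point is to show that $X^{\orb}_{\ulambda}$ is sober.

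So let $Z \subseteq X^{\orb}_{\ulambda}$ be a nonempty irreducible closed subset and put $\widetilde{Z} = \overline{Z}^{X^{\orb}}$. This is irreducible and closed in the sober space $X^{\orb}$ and hence admits a unique generic point $\xi$. By Proposition~\ref{prop:opens-of-Xorb}, $Y = \pi^{-1}(\widetilde{Z})$ is an irreducible closed $\GL$-subvariety of $X$ whose generic point represents $\xi$. Once we show that $\type(Y) \subseteq \ulambda$, i.e.\ $\xi \in X^{\orb}_{\ulambda}$, we obtain $\xi \in Z$ and $\overline{\{\xi\}}^{X^{\orb}_{\ulambda}} = \overline{\{\xi\}}^{X^{\orb}} \cap X^{\orb}_{\ulambda} = \widetilde{Z} \cap X^{\orb}_{\ulambda} = Z$; uniqueness of $\xi$ in $X^{\orb}_{\ulambda}$ is inherited from $X^{\orb}$.

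To establish $\type(Y) \subseteq \ulambda$, I invoke Proposition~\ref{prop:dense-type}. The set $\pi^{-1}(Z) \subseteq Y$ is $\GL$-stable, and its closure in $X$ equals $\pi^{-1}(\widetilde{Z}) = Y$ (using the bijection in Proposition~\ref{prop:opens-of-Xorb} applied to the closure of the $\GL$-stable set $\pi^{-1}(Z)$), so $\pi^{-1}(Z)$ is Zariski dense in $Y$. Now partition $\pi^{-1}(Z) = \bigsqcup_{\umu \subseteq \ulambda} S_{\umu}$ according to the exact type of points. Only finitely many pure tuples are contained in $\ulambda$, so this is a finite partition, and the irreducibility of $Y$ forces some single $S_{\umu}$ to have Zariski closure equal to $Y$. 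Then the set of points of $Y$ of exact type $\umu$ is dense in $Y$, and Proposition~\ref{prop:dense-type} yields $\type(Y) = \umu \subseteq \ulambda$, completing the proof.

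The substantive ingredient here is the pigeonhole reduction from ``type $\subseteq \ulambda$ is dense'' to ``some single exact type $\umu \subseteq \ulambda$ is dense,'' which is exactly what makes Proposition~\ref{prop:dense-type} applicable; the remaining steps are formal manipulation of the bijection $\pi^{-1}$ between closed $\GL$-stable subsets of $X$ and closed subsets of $X^{\orb}$, together with standard properties of noetherian and spectral spaces.
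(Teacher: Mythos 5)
Your proof follows essentially the same route as the paper's: you reduce spectrality to sobriety using noetherianity, then pass to the closure $\widetilde{Z}$ in the sober space $X^{\orb}$, and pin down the type of its generic point by pigeonholing over the finitely many exact types $\umu \subseteq \ulambda$ and invoking Proposition~\ref{prop:dense-type}. Your write-up is slightly more explicit than the paper's about pulling back to the $\GL$-variety $Y = \pi^{-1}(\widetilde{Z})$ before applying Proposition~\ref{prop:dense-type} and about verifying $\overline{\{\xi\}}^{X^{\orb}_{\ulambda}} = Z$, but the idea and the key lemma are identical.
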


\begin{proof}
Let $Z$ be an irreducible closed subset of $X^{\orb}_{\ulambda}$. For $\umu \subset \ulambda$, let $Z_{\umu}$ be the set of points of $Z$ of type $\umu$. Since $Z$ is the union of the $Z_{\umu}$'s and there are only finitely many choices for $\umu$, it follows that there is some $Z_{\umu}$ that is dense in $Z$. Let $\ol{Z}$ be the Zariski closure of $Z$ in $X^{\orb}$, which is irreducible. Then $Z_{\umu}$ is dense in $\ol{Z}$, and thus the generic point $z$ of $\ol{Z}$ has type $\umu$ by Proposition~\ref{prop:dense-type}. It follows that $z \in \ol{Z} \cap X_{\ulambda}^{\orb}=Z$. Clearly, $z$ is a generic point of $Z$. It is also unique: if $z'$ is a generic point of $Z$ then it is also one for $\ol{Z}$, and thus $z=z'$ since $X^{\orb}$ is sober. We have thus shown that $X^{\orb}_{\ulambda}$ is sober. It is noetherian since it is a subspace of the noetherian space $X^{\orb}$. It is therefore also spectral.
\end{proof}

\subsection{Self-maps of affine spaces}

Let $\ulambda$ be a pure tuple. We let $\Gamma^+_{\ulambda}$ be the endomorphism monoid of $\bA^{\ulambda}$ and we let $\Gamma_{\ulambda}$ be the automorphism group of $\bA^{\ulambda}$. We regard both as algebraic varieties. More precisely, $\Gamma^+_{\ulambda}$ is the mapping space $\uMap_K(\bA^{\ulambda}, \bA^{\ulambda})$ and $\Gamma_{\ulambda}$ is the closed subvariety of $\Gamma^+_{\ulambda} \times \Gamma^+_{\ulambda}$ consisting of pairs $(x,y)$ such that $xy=1$.

\begin{proposition} \label{prop:Gamma}
Both $\Gamma_{\ulambda}$ and $\Gamma^+_{\ulambda}$ are irreducible varieties. Moreover, $\Gamma_{\ulambda}$ is an open dense subvariety of $\Gamma^+_{\ulambda}$.
\end{proposition}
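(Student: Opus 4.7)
The plan is to identify $\Gamma^+_{\ulambda}$ with an affine space via Proposition~\ref{prop:Map}, identify $\Gamma_{\ulambda}$ with the open subset of $\Gamma^+_{\ulambda}$ cut out by nonvanishing of finitely many determinants, and deduce irreducibility and density. For the first step, I would apply Proposition~\ref{prop:Map}(c) with $S=\Spec(K)$ and $X=Y=\bA^{\ulambda}$. Since $\bA^{\ulambda}$ is $\GL$-finite type, flat over $K$, and pure over $K$ (as $\ulambda$ is pure), this identifies $\Gamma^+_{\ulambda}=\uMap_K(\bA^{\ulambda},\bA^{\ulambda})$ with a finite-rank vector bundle over $\Spec(K)$, i.e., an affine space $\bA^N$; in particular $\Gamma^+_{\ulambda}$ is irreducible.

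Next, I would introduce the linearization map $L\colon\Gamma^+_{\ulambda}\to\prod_{\lambda}\mathrm{End}_K(K^{m_\lambda})$, where the product runs over the distinct partitions $\lambda$ appearing in $\ulambda$ with multiplicity $m_\lambda$. Concretely, a point $\phi^*\in\Gamma^+_{\ulambda}$ is a $\GL$-equivariant algebra endomorphism of $\bR_{\ulambda}$; it preserves the central $\bG_m$-grading and hence carries $\bV_{\lambda_i}\subset\bR_{\ulambda}$ into $(\bR_{\ulambda})_{|\lambda_i|}$. Composing with the projection $\bR_{\ulambda,+}\twoheadrightarrow \bR_{\ulambda,+}/\bR_{\ulambda,+}^2\cong\bV_{\ulambda}$ yields a $\GL$-equivariant graded endomorphism of $\bV_{\ulambda}$, which by Schur's lemma is block-diagonal by isotype; this collection of blocks is $L(\phi)$.

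The key claim is that $\phi\in\Gamma^+_{\ulambda}$ is invertible if and only if every block $L_{\lambda}(\phi)$ is invertible. The forward direction is immediate from $L(\phi\psi)=L(\phi)L(\psi)$ and $L(\id)=\id$. The converse is what I expect to be the main obstacle: I would build the equivariant inverse $\psi^*$ on generators inductively. Writing $\phi^*(v)=L(\phi)(v)+h(v)$ with $v\in\bV_{\ulambda}$ and $h(v)\in\bR_{\ulambda,+}^2$, the equation $\psi^*(\phi^*(v))=v$ reduces modulo $\bR_{\ulambda,+}^2$ to $L(\psi)L(\phi)=\id$, which forces $L(\psi)=L(\phi)^{-1}$; the higher-order components of $\psi^*$ on each generator are then determined by a recursion that expresses $\psi^*(h(v))$ in terms of the values of $\psi^*$ on generators of strictly smaller degree. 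Since each graded piece of $\bR_{\ulambda}$ has finite length as a polynomial representation, only finitely many Schur-block equations arise at each fixed degree, and the procedure terminates there.

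Finally, let $U\subset\Gamma^+_{\ulambda}$ be the locus where each $\det L_\lambda$ is nonzero; it is open and nonempty (containing $\id$). The projection $\pi\colon\Gamma_{\ulambda}\to\Gamma^+_{\ulambda}$, $(x,y)\mapsto x$, lands in $U$ since $L(x)L(y)=\id$, and surjects onto $U$ by the key claim. It is injective: if $xy=1$ then $y^*x^*=\id$ on each finite-dimensional isotypic multiplicity space of $\bR_{\ulambda}$, whence also $x^*y^*=\id$ there, so $yx=1$ and $y$ is uniquely determined by $x$. Combining Cramer's rule on the linearized level with the degree-by-degree construction above, $x\mapsto(x,x^{-1})$ is a morphism $U\to\Gamma_{\ulambda}$ inverse to $\pi$. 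Therefore $\Gamma_{\ulambda}\cong U$ is a nonempty open subvariety of the irreducible variety $\Gamma^+_{\ulambda}$, and as such is itself irreducible and dense in $\Gamma^+_{\ulambda}$.
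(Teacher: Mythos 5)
Your proof is correct and takes essentially the same approach as the paper: both realize $\Gamma^+_{\ulambda}$ as an affine space via Proposition~\ref{prop:Map}(c), and both identify $\Gamma_{\ulambda}$ as the open locus where the ``linear blocks'' (the induced $\GL$-equivariant endomorphism of $\bR_{\ulambda,+}/\bR_{\ulambda,+}^2 \cong \bV_{\ulambda}$) are invertible. The paper packages this as an explicit product decomposition $\Gamma^+_{\ulambda} \cong \prod_i \bigl(\bM_{m_i} \times \uMap(X_{i-1}, Y_i)\bigr)$ indexed by the distinct partitions ordered by size, whereas you introduce the linearization map $L$ and spell out the degree-by-degree construction of the inverse; the paper leaves that verification implicit with ``it is not difficult to see.'' Both hinge on the same two facts — that a graded algebra endomorphism of $\bR_{\ulambda}$ is invertible iff its linear part is, and that the degree filtration makes the construction of the inverse a finite triangular recursion — so this is a difference of exposition, not of substance.
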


\begin{proof}
Let $\mu_1, \ldots, \mu_n$ be the distinct partitions appearing in $\ulambda$, arranged so that $\# \mu_i \le \# \mu_{i+1}$, and let $m_i$ be the multiplicity of $\mu_i$ in $\ulambda$. Let $Y_i=\bA^{\mu_i} \otimes K^{m_i}$, which is simply a product of $m_i$ copies of $\mu_i$, and let $X_i=Y_1 \times \cdots \times Y_i$. Thus $X_n=\bA^{\ulambda}$. We have $\Gamma^+_{\ulambda} = \prod_{i=1}^n \uMap_K(X, Y_i)$. Now, any map $X \to Y_i$ must factor through the projection $X \to X_i$. Any map $X_i \to Y_i$ decomposes as $f+g$, where $f$ is induced by a linear endomorphism of $K^{m_i}$ and $g$ factors through the projection $X_i \to X_{i-1}$. We thus have
\begin{displaymath}
\Gamma_{\ulambda}^+ = \prod_{i=1}^n (\bM_{m_i} \times \uMap(X_{i-1}, Y_i)),
\end{displaymath}
where $\bM_d$ denotes the space of $d \times d$ matrices. By Proposition~\ref{prop:Map}(c), the mapping spaces appearing above are affine spaces. We thus see that $\Gamma_{\ulambda}^+$ is an affine space, and, in particular, irreducible.

It is not difficult to see that, under the above identification, we have
\begin{displaymath}
\Gamma_{\ulambda} = \prod_{i=1}^n (\GL_{m_i} \times \uMap(X_{i-1}, X_i)),
\end{displaymath}
from which it easily follows that $\Gamma_{\ulambda}$ is irreducible, and open and dense in $\Gamma_{\ulambda}^+$.
\end{proof}

\begin{corollary}
Any irreducible component of $\cM_{\ulambda}(X)$ is stable by $\Gamma_{\ulambda}$.
\end{corollary}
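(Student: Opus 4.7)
The plan is to apply the standard principle that the orbit of an irreducible closed subset under an irreducible algebraic group lies in its own (irreducible) closure, which, when the subset is already a maximal irreducible closed subset, forces the orbit to lie back inside it.

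First I would verify that the action of $\Gamma_{\ulambda}$ on $\cM_{\ulambda}(X)$ is a morphism of $K$-schemes. The map $\Gamma_{\ulambda}^+ \times \cM_{\ulambda}(X) \to \cM_{\ulambda}(X)$ sending $(\sigma,\phi)$ to $\phi \circ \sigma$ is a natural transformation of functors, since composition is functorial; because all three spaces are representable by finite type affine schemes (Proposition~\ref{prop:Map} and the definition of $\Gamma_{\ulambda}^+ = \uMap_K(\bA^{\ulambda},\bA^{\ulambda})$), this natural transformation is a morphism of schemes. Restricting along the open immersion $\Gamma_{\ulambda} \hookrightarrow \Gamma_{\ulambda}^+$ supplied by Proposition~\ref{prop:Gamma} gives the action morphism $\alpha \colon \Gamma_{\ulambda} \times \cM_{\ulambda}(X) \to \cM_{\ulambda}(X)$.

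Next, let $Z$ be an irreducible component of $\cM_{\ulambda}(X)$. By Proposition~\ref{prop:Gamma}, $\Gamma_{\ulambda}$ is a dense open subvariety of the affine space $\Gamma_{\ulambda}^+$, and in particular it is geometrically integral. Therefore $\Gamma_{\ulambda} \times_K Z$ is irreducible, so its image $\alpha(\Gamma_{\ulambda} \times_K Z) = \Gamma_{\ulambda} \cdot Z$ has irreducible closure $W \subseteq \cM_{\ulambda}(X)$. The identity element of $\Gamma_{\ulambda}$ acts trivially, so $Z = \alpha(\{1\} \times Z) \subseteq W$; maximality of $Z$ as an irreducible closed subset then forces $W = Z$, and hence $\Gamma_{\ulambda} \cdot Z \subseteq Z$.

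The only real subtlety is the irreducibility of $\Gamma_{\ulambda} \times_K Z$ without assuming $K$ algebraically closed; this is clean because $\Gamma_{\ulambda}$ is an open subvariety of an affine space over $K$, hence geometrically integral, so its product with any irreducible $K$-variety remains irreducible. Everything else is formal, and the same argument in fact shows that each component is stable under the (possibly larger) monoid $\Gamma_{\ulambda}^+$.
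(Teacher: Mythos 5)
Your proof is correct and is exactly the argument the paper intends: the corollary is stated directly after Proposition~\ref{prop:Gamma} without proof, and the implicit reasoning is precisely the standard fact that an irreducible (in fact geometrically integral) algebraic group acting algebraically must preserve each irreducible component, which you carry out carefully, including the base-field subtlety. Your observation that the same argument gives stability under the larger monoid $\Gamma_{\ulambda}^+$ is also correct and is in fact used later in the paper (see the proof of Lemma~\ref{lem:pistar}).
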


\subsection{The principal component of the mapping space} \label{ss:prin}

We now isolate an important irreducible component of the mapping space $\cM_{\ulambda}(X)$, in certain cases.

\begin{lemma} \label{lem:prin}
Let $X$ be an irreducible variety of type $\umu$. Let $\ulambda$ be a pure tuple containing $\umu$ and let $\pi \colon \bA^{\ulambda} \to \bA^{\umu}$ be the projection map. Let $\phi \colon B \times \bA^{\umu} \to X$ be a typical morphism and let $\psi \colon C \times \bA^{\ulambda} \to X$ be a dominant morphism, with $C$ irreducible. Then there exist non-empty open subsets $U \subset B$ and $V \subset C$ such that, for any algebraically closed field $\Omega$ containing $K$, we have:
\begin{enumerate}
\item Given $x \in U(\Omega)$ there exists $z \in C(\Omega)$ and $\sigma \in \Gamma_{\ulambda}(\Omega)$ such that $\pi^*(\phi_x)=\psi_z \circ \sigma$.
\item Given $z \in V(\Omega)$ there exists $x \in B(\Omega)$ and $\sigma \in \Gamma_{\ulambda}(\Omega)$ such that $\psi_z=\pi^*(\phi_x) \circ \sigma$.
\end{enumerate}
\end{lemma}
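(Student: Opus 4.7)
The plan is to combine the factorization-through-a-typical-morphism result (Proposition~\ref{prop:typical-factor}) with the structural description of dominant morphisms between elementary $\GL$-varieties (Proposition~\ref{prop:Adom}). The entire lemma will fall out of a single commutative diagram and one identity read at the level of $\Omega$-points.

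First I would apply Proposition~\ref{prop:typical-factor} to the typical morphism $\phi$ and the dominant morphism $\psi$ (with the roles of $\ulambda$ and $\umu$ swapped relative to that proposition's statement). This produces an irreducible variety $D$, a dominant quasi-finite morphism $\alpha \colon D \to C$, and a dominant $\GL$-morphism $\theta \colon D \times \bA^{\ulambda} \to B \times \bA^{\umu}$ satisfying $\phi \circ \theta = \psi \circ (\alpha \times \mathrm{id})$. Since $\umu \subset \ulambda$ and $\theta$ is dominant between pure elementary $\GL$-varieties, Proposition~\ref{prop:Adom} then furnishes non-empty open affine subvarieties $U_D \subset D$ and $U_B \subset B$, a surjective morphism $\theta_0 \colon U_D \to U_B$ (using the last sentence of that proposition after possibly shrinking), and a $U_D$-automorphism $\tau$ of $U_D \times \bA^{\ulambda}$ such that, writing $\tau(d,a) = (d, \tau_d(a))$ with $\tau_d \in \Gamma_{\ulambda}$, one has $\theta(d,a) = (\theta_0(d), \pi(\tau_d(a)))$ for all $d \in U_D$. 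Combined with the commutativity of the first diagram, this gives the key identity
$$\psi_{\alpha(d)}(a) \;=\; \phi_{\theta_0(d)}\bigl(\pi(\tau_d(a))\bigr) \;=\; \bigl(\pi^*(\phi_{\theta_0(d)}) \circ \tau_d\bigr)(a)$$
for all $d \in U_D(\Omega)$ and $a \in \bA^{\ulambda}(\Omega)$.

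Both assertions of the lemma are then read off from this identity. For (b) I would take $V \subset C$ a non-empty open subset contained in $\alpha(U_D)$, which exists by Chevalley's theorem (Theorem~\ref{thm:chev}) applied to the dominant morphism $\alpha|_{U_D}$; any $z \in V(\Omega)$ lifts to some $d \in U_D(\Omega)$ with $\alpha(d) = z$ because $\Omega$ is algebraically closed, and then $x := \theta_0(d)$ and $\sigma := \tau_d$ satisfy $\psi_z = \pi^*(\phi_x) \circ \sigma$. For (a) I would take $U \subset B$ a non-empty open subset contained in $\theta_0(U_D)$; any $x \in U(\Omega)$ lifts to some $d \in U_D(\Omega)$ with $\theta_0(d) = x$, and then $z := \alpha(d)$ and $\sigma := \tau_d^{-1}$ satisfy $\pi^*(\phi_x) = \psi_z \circ \sigma$.

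I expect no serious obstacles. The only routine verifications are that $\theta_0 \colon D \to B$ is well-defined and dominant (which follows from the fact that $B$ is recovered as the $\GL$-invariants of the coordinate ring of $B \times \bA^{\umu}$ by Proposition~\ref{prop:Ainv}, together with the dominance of $\theta$), and that dominant morphisms of finite-dimensional varieties over an algebraically closed field lift $\Omega$-points on a dense open subset. All the conceptual content is packaged in Propositions~\ref{prop:typical-factor} and~\ref{prop:Adom}; the proof is bookkeeping around a single compatibility diagram.
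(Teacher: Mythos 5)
Your argument is essentially the same as the paper's: both invoke Proposition~\ref{prop:typical-factor} to produce the square with $D$, $\alpha$ and $\theta$, then Proposition~\ref{prop:Adom} to factor $\theta$ as a twist of the projection, and read both claims off the resulting identity after picking dense opens in the images of the induced maps $D \to B$ and $D \to C$. The only cosmetic difference is that you spell out both directions while the paper proves (a) and declares (b) similar; also note that Chevalley here is only needed for ordinary finite-dimensional varieties, so citing Theorem~\ref{thm:chev} is overkill (the paper says "Chevalley's theorem (for ordinary varieties)").
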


\begin{proof}
Applying Proposition~\ref{prop:typical-factor}, we obtain a commutative diagram
\begin{displaymath}
\xymatrix@C=4em{
D \times \bA^{\umu} \ar[r]^{\alpha \times \id} \ar[d]_{\theta} & C \times \bA^{\umu} \ar[d]^{\psi} \\
B \times \bA^{\ulambda} \ar[r]^{\phi} & X }
\end{displaymath}
with $\alpha$ and $\theta$ dominant. Applying Proposition~\ref{prop:Adom}, after possibly replacing $D$ with a dense open, we see that $\theta=(\beta \times \pi) \circ \sigma$ where $\beta \colon D \to B$ is a dominant morphism and $\sigma$ is a $D$-automorphism of $D \times \bA^{\umu}$. Let $U$ be an open subset of $B$ contained in $\im(\beta)$ and $V$ be an open subset of $C$ contained in $\im(\alpha)$; these exist by Chevalley's theorem (for ordinary varieties). Suppose that $x \in U(\Omega)$. Let $y \in D(\Omega)$ satisfy $\beta(y)=x$, and let $z=\alpha(y) \in C(\Omega)$. We then have $\pi^*(\phi_x) \circ \sigma_y=\psi_z$. This proves (a). The proof of (b) is similar.
\end{proof}

\begin{proposition} \label{prop:prin}
Let $X$ be an irreducible $\GL$-variety of type $\umu$ and let $\ulambda$ be a pure tuple with $\umu \subset \ulambda$.
\begin{enumerate}
\item There exists a unique irreducible component $M$ of $\cM_{\ulambda}(X)$ for which the natural map $M \times \bA^{\ulambda} \to X$ is dominant.
\item Suppose that $C \times \bA^{\ulambda} \to X$ is dominant with $C$ irreducible. Then the map $C \to \cM_{\ulambda}(X)$ factors through $M$, and the induced map $\Gamma_{\ulambda} \times C \to M$ is dominant.
\end{enumerate}
\end{proposition}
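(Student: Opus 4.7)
The plan is to produce $M$ explicitly from a typical morphism for $X$ and then pin it down using the two halves of Lemma~\ref{lem:prin}. Fix a typical morphism $\phi \colon B \times \bA^{\umu} \to X$ and the projection $\pi \colon \bA^{\ulambda} \to \bA^{\umu}$; the assignment $b \mapsto \phi_b \circ \pi$ defines a morphism $F \colon B \to \cM_{\ulambda}(X)$. Let $M$ be the Zariski closure of $\Gamma_{\ulambda} \cdot F(B)$ inside $\cM_{\ulambda}(X)$; it is irreducible, being the closure of the image of the irreducible variety $\Gamma_{\ulambda} \times B$. This $M$ will be the principal component claimed by the proposition.

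For part (a), I first observe that the natural map $M \times \bA^{\ulambda} \to X$ is dominant, since its restriction to $F(B) \times \bA^{\ulambda}$ is $\phi \circ (\id \times \pi)$, which is dominant. To verify that $M$ is a full irreducible component and is the unique component with dominant evaluation, let $M'$ be any irreducible component with $M' \times \bA^{\ulambda} \to X$ dominant, and apply Lemma~\ref{lem:prin}(b) to the evaluation $\psi \colon M' \times \bA^{\ulambda} \to X$: it produces a nonempty open $V \subseteq M'$ such that every $z \in V$ equals $F(x) \circ \sigma$ for some $x \in B$ and $\sigma \in \Gamma_{\ulambda}$. Thus $V \subseteq \Gamma_{\ulambda} \cdot F(B) \subseteq M$, and since $V$ is open dense in $M'$ and $M$ is closed, $M' \subseteq M$; maximality of $M'$ as an irreducible closed subset forces $M' = M$. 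Applying this first to an irreducible component of $\cM_{\ulambda}(X)$ containing $M$ (one exists since $\cM_{\ulambda}(X)$ is finite type) shows that $M$ itself is a component, and applying it to any other such $M'$ yields the uniqueness.

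For part (b), let $G \colon C \to \cM_{\ulambda}(X)$ be the morphism induced by $\psi$. Applying Lemma~\ref{lem:prin}(b) to $\psi$ yields an open dense $V \subseteq C$ with $G(V) \subseteq \Gamma_{\ulambda} \cdot F(B) \subseteq M$; density of $V$ and closedness of $M$ give $G(C) \subseteq M$, so $G$ factors through $M$. For the dominance of $\Gamma_{\ulambda} \times C \to M$, I invoke the complementary Lemma~\ref{lem:prin}(a) to obtain an open dense $U \subseteq B$ with $F(U) \subseteq \Gamma_{\ulambda} \cdot G(C)$; then $\Gamma_{\ulambda} \cdot F(U) \subseteq \Gamma_{\ulambda} \cdot G(C)$, and since $\Gamma_{\ulambda} \times U$ is open dense in $\Gamma_{\ulambda} \times B$, the Zariski closure of $\Gamma_{\ulambda} \cdot F(U)$ already coincides with $M$, forcing $\overline{\Gamma_{\ulambda} \cdot G(C)} = M$. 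The main delicacy is translating the $\Omega$-point statements in Lemma~\ref{lem:prin} into Zariski inclusions between subsets of $\cM_{\ulambda}(X)$, but this is routine since $\overline{K}$-points are dense in reduced finite-type $K$-schemes; all the substantive work has been packaged into the lemma.
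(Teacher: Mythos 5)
Your proof is correct and follows essentially the same route as the paper: both fix the typical morphism $\phi$ and the induced map $\alpha=F\colon B\to\cM_{\ulambda}(X)$, both identify $M$ with the closure of $\Gamma_{\ulambda}\cdot F(B)$, and both extract the key inclusions from Lemma~\ref{lem:prin} in exactly the same way. The only organizational difference is that you build $M$ explicitly from the start and then check it is a component, whereas the paper first obtains a component with dominant evaluation abstractly (via the universal map $\cM_{\ulambda}(X)\times\bA^{\ulambda}\to X$) and then proves it coincides with $\overline{\Gamma_{\ulambda}\cdot\alpha(B)}$; this is an equivalent rearrangement rather than a different argument.
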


\begin{proof}
Fix a typical morphism $\phi \colon B \times \bA^{\umu} \to X$, and let $\pi \colon \bA^{\ulambda} \to \bA^{\umu}$ be the projection map. Composing $\phi$ with $\pi$ yields a dominant map $B \times \bA^{\ulambda} \to X$. It follows that the universal map $\cM_{\ulambda}(X) \times \bA^{\ulambda} \to X$ is dominant, and so there is some component $M$ of $\cM_{\ulambda}(X)$ such that $M \times \bA^{\ulambda} \to X$ is dominant. This proves the existence statement in (a).

Let $\alpha \colon B \to \cM_{\ulambda}(X)$ be the map $b \mapsto \pi^*(\phi_b)$, and let $M$ be any irreducible component of $\cM_{\ulambda}(X)$ such that the map $\psi \colon M \times \bA^{\ulambda} \to X$ is dominant. We claim that $M$ is the closure of $\Gamma_{\ulambda} \im(\alpha)$, which will establish the uniqueness in (a). Let $U \subset B$ and $V \subset M$ be as in Lemma~\ref{lem:prin}. Let $\Omega$ be an algebraically closed field containing $K$ and let $\Gamma=\Gamma_{\ulambda}(\Omega)$. The lemma shows that
\begin{displaymath}
\alpha(U(\Omega)) \subset \Gamma \cdot M(\Omega), \qquad V(\Omega) \subset \Gamma \cdot \alpha(U(\Omega))
\end{displaymath}
as subsets of $\cM_{\ulambda}(X)(\Omega)$. Since $M$ is $\Gamma_{\ulambda}$-stable, we see that $\alpha(U) \subset M$, and thus $\alpha(B) \subset M$. Since $V$ is dense in $M$, we see that $\Gamma_{\ulambda} \cdot \alpha(B)$ is dense in $M$. This verifies the claim, and completes the proof of (a).

We now prove (b). Let $\theta \colon C \times \bA^{\ulambda} \to X$ be dominant with $C$ irreducible, and let $\beta \colon C \to \cM_{\ulambda}(X)$ be the induced map. Let $U \subset B$ and $V \subset C$ be as in Lemma~\ref{lem:prin}. Then, as above, we find
\begin{displaymath}
\alpha(U(\Omega)) \subset \Gamma \cdot \beta(C(\Omega)), \qquad V(\Omega) \subset \Gamma \cdot \alpha(U(\Omega)).
\end{displaymath}
We thus see that $\beta(V) \subset M$, and thus $\beta(C) \subset M$. Since $\Gamma \cdot \alpha(U(\Omega))$ is dense in $M$, we see that $\Gamma_{\ulambda} \cdot \beta(V)$ is also dense in $M$, which completes the proof.
\end{proof}

\begin{definition}
Let $X$ be an irreducible $\GL$-variety and let $\ulambda$ be a pure tuple with $\type(X) \subset \ulambda$. We define the \defn{principal component} of $\cM_{\ulambda}(X)$, denoted $\cM^{\prin}_{\ulambda}(X)$, to be the irreducible component $M$ of $\cM_{\ulambda}(X)$ from Proposition~\ref{prop:prin}.
\end{definition}

In fact, in our subsequent paper, we show that $\cM_{\ulambda}(X)$ is irreducible in the situation above (see \S \ref{ss:further}).

\subsection{The space of types}

Let $\ulambda$ be a pure tuple. Recall that $\cM_{\ulambda}=\cM_{\ulambda}(X)$ is the mapping space $\uMap_K(\bA^{\ulambda}, X)$. The group $\Gamma_{\ulambda}$ acts on this space. We define $X^{\type}_{\ulambda}$ to be the set of $\Gamma_{\ulambda}$ fixed points of $\cM_{\ulambda}$, equipped with the subspace topology. We note that here we are thinking of $\cM_{\ulambda}$ as a scheme, and most points of $X^{\type}_{\ulambda}$ will not be closed points. One should think of a point of $X^{\type}_{\ulambda}$ as corresponding to an irreducible closed $\Gamma_{\ulambda}$-stable subvariety of $\cM_{\ulambda}$. We have a natural map $\cM_{\ulambda} \to X^{\type}_{\ulambda}$ taking a point to the generic point of the closure of its $\Gamma_{\ulambda}$-orbit, and this realizes $X^{\type}_{\ulambda}$ as a quotient space of $\cM_{\ulambda}$; the proof is similar to that of Proposition~\ref{prop:XGL-Xorb}.

\begin{proposition}
The space $X^{\type}_{\ulambda}$ is a noetherian spectral space.
\end{proposition}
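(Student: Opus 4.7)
The plan is to adapt the chain of arguments used for $X^{\orb}$ (Propositions~\ref{prop:XGL-Xorb}, \ref{prop:opens-of-Xorb}, and \ref{prop:Xorb-spectral}) to the present setting, replacing the pair $(\GL, X)$ by the algebraic group acting on the scheme $(\Gamma_{\ulambda}, \cM_{\ulambda})$. The two crucial inputs are that $\cM_{\ulambda}$ is a noetherian sober topological space, since it is a finite-type affine scheme by Proposition~\ref{prop:Map}(c), and that $\Gamma_{\ulambda}$ is irreducible as a variety by Proposition~\ref{prop:Gamma}.

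First I would prove the analog of Proposition~\ref{prop:opens-of-Xorb}: the quotient map $\pi \colon \cM_{\ulambda} \to X^{\type}_{\ulambda}$ induces an inclusion-preserving bijection between closed $\Gamma_{\ulambda}$-stable subsets of $\cM_{\ulambda}$ and closed subsets of $X^{\type}_{\ulambda}$. The key observation is that every closed $\Gamma_{\ulambda}$-stable subset $Z$ of $\cM_{\ulambda}$ satisfies $\pi^{-1}(\pi(Z))=Z$: since $\Gamma_{\ulambda}$ is irreducible, it permutes the finite set of irreducible components of $Z$ trivially, so each such component is $\Gamma_{\ulambda}$-stable and, by sobriety of $\cM_{\ulambda}$, has a unique generic point, which must be $\Gamma_{\ulambda}$-fixed by uniqueness. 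Hence $Z \cap X^{\type}_{\ulambda}$ contains the generic points of all irreducible components of $Z$, and its $\pi$-preimage recovers $Z$.

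Noetherianity then follows immediately, since $\cM_{\ulambda}$ is a noetherian topological space and hence its closed $\Gamma_{\ulambda}$-stable subsets satisfy DCC, which transports to $X^{\type}_{\ulambda}$ via the bijection. For spectrality I would invoke the standard fact that a noetherian sober space is automatically spectral: every open is quasi-compact, the quasi-compact opens form a basis closed under finite intersection, and the whole space is quasi-compact. Sobriety is established as in Proposition~\ref{prop:Xorb-spectral}: given an irreducible closed $W \subset X^{\type}_{\ulambda}$, pick an irreducible component of $\pi^{-1}(W)$ that still dominates $W$, use its unique $\Gamma_{\ulambda}$-fixed generic point (produced exactly as above) to obtain a generic point of $W$, and appeal to sobriety of $\cM_{\ulambda}$ to conclude uniqueness.

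The main obstacle will be setting up the closed-set bijection cleanly, since $X^{\type}_{\ulambda}$ is simultaneously a subspace of $\cM_{\ulambda}$ (the locus of $\Gamma_{\ulambda}$-fixed scheme-theoretic points) and a topological quotient of $\cM_{\ulambda}$. Once the identity $\pi^{-1}(\pi(Z))=Z$ for closed $\Gamma_{\ulambda}$-stable $Z$ is verified, the rest is a direct transcription of the $X^{\orb}$ arguments, with the irreducibility of the algebraic group $\Gamma_{\ulambda}$ taking the role played earlier by the existence of $\GL$-fixed generic points in orbit closures.
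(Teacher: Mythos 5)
Your proposal is correct, but it takes a noticeably longer route to noetherianity than the paper does. For sobriety you do exactly what the paper does—transport the argument from Proposition~\ref{prop:Xorb-spectral}, replacing $(\GL,X)$ with $(\Gamma_{\ulambda},\cM_{\ulambda})$ and using the irreducibility of $\Gamma_{\ulambda}$ (hence trivial action on irreducible components) in place of the existence of $\GL$-fixed generic points. For noetherianity, however, you set up a closed-set bijection (the analog of Proposition~\ref{prop:opens-of-Xorb}) between $\Gamma_{\ulambda}$-stable closed subsets of $\cM_{\ulambda}$ and closed subsets of $X^{\type}_{\ulambda}$, then transport DCC. The paper instead leans directly on the \emph{definition} of $X^{\type}_{\ulambda}$ as the set of $\Gamma_{\ulambda}$-fixed points of $\cM_{\ulambda}$ \emph{equipped with the subspace topology}: any subspace of a noetherian topological space is noetherian, so since $\cM_{\ulambda}$ is a finite-type affine $K$-scheme (Proposition~\ref{prop:Map}(c)), noetherianity of $X^{\type}_{\ulambda}$ is immediate, with no need for the quotient picture or any bijection. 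You flag the dual subspace/quotient structure as the ``main obstacle''; in fact it is a shortcut—the subspace description settles noetherianity in one line, and the quotient description is only needed for sobriety. Your closed-set bijection is a correct and genuinely useful side fact (it underlies the paper's remark that $\pi$ realizes $X^{\type}_{\ulambda}$ as a quotient of $\cM_{\ulambda}$), but invoking it here does more work than the problem requires.
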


\begin{proof}
Sobriety follows from the same argument as in the first paragraph of the proof of Proposition~\ref{prop:Xorb-spectral}. Since $X^{\type}_{\ulambda}$ is a subspace of the noetherian space $\cM_{\ulambda}$, it is also noetherian, and therefore spectral.
\end{proof}

Now suppose $\ulambda \subset \umu$. Choose a projection map $\pi \colon \bA^{\umu} \to \bA^{\ulambda}$, and let $\pi^* \colon \cM_{\ulambda} \to \cM_{\umu}$ be the induced map. Let $x \in X^{\type}_{\ulambda}$, and let $Z \subset \cM_{\ulambda}$ be the corresponding $\Gamma_{\ulambda}$-stable closed subset, i.e., the Zariski closure of $\{x\} \subset \cM_{\ulambda}$. Then the Zariski closure $W$ of $\Gamma_{\umu} \pi^*(Z)$ is a $\Gamma_{\umu}$-stable irreducible closed subset of $\cM_{\umu}$, and thus defines a point of $X^{\type}_{\umu}$. We have thus defined a function $X^{\type}_{\ulambda} \to X^{\type}_{\umu}$. This construction is independent of the choice of $\pi$, since any two projection maps differ by an element of $\Gamma_{\umu}$.

\begin{lemma} \label{lem:pistar}
Let $Z$ and $W$ be as above. Then $Z=(\pi^*)^{-1}(W)$.
\end{lemma}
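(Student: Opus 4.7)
The plan is to show $s^{*}(W) \subseteq Z$, where $s \colon \bA^{\ulambda} \to \bA^{\umu}$ is a section of $\pi$ coming from the decomposition $\bA^{\umu} \cong \bA^{\ulambda} \times \bA^{\unu}$ with $\umu = \ulambda \cup \unu$. Since $\pi \circ s = \id$, we get $s^{*} \circ \pi^{*} = \id_{\cM_{\ulambda}}$, so any $f \in (\pi^{*})^{-1}(W)$ satisfies $f = s^{*}(\pi^{*}(f)) \in s^{*}(W)$; once $s^{*}(W) \subseteq Z$ is established, this yields the nontrivial inclusion $(\pi^{*})^{-1}(W) \subseteq Z$. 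The other inclusion $Z \subseteq (\pi^{*})^{-1}(W)$ is immediate from $\pi^{*}(Z) \subseteq W$.

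To prove $s^{*}(W) \subseteq Z$, I would introduce the morphism $p \colon \Gamma_{\umu} \to \Gamma^{+}_{\ulambda}$, $\gamma \mapsto \pi \circ \gamma \circ s$. With the action of $\Gamma_{\umu}$ on $\cM_{\umu}$ taken to be $\gamma \cdot h = h \circ \gamma^{-1}$, a direct calculation gives
\begin{displaymath}
s^{*}(\gamma \cdot \pi^{*}(g)) = g \circ p(\gamma^{-1}) \qquad (g \in \cM_{\ulambda},\ \gamma \in \Gamma_{\umu}).
\end{displaymath}
Now $p(\id) = \pi \circ s = \id$ lies in $\Gamma_{\ulambda}$, and by Proposition~\ref{prop:Gamma}, $\Gamma_{\ulambda}$ is open in $\Gamma^{+}_{\ulambda}$ while $\Gamma_{\umu}$ is irreducible. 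Hence the open subset $U := \{\gamma \in \Gamma_{\umu} : p(\gamma^{-1}) \in \Gamma_{\ulambda}\}$ is nonempty and therefore dense in $\Gamma_{\umu}$. For $(\gamma, g) \in U \times Z$, the displayed equation combined with $\Gamma_{\ulambda}$-stability of $Z$ gives $s^{*}(\gamma \cdot \pi^{*}(g)) \in Z$.

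Finally, consider the morphism $\eta \colon \Gamma_{\umu} \times Z \to \cM_{\umu}$, $(\gamma, g) \mapsto \gamma \cdot \pi^{*}(g)$, whose image is $\Gamma_{\umu} \cdot \pi^{*}(Z)$ and so $\ol{\im \eta} = W$. The composite $s^{*} \circ \eta$ carries the dense subset $U \times Z$ into the closed subset $Z \subseteq \cM_{\ulambda}$, hence by continuity carries all of $\Gamma_{\umu} \times Z$ into $Z$. Continuity of $s^{*}$ now yields $s^{*}(W) = s^{*}\bigl(\ol{\im \eta}\bigr) \subseteq \ol{s^{*}(\im \eta)} \subseteq Z$, as required. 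The only substantive point is the nonemptiness of $U$, which reduces to the trivial observation $p(\id) = \id$; irreducibility of $\Gamma_{\umu}$ then promotes this to density, and the rest is a purely formal continuity argument.
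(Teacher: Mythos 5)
Your proof is correct and follows essentially the same route as the paper's: both use the section $s$ (the paper calls it $i$) of $\pi$ to reduce to showing $s^{*}(W)\subseteq Z$, both observe that $\pi\circ\gamma\circ s$ is a self-map of $\bA^{\ulambda}$, and both invoke the density of $\Gamma_{\ulambda}$ in $\Gamma^{+}_{\ulambda}$ from Proposition~\ref{prop:Gamma}. The only cosmetic difference is that the paper packages the density argument as a standalone observation ($Z$ closed and $\Gamma_{\ulambda}$-stable implies $\Gamma^{+}_{\ulambda}$-stable) and then applies it, whereas you restrict to the dense open $U\subseteq\Gamma_{\umu}$ where $p(\gamma^{-1})\in\Gamma_{\ulambda}$ and extend by continuity along the specific morphism $s^{*}\circ\eta$.
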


\begin{proof}
Let $Z'=(\pi^*)^{-1}(W)$; we show $Z=Z'$. The inclusion $Z \subset Z'$ is clear. Let $i \colon \bA^{\ulambda} \to \bA^{\umu}$ be the inclusion, so that $\pi \circ i = \id$. We thus see that $Z'=i^*(\pi^*(Z')) \subset i^*(W)$. Now, $i^*(W)$ is the closure of the set $i^*(\Gamma_{\umu}\pi^*(Z))$. An element of this set has the form $\phi \circ \pi \circ \gamma \circ i$, where $\phi \colon \bA^{\ulambda} \to X$ belongs to $Z$ and $\gamma \in \Gamma_{\umu}$. Note that $\pi \circ \gamma \circ i$ is a self-map of $\Gamma_{\ulambda}$. We thus see that $i^*(W)$ is contained in the closure of the set $\Gamma_{\ulambda}^+ Z$. But $Z$ is $\Gamma_{\ulambda}^+$-stable since it is closed and $\Gamma_{\lambda}$-stable and $\Gamma_{\lambda}$ is dense in $\Gamma_{\lambda}^+$. We thus see that $i^*(W) \subset Z$, which shows that $Z' \subset Z$.
\end{proof}

\begin{proposition} \label{prop:type-homeo}
The map $X^{\type}_{\ulambda} \to X^{\type}_{\umu}$ is a homeomorphism onto its image.
\end{proposition}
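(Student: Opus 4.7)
The plan is to produce a continuous left inverse $g \colon X^{\type}_{\umu} \to X^{\type}_{\ulambda}$ to the map $f$ in the proposition; by elementary point-set topology this will immediately imply that $f$ is a homeomorphism onto its image. Since $\ulambda \subset \umu$, we choose a section $i \colon \bA^{\ulambda} \to \bA^{\umu}$ of $\pi$. Precomposition by $i$ defines a scheme morphism $i^* \colon \cM_{\umu} \to \cM_{\ulambda}$ satisfying $i^* \circ \pi^* = \id_{\cM_{\ulambda}}$, and $g$ will be the descent of $i^*$ to the type spaces.

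First, I verify continuity of $f$. Since $X^{\type}_{\ulambda}$ carries the quotient topology from $\cM_{\ulambda}$ (by an argument analogous to Proposition~\ref{prop:XGL-Xorb}), continuity of $f$ reduces to continuity of the composition $\cM_{\ulambda} \xrightarrow{\pi^*} \cM_{\umu} \to X^{\type}_{\umu}$, which is immediate. Next, I show $i^*$ descends to a continuous map $g \colon X^{\type}_{\umu} \to X^{\type}_{\ulambda}$. Via the natural inclusion $\Gamma_{\ulambda} \hookrightarrow \Gamma_{\umu}$, extending an automorphism of $\bA^{\ulambda}$ by the identity on the complementary factor in $\bA^{\umu} \cong \bA^{\ulambda} \times \bA^{\unu}$, both $\pi$ and $i$ are $\Gamma_{\ulambda}$-equivariant, hence so is $i^*$. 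For any $\Gamma_{\umu}$-fixed point $y \in \cM_{\umu}$ and any $\gamma \in \Gamma_{\ulambda} \subset \Gamma_{\umu}$, we have $\gamma \cdot i^*(y) = i^*(\gamma \cdot y) = i^*(y)$, so $i^*(y)$ is $\Gamma_{\ulambda}$-fixed. Therefore $i^*$ restricts to a continuous map $g \colon X^{\type}_{\umu} \to X^{\type}_{\ulambda}$.

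The crucial step is verifying $g \circ f = \id_{X^{\type}_{\ulambda}}$. Let $x \in X^{\type}_{\ulambda}$ be the generic point of a $\Gamma_{\ulambda}$-stable irreducible closed subset $Z \subset \cM_{\ulambda}$; then $f(x) = \eta_W$, the generic point of $W = \ol{\Gamma_{\umu} \cdot \pi^*(Z)}$. By Lemma~\ref{lem:pistar}, $Z = (\pi^*)^{-1}(W)$, and inspection of that proof yields the stronger set-theoretic equality $i^*(W) = Z$ (so in particular $i^*(W)$ is closed). Consequently, for the morphism of schemes $i^*$, the image $i^*(\eta_W)$ is the generic point of $\ol{i^*(W)} = Z$, i.e., $g(f(x)) = i^*(\eta_W) = \eta_Z = x$. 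Hence $g$ is a continuous left inverse of $f$, which therefore is a homeomorphism onto its image.

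The main obstacle I anticipate is the third step: the passage from Lemma~\ref{lem:pistar} (which explicitly states only $Z = (\pi^*)^{-1}(W)$) to the sharper identity $i^*(W) = Z$ that one actually needs to handle generic points under $i^*$. Fortunately, this stronger equality is established en route inside the proof of the lemma. The remaining steps—continuity of $f$, the descent of $i^*$, and the final deduction from having a continuous one-sided inverse—are essentially formal manipulations with equivariance and with the equivalence between subspace and quotient topologies on $X^{\type}$.
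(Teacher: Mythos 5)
Your proof is correct, and it takes a genuinely different route from the paper's. The paper argues purely order-theoretically: it shows that $f$ both preserves and reflects specialization---invoking Lemma~\ref{lem:pistar} only in the stated form $Z=(\pi^*)^{-1}(W)$---and then cites a general lemma on noetherian sober spaces (\stacks{09XU}) to conclude that $f$ is a homeomorphism onto its image; continuity of $f$ is left implicit there. You instead produce an explicit continuous left inverse $g=i^*|_{X^{\type}_{\umu}}$, which makes the argument more self-contained and handles continuity of both $f$ and $g$ directly, for $f$ via the quotient description of $X^{\type}_{\ulambda}$. The price is needing the identity $i^*(W)=Z$ rather than just $(\pi^*)^{-1}(W)=Z$. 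You correctly note this is established inside the proof of Lemma~\ref{lem:pistar}; in fact it also follows from the lemma's statement alone: for $w\in W$ one has $\pi^*(i^*(w))=w\circ(i\circ\pi)$ with $i\circ\pi\in\Gamma^+_{\umu}$, and $W$ is $\Gamma^+_{\umu}$-stable because it is closed, $\Gamma_{\umu}$-stable, and $\Gamma_{\umu}$ is dense in $\Gamma^+_{\umu}$ (Proposition~\ref{prop:Gamma}), whence $i^*(w)\in(\pi^*)^{-1}(W)=Z$. One small terminological point: $g$ is not a ``descent'' of $i^*$ through any quotient but merely its restriction to the subspace $X^{\type}_{\umu}\subset\cM_{\umu}$; the equivariance check showing that $i^*$ carries $\Gamma_{\umu}$-fixed points to $\Gamma_{\ulambda}$-fixed points is exactly what makes this restriction land in $X^{\type}_{\ulambda}$.
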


\begin{proof}
Call the map in question $f$. Let $x,y \in X^{\type}_{\ulambda}$. Let $Z_x \subset \cM_{\ulambda}$ be the Zariski closure of $\{x\}$, and let $W_x \subset \cM_{\umu}$ be the Zariski closure of $\Gamma_{\umu} \pi^*(Z_x)$, so that $f(x)$ is the generic point of $W_x$. Analogously define $Z_y$ and $W_y$. Note that $y$ is a specialization of $x$ if and only if $Z_y \subset Z_x$, and $f(y)$ is a specialization of $f(x)$ if and only if $W_y \subset W_x$. Of course, $Z_y \subset Z_x$ implies $W_y \subset W_x$. The reverse implication holds by Lemma~\ref{lem:pistar}. We thus see that $y$ is a specialization of $x$ if and only if $f(y)$ is a specialization of $f(x)$. Since the spaces in question are noetherian spectral spaces, the result follows \stacks{09XU}.
\end{proof}

\subsection{The map \texorpdfstring{$\rho$}{rho}} \label{ss:rho}

Let $x$ be a point of $X^{\type}_{\ulambda}$ and let $B \subset \cM_{\ulambda}$ be its Zariski closure. We have a natural map $B \times \bA^{\ulambda} \to X$. We define $\rho_{\ulambda}(x) \in X^{\orb}$ to be the image of the generic point of $B \times \bA^{\ulambda}$. We note that $\type(\rho_{\ulambda}(x)) \subset \ulambda$ by Proposition~\ref{prop:typical-bounds}, and so $\rho_{\ulambda}(x) \in X^{\orb}_{\ulambda}$. The following is result is one of the main points of type theory:

\begin{theorem} \label{thm:rholambda}
The function $\rho_{\ulambda} \colon X^{\type}_{\ulambda} \to X^{\orb}_{\ulambda}$ is a continuous bijection.
\end{theorem}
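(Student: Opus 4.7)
A point $x \in X^{\type}_{\ulambda}$ corresponds to its Zariski closure $B := \overline{\{x\}}$ in $\cM_{\ulambda}(X)$, which is a closed irreducible $\Gamma_{\ulambda}$-stable subvariety; evaluating gives a $\GL$-morphism $\phi_B \colon B \times \bA^{\ulambda} \to X$, and $\rho_{\ulambda}(x)$ is by definition a $\GL$-generic point of $Z := \overline{\phi_B(B \times \bA^{\ulambda})}$. Well-definedness ($\rho_{\ulambda}(x) \in X^{\orb}_{\ulambda}$) is immediate from Proposition~\ref{prop:typical-bounds} applied to the dominant morphism $B \times \bA^{\ulambda} \to Z$, which forces $\type(Z) \subset \ulambda$.

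For continuity, I would use Proposition~\ref{prop:opens-of-Xorb} to present every closed subset of $X^{\orb}_{\ulambda}$ as $\pi(Y) \cap X^{\orb}_{\ulambda}$ for some closed $\GL$-subvariety $Y \subseteq X$, where $\pi \colon X \to X^{\orb}$ is the quotient. A direct unwinding of definitions shows $\rho_{\ulambda}(x) \in \pi(Y)$ iff $\phi_B(B \times \bA^{\ulambda}) \subseteq Y$ iff $B \subseteq \cM_{\ulambda}(Y)$, where $\cM_{\ulambda}(Y)$ is viewed inside $\cM_{\ulambda}(X)$ via the closed immersion from Proposition~\ref{prop:Map}. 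Hence the preimage of the closed set equals $X^{\type}_{\ulambda} \cap \cM_{\ulambda}(Y)$, which is closed in $X^{\type}_{\ulambda}$ for the subspace topology.

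Bijectivity is the crux and rests on the principal-component result Proposition~\ref{prop:prin}. For surjectivity, given $z \in X^{\orb}_{\ulambda}$ I set $Z = \ol{O}_z$ (so $\type(Z) \subset \ulambda$) and take $M := \cM^{\prin}_{\ulambda}(Z)$. It is an irreducible component of $\cM_{\ulambda}(Z)$, hence $\Gamma_{\ulambda}$-stable by the corollary to Proposition~\ref{prop:Gamma}, and through the closed immersion $\cM_{\ulambda}(Z) \hookrightarrow \cM_{\ulambda}(X)$ it defines a point of $X^{\type}_{\ulambda}$ that manifestly maps to $z$. For injectivity, suppose $\rho_{\ulambda}(x) = \rho_{\ulambda}(y)$ with common orbit closure $Z$ and let $C = \overline{\{y\}}$; then both $B$ and $C$ are closed irreducible $\Gamma_{\ulambda}$-stable subvarieties of $\cM_{\ulambda}(Z)$ whose evaluation dominates $Z$. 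Proposition~\ref{prop:prin}(b) applied to $B$ gives $B \subseteq M$ and dominance of $\Gamma_{\ulambda} \times B \to M$; since $B$ is already $\Gamma_{\ulambda}$-stable the image of this map equals $B$, so $B$ is simultaneously dense and closed in $M$, forcing $B = M$. Symmetrically $C = M$, so $x = y$.

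The main obstacle is the injectivity step: Proposition~\ref{prop:prin}(a) only claims uniqueness among irreducible components of $\cM_{\ulambda}(Z)$, whereas $B$ is a priori only a closed irreducible $\Gamma_{\ulambda}$-stable subvariety. The extra density assertion in part~(b) is exactly what is needed to upgrade $B$ to the full principal component; once this is in hand the remainder of the argument is a straightforward translation between the scheme-theoretic picture on $\cM_{\ulambda}(X)$ and the orbit-space picture on $X^{\orb}$.
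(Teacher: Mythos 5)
Your proof is correct, and the bijectivity argument—the heart of the theorem—is essentially the paper's argument, made more explicit. Both reduce injectivity and surjectivity to the principal component result (Proposition~\ref{prop:prin}) together with the corollary to Proposition~\ref{prop:Gamma}, and you correctly observe and repair a small gap that the paper glosses over: the paper asserts that $\Gamma_{\ulambda}$-stability plus Proposition~\ref{prop:prin} force $M$ to \emph{be} the principal component, but part~(a) of that proposition only characterizes the principal component among irreducible \emph{components}, while $M$ is a priori just a closed irreducible $\Gamma_{\ulambda}$-stable subvariety. Your use of the density conclusion in part~(b)---$\Gamma_{\ulambda}\times M\to \cM^{\prin}_{\ulambda}(Z)$ dominant, image equals $M$ by stability, $M$ closed, hence $M=\cM^{\prin}_{\ulambda}(Z)$---is exactly the right way to fill this in.

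Where you genuinely diverge from the paper is continuity. The paper fixes a $\GL$-generic $K$-point $x\in\bA^{\ulambda}$ (Proposition~\ref{prop:generic-exists}) and exhibits $\rho_{\ulambda}$ as the composite
$X^{\type}_{\ulambda}\hookrightarrow\cM_{\ulambda}\xrightarrow{\id\times x}\cM_{\ulambda}\times\bA^{\ulambda}\xrightarrow{\mathrm{ev}}X\xrightarrow{\pi}X^{\orb}$,
each factor of which is continuous; the only work is verifying that this composite really agrees with $\rho_{\ulambda}$, which uses that $(p,x)$ is $\GL$-generic in $B_p\times\bA^{\ulambda}$. You instead show directly that preimages of closed sets are closed: a closed subset of $X^{\orb}_{\ulambda}$ is $\pi(Y)\cap X^{\orb}_{\ulambda}$ for $Y\subseteq X$ closed and $\GL$-stable, and $\rho_{\ulambda}(p)\in\pi(Y)$ iff $B_p\subseteq\cM_{\ulambda}(Y)$, iff $p\in\cM_{\ulambda}(Y)$ (the last step uses that $\cM_{\ulambda}(Y)\hookrightarrow\cM_{\ulambda}(X)$ is a closed immersion, which follows from the proof of Proposition~\ref{prop:Map}(a)). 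Both are sound; the paper's is slicker once the identification is granted, while yours is more transparent and avoids the implicit claim about $\GL$-generic products. Either way the key structural inputs are identical.
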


\begin{proof}
Let $x$ be a $\GL$-generic $K$-point of $\bA^{\ulambda}$. Consider the maps
\begin{displaymath}
\xymatrix@C=3em{
X^{\type}_{\ulambda} \ar[r] &
\cM_{\ulambda} \ar[r]^-{\id \times x} \ar[r] &
\cM_{\ulambda} \times \bA^{\ulambda} \ar[r] &
X \ar[r] & X^{\orb}. }
\end{displaymath}
The first map is the inclusion, the second is as labeled, the third is the evaluation map, and the last is the quotient map. The composition of these maps is $\rho_{\ulambda}$. Since each map is continuous, so is $\rho_{\ulambda}$.

We now show that $\rho_{\ulambda}$ is injective. Let $x \in X^{\type}_{\ulambda}$, let $M \subset \cM_{\ulambda}$ be its Zariski closure, and let $Z$ be the Zariski closure of $\rho_{\ulambda}(x)$. We thus have a dominant map $\phi \colon M \times \bA^{\ulambda} \to Z$. We thus see that $M$ is contained in $\cM_{\ulambda}(Z) \subset \cM_{\ulambda}(X)$. Since $M$ is $\Gamma_{\ulambda}$ stable, it follows from Proposition~\ref{prop:prin} that $M$ must be the principal component of $\cM_{\ulambda}(Z)$. Thus $x$ is uniquely determined from $\rho_{\ulambda}(x)$.

Finally, we show $\rho_{\ulambda}$ is surjective. Thus let
$y \in X^{\orb}_{\ulambda}$ be given. Let $W \subset X$ be
the irreducible closed $\GL$-subvariety with generic point
$y$. Since $\type(y) \subset \ulambda$, we can find a
dominant morphism $\psi \colon B \times \bA^{\ulambda} \to
W$ with $B$ irreducible. (If $\type(y)$ is smaller,
inclusionwise, than $\ulambda$, then $\psi$ will factor through a projection map.) Let $C$ be the closure of the $\Gamma_{\ulambda}$-orbit of the image of the natural map $B \to \cM_{\ulambda}$. Then the natural map $C \times \bA^{\ulambda} \to X$ also maps dominantly to $W$. It follows that if $x \in X^{\type}_{\ulambda}$ is the generic point of $C$ then $\rho_{\ulambda}(x)=y$. This completes the proof.
\end{proof}

\begin{proposition}
Let $\ulambda \subset \umu$ be pure tuples. Then the restriction of $\rho_{\umu}$ to $X^{\type}_{\ulambda}$ agrees with $\rho_{\ulambda}$.
\end{proposition}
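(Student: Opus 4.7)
The plan is to express both $\rho_{\ulambda}(x)$ and $\rho_{\umu}(f(x))$, where $f \colon X^{\type}_{\ulambda} \to X^{\type}_{\umu}$ is the transition map of Proposition~\ref{prop:type-homeo}, as generic points in $X^{\orb}$ of irreducible closed $\GL$-stable subvarieties of $X$, and then to verify that these two subvarieties coincide. Concretely, I will let $Z \subset \cM_{\ulambda}$ be the Zariski closure of a given $x \in X^{\type}_{\ulambda}$, fix a projection $\pi \colon \bA^{\umu} \to \bA^{\ulambda}$, and set $W := \ol{\Gamma_{\umu} \cdot \pi^*(Z)} \subset \cM_{\umu}$, so that $f(x)$ is the generic point of $W$. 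Unwinding the definition of $\rho$, the point $\rho_{\ulambda}(x)$ corresponds to $Y_1 := \ol{\im(Z \times \bA^{\ulambda} \to X)}$ and $\rho_{\umu}(f(x))$ to $Y_2 := \ol{\im(W \times \bA^{\umu} \to X)}$, so it will suffice to show $Y_1 = Y_2$.

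For $Y_2 \subseteq Y_1$, I would observe that the evaluation on $\pi^*(Z) \times \bA^{\umu}$ factors as $(\pi^*\phi, b) \mapsto (\phi, \pi(b)) \mapsto \phi(\pi(b))$ through the evaluation on $Z \times \bA^{\ulambda}$, hence its image already lies in $Y_1$. Next, since the action of $\Gamma_{\umu}$ on $\cM_{\umu}$ is by reparametrization of the source, for any $\psi \in \cM_{\umu}$ and $\gamma \in \Gamma_{\umu}$ the morphisms $\gamma \cdot \psi$ and $\psi$ have the same set-theoretic image in $X$; consequently the image of $\Gamma_{\umu} \cdot \pi^*(Z) \times \bA^{\umu}$ under evaluation is still contained in $Y_1$. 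Continuity of the evaluation morphism together with density of $\Gamma_{\umu} \cdot \pi^*(Z)$ in $W$ then give $\im(W \times \bA^{\umu} \to X) \subseteq Y_1$, so $Y_2 \subseteq Y_1$.

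For the reverse inclusion $Y_1 \subseteq Y_2$, I would use that $\pi$ is surjective: any value $\phi(a)$ with $\phi \in Z$ and $a \in \bA^{\ulambda}$ equals $(\pi^*\phi)(b)$ for any $b \in \bA^{\umu}$ with $\pi(b) = a$, and $\pi^*\phi \in W$. This gives $\im(Z \times \bA^{\ulambda} \to X) \subseteq \im(W \times \bA^{\umu} \to X)$, and taking closures yields the desired containment. No serious obstacle is expected: the content is essentially a bookkeeping check that extending a morphism along a surjective projection and reparametrizing the source by $\Gamma_{\umu}$ does not alter the image closure in $X$, which is precisely the mechanism built into the transition map $X^{\type}_{\ulambda} \hookrightarrow X^{\type}_{\umu}$. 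The only mild subtlety is the closure-and-continuity step in the forward inclusion, which is routine once one notes that the $\Gamma_{\umu}$-action preserves images.
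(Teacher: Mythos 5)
Your proof is correct and takes essentially the same approach as the paper: both identify $\rho_{\ulambda}(x)$ and $\rho_{\umu}(f(x))$ with the image closures of the evaluation maps from $Z \times \bA^{\ulambda}$ and $\ol{\Gamma_{\umu}\cdot\pi^*(Z)} \times \bA^{\umu}$, and then observe that applying $\pi^*$ and reparametrizing the source by $\Gamma_{\umu}$ does not change the image, after which passing to closures finishes the argument. The only difference is that you separate the two inclusions explicitly, while the paper states the equality of images in one line; the content is the same.
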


\begin{proof}
Let $x \in X^{\type}_{\ulambda}$ and let $Z \subset \cM_{\ulambda}$ be its Zariski closure. Let $\pi \colon \bA^{\umu} \to \bA^{\ulambda}$ be a projection map, let $W_0 \subset \cM_{\umu}$ be the set $\Gamma_{\umu} \cdot \pi^*(Z)$, and let $W$ be the Zariski closure of $W_0$. The maps $Z \times \bA^{\ulambda} \to X$ and $W_0 \times \bA^{\umu} \to X$ have the same image, since applying $\pi^*$ and automorphisms of $\bA^{\umu}$ does not change the image. It follows that the maps $Z \times \bA^{\ulambda} \to X$ and $W \times \bA^{\umu} \to X$ have the same image closures. Thus $\rho_{\ulambda}(x)=\rho_{\umu}(x)$.
\end{proof}

\subsection{Putting the pieces together} \label{ss:Xtype}

As we have seen, the spaces $X^{\type}_{\ulambda}$ form a directed system. We define $X^{\type}$ to be their direct limit. This spaces carries a direct limit topology, and there is a natural bijection
\begin{displaymath}
\rho \colon X^{\type} \to X^{\orb}
\end{displaymath}
that is continuous with respect to the direct limit topology on $X^{\type}$. However, $\rho$ is typically not a homeomorphism when $X^{\type}$ is given the direct limit topology; this is due to the fact that $X^{\orb}$ is typically not the direct limit of its subspaces $X^{\orb}_{\ulambda}$. There is a refined topology one can give $X^{\type}$ for which we expect $\rho$ to be a homeomorphism. We will return to this topic in the future.

\subsection{An example} \label{ss:type-example}

Let $\kappa=(2)$ and consider $X=\bA^{\kappa}$. One can think of $X$ as the space of symmetric bilinear forms on $\bV$. For $r \in \bN$, let $Z_r$ be the locus of forms of rank $\le r$. These sets, together with $\emptyset$ and $X$, account for all of the closed $\GL$-subvarieties. We thus see that $X^{\orb}$ is identified with $\bN \cup \{\infty\}$. The topology on $X^{\orb}$ is the order topology: the closed sets are the finite intervals $\{0, \ldots, r\}$, together with $\emptyset$ and $X^{\orb}$.

Let $\ulambda=[(1)^n,(2)]$. Then $X^{\orb}_{\ulambda}=\{0, \ldots, n, \infty\}$ with the subspace topology. It is not difficult to see in this case that $X^{\type}_{\ulambda} \to X^{\orb}_{\ulambda}$ is a homeomorphism. The space $X^{\type}$ can be identified with $\bN \cup \{\infty\}$ equipped with the direct limit topology with respect to the $X^{\type}_{\ulambda}$. Since $\bN \cap X^{\type}_{\ulambda}$ is closed for all $\ulambda$, it follows that $\bN$ is closed in $X^{\type}$. However, it is not closed in $X^{\orb}$. We thus see that $\rho$ is not a homeomorphism in this case.

%

\section{Examples and applications} \label{s:examples}

\subsection{Systems of variables}\label{ss:system}

Let $\ulambda$ be a pure tuple. We write $\bA^{\ulambda}(K)$ for the set of $K$-points of $\bA^{\ulambda}$. We say that $x \in \bA^{\ulambda}(K)$ is \defn{degenerate} if it is not $\GL$-generic. We have the following characterization of these points:

\begin{proposition} \label{prop:degen-char}
Let $x \in \bA^{\ulambda}(K)$. Then $x$ is degenerate if and only if $x$ has the form $\phi(y)$ where $\phi \colon \bA^{\umu} \to \bA^{\ulambda}$ is a morphism where $\umu$ is a pure tuple with $\magn(\umu)<\magn(\lambda)$ and $y \in \bA^{\umu}(K)$.
\end{proposition}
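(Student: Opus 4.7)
The plan is to prove both implications. The reverse direction is essentially immediate from Proposition~\ref{prop:Adom}, while the forward direction requires a noetherian induction on proper closed $\GL$-subvarieties of $\bA^{\ulambda}$, powered by Theorem~\ref{thm:uni}(d).

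For the reverse direction, I would argue that if $\phi \colon \bA^{\umu} \to \bA^{\ulambda}$ is a $\GL$-morphism with $\magn(\umu) < \magn(\ulambda)$, then its image closure $Y := \ol{\phi(\bA^{\umu})}$ must be a proper closed $\GL$-subvariety of $\bA^{\ulambda}$: were $\phi$ dominant, Proposition~\ref{prop:Adom} (with $B = C = \Spec(K)$) would give $\ulambda \subseteq \umu$, hence $\magn(\ulambda) \le \magn(\umu)$, contradicting the hypothesis. Since $x = \phi(y) \in Y$, the orbit closure $\ol{O}_x$ is contained in $Y$, making $x$ degenerate.

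For the forward direction, set $W := \ol{O}_x$, a proper closed $\GL$-subvariety of $\bA^{\ulambda}$. I would establish, by noetherian induction on $W$ (Theorem~\ref{thm:Noetherianity}), the following claim: for every proper closed $\GL$-subvariety $W \subseteq \bA^{\ulambda}$ and every $w \in W(K)$, there exist a pure tuple $\umu$ with $\magn(\umu) < \magn(\ulambda)$, a $\GL$-morphism $\phi \colon \bA^{\umu} \to \bA^{\ulambda}$, and $y \in \bA^{\umu}(K)$ with $\phi(y) = w$. To carry out the inductive step, pick an irreducible component $W_1$ of $W$; by Proposition~\ref{prop:FirstProperties}(d) it is $\GL$-stable, and it is non-empty and proper in $\bA^{\ulambda}$. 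Applying Theorem~\ref{thm:uni}(d) to $W_1 \subseteq \bA^0 \times \bA^{\ulambda}$, the first alternative (namely $W_1 = C \times \bA^{\ulambda}$ with $C \subseteq \bA^0 = \Spec(K)$) is ruled out since it would force $W_1 \in \{\emptyset, \bA^{\ulambda}\}$; we therefore obtain a dominant $\GL$-morphism $\psi \colon B \times \bA^{\ul{\kappa}} \to W_1$ with $B$ an irreducible variety, $\ul{\kappa}$ pure, and $\magn(\ul{\kappa}) < \magn(\ulambda)$. Parts (a)--(c) of the same theorem furnish a non-empty open $\GL$-subvariety $U_1 \subseteq W_1$ each of whose $K$-points lifts to a $K$-point of $B \times \bA^{\ul{\kappa}}$.

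To finish the claim for a given $w \in W(K)$: if $w \in U_1$, I lift it to $(b, y) \in B(K) \times \bA^{\ul{\kappa}}(K)$ and set $\phi := \psi(b, \cdot) \colon \bA^{\ul{\kappa}} \to \bA^{\ulambda}$; this $\phi$ is $\GL$-equivariant because $\GL$ acts trivially on $B$, and satisfies $\phi(y) = w$. Otherwise $w$ lies in $W \setminus U_1 = (W_1 \setminus U_1) \cup W_2 \cup \cdots \cup W_r$, where $W_2, \ldots, W_r$ are the remaining irreducible components of $W$; this is a proper closed $\GL$-stable subset of $W$ (closed because each $W_i$ and $W_1 \setminus U_1$ is closed in $W$; proper because $U_1$ is non-empty), to which the inductive hypothesis applies. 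The main technical obstacle I anticipate is the magnitude bookkeeping: the argument hinges on strict magnitude decrease at each step, which is why I invoke Theorem~\ref{thm:uni}(d) rather than merely (a), and why the properness of $W_1$ in $\bA^{\ulambda}$---ultimately the degeneracy hypothesis on $x$---is essential to rule out the trivial alternative there.
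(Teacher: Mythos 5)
Your reverse direction is identical to the paper's. Your forward direction is correct but more roundabout: you run a noetherian induction over proper closed $\GL$-subvarieties of $\bA^{\ulambda}$, passing to irreducible components and stripping off good open sets one layer at a time. The paper instead observes that $\ol{O}_x$ is already \emph{irreducible} (Proposition~\ref{prop:FirstProperties}(e)), applies Theorem~\ref{thm:uni}(d) to it once, and then uses Proposition~\ref{prop:O-as-int-opens} --- that $O_x$ is contained in \emph{every} non-empty open $\GL$-subset of $\ol{O}_x$ --- to conclude that $x$ itself lies in the good open set $U$ from which lifting is possible. That one proposition replaces your entire inductive descent. Your argument is still valid: the set you induct into, $(W_1 \setminus U_1) \cup W_2 \cup \cdots \cup W_r$, really is closed, $\GL$-stable, and proper in $W$, though the equality you assert with $W \setminus U_1$ is only a containment in general (a component $W_i$, $i \geq 2$, might meet $U_1$ inside $W_1 \cap W_i$), and this inclusion is all you use. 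The upshot is a correct proof that does more work than needed because it declines to exploit the structure of generalized orbits encoded in Proposition~\ref{prop:O-as-int-opens}.
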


\begin{proof}
Suppose $x$ is degenerate. Then $\ol{O}_x$ is a proper closed $\GL$-subvariety of $\bA^{\ulambda}$. Let $\phi \colon B \times \bA^{\umu} \to \ol{O}_x$ be the morphism produced by Theorem~\ref{thm:uni}(d). Thus $\magn(\umu)<\magn(\ulambda)$, the image of $\phi$ contains a non-empty $\GL$-open subset $U$ of $\ol{O}_x$, and every $K$-point of $U$ lifts to a $K$-point of $\bA^{\umu}$. Since $U$ contains $O_x$ (Proposition~\ref{prop:O-as-int-opens}), it follows that $x=\phi(b,y)$ for some $b \in B(K)$ and $y \in \bA^{\umu}(K)$. Letting $\phi_b$ be the restriction of $\phi$ to $\{b\} \times \bA^{\umu}$, we have $x=\phi_b(y)$, and so $x$ has the stated form.

Now suppose that $x=\phi(y)$ for some $\phi \colon \bA^{\umu} \to \bA^{\ulambda}$ with $\magn(\umu)<\magn(\ulambda)$. Since $\phi$ is not dominant (by, e.g., Proposition~\ref{prop:Adom}) it follows that $\ol{O}_x$ is a proper subset of $\bA^{\ulambda}$, and so $x$ is degenerate.
\end{proof}

\begin{proposition}
Let $\lambda$ be a non-empty partition. Then the set of degenerate points in $\bA^{\lambda}(K)$ forms a $K$-vector subspace of $\bA^{\lambda}(K)$.
\end{proposition}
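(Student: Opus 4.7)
The plan is to use the characterization in Proposition~\ref{prop:degen-char}: a $K$-point $x \in \bA^{\lambda}(K)$ is degenerate iff there is a $\GL$-equivariant morphism $\phi \colon \bA^{\umu} \to \bA^{\lambda}$ with $\umu$ pure and $\magn(\umu) < \magn([\lambda])$, together with a point $y \in \bA^{\umu}(K)$, such that $x = \phi(y)$. The first key observation I would record is what the magnitude inequality says concretely. Setting $d = |\lambda| \ge 1$, the tuple $[\lambda]$ has magnitude $(0,\ldots,0,1,0,\ldots)$ with the $1$ in position $d$, and $\magn(\umu) < \magn([\lambda])$ in the well-ordering defined in \S\ref{ss:part} is equivalent to saying every partition in $\umu$ has size strictly less than $d$. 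In particular, the class of admissible $\umu$'s is closed under the concatenation operation $\cup$.

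Next I would exploit the fact that $\bA^{\lambda} = \Spec(\Sym(\bV_{\lambda}))$ is naturally a $K$-vector scheme, with $\GL$-equivariant addition $+ \colon \bA^{\lambda} \times \bA^{\lambda} \to \bA^{\lambda}$ and scalar multiplications $c \cdot (-) \colon \bA^{\lambda} \to \bA^{\lambda}$ for each $c \in K$. Three things must be checked. First, $0$ is degenerate: take $\umu$ to be the empty tuple, $\bA^{\umu} = \Spec(K)$, $y$ the unique point, and $\phi$ the zero morphism (this is the unique $\GL$-equivariant map since $0$ is the only $\GL$-fixed $K$-point of $\bA^{\lambda}$). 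The magnitude of the empty tuple is the zero sequence, which is $<\magn([\lambda])$. Second, closure under scalar multiplication: if $x = \phi(y)$ with $\phi \colon \bA^{\umu} \to \bA^{\lambda}$ equivariant and $c \in K$, then the composition $c\phi := c \cdot \phi \colon \bA^{\umu} \to \bA^{\lambda}$ is again a $\GL$-equivariant morphism (since $c$ acts trivially on $K$) and evaluates to $cx$ at $y$. Third, closure under addition: if $x_i = \phi_i(y_i)$ with $\phi_i \colon \bA^{\umu_i} \to \bA^{\lambda}$ for $i = 1,2$, form
\begin{displaymath}
\phi_1 \boxplus \phi_2 \colon \bA^{\umu_1 \cup \umu_2} = \bA^{\umu_1} \times \bA^{\umu_2} \xrightarrow{\phi_1 \times \phi_2} \bA^{\lambda} \times \bA^{\lambda} \xrightarrow{+} \bA^{\lambda},
\end{displaymath}
which is $\GL$-equivariant and sends $(y_1,y_2)$ to $x_1+x_2$. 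By the observation in the first paragraph, $\magn(\umu_1 \cup \umu_2) < \magn([\lambda])$, so $x_1 + x_2$ is degenerate by Proposition~\ref{prop:degen-char}.

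There is essentially no obstacle here beyond the bookkeeping on magnitudes; the point is simply that the bound $\magn(\umu) < \magn([\lambda])$ is \emph{additive-friendly} exactly because $[\lambda]$ is a singleton containing a nonempty partition, so the strict inequality can be read as ``all parts of $\umu$ have size $< |\lambda|$'', a property preserved under $\cup$. The only place one might worry is whether the morphisms produced from Proposition~\ref{prop:degen-char} are genuinely $\GL$-equivariant (and not merely morphisms of schemes), but this is clear from its proof: the maps $\phi_b \colon \bA^{\umu} \to \bA^{\lambda}$ are restrictions of a $\GL$-morphism $B \times \bA^{\umu} \to \bA^{\lambda}$ along the $\GL$-fixed slice $\{b\} \times \bA^{\umu}$.
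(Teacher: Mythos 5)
Your proof is correct and follows essentially the same route as the paper: verify the ``contains $0$'' and ``closed under scaling'' points directly, and handle addition via the characterization of Proposition~\ref{prop:degen-char} together with the morphism $\bA^{\umu_1 \cup \umu_2} \to \bA^{\lambda},\ (u,v) \mapsto \phi_1(u) + \phi_2(v)$. Your extra observation that $\magn(\umu) < \magn([\lambda])$ just means every partition in $\umu$ has size $< |\lambda|$ — and hence is preserved under $\cup$ — is the implicit magnitude bookkeeping that the paper leaves unstated.
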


\begin{proof}
The subset of degenerate points clearly contains $0$ and is closed under scaling. We now show that it is closed under addition. Let $x,y \in \bA^{\lambda}$ be degenerate. Applying the previous proposition, write $x=\phi(x')$ where $\phi \colon \bA^{\umu} \to \bA^{\lambda}$ is a morphism with $\magn(\umu)<\magn(\lambda)$ and $x' \in \bA^{\umu}(K)$; similarly, write $y=\psi(y')$ where $\psi \colon \bA^{\unu} \to \bA^{\lambda}$ is a morphism with $\magn(\unu)<\magn(\lambda)$ and $y' \in \bA^{\unu}(K)$. Then $x+y$ is the image of $(x',y')$ under the map
\begin{displaymath}
\bA^{\umu \cup \unu} \to \bA^{\lambda}, \qquad (u,v) \mapsto \phi(u)+\psi(v).
\end{displaymath}
Since $\magn(\umu \cup \unu)<\magn(\lambda)$, the previous proposition shows that $x+y$ is degenerate.
\end{proof}

The above proposition is not true for tuples. Indeed, if $(x,y)$ is a non-degenerate point of $\bA^{[\lambda,\mu]}=\bA^{\lambda} \times \bA^{\mu}$ then $(x,y)=(x,0)+(0,y)$ and both $(x,0)$ and $(0,y)$ are degenerate. We say that a set of $K$-points of $\bA^{\lambda}$ is \defn{collectively non-degenerate} if they are linearly independent modulo the subspace of degenerate points, and \defn{collectively degenerate} otherwise.

\begin{proposition} \label{prop:nondegen}
Let $\lambda_1, \ldots, \lambda_n$ be distinct non-empty partitions, let $x_{i,1}, \ldots, x_{i,m(i)}$ be $K$-points of $\bA^{\lambda_i}$, and let $x=(x_{i,j})$, which we regard as a $K$-point $\prod_{i=1}^n(\bA^{\lambda_i})^{m(i)}$. Then $x$ is non-degenerate if and only if for each $i$ the points $x_{i,1},\ldots,x_{i,m(i)}$ are collectively non-degenerate.
\end{proposition}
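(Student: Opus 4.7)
The plan is to prove the two implications separately, with the ``only if'' direction being the substantive one.

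For the ``if'' direction, suppose $(x_{i,1}, \ldots, x_{i,m(i)})$ is collectively degenerate for some $i$, so that $\sum_j c_j x_{i,j}$ is degenerate in $\bA^{\lambda_i}$ for some nonzero $(c_j)$; say $c_{m(i)} \ne 0$. Applying the $\GL$-equivariant linear automorphism of $\bA^{\ulambda}$ that is the identity outside the $m(i)$ copies of $\bA^{\lambda_i}$ and sends $(z_1, \ldots, z_{m(i)})$ to $(z_1, \ldots, z_{m(i)-1}, \sum_j c_j z_j)$, I would reduce to the case that $x_{i,m(i)}$ is itself degenerate. By Proposition~\ref{prop:degen-char} write $x_{i,m(i)} = \psi(z)$ for a morphism $\psi: \bA^{\unu} \to \bA^{\lambda_i}$ with $\magn(\unu) < \magn([\lambda_i])$, so that $\unu$ has only partitions of size $< |\lambda_i|$. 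Assembling $\psi$ on the $(i, m(i))$-slot with identity maps elsewhere yields a morphism $\phi: \bA^{\ulambda'} \to \bA^{\ulambda}$ whose image contains $x$, where $\ulambda'$ is $\ulambda$ with one copy of $\lambda_i$ replaced by $\unu$. A direct magnitude check gives $\magn(\ulambda') < \magn(\ulambda)$, so $x$ is degenerate by Proposition~\ref{prop:degen-char}.

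For the ``only if'' direction, suppose $x$ is degenerate and assume toward contradiction that each $(x_{i,1}, \ldots, x_{i,m(i)})$ is collectively non-degenerate. By Proposition~\ref{prop:degen-char} write $x = \phi(y)$ with $\phi: \bA^{\umu} \to \bA^{\ulambda}$, $\umu$ pure, and $\magn(\umu) < \magn(\ulambda)$, and decompose $\phi$ into components $\phi_{i,j}: \bA^{\umu} \to \bA^{\lambda_i}$. Each $\phi_{i,j}$ corresponds to a $\GL$-representation map $\bV_{\lambda_i} \to \Sym(\bV_{\umu})$ landing in $\GL$-degree $|\lambda_i|$. Since a variable from a factor $\bV_{\mu}$ of $\bV_{\umu}$ has $\GL$-degree $|\mu|$, any element of $\GL$-degree $|\lambda_i|$ uses only partitions of $\umu$ of size $\le |\lambda_i|$, with size-$|\lambda_i|$ variables appearing at most to the first power. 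This yields a canonical splitting $\phi_{i,j} = \phi'_{i,j} + \phi''_{i,j}$ in which $\phi'_{i,j}$ has image in $\bV_{\umu_{|\lambda_i|}}$ and $\phi''_{i,j}$ factors through $\bA^{\umu_{<|\lambda_i|}}$. Evaluating at $y$, the second summand is a degenerate $K$-point of $\bA^{\lambda_i}$ by Proposition~\ref{prop:degen-char}; by Schur's lemma the first summand is a $K$-linear combination of exactly the $q_i$ coordinates of $y$ corresponding to the copies of $\lambda_i$ in $\umu$, where $q_i$ is that multiplicity.

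Consequently, the images of $x_{i,1}, \ldots, x_{i,m(i)}$ in the quotient of $\bA^{\lambda_i}(K)$ by its degenerate subspace all lie in a $K$-subspace of dimension at most $q_i$, and collective non-degeneracy forces $q_i \ge m(i)$ for every $i$. Summing over the $i$ with $|\lambda_i| = d$ for each fixed size $d$ gives
\[
\magn(\umu)_d \ \ge\ \sum_{i:\,|\lambda_i|=d} q_i \ \ge\ \sum_{i:\,|\lambda_i|=d} m(i) \ =\ \magn(\ulambda)_d.
\]
Since $\magn(\umu) \ne \magn(\ulambda)$, strict inequality must hold at some $d$, and at the largest such $d$ the lex comparison yields $\magn(\umu) > \magn(\ulambda)$, contradicting $\magn(\umu) < \magn(\ulambda)$. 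I expect the main technical obstacle to be setting up the decomposition $\phi_{i,j} = \phi'_{i,j} + \phi''_{i,j}$ carefully through the $\GL$-grading on $\Sym(\bV_{\umu})$ and identifying $\phi'_{i,j}$ with a $q_i$-parameter family of scalars via Schur's lemma; once that representation-theoretic bookkeeping is in place, the magnitude inequality finishes things cleanly.
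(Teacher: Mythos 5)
Your proof is correct and follows essentially the same route as the paper. In the substantive direction (degenerate $x$ implies some collection is collectively degenerate) you use the same decomposition $\phi_{i,j}=\phi'_{i,j}+\phi''_{i,j}$ into a linear piece on the size-$|\lambda_i|$ factors plus a piece factoring through smaller factors, and the same lexicographic magnitude count; in the easy direction the paper, after reducing to $x_{i,m(i)}$ degenerate, simply notes that the projection of a $\GL$-generic point to any factor is $\GL$-generic, whereas you explicitly assemble a morphism from $\bA^{\ulambda'}$ of smaller magnitude and apply Proposition~\ref{prop:degen-char} -- a small variant of the same idea.
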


\begin{proof}
Suppose that $x$ is degenerate, and write $x=\phi(y)$ where $\phi \colon \bA^{\umu} \to \bA^{\ulambda}$ is a morphism with $\magn(\umu)<\magn(\ulambda)$ and $\ulambda=[\lambda_1^{m(1)}, \ldots, \lambda_n^{m(n)}]$. Now, if $\kappa$ is a partition then any morphism $\psi \colon \bA^{\umu} \to \bA^{\kappa}$ has the form $\psi=\psi_1+\psi_2$, where $\psi_1$ is a linear map defined on factors of $\bA^{\umu}$ with $\mu_i=\kappa$ and $\psi_2$ depends only on the factors of $\bA^{\umu}$ with $\# \mu_i<\# \kappa$. In particular, we see that $\psi(y)$ is equal to a linear combination of the $y_i$'s with $\mu_i=\kappa$ modulo degenerate elements. Now, since $\magn(\umu)<\magn(\ulambda)$ there must be some $\lambda_i$ that appears $r<m(i)$ times in $\umu$. We thus see that $x_{i,1},\ldots,x_{i,m(i)}$ is a linear combination of $r$ components of $y$ modulo degenerate elements, and so collectively degenerate.

Now suppose that $x_{i,1}, \ldots, x_{i,m(i)}$ is collectively degenerate for some $i$. Applying an element of $\GL_{m(i)}$, we can assume that $x_{i,m(i)}$ is itself degenerate. Since the projection $x_{i,m(i)}$ of $x$ to $\bA^{\lambda_i}$ is not $\GL$-generic, it follows that $x$ is not $\GL$-generic.
\end{proof}

We now come to the central concept of \S \ref{ss:system}.

\begin{definition}
For a non-empty partition $\lambda$, a \defn{system of $\lambda$-variables} is a subset $\Xi_{\lambda}$ of $\bA^{\lambda}(K)$ that forms a basis modulo the subspace of degenerate elements. A \defn{system of variables} is a collection $\Xi=\{\Xi_{\lambda}\}_{\lambda}$ where $\Xi_{\lambda}$ is a system of $\lambda$-variables for each non-empty partition $\lambda$.
\end{definition}

We fix a system of variables $\Xi$ in what follows. Given a $K$-point $x$ of $\bA^{\ul{\kappa}}$, an expression of the form $x=\phi(\xi_1, \ldots, \xi_n)$ means that $\phi$ is a morphism $\bA^{\ulambda} \to \bA^{\ul{\kappa}}$ with $\xi_i \in \Xi_{\lambda_i}$. The following theorem, which is our main result on systems of variables, essentially says that every element of $\bA^{\ul{\kappa}}$ can be expressed uniquely in terms of $\Xi$.

\begin{theorem} \label{thm:vars}
Let $x$ be a $K$-point of $\bA^{\ul{\kappa}}$. Then we have $x=\phi(\xi_1, \ldots, \xi_n)$ for distinct $\xi_1, \ldots, \xi_n$. Fix such an expression with $n$ minimal, and consider a second such expression $x=\psi(\xi'_1, \ldots, \xi'_m)$ with $\xi'_1, \ldots, \xi'_m$ distinct. Then, after applying a permutation, we have $\xi'_i=\xi_i$ for $1 \le i \le n$ and $\psi(y_1, \ldots, y_m)=\phi(y_1, \ldots, y_n)$ identically.
\end{theorem}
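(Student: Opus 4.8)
The plan is to prove the two assertions by rather different means: existence of an expression by induction on $\magn(\ul{\kappa})$ using Proposition~\ref{prop:degen-char}, and rigidity of the minimal expression from the fact that a tuple of distinct system-variables is a $\GL$-generic point. For existence I would induct on $\magn(\ul{\kappa})$. If $\ul{\kappa}$ consists only of empty partitions, then $x$ is a tuple of scalars and $\phi$ is a constant morphism with $n=0$. Otherwise decompose $\bA^{\ul{\kappa}}$ as the product of an ordinary affine space --- on which any morphism out of a pure $\bA^{\ulambda}$ is constant, since the only invariants of $\bR_{\ulambda}$ are scalars --- with factors $\bA^{\mu}$ indexed by the nonempty partitions $\mu$ occurring in $\ul{\kappa}$. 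For each coordinate $x_{\mu,j}\in\bA^{\mu}(K)$, use that $\Xi_{\mu}$ is a basis of $\bA^{\mu}(K)$ modulo the (vector-space) subspace of degenerate elements to write $x_{\mu,j}$ as a finite $K$-linear combination of members of $\Xi_{\mu}$ plus a degenerate remainder $d_{\mu,j}$. By Proposition~\ref{prop:degen-char}, $d_{\mu,j}=\eta_{\mu,j}(w_{\mu,j})$ for a morphism $\eta_{\mu,j}\colon\bA^{\ul{\nu}_{\mu,j}}\to\bA^{\mu}$ with $\magn(\ul{\nu}_{\mu,j})<\magn(\mu)$, so every partition occurring in $\ul{\nu}_{\mu,j}$ has size $<\deg(\ul{\kappa})$. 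Concatenating the $\ul{\nu}_{\mu,j}$ into $\ul{\nu}$ and the $w_{\mu,j}$ into $w\in\bA^{\ul{\nu}}(K)$, all partitions in $\ul{\nu}$ have size $<\deg(\ul{\kappa})$, hence $\magn(\ul{\nu})<\magn(\ul{\kappa})$; apply the inductive hypothesis to $w$ and substitute back. This presents $x$ as a morphism applied to a finite list of members of $\bigsqcup_{\lambda}\Xi_{\lambda}$, and precomposing with the diagonal that identifies repeated entries of the list yields $x=\phi(\xi_1,\ldots,\xi_n)$ with $\xi_1,\ldots,\xi_n$ distinct. The minimality clause is then immediate.

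For rigidity, the three facts I would isolate are: (i) every element of $\Xi_{\lambda}$ is $\GL$-generic in $\bA^{\lambda}$, because a basis omits the zero vector and a $K$-point is degenerate exactly when it is zero modulo the degenerate subspace; (ii) consequently, by Proposition~\ref{prop:nondegen}, for any finite set $Z$ of distinct members of $\bigsqcup_{\lambda}\Xi_{\lambda}$, the point of $\bA^{\ul{\rho}}(K)$ whose coordinates are the members of $Z$, where $\ul{\rho}$ records their partitions, is $\GL$-generic; and (iii) a morphism $\theta\colon\bA^{\ul{\rho}}\to\bA^{\ul{\kappa}}$ vanishing at a $\GL$-generic point is the zero morphism, since its zero scheme is then a closed $\GL$-stable subset containing a point with orbit closure all of $\bA^{\ul{\rho}}$, so the subrepresentation of $\bR_{\ul{\rho}}$ it defines lies in the nilradical, which vanishes (one reduces first to $\ul{\kappa}$ pure, the empty-partition components of a morphism out of a pure $\bA^{\ul{\rho}}$ being forced to equal the corresponding scalar coordinate of $x$). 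Call an expression $x=\phi(\xi_1,\ldots,\xi_n)$ \emph{reduced} if $\phi$ does not factor through the projection omitting any one variable; a minimal expression is reduced, and any expression becomes reduced upon deleting removable variables.

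The crux is then: if $x$ has two reduced expressions, with variable sets $S_1,S_2$ and morphisms $\phi_1\colon\bA^{[S_1]}\to\bA^{\ul{\kappa}}$ and $\phi_2\colon\bA^{[S_2]}\to\bA^{\ul{\kappa}}$, then $S_1=S_2$ and, after matching variables, $\phi_1=\phi_2$. Indeed, letting $\ul{\rho}$ record the partitions of $Z=S_1\cup S_2$, the morphisms $\phi_1\circ\pi_{S_1}$ and $\phi_2\circ\pi_{S_2}$ from $\bA^{\ul{\rho}}$ both send the tautological point of $Z$ --- $\GL$-generic by (ii) --- to $x$, so their difference vanishes there and hence is $0$ by (iii), giving $\phi_1\circ\pi_{S_1}=\phi_2\circ\pi_{S_2}$. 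If some variable lay in $S_2\setminus S_1$, then $\phi_2\circ\pi_{S_2}$ would genuinely depend on that coordinate (composing with the zero-section $\bA^{[S_2]}\hookrightarrow\bA^{\ul{\rho}}$ recovers $\phi_2$, which is reduced), whereas $\phi_1\circ\pi_{S_1}$ does not --- a contradiction; hence $S_2\subseteq S_1$, and symmetrically $S_1=S_2$, whence $\phi_1=\phi_2$. The theorem follows by applying this with $\phi_1$ the given minimal expression $\phi$ and $\phi_2$ the reduction $\tilde\psi$ of $\psi$ (delete its removable variables; let $T\subseteq\{\xi'_1,\ldots,\xi'_m\}$ be the surviving variable set, so $m\ge|T|=n$): we obtain $\{\xi_1,\ldots,\xi_n\}=T$ and $\tilde\psi=\phi$ after a permutation, and since $\psi=\tilde\psi\circ(\text{projection onto the }T\text{ coordinates})$, reordering $\xi'_1,\ldots,\xi'_m$ so that $T$ comes first gives $\xi'_i=\xi_i$ for $i\le n$ and $\psi(y_1,\ldots,y_m)=\phi(y_1,\ldots,y_n)$ identically.

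I expect the main obstacle to be the existence induction --- in particular, checking carefully that the degenerate remainders genuinely lower the magnitude (this is exactly where $\deg(\ul{\kappa})$ enters) and that the final deduplication of variables leaves the morphism intact. Rigidity, by contrast, should be short once facts (i)--(iii) are established: it reduces to a $\GL$-genericity argument together with a count of which coordinates a morphism depends on.
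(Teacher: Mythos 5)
Your proof is correct and takes essentially the same approach as the paper: existence by induction on magnitude via the degenerate remainder (Proposition~\ref{prop:degen-char}), and rigidity by observing that a tuple of distinct system-variables is a $\GL$-generic point (Proposition~\ref{prop:nondegen}), so that two expressions agreeing there must coincide identically. The paper reduces existence to a single partition $\kappa$ and organizes rigidity around a maximal-agreement permutation rather than your auxiliary notion of a ``reduced'' expression, but these are only organizational differences.
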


\begin{proof}
For the first statement, it suffices to treat the case where $\ul{\kappa}$ is a single partition $\kappa$. Write $x=\sum_{i=1}^r c_i \xi_i + y$ where the $c_i$'s are scalars, the $\xi_i$ belong to $\Xi_{\kappa}$, and $y$ is degenerate; we can do this since $\Xi_{\kappa}$ is a basis modulo degenerate elements. Since $y$ is degenerate, we can write $y=\psi(z)$ for some morphism $\psi \colon \bA^{\umu} \to \bA^{\kappa}$ with $\magn(\umu)<\magn(\kappa)$. By induction, we have an expression for $z$ in terms of $\xi$'s. Combining all of this, we obtain the required expression for $x$.

Now, fix an expression $x=\phi(\xi_1, \ldots, \xi_n)$ with $\xi_1, \ldots, \xi_n$ with $n$ minimal; this implies that the $\xi_i$ are distinct. Consider a second expression $x=\psi(\xi'_1, \ldots, \xi'_m)$ with $\xi'_1, \ldots, \xi'_m$ distinct. Apply a permutation so that $\xi_i=\xi'_i$ for $1 \le i \le r$ with $r$ maximal; thus the elements $\xi_1, \ldots, \xi_n, \xi'_{r+1}, \ldots, \xi'_m$ is distinct. Let $\xi_i \in \Xi_{\lambda_i}$ and $\xi'_i \in \Xi_{\lambda'_i}$, and put $\ulambda=[\lambda_1, \ldots, \lambda_n, \lambda'_{r+1}, \ldots, \lambda'_m]$. Let $\tilde{\phi} \colon \bA^{\ulambda} \to \bA^{\ul{\kappa}}$ be the composition of $\phi$ with the appropriate projection, and similarly define $\tilde{\psi}$. Then both $\tilde{\phi}$ and $\tilde{\psi}$ map $(\xi_1, \ldots, \xi_n, \xi'_{r+1}, \ldots, \xi'_m)$ to $x$. Since this point is $\GL$-generic by Proposition~\ref{prop:nondegen}, it follows that $\tilde{\phi}=\tilde{\psi}$. Evaluating both sides on the point $(\xi_1, \ldots, \xi_r, 0, \ldots, 0, \xi'_{r+1}, \dots, \xi'_m)$, we find $\phi(\xi_1, \ldots, \xi_r, 0, \ldots, 0)=x$. By the minimality of $n$, we conclude that $r=n$. It now follows that $\phi(y_1, \ldots, y_n)=\psi(y_1, \ldots, y_m)$ for all $y_1, \ldots, y_m$, which completes the proof.
\end{proof}

\subsection{Big polynomial rings} \label{ss:bigpoly}

Let $\cR$ be the inverse limit of the standard-graded polynomial rings $K[x_1, \ldots, x_n]$ in the category of graded rings. Thus $\cR$ is a graded ring, and a degree $d$ element of $\cR$ is a formal $K$-linear combination of degree $d$ monomials in the variables $x_1, x_2, \ldots$; in particular, $\cR_d$ is identified with $\bA^{(d)}(K)$. The following theorem was established in \cite{ess} (and previously in \cite{am}), and used to give two new proofs of Stillman's conjecture:

\begin{theorem} \label{thm:poly}
The ring $\cR$ is (isomorphic to) a polynomial ring.
\end{theorem}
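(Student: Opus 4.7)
The plan is to produce an explicit free polynomial generating set for $\cR$ using the theory of systems of variables (Theorem~\ref{thm:vars}). Choose any system of variables $\Xi=\{\Xi_\lambda\}$. Since $\cR_d=\bA^{(d)}(K)$ canonically, each subset $\Xi_{(d)}$ sits inside $\cR_d$, so I may form $\Xi_\cR=\bigsqcup_{d\ge1}\Xi_{(d)}\subset\cR$. The goal is to show that the $K$-algebra map $K[\Xi_\cR]\to\cR$ sending each indeterminate $t_\xi$ to $\xi$ (and graded by $\deg t_\xi=\deg\xi$) is an isomorphism.

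For surjectivity, let $x\in\cR_d$ and apply Theorem~\ref{thm:vars} to produce a minimal expression $x=\phi(\xi_1,\ldots,\xi_n)$ with $\phi\colon\bA^{\ulambda}\to\bA^{(d)}$ a $\GL$-morphism and $\xi_i\in\Xi_{\lambda_i}$. I claim minimality forces each $\lambda_i$ to be a single-row partition $(k_i)$. By iterated Littlewood--Richardson, every irreducible summand $\bV_\nu$ of $\bV_\mu^{\otimes k}$ satisfies $\nu\supseteq\mu$, and hence summands of $\Sym^k\bV_\mu$ are all multi-row whenever $\mu$ is. Consequently $\bV_{(d)}$ does not occur in $\bR_{\ulambda}$ via any factor with multi-row $\lambda_i$, so every $\GL$-morphism $\bA^{\ulambda}\to\bA^{(d)}$ factors through the projection $\bA^{\ulambda}\to\bA^{\ulambda'}$ onto the single-row subtuple $\ulambda'$. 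Discarding the multi-row variables would then yield a strictly shorter expression for $x$, contradicting minimality unless all $\lambda_i$ are already single-row. A morphism $\bA^{[(k_1),\ldots,(k_n)]}\to\bA^{(d)}$ corresponds exactly to a polynomial $p\in K[t_1,\ldots,t_n]$ that is weighted-homogeneous of weighted degree $d$ (with $\deg t_i=k_i$), and under this identification $x=p(\xi_1,\ldots,\xi_n)$ inside $\cR$.

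For injectivity, since the map $K[\Xi_\cR]\to\cR$ is graded its kernel is spanned by weighted-homogeneous elements. Let $P$ be weighted-homogeneous of weight $d$ satisfying $P(\xi_1,\ldots,\xi_n)=0$ for its distinct variables $\xi_i\in\Xi_{(k_i)}$. Then $P$ corresponds to a $\GL$-morphism $\phi_P\colon\bA^{[(k_1),\ldots,(k_n)]}\to\bA^{(d)}$ with $\phi_P(\xi_1,\ldots,\xi_n)=0$. Applying Theorem~\ref{thm:vars} to $0\in\bA^{(d)}(K)$, whose minimal expression uses zero variables with the zero morphism, the uniqueness clause forces $\phi_P(y_1,\ldots,y_n)=0$ identically; hence $\phi_P$ is the zero morphism and $P=0$.

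The main obstacle is the representation-theoretic factorization claim: that any $\GL$-morphism $\bA^{\ulambda}\to\bA^{(d)}$ factors through the projection onto the single-row subtuple of $\ulambda$. Once this is established via the Littlewood--Richardson argument above, the rest of the proof is a formal application of Theorem~\ref{thm:vars} together with routine grading bookkeeping.
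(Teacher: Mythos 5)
Your proposal is correct and takes essentially the same approach as the paper: both arguments combine the representation-theoretic observation that the Littlewood--Richardson rule forces any $\GL$-morphism $\bA^{\ulambda}\to\bA^{(d)}$ to factor through the single-row coordinates and to be given there by a weighted-homogeneous polynomial, with Theorem~\ref{thm:vars} supplying existence and uniqueness of the resulting expressions. You merely spell out the surjectivity and injectivity of the map $K[\Xi_\cR]\to\cR$ in more explicit detail than the paper chooses to.
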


In fact, this result is essentially a special case of the material in the previous section, as we now show:

\begin{proof}[Proof of Theorem~\ref{thm:poly}]
It follows from the Littlewood--Richardson rule that if a symmetric power $\bV_{(d)}$ appears in a tensor product $\bV_{\lambda} \otimes \bV_{\mu}$ then $\lambda$ and $\mu$ each have a single row, i.e., $\bV_{\lambda}$ and $\bV_{\mu}$ are themselves symmetric powers; moreover, in this case, $\bV_{(d)}$ occurs with multiplicity one, and so, up to scaling, the only map $\bV_{\lambda} \otimes \bV_{\mu} \to \bV_{(d)}$ is the multiplication map. It follows that any morphism $\bA^{\ulambda} \to \bA^{(d)}$ only depends on the coordinates of $\bA^{\ulambda}$ that are symmetric powers, and is simply given by some polynomial in those coordinates.

Now, fix a system of variables $\Xi$. By Theorem~\ref{thm:vars} and the above discussion, we see that every element of $\cR_d$ can be expressed as a polynomial in the elements $S=\bigcup_{n \ge 1} \Xi_{(n)}$, and that this expression is unique up to permutations. Thus $\cR$ is a polynomial ring on the generators $S$.
\end{proof}

Now, suppose that $K$ is the field $\bC\lpp t \rpp$ of Laurent series. Let $\cR^{\flat}$ be the subring of $\cR$ consisting of elements $f$ with bounded denominators, i.e., $f$ has coefficients in $t^{-n} \bC\lbb t \rbb$ for some $n$. In \cite{ess}, it was also shown that $\cR^{\flat}$ is also a polynomial ring over $K$. The following complementary (but much deeper) result was established in \cite{relbig}:

\begin{theorem} \label{thm:relpoly}
The ring $\cR$ is a polynomial algebra over $\cR^{\flat}$.
\end{theorem}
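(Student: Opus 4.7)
The plan is to mirror the proof of Theorem~\ref{thm:poly} by relativizing the system-of-variables machinery of \S\ref{ss:system} over the base ring $\cR^\flat$ instead of $K$. For each $d \ge 1$, define $\mathcal{N}_d \subset \cR_d$ to be the $K$-linear span of $\cR^{\flat}_d$ together with the degenerate $K$-points of $\bA^{(d)}(K)$ (in the sense of Proposition~\ref{prop:degen-char}). Choose a collection $\Xi^\flat = \bigsqcup_{d \ge 1} \Xi^\flat_{(d)}$ in which each $\Xi^\flat_{(d)} \subset \cR_d$ lifts a $K$-basis of $\cR_d/\mathcal{N}_d$; the goal is then to show that the natural $\cR^\flat$-algebra map $\cR^\flat[\Xi^\flat] \to \cR$ is an isomorphism.

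First I would establish the existence half: every $f \in \cR_d$ admits an expression $f = P(\xi_1, \ldots, \xi_n)$ with the $\xi_i \in \Xi^\flat$ distinct and $P$ an ordinary polynomial with coefficients in $\cR^\flat$. Argue by induction on $d$: subtract off a $K$-linear combination $\sum_i c_i \xi_i$ with $\xi_i \in \Xi^\flat_{(d)}$ to reduce to the case $f \in \mathcal{N}_d$, then split $f = g + \sum_j \phi_j(y_j)$ with $g \in \cR^\flat_d$ and each $\phi_j \colon \bA^{\ulambda_j} \to \bA^{(d)}$ a morphism satisfying $\magn(\ulambda_j) < \magn((d))$. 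By the Littlewood--Richardson observation used in the proof of Theorem~\ref{thm:poly}, each $\phi_j$ is an ordinary $K$-polynomial in the symmetric-power coordinates of $\bA^{\ulambda_j}$, and these coordinates, evaluated at $y_j$, lie in $\cR_e$ for various $e < d$. The inductive hypothesis rewrites each such value as an $\cR^\flat$-polynomial in $\Xi^\flat$, giving the desired expression for $f$.

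The main obstacle is uniqueness, i.e., the algebraic independence of $\Xi^\flat$ over $\cR^\flat$: if $P(\xi_1, \ldots, \xi_n) = 0$ in $\cR$ for distinct $\xi_i \in \Xi^\flat$ and a polynomial $P$ with $\cR^\flat$-coefficients, then $P$ must be the zero polynomial. My plan is to reinterpret $P$ as a $\GL$-equivariant morphism $\bA^{\ulambda}_{\cR^\flat} \to \bA^{(d)}_{\cR^\flat}$ over $\Spec(\cR^\flat)$, where $\ulambda$ records the degrees of the $\xi_i$; the same Littlewood--Richardson rigidity forces $P$ to be an $\cR^\flat$-polynomial in the symmetric-power coordinates of $\bA^{\ulambda}_{\cR^\flat}$. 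One then needs an analog of Proposition~\ref{prop:nondegen} relative to $\cR^\flat$: the tuple $(\xi_1, \ldots, \xi_n)$ is ``$\cR^\flat$-generic'' in the sense that no nonzero $\cR^\flat$-polynomial in the symmetric-power coordinates vanishes at it. I would approach this by a secondary induction on $n$ with $d$ fixed, using the minimality of $n$ and the defining property that $\Xi^\flat_{(d)}$ descends to a $K$-basis of $\cR_d/\mathcal{N}_d$ to force the top-degree-in-$\xi$ terms of $P$ to land in the span of previously-accounted-for relations, producing the required cancellation.

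The technical heart is making the $\cR^\flat$-genericity step work cleanly, since $\cR^\flat$ is not a field and the generic-point arguments of \S\S\ref{s:embed}--\ref{s:type} need a suitable replacement; depending on how the argument unfolds, one may need to invoke the Noetherianity of $\GL$-schemes over general noetherian base rings from \cite{bdd} or develop a relative version of the mapping-space theory of \S\ref{ss:mapping}. Combining existence and uniqueness yields $\cR \cong \cR^\flat[\Xi^\flat]$, establishing the theorem.
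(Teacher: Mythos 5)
Your proposal takes a genuinely different route from the paper, and there is a real gap at the step you yourself flag as the technical heart.

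The paper does not directly build a relative system of variables. Instead it invokes the reduction from \cite{relbig}: Theorem~\ref{thm:relpoly} follows from Proposition~\ref{prop:str}, which asserts that an element of $\cR^\flat$ of finite strength in $\cR$ already has finite strength in $\cR^\flat$. The paper then proves Proposition~\ref{prop:str} by writing the finite-strength condition as ``$f$ is in the image of a specific $\GL$-morphism $\pi\colon\bA^{\ulambda}\to\bA^{(d)}$,'' invokes Proposition~\ref{prop:bounded-lift} (a consequence of the decomposition theorem) to lift $f$ to a \emph{bounded} $K(t^{1/n})$-point upstairs, and then descends to $\cR^\flat$ by expanding in $t^{1/n}$-powers. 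The decomposition/Chevalley machinery is thus the essential input.

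Your uniqueness step never supplies this. To show that $\Xi^\flat$ is algebraically independent over $\cR^\flat$ (equivalently, that a system of variables for $\cR^\flat$ extends to one for $\cR$), you must in particular know that the degree-$d$ generators of $\cR^\flat$ remain linearly independent modulo the degenerate subspace $D_d$ of $\cR_d$, not merely modulo the bounded-degenerate subspace $D^\flat_d$. This is precisely the statement $D_d\cap\cR^\flat_d = D^\flat_d$, which, unwound, is Proposition~\ref{prop:str}. If it failed, there would be a nontrivial polynomial relation among your generators, so the statement is not a convenience but a necessity; yet your sketched ``secondary induction on $n$, forcing top-degree-in-$\xi$ terms to cancel'' gives no mechanism to produce it. The ``$\cR^\flat$-genericity'' you ask for is equivalent to what needs proving, not a step toward it. Note also that appealing to \cite{bdd} as you suggest will not help directly, since $\cR^\flat$ is a polynomial ring in uncountably many variables and hence not Noetherian. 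To close the gap, one way or another you would need to prove the strength-transfer statement, and the paper's Proposition~\ref{prop:bounded-lift} (hence the decomposition theorem) is the tool that does it.
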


We now show how this more difficult theorem can be deduced from our theory in a fairly simple manner. We say that a $K$-point of $\bA^{\ulambda}$ is \defn{bounded} if there is some $n$ such that its coordinates all belong to $t^{-n} \bC\lbb t \rbb$. Thus $\cR^{\flat}_d$ can be identified with the set of bounded $K$-points of $\bA^{(d)}$. More generally, suppose that $x$ is a $K$-point of a quasi-affine $\GL$-variety $X$. Let $R=\Gamma(X, \cO_X)$ and let $\phi \colon R \to K$ be the homomorphism corresponding to $x$. We say that $x$ is \defn{bounded} if for every finite length subrepresentation $V$ of $R$ there is some $n$ such that $\phi(V) \subset t^{-n} \bC\lbb t \rbb$; in fact, it suffices to verify this for any subrepresentation $V$ that generates $R$ as an algebra. The key result we need is the following:

\begin{proposition} \label{prop:bounded-lift}
Let $\phi \colon X \to Y$ be a morphism of $\GL$-varieties and let $x$ be a bounded $K$-point of $X$ belonging to $\im(\phi)$. Then there is a bounded $K(t^{1/n})$-point $y$ of $Y$, for some $n$, such that $x=\phi(y)$.
\end{proposition}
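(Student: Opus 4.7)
The plan is to use the decomposition theorem to reduce to a single $G(m)$-elementary morphism and then lift directly in that case. The starting observation is that boundedness of a $K$-point is preserved under the $\GL$-action: for any $g \in \GL$ and any finite-length $\GL$-subrepresentation $W$ of the ambient coordinate ring, $W$ is $\GL$-stable, and hence $(gx)^\ast(W) = x^\ast(g^{-1}W) = x^\ast(W)$. This lets me freely translate $x$ by $\GL$-elements during the reduction without affecting boundedness or membership in $\operatorname{im}(\phi)$.

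I would first apply Theorem~\ref{thm:decomp} to obtain a locally elementary decomposition of $\phi$, locate $x$ in a stratum $Y_{j_0}$, and select a stratum $X_{i_0}$ for which $\phi \colon X_{i_0} \to Y_{j_0}$ is locally elementary (hence surjective onto $Y_{j_0}$). By Proposition~\ref{prop:loc-elem-unif} this restricted map is covered by finitely many $G(m)$-elementary morphisms $\phi \colon U_k \to V_k$ whose $\GL$-translates exhaust, and after a $\GL$-translation of $x$ I may assume $x \in V_{k_0}$ for some $k_0$. Identify $\phi\vert_{U_{k_0}}$ with $\psi \times \pi \colon B \times \bA^{\ulambda} \to C \times \bA^{\umu}$, where $\psi$ is a surjective morphism of irreducible affine varieties and $\pi$ is the projection dual to a chosen inclusion $\umu \subset \ulambda$; write $\ulambda = \umu \cup \ulambda'$ and $x = (c, a)$ with $c \in C(K)$, $a \in \bA^{\umu}(K)$. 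By Proposition~\ref{prop:points} applied to $\psi$, the point $c$ lifts to some $b \in B(K')$ for a finite extension $K'/K$; by Newton--Puiseux, $K'$ may be chosen of the form $K(t^{1/n})$, and $b$ is automatically bounded since $B$ is finite-dimensional. Lift $a$ to $a' \in \bA^{\ulambda}(K')$ by setting the coordinates indexed by $\ulambda'$ to zero: the tensor decomposition $\bR_{\ulambda} = \bR_{\umu} \otimes \bR_{\ulambda'}$ makes $a'^\ast$ factor as $a^\ast \circ (\mathrm{id} \otimes \epsilon)$ with $\epsilon$ the augmentation on $\bR_{\ulambda'}$, so boundedness of $a'$ follows immediately from that of $a$. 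The pair $(b, a') \in U_{k_0}(K') \subseteq X(K')$ is then the desired bounded lift of $x$.

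The main technical obstacle is tracking boundedness through these reductions. Specifically, I must check (i) that boundedness of $x$ in $Y$ yields boundedness of the component $a$ as a point of $\bA^{\umu}$ after the $G(m)$-equivariant identification of $V_{k_0}$ with $C \times \bA^{\umu}$, and (ii) that $(b, a')$ is bounded as a point of $X$, and not merely as a point of the $G(m)$-open $U_{k_0}$. The second is immediate from the $G(m)$-equivariant inclusion $\Gamma(X, \cO_X) \hookrightarrow \Gamma(U_{k_0}, \cO_{U_{k_0}})$. The first is the genuinely delicate step, because $\Gamma(V_{k_0}, \cO_{V_{k_0}})$ is strictly larger than $\Gamma(Y, \cO_Y)$ and its finite-length $G(m)$-subrepresentations need not extend to finite-length $\GL$-subrepresentations of $\Gamma(Y, \cO_Y)$. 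I would address this by expressing $V_{k_0}$ as a finite union of distinguished opens $Y[1/h_i]$ for $G(m)$-invariant functions $h_i \in \Gamma(Y, \cO_Y)$: any finite-length $G(m)$-subrepresentation of $\Gamma(V_{k_0}, \cO_{V_{k_0}})$, after clearing these invariant denominators, embeds into a finite-length $G(m)$-subrepresentation of $\Gamma(Y, \cO_Y)$, whose evaluation on $x$ is controlled jointly by the boundedness hypothesis on $x$ and by the $t$-adic boundedness of the inverses $1/h_i(x) \in K^{\times}$.
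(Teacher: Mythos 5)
Your proposal follows the same strategy as the paper's: apply the decomposition theorem, reduce to a single $G(m)$-elementary morphism, and lift there. The paper's own proof is extremely terse --- essentially ``this is clear for elementary morphisms, and thus the result follows from the decomposition theorem'' --- and explicitly acknowledges that the required theory of boundedness for $G(n)$-varieties (in particular, that $G(n)$-boundedness is equivalent to $G(m)$-boundedness for $m \geq n$) ``will be better developed in our subsequent paper.'' So you are filling a gap that the paper itself leaves open, and most of your detail is sound: the observation that boundedness is preserved under $\GL$-translation (since finite-length subrepresentations are $\GL$-stable, $(gx)^\ast(W)=x^\ast(W)$), the use of Proposition~\ref{prop:points} plus Newton--Puiseux to get the lift $b \in B(K(t^{1/n}))$, and the remark that any $K(t^{1/n})$-point of a finite-dimensional variety is automatically bounded because $K(t^{1/n}) \cong \bC(\!(t^{1/n})\!)$.

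The one step that is not justified as stated is your claim that the elementary open $V_{k_0}$ can be written as a finite union of distinguished opens $Y[1/h_i]$ with each $h_i$ a $G(m)$-invariant regular function on $Y$. The statement of Theorem~\ref{thm:decomp} (even refined by Proposition~\ref{prop:loc-elem-unif}) only gives you that $V_{k_0}$ is an affine $G(m)$-stable open in its stratum; it does not say the complement is cut out by $G(m)$-invariants, and for a general $G(m)$-stable ideal that fails (e.g.\ rank conditions are not cut out by invariants). What saves you is that the specific opens produced inside the proofs of Lemmas~\ref{lem:decomp-1} and~\ref{lem:decomp-2} are, by construction via Theorem~\ref{thm:shift} and Proposition~\ref{prop:Adom}, distinguished opens $(\cdot)[1/h]$ for a single invariant $h$, and the claim you need does hold for those --- but you then need to track this form through the recursion in Theorem~\ref{thm:decomp}, and also through the passage to locally closed strata $Y_{j_0} \subset Y$, because boundedness must ultimately be referred back to $\Gamma(Y,\cO_Y)$, not $\Gamma(Y_{j_0},\cO_{Y_{j_0}})$. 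This bookkeeping is genuine work that your proposal sketches rather than carries out; it is precisely the content that the authors deferred. So: same route as the paper, more detail than the paper, and one substantive point (the form of $V_{k_0}$ and the reduction of boundedness from $Y$ to $Y_{j_0}$ to $V_{k_0}$) that would need to be argued from the construction in the proof of the decomposition theorem rather than from its statement.
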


\begin{proof}
This is clear for elementary morphisms, and thus the result follows from the decomposition theorem. (We note that there is a notion of boundedness for $K$-points in a $G(n)$-variety, and a point is $G(n)$-bounded if and only if it is $G(m)$-bounded for $m \ge n$. This theory will be better developed in our subsequent paper.)
\end{proof}

Recall from Example~\ref{ex:str-var} the notion of strength. As explained in the introduction to \cite{relbig}, Theorem~\ref{thm:relpoly} follows easily from the following:

\begin{proposition} \label{prop:str}
Suppose that $f$ is an element of $\cR^{\flat}$ that has finite strength in $\cR$. Then $f$ has finite strength in $\cR^{\flat}$.
\end{proposition}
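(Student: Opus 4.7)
The plan is to translate ``$f$ has strength $\le k$ in $\cR$'' into the statement that $f$ is a $K$-point in the image of an explicit $\GL$-equivariant morphism of affine $\GL$-varieties, then apply Proposition~\ref{prop:bounded-lift} to lift $f$ to a bounded point over the ramified extension $K' := K(t^{1/n})$ for some $n$, and finally Galois-descend the lifted decomposition back to $K$, at the cost of multiplying the number of terms by $n$.

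Fix such an $f$, homogeneous of degree $d$, and let the integers $a_i,b_i \ge 1$ with $a_i+b_i=d$ be the degrees witnessing a decomposition $f=\sum_{i=1}^{k} g_ih_i$ in $\cR$. Consider the $\GL$-equivariant multiplication morphism
\[
\phi \colon Z := \prod_{i=1}^{k} \bigl(\bA^{(a_i)} \times \bA^{(b_i)}\bigr) \longrightarrow \bA^{(d)}, \qquad (g_i,h_i)_i \longmapsto \sum_{i=1}^{k} g_i h_i.
\]
By assumption, $f\in\bA^{(d)}(K)$ lies in the image of $\phi$, and $f$ is a bounded $K$-point because $f\in\cR^\flat$. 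Proposition~\ref{prop:bounded-lift} then provides an integer $n$ and a bounded $K'$-point $(g_i',h_i')$ of $Z$ satisfying $\sum_i g_i'h_i'=f$.

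For the descent, I would use the $K$-linear direct sum decomposition $K'=\bigoplus_{j=0}^{n-1} t^{j/n}\cdot K$. Writing $g_i'=\sum_{j=0}^{n-1} t^{j/n}g_{i,j}$ and $h_i'=\sum_{j'=0}^{n-1} t^{j'/n}h_{i,j'}$ yields bounded components $g_{i,j}\in\cR^\flat_{a_i}$ and $h_{i,j'}\in\cR^\flat_{b_i}$, since a bounded $K'$-element has bounded $t^{j/n}$-parts over $K$. Expanding $g_i'h_i'$ and grouping terms by $r:=(j+j')\bmod n$, the condition $f\in\cR$ forces every $r\ne 0$ contribution to vanish; the $r=0$ part reads
\[
f = \sum_{i=1}^{k} g_{i,0}h_{i,0} \;+\; t\sum_{i=1}^{k}\sum_{j=1}^{n-1} g_{i,j}h_{i,n-j},
\]
exhibiting $f$ as a sum of at most $nk$ products of positive-degree elements of $\cR^\flat$ (the factor $t$ can be absorbed into either factor of any term). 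Hence $f$ has strength at most $nk$ in $\cR^\flat$. The only substantive step is the bounded lifting, which rests on the decomposition theorem via Proposition~\ref{prop:bounded-lift}; once the lift over $K'$ is in hand, the Galois descent is essentially formal, and the only mild verification is that boundedness over $K'$ descends to boundedness of each $t^{j/n}$-component over $K$.
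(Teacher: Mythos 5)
Your proposal is correct and takes essentially the same route as the paper: encode the strength-$\le k$ locus as the image of the $\GL$-equivariant multiplication map to $\bA^{(d)}$, invoke Proposition~\ref{prop:bounded-lift} to lift $f$ to a bounded $K(t^{1/n})$-point, and then project the resulting decomposition onto the $\cR^{\flat}$-component via the $K$-basis $1,t^{1/n},\ldots,t^{(n-1)/n}$ of $K(t^{1/n})$, obtaining strength $\le nk$ in $\cR^{\flat}$. You have merely spelled out the final ``take the piece in $\cR^{\flat}$'' bookkeeping more explicitly than the paper does.
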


\begin{proof}
Say $f$ has degree $d$, and we have $f=\sum_{i=1}^r g_i h_i$ where $g_i,h_i \in \cR$ are homogeneous of degrees $<d$. Put $\deg(g_i)=e_i$ $\deg(h_i)=e'_i$ and let $\ulambda=[(e_1),(e'_1),\ldots,(e_r),(e'_r)]$. Consider the map $\pi \colon \bA^{\ulambda} \to \bA^{(d)}$ given by $\pi(a_1,b_1,\ldots,a_r,b_r)=\sum_{i=1}^r a_i b_i$. We have $f=\pi(g_1,h_1,\ldots,g_r,h_r)$, and so $f$ is in the image of $\pi$. By Proposition~\ref{prop:bounded-lift}, $f$ is the image of a bounded $K(t^{1/n})$-point for some $n \ge 1$. This means we can write $f=\sum_{i=1}^r g'_i h'_i$ where $g'_i,h'_i \in \cR^{\flat}_{K(t^{1/n})}$ are homogeneous of degrees $<d$. Write $g'_i=\sum_{k=0}^{n-1} g'_{i,k} t^{k/n}$ with $g'_{i,k} \in \cR^{\flat}$, and similarly express $h'_i$. Multiplying $g'_i h'_i$ out and taking the piece that belongs to $\cR^{\flat}$, we see that $f$ has strength $\le rn$ in $\cR^{\flat}$.
\end{proof}

\subsection{An extended example} \label{ss:ex-type}

Suppose that $K$ is algebraically closed field and let $X \subset \bA^{[(1),(1)]}$ be the rank $\le 1$ locus. Precisely, we identify $\bA^{[(1),(1)]}$ with the spectrum of $S=K[x_i,y_i]_{i \ge 1}$, with each set of variables generating a copy of the standard representation of $\GL$, and $X$ is the spectrum of $R=S/(x_i y_j-x_j y_i)$. We look at several of our definitions and results in this case.
\begin{itemize}
\item For each $[a:b] \in \bP^1(K)$ we have the line $L_{a,b} \subset X$ where $bx_i=ay_i$. This is a closed $\GL$-subset. Besides these, there are only two other closed $\GL$-subsets, namely $\{0\}$ and $X$. We thus see that $X^{\orb}$ can be identified with $\bP^1(K)$ plus two additional points. In fact, the generalized orbit of the generic point of $X$ corresponds to the generic point of $\bP^1_K$. Thus $X^{\orb}$ is the topological space underlying the scheme $\bP^1_K$ plus one extra point $\ast$ corresponding to $\{0\}$. The point $\ast$ is closed, and the closure of any point contains $\ast$; this completely describes the topology on $X^{\orb}$.
\item Let $t=x_i/y_i \in \Frac(R)$; note that this is independent of $i$. Then $K(X)^{\GL}=K(t)$. Thus $X$ gives an example of a pure $\GL$-variety with a non-trivial invariant function field.
\item We now consider the shift theorem for $X$. Identify $\Sh_1(S)$ with $S[\xi,\eta]$, where $\xi$ the ``shifted off'' $x$ variable, and $\eta$ is the ``shifted off'' $y$ variable. Then $\Sh_1(R)$ is the quotient of $S[\xi,\eta]$ by the relations
\begin{displaymath}
x_iy_j-x_jy_i, \qquad
\xi y_i - \eta x_i
\end{displaymath}
for all $i$ and $j$. We thus see that in $\Sh_1(R)[1/\eta]$ we have $x_i=(\xi/\eta) y_i$ for all $i$. We thus find that $\Sh_1(R)[1/\eta]=K[\xi,\eta^{\pm 1},y_1,y_2,\ldots]$. Hence $\Sh_1(X) \cong B \times \bA^{(1)}$ where $B=\bA^1 \times (\bA^1 \setminus \{0\})$. From this, we see that the invariant function field of any $\Sh_n(X)$ is a rational function field over $K$.
\item Besides 0, every point in $X$ has type $[(1)]$.
\item A typical morphism for $X$ is given by the map $\bA^1 \times \bA^{(1)} \to X$ corresponding to the ring homomorphism $R \to K[t,y_1, y_2, \ldots]$ mapping $y_i$ to $y_i$ and $x_i$ to $t y_i$.
\item Consider the mapping space $M=\Map_K(\bA^{(1)}_K, X)$. Let $A$ be a $K$-algebra. An $A$-point of $M$ corresponds to a $\GL$-algebra homomorphism $f \colon R \to A[z_1, z_2, \ldots]$. Under such a homomorphism $x_1$ must map to a $G(1)$-invariant element, and thus a scalar multiple of $z_1$; similarly, $y_1$ must also map to a scalar multiple of $z_1$. Say $f(x_1)=az_1$ and $f(y_1)=bz_1$. Then $f(x_i)=az_i$ and $f(y_i)=bz_i$, and so $f$ is completely determined. Conversely, for any $a$ and $b$, we see that these formulas define a $\GL$-algebra homomorphism, i.e., the relations in $R$ hold. We thus see that $M=\Spec(K[a,b])=\bA^2$.
\end{itemize}

\subsection{Examples of invariant function fields} \label{ss:ex-field}

Let $\ulambda$ be the tuple consisting of $n+1$ copies of $(1)$, and let $X$ be the rank $\le 1$ locus in $\bA^{\ulambda}$. We can think of $X \setminus \{0\}$ as the space of rank~1 maps $\bV \to K^{n+1}$. We thus have a natural map $\pi \colon X \setminus \{0\} \to \bP^n$ by associating to a rank~1 map its image. The map $\pi$ is clearly $\GL$-equivariant, using the trivial action on the target. Let $W$ be an irreducible subvariety of $\bP^n$, and let $Z \subset X$ be the closure of $\pi^{-1}(W)$, which is a $\GL$-variety. The map $\pi$ induces a field homomorphism $\pi^* \colon K(W) \to K(Z)^{\GL}$, which one can show is an isomorphism. We thus see that any finitely generated extension of $K$ can be realized as the invariant function field of a $\GL$-variety that is pure over $K$.

\subsection{Morphisms of affine spaces need not have closed image} \label{ss:ex-notclosed}

Suppose that $\phi \colon \bA^{\ulambda} \to \bA^{\umu}$ is a morphism of $\GL$-varieties with $\ulambda$ and $\umu$ pure. From Examples~\ref{ex:rank-var} and~\ref{ex:str-var}, we know that $\phi$ need not have closed image. We now give a simple example that directly exhibits this phenomenon.

Consider the $\GL$-morphism
\begin{eqnarray*}
\varphi\colon\bA^{[(2),(2),(2)]}&\to&\bA^{(4)}\\
(f,g,h)&\mapsto&fg-h^2
\end{eqnarray*}
and the closure $X$ of its image. We show that the image of $\varphi$ is not closed by finding a $\GL$-morphism $\psi$ to $X$ such that $\psi\neq \varphi\circ\gamma$ for every $\GL$-morphism $\gamma$.

Note that for all closed points $(x,y,f,g,h)\in\bA^{[(1),(1),(2),(2),(2)]}$ and $t\in K\setminus\{0\}$, we have
\begin{eqnarray*}
t^{-1}\varphi\left(y^2+tf,x^2+tg,xy-\mbox{$\frac{1}{2}$}th\right)&=&t^{-1}\left((y^2+tf)(x^2+tg)-(xy-\mbox{$\frac{1}{2}$}th)^2\right)\\
&=&x^2f+y^2g+xyh+t(\dots)\in X
\end{eqnarray*}
Hence the $\GL$-morphism
\begin{eqnarray*}
\psi\colon\bA^{[(1),(1),(2),(2),(2)]}&\to&\bA^{(4)}\\
(x,y,f,g,h)&\mapsto&x^2f+y^2g+xyh
\end{eqnarray*}
maps into $X$. If $\im(\psi)\subseteq\im(\varphi)$, then $\psi=\varphi\circ\gamma$ for some $\GL$-morphism
\[
\gamma\colon\bA^{[(1),(1),(2),(2),(2)]}\to\bA^{[(2),(2),(2)]}
\]
by Propositions~\ref{prop:generic-exists} and~\ref{prop:lift}. Such a $\GL$-morphism $\gamma$ must be of the form
$$
\gamma(x,y,f,g,h)=\left(\begin{array}{c}c_{11}x^2+c_{12}xy+c_{13}y^2+c_{14}f+c_{15}g+c_{16}h\\c_{21}x^2+c_{22}xy+c_{23}y^2+c_{24}f+c_{25}g+c_{26}h\\c_{31}x^2+c_{32}xy+c_{33}y^2+c_{34}f+c_{35}g+c_{36}h\end{array}\right)
$$
for some constants $c_{ij}\in K$. This turns the condition $\psi=\varphi\circ\gamma$ into polynomial equations in the $c_{ij}$. Now, one can check using a Gr\"obner basis calculation that this system has no solutions. Hence $\im(\psi)\not\subseteq\im(\varphi)$. So, in particular, we see that $\im(\varphi)\neq X$ is not closed.

\begin{question}
Is $\im(\varphi)\cup\im(\psi)$ closed?
\end{question}

\begin{question}
Is $\im(\psi)$ closed?
\end{question}

\subsection{The typical locus need not be open} \label{ss:notopen}

We consider the set of all $\GL$-morphisms $\bA^{[(1),(1),(2),(2),(2)]}\to\bA^{(4)}$. Such a $\GL$-morphism is typical (as a map to its image closure) precisely when it does not factor via $\bA^{\ulambda}$ for any $\ulambda\subsetneq[(1),(1),(2),(2),(2)]$. Note that for all $t\in K\setminus\{0\}$, the $\GL$-morphism
\begin{eqnarray*}
\varphi_t\colon \bA^{[(1),(1),(2),(2),(2)]}&\to&\bA^{(4)}\\
(x,y,f,g,h)&\mapsto& t^{-1}\left((x^2+tf)(y^2+tg)-(xy+th)^2\right)
\end{eqnarray*}
factors via $\bA^{[(2),(2),(2)]}$ and is hence not typical. If the set of typical $\GL$-morphisms were open, then the limit $\varphi_0=\lim_{t\to 0}\varphi_t$ would also have to be not typical. However the $\GL$-morphism
\begin{eqnarray*}
\varphi_0\colon \bA^{[(1),(1),(2),(2),(2)]}&\to&\bA^{(4)}\\
(x,y,f,g,h)&\mapsto& x^2g+y^2f-2xyh
\end{eqnarray*}
cannot factor through $\bA^{[(1),(1),(2),(2)]}$ for dimension reasons and it cannot factor through $\bA^{[(1),(2),(2),(2)]}$ since the coefficients $x^2,y^2$ of $g,f$ in $\varphi_0$ are linearly independent. So $\varphi_0$ is typical. Hence the set of typical $\GL$-morphisms $\bA^{[(1),(1),(2),(2),(2)]}\to\bA^{(4)}$ is not open.

\end{document}